\def\l{\left}
\def\r{\right}
\def\l{\left}
\def\r{\right}
\newcommand{\mc}[1]{\mathcal{#1}}
\title{Robust One-Bit Recovery via ReLU Generative Networks: Near-Optimal Statistical Rate and Global Landscape Analysis}
\begin{document}
\author{Shuang Qiu\footnote{Equal Contribution}  \thanks{University of Michigan.
Email: \texttt{qiush@umich.edu}.} 
       \qquad
	Xiaohan Wei${}^*$\thanks{Facebook, Inc.
Email: \texttt{ubimeteor@fb.com}.}  
	\qquad
       Zhuoran Yang\thanks{Princeton University.
    Email: \texttt{zy6@princeton.edu}.}
       }


\maketitle

\begin{abstract}
We study the robust one-bit compressed sensing problem whose goal is to design an algorithm that faithfully recovers any sparse target vector $\theta_0\in\mathbb{R}^d$   \textit{uniformly} via $m$ quantized noisy measurements. Specifically, we consider a new framework for this problem where the sparsity is implicitly enforced via mapping a low dimensional representation $x_0 \in \mathbb{R}^k$  through a known $n$-layer ReLU generative network $G:\mathbb{R}^k\rightarrow\mathbb{R}^d$ such that $\theta_0 = G(x_0)$.  Such a framework poses low-dimensional priors on $\theta_0$ without a known sparsity basis. We propose to recover the target $G(x_0)$ solving an unconstrained empirical risk minimization (ERM). Under a weak \textit{sub-exponential measurement assumption}, we establish a joint statistical and computational analysis. In particular, we prove that the ERM estimator in this new framework achieves a statistical rate of $m=\widetilde{\mathcal{O}}(kn \log d /\varepsilon^2)$ recovering any $G(x_0)$ uniformly up to an error $\varepsilon$. When the network is shallow (i.e., $n$ is small), we show this rate matches the information-theoretic lower bound up to logarithm factors of $\varepsilon^{-1}$. From the lens of computation, we prove that under proper conditions on the network weights, our proposed empirical risk, despite non-convexity, has no stationary point outside of small neighborhoods around the true representation $x_0$ and its negative multiple;  furthermore, we show that the global minimizer of the empirical risk stays within the neighborhood around $x_0$ rather than its negative multiple under further assumptions on the network weights.
\end{abstract}

\section{Introduction}
Quantized compressed sensing investigates how to design the sensing procedure, quantizer, and reconstruction algorithm so as to recover a high dimensional vector from a limited number of quantized measurements. The problem of one-bit compressed sensing, which aims at recovering a target vector $\theta_0\in\mathbb R^d$ from single-bit observations $y_i = \sign(\dotp{a_i}{\theta_0}),~i\in\{1,2,\cdots,m\},~m\ll d$ and random sensing vectors $a_i\in\mathbb{R}^d$, is particularly challenging. 
Previous theoretical successes on this problem (e.g. \citet{jacques2013robust,plan2013robust,zhu2015towards}) mainly rely on two key assumptions: (1) The Gaussianity of the sensing vector $a_i$. (2) The sparsity of the vector $\theta_0$ on a given basis. However, the practical significance of these assumptions is rather limited in the sense that it is difficult to generate Gaussian vectors and high dimensional targets in practice are often distributed near a low-dimensional manifold rather than sparse on some given basis. The goal of this work is to make steps towards addressing these limitations. 

\subsection{Sub-Gaussian One-Bit Compressed Sensing}

As investigated in \citet{ai2014one}, sub-Gaussian one-bit compressed sensing can easily fail regardless of the recovery algorithms. More specifically, consider two sparse vectors: $\theta_1 = [1,~0,~0,~\cdots,~0]$, $\theta_2 = [1,~-1/2,~0,~\cdots,~0]$, and i.i.d. Bernoulli sensing vectors $a_i$, where each entry takes $+1$ and $-1$ with equal probabilities. Such sensing vectors are known to perform optimally in the ordinary linear compressed sensing scenario, but cannot distinguish between $\theta_1$ and $\theta_2$ in the current one-bit scenario regardless of algorithms.  Moreover,  \citet{ai2014one,10.1093/imaiai/iay006} further  propose \textit{non-consistent} estimators whose discrepancies are measured in terms of certain distances between the Gaussian distribution and the distribution of the sensing vectors.

A major step towards consistent non-Gaussian one-bit compressed sensing is called \textit{dithering}, which has been considered in several recent works \citep{xu2018quantized, dirksen2018non}. The key idea is that instead of $y_i = \sign(\dotp{a_i}{\theta_0})$, one considers a new procedure by adding artificial random noise $\tau_i$ before quantization: 
$y_i = \sign(\dotp{a_i}{\theta_0} + \tau_i),~i\in\{1,2,\cdots,m\}$.  In addition, \citet{dirksen2018non} proposes a new computationally-efficient convex recovery algorithm and shows that under the new quantization procedure and the sub-Gaussian assumption on $a_i$, one can achieve the best known statistical rate\footnote{In this paper, we use $\tilde{\mathcal{O}}(\cdot)$ to hide the logarithm factors.} 
$m=\tilde{\mathcal{O}}(k\log d/\varepsilon^4)$ estimating \textit{any $k$ sparse $\theta_0\in\mathbb{R}^d$ within radius $R$ uniformly up to error $\varepsilon$ with high probability.}
 \citet{dirksen2018robust-2} further shows that the same algorithm can achieve the rate 
$m=\tilde{\mathcal{O}}(k\log d/\varepsilon^2)$ for vectors $a_i$ sampled from a specific circulant matrix. Without computation tractability, \citet{jacques2013robust,plan2013robust,dirksen2018non} also show that one can achieve the near-optimal rate solving a non-convex constrained program with Gaussian and sub-Gaussian sensing vectors, respectively. 
It is not known though if the optimal rate is achievable via \textit{computationally tractable algorithms}, not to mention more general measurements than Gaussian/sub-Gaussian vectors. 

It is also worth emphasizing that the aforementioned works  \citep{plan2013robust,xu2018quantized,dirksen2018non,dirksen2018robust-2} obtain uniform recovery results which hold with high probability for all $k$ sparse $\theta_0\in\mathbb{R}^d$ within radius $R$. 
The ability of performing uniform recovery potentially allows $\theta_0$ to be adversarially chosen with the knowledge of the algorithm. 
It is a characterization of ``robustness'' not inherited in the non-uniform recovery results  \citep{plan2013robust, zhang2014efficient, goldstein2018structured, thrampoulidis2018generalized}, which provide guarantees \textit{recovering an arbitrary but fixed sparse vector $\theta_0$.} However, with the better result comes the greater technical difficulty unique to one-bit compressed sensing known as the \textit{random hyperplane tessellation problem}. Simply put, uniform recoverability is, in some sense, equivalent to the possibility of constructing a binary embedding of a sparse set into the Euclidean space via random hyperplanes. See \citet{plan2014dimension,dirksen2018non} for details
.

\subsection{Generative Models and Compressed Sensing}

Deep generative models have been applied to a variety of modern machine learning areas. In this work, we focus on using deep generative models to solve inverse problems, which has find extensive empirical successes in image reconstructions such as 
super-resolution \citep{sonderby2016amortised, ledig2017photo}, image impainting \citep{yeh2017semantic} and medical imaging \citep{hammernik2018learning, yang2018dagan}. In particular, these generative model based methods have been shown to produce comparable results to the classical sparsity based methods with much fewer (sometimes 5-10x fewer) measurements, which will greatly benefit application areas such as magnetic resonance imaging (MRI) and computed tomography (CT), where the measurements are usually quite expensive to obtain. In contrast to widely recognized empirical results, theoretical understanding of generative models remains limited.

In a recent work, \citet{bora2017compressed} considers a linear model 
$
\mf y = \mathbf{A}G( x_0)+\eta,
$
where $\mathbf{A}$ is a Gaussian measurement matrix, $\eta$ is a bounded noise term and $G(\cdot)$ is an $L$-Lipschitz generative model. By showing that the Gaussian measurement matrix satisfies a restricted eigenvalue condition (REC) over the range of $G(\cdot)$, the authors prove the $L_2$ empirical risk minimizer
\begin{equation}\label{eq:l2-risk}
\hat{x}\in\arg\min_{x\in\mathbb{R}^k} 
\|\mathbf{A}G( x)-\mathbf y\|_2^2
\end{equation}
satisfies an estimation error bound $\|\eta\|_2+\varepsilon$ when the number of samples is of order $\mathcal{O}(k\log (L/\varepsilon)/\varepsilon^2)$. They further show that the $\log(1/\varepsilon)$ term in the error bound can be removed when $G(\cdot)$ is a multilayer ReLU network. In addition,  \citet{hand2018global,huang2018provably} consider the same linear model with the aforementioned $L_2$ empirical risk minimizer and an $n$-layer ReLU network $G(\cdot)$. They show when the noise in the linear model is small enough, the measurement matrix satisfies range restricted concentration, which is stronger than REC, 
$m\geq\mathcal{O}(kn\log d ~\text{poly}(\varepsilon^{-1}))$\footnote{Here $\text{poly}(\varepsilon^{-1})$ stands for polynomial dependency on $\varepsilon^{-1}$.}, and suitable conditions on the weights of the ReLU function hold, the $L_2$ empirical risk enjoys a favorable landscape. Specifically, there is no spurious local stationary point outside of small neighborhoods of radius 
$\mathcal{O}(\varepsilon^{1/4})$ around the true representation $x_0$ and its negative multiple, and with further assumptions, the point $\hat{x}$ is guaranteed to be located around $x_0$ instead of its negative multiple. Moreover, 
\citet{liu2019information,kamath2019lower} study sample complexity lower bounds for the generative compressed sensing model as \eqref{eq:l2-risk}.

More recently, generative models have been applied to scenarios beyond linear models with theoretical guarantees. \citet{wei2019statistical} considers a non-linear recovery using a generative model, where the link function is assumed to be differentiable and the recovery guarantee is non-uniform. \citet{hand2019global} studies the landscape of $L_2$ empirical risk for blind demodulation problem with an $n$-layer ReLU generative prior. Using the same prior,
\citet{hand2018phase} analyzes the landscape of the amplitude flow risk objective for phase retrieval. Furthermore, \citet{aubin2019spiked} investigates the spiked matrix model using generative priors with linear activations. Besides these studies, there is another line of work investigating the problem of compressed sensing via generative models by the approximate message passing framework, e.g. \citet{manoel2017multi,pandit2020inference}.

\subsection{Summary of the Main Results}
We introduce a new framework for robust \emph{dithered} one-bit compressed sensing where the structure of target vector $\theta_0$ is represented via an $n$-layer ReLU network $G:\mathbb{R}^k\rightarrow\mathbb{R}^d$, i.e., $\theta_0 = G(x_0)$ for some 
$x_0\in\mathbb{R}^k$ and $k\ll d$. Building upon this framework, we propose a new recovery model which is related to solving an unconstrained ERM, with $\widehat x_m$ being the solution to the proposed ERM. We show that this model enjoys the following favorable properties: 
\begin{itemize}[leftmargin=*]

\item Statistically, when taking measurements $a_i$ to be \textit{sub-exponential} random vectors, with high probability and uniformly for any $G(x_0)\in G(\mathbb{R}^k)\cap \mathbb{B}_2^d(R)$, where $\mathbb{B}_2^d(R)$ is the ball of radius $R>0$ centered at the origin, $G(\widehat x_m)$ recovers the true vector $G(x_0)$ up to error $\varepsilon$ when the number of samples 
$m = \widetilde{\mathcal{O}}(kn\log d/\varepsilon^2)$. In particular, our result does not require REC-type assumptions adopted in previous analysis of generative signal recovery works and at the same time weakens the known sub-Gaussian assumption adopted in previous sparse one-bit compressed sensing works. 
Moreover, we further establish an information-theoretic lower bound for the sample complexity. 
When the number of layers $n$ is small, we show that the proved statistical rate matches the information-theoretic lower bound up to logarithm factors of $\varepsilon^{-1}$.

\item Computationally, building upon the previous methods guaranteeing uniform recovery, we show that solving the ERM and approximate the true representation $x_0\in\mathbb R^k$ can be tractable under further assumptions on ReLU networks. 
More specifically, we prove with high probability, there always exists a descent direction outside of two small neighborhoods around $x_0$ and $-\rho_n x_0$ with radius $\mathcal{O}(\varepsilon_\WDC^{1/4})$ respectively, where $0<\rho_n \leq 1$ is a factor depending on $n$. This holds uniformly for any $x_0\in \mathbb{B}_2^k(R')$ with $R'= (0.5+\varepsilon_\WDC)^{-n/2}R$, when the ReLU network satisfies a Weight Distribution Condition with a parameter $\varepsilon_\WDC>0$ and $m = \widetilde{\mathcal{O}}(kn \log d /\varepsilon_\WDC^2)$. Furthermore, when $\varepsilon_\WDC$ is sufficiently small, one guarantees that the solution $\widehat{x}_m$ stays within the neighborhood around $x_0$ rather than $-\rho_n x_0$. Our result is achieved under quantization errors and without assuming the REC-type conditions, thereby improving upon previously known computational guarantees for ReLU generative signal recovery in linear models with small noise.
 \end{itemize}
From a technical perspective, our proof makes use of the special piecewise linearity property of ReLU network. The merits of such a property in the current scenario are two-fold: (1) It allows us to replace the generic chaining type bounds commonly adopted in previous works, e.g. \citet{dirksen2018non}, by novel arguments that are ``sub-Gaussian free''. 
(2) From a hyperplane tessellation point of view, we show that for a given accuracy level, a binary embedding of $G(\mathbb{R}^k)\cap \mathbb{B}_2^d(R)$ into Euclidean space is ``easier'' in that it requires less random hyperplanes than that of a bounded $k$ sparse set, e.g. \citet{plan2014dimension,dirksen2018non}.

\vspace{3.5pt}
\noindent\textbf{Notation.} Throughout this paper, let $\mathcal S^{d-1}$ and $\mathbb B_2^d(r)$ be the unit Euclidean sphere and the Euclidean ball of radius $r$ centered at the origin in $\mathbb{R}^d$, respectively. We also use $\mathcal B(x,r)$ to denote the Euclidean ball of radius $r$ centered at $x \in \mathbb{R}^k$.
For a random variable $X\in\mathbb{R}$, the $L_p$-norm ($p\geq1$) is denoted as $\|X\|_{L_p}=\expect{|X|^p}^{1/p}$. The Olicz 
$\psi_1$-norm is denoted
$\|X\|_{\psi_1}:= \sup_{p\geq1}p^{-1}\|X\|_{L_p}$. We say a random variable is sub-exponential if its $\psi_1$-norm is bounded. A random vector $ x\in\mathbb{R}^d$ is sub-exponential if there exists a a constant $C>0$ such that
$\sup_{t\in\mathcal S^{d-1}}\|\dotp{ x}{t}\|_{\psi_1}\leq C$. We use $\| x\|_{\psi_1}$ to denote the minimal $C$ such that this bound holds. Furthermore, $C,C',C_0, C_1,\ldots$ and $c,c',c_0,c_1,\ldots$ denote absolute constants, and their actual values can be different per appearance. We let $[n]$ denote the set $\{1,2,\ldots,n\}$. We denote $\mf I_p$ and $\mf{0}_{p\times d}$ as a $p \times p$ identity matrix and a $p\times d$ all-zero matrix respectively.

\section{Model}
In this paper, we focus on one-bit recovery model in which one observes quantized measurements of the following form
\begin{equation}\label{eq:def-y}
y = \sign(\dotp{a}{G(x_0)} + \xi + \tau),
\end{equation}
where $a\in\mathbb{R}^d$ is a random measurement vector, $\xi\in\mathbb{R}$ is a random pre-quantization noise with an unknown distribution, $\tau$ is a random quantization threshold (i.e., \emph{dithering noise}) which one can choose, and $x_0\in\mathbb{R}^k$ is the unknown representation to be recovered.
We are interested the high-dimensional scenario where the dimension of the representation space $k$ is potentially much less than the ambient dimension $d$. The function $G:\mathbb{R}^k\rightarrow\mathbb{R}^d$ is a fixed ReLU neural network of the form:
\begin{equation}\label{eq:relu}
G(x) = \sigma\circ(W_n \sigma\circ(W_{n-1}\cdots\sigma\circ(W_1x))),
\end{equation}
where $\sigma\circ(\cdot)$ denotes the entry-wise application of the ReLU activation function $\sigma(\cdot) = \max\{\cdot, 0\}$ on a vector. We consider a scenario where the number of layers $n$ is smaller than $d$ and the weight matrix of the $i$-th layer is $W_{i}\in\mathbb{R}^{d_i\times d_{i-1}}$ with $d_n=d$ and $d_i\leq d,~\forall i\in[n]$.
 Throughout the paper, we assume that $G(x_0)$ is bounded, i.e. there exists an $R\geq1$ such that $\|G(x_0)\|_2\leq R$, and
we take $\tau\sim\text{Unif}[-\lambda,+\lambda]$, i.e. a uniform distribution bounded by a chosen parameter $\lambda>0$. 
Let 
$\{(a_i,y_i)\}_{i=1}^m$ be i.i.d. copies of $(a,y)$. Our goal is to compute an estimator $G(\hat x_m)$ of $G(x_0)$ such that 
$\|G(\hat x_m) - G(x_0)\|_2$ is small.

We propose to solve the following ERM for estimator $\hat x_m$:
\begin{align}
\begin{aligned}\label{eq:erm}
\hat x_m:=\arg\min_{x\in\mathbb{R}^k} \Big\{ L(x)
:= \|G(x)\|_2^2 - \frac{2\lambda}{m}\sum_{i=1}^my_i\dotp{a_i}{G(x)} \Big\},
\end{aligned}
\end{align}
where $y_i = \sign (\dotp{a_i}{G(x_0)} + \xi_i+\tau_i)$. It is worth mentioning that, in general, there is no guarantee that the minimizer of $L(x)$ is unique. Nevertheless, in Sections \S \ref{sec:statistical} and \S \ref{sec:computational}, we will show that any solution $\hat x_m$ to this problem must satisfy the desired statistical guarantee, and stay inside small neighborhoods around the true signal $x_0$ and its negative multiple with high probability. 

\section{Main Results}
In this section, we establish our main theorems regarding statistical recovery guarantee of $G(x_0)$ and the associated information-theoretic lower bound in Sections \S \ref{sec:statistical} and
\S \ref{sec:lowerbound}. The global landscape analysis of the empirical risk $L(x)$ is presented in Section \S \ref{sec:computational}.  

\subsection{Statistical Guarantee}\label{sec:statistical}
We start by presenting the statistical guarantee of using ReLU network for one-bit compressed sensing. 
Our statistical guarantee relies on the following assumption on the measurement vector and noise.
\begin{assumption}\label{as:moments}
The measurement vector $a\in\mathbb R^d$ is mean 0, isotropic and sub-exponential. The noise $\xi$ is also a sub-exponential random variable.
\end{assumption}
Under this assumption, we have the following main statistical performance theorem.
\begin{theorem}\label{thm:main-1}
Suppose Assumption \ref{as:moments} holds and consider any $\varepsilon\in(0,1)$. Set $C_{a,\xi,R}=\max\{c_1(R\|a\|_{\psi_1} \allowbreak +\|\xi\|_{\psi_1}),1\}$, $\lambda\geq4C_{a,\xi,R}\cdot\log(64C_{a,\xi,R}\cdot\varepsilon^{-1})$, and 
\begin{align}
\begin{aligned}\label{eq:sample-complexity}
m &\geq c_2\lambda^2\log^2(\lambda m)\big[kn\log(ed)   + k\log(2R) + k\log m+ u \big]/\varepsilon^2.
\end{aligned}
\end{align}
Then, with probability at least $1-c_3\exp(-u),~\forall u\geq0$, any solution $\hat x_m$ to \eqref{eq:erm} satisfies 
$$\|G(\hat x_m)-G(x_0)\|_2\leq \varepsilon$$ 
for all $x_0$ such that $\|G(x_0)\|_2\leq R$, where $c_1,~c_2, ~c_3\geq1$ are absolute constants. 
\end{theorem}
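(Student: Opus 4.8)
\quad The plan is to recast \eqref{eq:erm} as a projection and then run a basic-inequality argument. Completing the square in $L(x)$ with $\hat v_m:=\frac{\lambda}{m}\sum_{i=1}^m y_i a_i$ gives $L(x)=\|G(x)-\hat v_m\|_2^2-\|\hat v_m\|_2^2$, so $\hat x_m$ is a Euclidean projection of $\hat v_m$ onto the range of $G$; since $G(0)=0$ is in the range, $\|G(\hat x_m)-\hat v_m\|_2\le\|\hat v_m\|_2$, hence $\|G(\hat x_m)\|_2\le 2\|\hat v_m\|_2\le \rho:=\text{poly}(d,\lambda)\cdot R$ on an event of probability at least $1-c\exp(-u)$ by sub-exponential concentration of $\frac1m\sum_i\|a_i\|_2$. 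From optimality $L(\hat x_m)\le L(x_0)$, expanding the squares gives the basic inequality
\[
\|G(\hat x_m)-G(x_0)\|_2^2\ \le\ 2\big\langle G(\hat x_m)-G(x_0),\ \hat v_m-G(x_0)\big\rangle .
\]
Thus it suffices to prove that, with the stated probability and \emph{simultaneously over all} $x_0$ with $\|G(x_0)\|_2\le R$,
\[
\sup_{\theta,\,x}\ \frac{\big|\big\langle G(x)-\theta,\ \hat v_m(\theta)-\theta\big\rangle\big|}{\max\{\|G(x)-\theta\|_2,\ \varepsilon\}}\ \le\ \tfrac{\varepsilon}{2},
\]
where $\theta=G(x_0)$ runs over $G(\mathbb R^k)\cap\mathbb B_2^d(R)$, $G(x)$ runs over $G(\mathbb R^k)\cap\mathbb B_2^d(\rho)$, and $\hat v_m(\theta)$ records that the $y_i$ depend on $\theta$: granting this, $\|G(\hat x_m)-G(x_0)\|_2>\varepsilon$ would force $\|G(\hat x_m)-G(x_0)\|_2^2\le\varepsilon\|G(\hat x_m)-G(x_0)\|_2$, a contradiction.

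I would split $\hat v_m(\theta)-\theta$ into the bias $\mathbb E[\hat v_m(\theta)]-\theta$ and the fluctuation $\hat v_m(\theta)-\mathbb E[\hat v_m(\theta)]$. For the bias, the uniform dither is exactly what linearizes the problem in expectation: since $\tau\sim\mathrm{Unif}[-\lambda,\lambda]$ is independent of $(a,\xi)$, $\mathbb E[y\mid a,\xi]=\mathrm{clip}\big((\langle a,\theta\rangle+\xi)/\lambda\big)$ with clipping to $[-1,1]$, and since $a$ is mean zero and isotropic and $\xi\perp a$,
\[
\big\|\lambda\,\mathbb E[ya]-\theta\big\|_2=\Big\|\mathbb E\big[\big(\langle a,\theta\rangle+\xi-\lambda\,\mathrm{clip}(\cdot)\big)a\big]\Big\|_2\le\sup_{t\in\mathcal S^{d-1}}\mathbb E\big[|\langle a,\theta\rangle+\xi|\,|\langle a,t\rangle|\,\mathbf 1\{|\langle a,\theta\rangle+\xi|>\lambda\}\big].
\]
Since $\langle a,\theta\rangle+\xi$ is sub-exponential with $\psi_1$-norm at most $C_{a,\xi,R}$, two applications of Cauchy--Schwarz together with sub-exponential tail bounds make the right-hand side $\lesssim C_{a,\xi,R}^2\exp(-c\lambda/C_{a,\xi,R})$, which is $\le\varepsilon/16$ precisely because of the choice $\lambda\ge 4C_{a,\xi,R}\log(64C_{a,\xi,R}\varepsilon^{-1})$; hence the bias contributes at most $\tfrac{\varepsilon}{16}\|G(x)-\theta\|_2$ to the numerator, uniformly.

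For the fluctuation I would fix $\theta,x$, set $u=G(x)-\theta$, and write $\langle u,\hat v_m(\theta)-\mathbb E\hat v_m(\theta)\rangle=\frac1m\sum_i(Z_i-\mathbb E Z_i)$ with $Z_i=\lambda\,y_i\langle u,a_i\rangle$. Because $|y_i|=1$ and $a$ is isotropic, $\mathrm{Var}(Z_i)=\lambda^2\mathbb E\langle u,a\rangle^2=\lambda^2\|u\|_2^2$ --- here isotropy does the work of a sub-Gaussian moment bound --- and after truncating $\langle u,a_i\rangle$ at level $\mathcal O(\|u\|_2\|a\|_{\psi_1}\log(\lambda m))$ (the tiny truncation failure handled by a crude union bound) the increments are bounded by $\mathcal O(\lambda\|u\|_2\|a\|_{\psi_1}\log(\lambda m))$. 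Bernstein's inequality then gives, for each fixed $(\theta,x)$ and whether or not $\|u\|_2\ge\varepsilon$,
\[
\mathbb P\Big(\big|\langle u,\hat v_m(\theta)-\mathbb E\hat v_m(\theta)\rangle\big|>\tfrac{\varepsilon}{8}\max\{\|u\|_2,\varepsilon\}\Big)\ \le\ 2\exp\!\Big(-c\,\frac{m\varepsilon^2}{\lambda^2\log^2(\lambda m)}\Big).
\]

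The last step, making this uniform, is the one I expect to be the main obstacle. Since $G$ is continuous and piecewise linear, $\mathbb R^k$ is partitioned into at most $\prod_{i=1}^n d_i^{\,k}\le d^{\,nk}$ polyhedral regions on each of which $G$ is linear of rank at most $k$, so $G(\mathbb R^k)\cap\mathbb B_2^d(\rho)$ admits a $\delta$-net of cardinality $\exp\big(\mathcal O(nk\log(ed)+k\log(\rho/\delta))\big)$ --- fewer points, for a given accuracy, than tessellating a bounded $k$-sparse set requires. One takes a product net of the $\theta$'s and the $G(x)$'s at a scale $\delta$ polynomially small in $\varepsilon,1/\lambda,1/d,1/m$; the subtlety is that $\hat v_m(\theta)$ is \emph{not} Lipschitz in $\theta$ because $y_i(\theta)=\mathrm{sign}(\langle a_i,\theta\rangle+\xi_i+\tau_i)$ jumps --- this is the random-hyperplane-tessellation issue --- but perturbing $\theta$ by $\delta$ flips $y_i$ only when $|\langle a_i,\theta'\rangle+\xi_i+\tau_i|\lesssim\|a_i\|_2\delta$, an event of conditional probability $\lesssim\|a_i\|_2\delta/\lambda$ thanks to the uniform dither, so the number of flips (hence the resulting perturbation of the numerator) is controlled with high probability. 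Combining the pointwise Bernstein bound, a union bound over the product net, the bias estimate of the previous paragraph, and the net residual then shows the displayed supremum is below $\varepsilon/2$ as soon as $m\gtrsim\lambda^2\log^2(\lambda m)\,[\,nk\log(ed)+k\log(2R)+k\log m+u\,]/\varepsilon^2$, which is exactly \eqref{eq:sample-complexity}; the clean $\varepsilon^{-2}$ rate --- rather than the worse power a generic chaining of a sub-exponential process would give --- is precisely what this net-plus-Bernstein route buys.
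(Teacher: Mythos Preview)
Your plan is correct and mirrors the paper's approach: the bias is handled by the dither linearizing the sign in expectation (the paper's Lemma~\ref{lem:expected-risk}), the fluctuation by Bernstein plus the piecewise-linear structure of $G$, and a covering net over $\theta=G(x_0)$ addresses the sign-stability (tessellation) issue (the paper's Lemma~\ref{lem:main-bound-on-sup}). Two technical differences are worth noting. First, the paper works with the homogeneous ratio $\big|\tfrac1m\sum_i\varepsilon_iy_i\langle a_i,G(x)-G(x_0)\rangle\big|/\|G(x)-G(x_0)\|_2$ and, after freezing the signs at a net point $G(v)$, bounds the supremum over \emph{all} $x,x_0\in\mathbb R^k$ directly: on each of the at most $(2d)^{kn}$ pairs of linear pieces of $G$ the ratio reduces to a supremum over a fixed $2k$-dimensional sphere, handled by a $1/2$-net and Bernstein. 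This one-step chaining makes your a~priori bound $\|G(\hat x_m)\|_2\le\rho$ and the separate net over $G(x)$ unnecessary, and so avoids tracking $\|\hat v_m-\theta\|_2$ through the residual. Second, for the sign-flip count the paper does not use your crude bound via $\|a_i\|_2\delta$; it decouples a threshold $\eta$ from the net scale $\delta$ and controls $\sup_{x_0}\sum_i\mathbf 1_{\{|\langle a_i,G(x_0)-G(v)\rangle|\ge\eta\}}$ uniformly by a VC-dimension argument on the indicator class $\{\mathbf 1_{|\langle\cdot,t\rangle|\ge\eta/\delta}\}$, combining this with the dither bound $\Pr(|\langle a_i,G(v)\rangle+\xi_i+\tau_i|\le\eta)\le\eta/\lambda$ to get a sharp bound on the fraction of flips. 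Both routes reach the stated $\varepsilon^{-2}$ rate; the paper's choices simply remove some bookkeeping.
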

\begin{remark}[Sample Complexity]\label{remark:sample_comp}
One can verify that the sample complexity enforced by \eqref{eq:sample-complexity} holds when $
m\geq  C \log^4(C_{a,\xi,R}\cdot\varepsilon^{-1})(kn\log(ed) + k\log(2R) + k\log (\varepsilon^{-1})+ u)/\varepsilon^2
$, where $C$ is a large enough absolute constant.
This gives the $\mathcal{O}(kn\log^4(\varepsilon^{-1})(\log d + \log(\varepsilon^{-1}))/\varepsilon^2)$, or equivalently $\widetilde{\mathcal{O}}(kn\log d /\varepsilon^2)$, sample complexity. In particular, when the number of layers $n$ is small, our result meets the optimal rate of sparse recovery (up to a logarithm factor) and demonstrate the effectiveness of recovery via generative models theoretically. 
The dependence on the number of layers $n$ results from the fact that our bound counts the number of linear pieces split by the ReLU generative network (see Lemma \ref{lem:ball-supremum} for details). Measuring certain complexities of a fixed neural network via counting linear pieces arises in several recent works (e.g. \citet{lei2018geometric}), and the question whether or not the linear dependence on $n$ is tight warrants further studies. 
\end{remark}

Note that our result is a \textit{uniform recovery result} in the sense that the bound $\|G(\hat x_m)-G(x_0)\|_2\leq \varepsilon$ holds with high probability uniformly for any target $x_0\in\mathbb{R}^k$ such that $\|G(x_0)\|_2\leq R$. This should be distinguished from known bounds \citep{plan2013robust, zhang2014efficient, goldstein2018structured, thrampoulidis2018generalized} on sparse one-bit sensing which hold only for a \emph{fixed} sparse vector. The boundedness of $G(x_0)$ is only assumed for theoretical purpose, which could be removed for practice.

Moreover, apart from the ReLU network, the proof of this theorem can be extended to other networks possessing the piecewise linearity property. Whether the analysis can be applied to networks with  a wider class of nonlinear activation functions remains to be further studied. The proof sketch is presented in Section \S \ref{sec:proof_lower_bound} with more proof details in Supplement \S \ref{sec:detailed_proof_lower_bound}.

\subsection{Information-Theoretic Lower Bound} \label{sec:lowerbound}

In this section, we show that when the network is shallow, i.e., $n$ is small, for any $k$ and $d$, there exists a ReLU network of the form \eqref{eq:relu} such that the above rate in Theorem \ref{thm:main-1} is optimal up to some logarithm factors. More specifically, we have the following theorem.

\begin{theorem}\label{thm:lower-bound}
For any positive $k$ and $d$ large enough such that $k\ll d $ with $k\leq d/4$, there exists a generative network $G$ of the form \eqref{eq:relu} with a $k+1$ dimensional input,  depth $n=3$ such that for 
the linear model before quantization: $\check{y} = \dotp{a}{\theta_0} + \xi$, where $\theta_0\in G(\mathbb R^{k+1})\cap \mathbb B_2^d(1)$,  $\xi\sim\mathcal N(0,1)$, $a\sim\mathcal N(0, \mathbf I_d)$ and $m\geq c_1k\log(d/k)$, we have
\begin{align}\label{eq:lower_bound}
\inf_{\widehat{\theta}} \sup_{\theta_0 \in G(\mathbb R^{k+1})\cap \mathbb B_2^d(1)} \EE \|\widehat \theta - \theta_0\|_2 \geq c_2\sqrt{\frac{k\log (d/k)}{m}}, 
\end{align} 
where $c_1,c_2>0$ are absolute constants and the infimum is taken over all estimators $\widehat\theta$ generated by all possible algorithms depending only on $m$ i.i.d. copies 
of $(a,\check{y})$.
\end{theorem}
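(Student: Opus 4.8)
\emph{Proof plan.} I would establish \eqref{eq:lower_bound} by a Fano-type minimax reduction, in which the only non-routine ingredient is the network construction. The claim reduces to the following: for the given $k$ and $d$, exhibit a depth-$3$ ReLU network $G$ of the form \eqref{eq:relu} with $k+1$ inputs, together with a finite family $\mathcal T\subseteq G(\mathbb R^{k+1})\cap\mathbb B_2^d(1)$ of cardinality $\log|\mathcal T|\gtrsim k\log(d/k)$ whose members are pairwise $\rho$-separated in $\ell_2$, where $\rho\asymp\sqrt{k\log(d/k)/m}$. Given such a family, one maps any estimator $\widehat\theta$ to the nearest element of $\mathcal T$, applies Fano's inequality to the resulting multiway test, and then uses the separation of $\mathcal T$ with Markov's inequality to conclude $\inf_{\widehat\theta}\sup_{\theta_0}\EE\|\widehat\theta-\theta_0\|_2\gtrsim\rho$, which is \eqref{eq:lower_bound}.

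\emph{The packing family.} I would take $\mathcal T$ to consist of rescaled $k$-sparse $0/1$ vectors supported on a ``block diagonal''. Partition $[d]$ into $k$ blocks $B_1,\dots,B_k$ of size $q\asymp d/k$ (possible since $k\le d/4$ and $d$ is large), and for a selection $s=(s_1,\dots,s_k)\in B_1\times\cdots\times B_k$ set $\theta_s:=\tfrac{\rho}{\sqrt k}\sum_{j=1}^k e_{s_j}$, so that $\|\theta_s\|_2=\rho$. A Gilbert--Varshamov bound for $q$-ary codes of length $k$ yields a subfamily $\mathcal S$ with $\log|\mathcal S|\gtrsim k\log(d/k)$ whose members are pairwise distinct in at least a fixed constant fraction of the $k$ blocks; consequently $c_0\rho^2\le\|\theta_s-\theta_{s'}\|_2^2\le 2\rho^2$ for distinct $s,s'$ and some absolute constant $c_0>0$, so $\mathcal T:=\{\theta_s:s\in\mathcal S\}$ is a $(\sqrt{c_0}\,\rho)$-packing, and $\mathcal T\subseteq\mathbb B_2^d(1)$ as soon as $\rho\le1$.

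\emph{Realizing $\mathcal T$ by a depth-$3$ network.} Since the form \eqref{eq:relu} has no explicit bias terms, I would use the $(k+1)$-st input coordinate as a constant input ($\equiv 1$) supplying biases. Fix equally spaced ``centers'' $\{c_\ell\}_{\ell\in B_j}$ (plus two virtual endpoint centers) in each block, and let the first layer compute the units $\sigma(c_\ell x_{k+1}-x_j)$ over $j\in[k]$ and the relevant $\ell$. On inputs with $x_{k+1}=1$ and each $x_j$ at a center, a fixed linear combination of these units within each block (a discrete second difference) produces, at the second layer, a triangle bump that attains a fixed positive height at $x_j=c_\ell$ and vanishes at every other center; since such bumps are nonnegative, the second-layer ReLU acts as the identity, and the second-layer output stacked over blocks equals a positive multiple of $\mathbb 1_{\{s_1,\dots,s_k\}}$ precisely when $x_j=c_{s_j}$ for all $j$. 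A third, purely rescaling layer (again passing unchanged through its ReLU) then outputs $\theta_s$. The second-difference device keeps the hidden widths of order $d$ (and $\le d$ after shrinking $q$ slightly), so $G$ is of the form \eqref{eq:relu} with $n=3$ and $\mathcal T\subseteq G(\mathbb R^{k+1})$.

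\emph{Fano, the choice of $\rho$, and the main obstacle.} Under $\theta_0=\theta_s$ the observation $(a,\check y)$ has $a\sim\mathcal N(0,\mathbf I_d)$ and $\check y\mid a\sim\mathcal N(\dotp{a}{\theta_s},1)$, so $D_{\mathrm{KL}}\!\big(P_s^{\otimes m}\,\big\|\,P_{s'}^{\otimes m}\big)=\tfrac m2\|\theta_s-\theta_{s'}\|_2^2\le m\rho^2$. Choosing $\rho^2=\kappa\,k\log(d/k)/m$ for a small absolute constant $\kappa$ makes the averaged KL divergence at most a small fraction of $\log|\mathcal S|$, so Fano forces the induced selection test to err with probability at least $\tfrac12$; with the $(\sqrt{c_0}\,\rho)$-separation of $\mathcal T$ and Markov's inequality this yields $\inf_{\widehat\theta}\sup_{\theta_0}\EE\|\widehat\theta-\theta_0\|_2\gtrsim\rho\asymp\sqrt{k\log(d/k)/m}$. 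The hypothesis $m\ge c_1 k\log(d/k)$ is used only to guarantee $\rho\le1$, i.e.\ $\mathcal T\subseteq\mathbb B_2^d(1)$. I expect the genuine difficulty to be the third step: packing $e^{\Omega(k\log(d/k))}$ distinct sparse vectors into the range of a fixed ReLU network of depth only $3$ and width $O(d)$. A naive linear construction would trap the range in a $(k+1)$-dimensional affine set, whose metric entropy carries no $\log(d/k)$ factor; obtaining that factor forces the range to ``fan out'' across all $d$ ambient coordinates through the bump/selector gadgets, and checking that these gadgets land exactly on the prescribed $0/1$ patterns while respecting both the width budget and the bias-free layer structure \eqref{eq:relu} is the delicate part.
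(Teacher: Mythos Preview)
Your proposal is correct and follows essentially the same route as the paper: build a depth-$3$ ReLU network whose range contains the nonnegative $k$-group-sparse vectors by using the $(k{+}1)$-st input as a bias surrogate for triangle-bump selectors, exhibit a packing of size $e^{\Omega(k\log(d/k))}$ inside that range, and convert it to a minimax lower bound (the paper cites Plan--Vershynin's local-packing result and proves the packing by a probabilistic argument, while you write out Fano and Gilbert--Varshamov directly; these are interchangeable). One point to clean up: as stated, your third ``rescaling'' layer carries the factor $\rho$, which would make $G$ depend on $m$; instead fix the third layer and obtain the scale $\rho$ from the input side via the positive homogeneity $G(\lambda x)=\lambda G(x)$ of bias-free ReLU networks---this is how the paper ensures that a single fixed $G$ works uniformly for all $m\ge c_1 k\log(d/k)$.
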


This theorem gives a lower bound of sample complexity over the set of all algorithms $\tilde{\mathcal{A}}$ recovering $\theta_0$ from the noisy linear model $\check{y} = \dotp{a}{\theta_0} + \xi$ by observing  ${(a_i,\check{y}_i)}_{i=1}^m$.
It gets connected to the one-bit dithered observations as follows: We consider a subset $\mathcal{A}\subseteq\tilde{\mathcal{A}}$ of algorithms, which adds dithering noise $\tau_i$ and then uses quantized observations $(a_i,y_i)_{i=1}^m$ to recover $\theta_0$, where $y_i = \sign(\check{y}_i+\tau_i)$. The corresponding estimators generated by any algorithm in $\mathcal{A}$ will also satisfy \eqref{eq:lower_bound}. Thefore, we have the following corollary of Theorem \ref{thm:lower-bound}, which gives the lower bound of sample complexity for one-bit recovery via a ReLU network.

\begin{corollary}
For any positive $k$ and $d$ large enough such that $k\ll d $ with $k\leq d/4$, there exists a generative network $G$ of the form \eqref{eq:relu} with a $k+1$ dimensional input,  depth $n=3$ such that for 
the quantizd linear model: $y = \sign(\dotp{a}{\theta_0} + \xi + \tau)$, where $\theta_0\in G(\mathbb R^{k+1})\cap \mathbb B_2^d(1)$,  $\xi\sim\mathcal N(0,1)$, $a\sim\mathcal N(0, \mathbf I_{d})$ and $m\geq c_1k\log(d/k)$, we have
\[
\inf_{\widehat{\theta}} \sup_{\theta_0\in G(\mathbb R^{k+1})\cap \mathbb B_2^d(1)} \EE \|\widehat \theta - \theta_0\|_2 \geq c_2\sqrt{\frac{k\log (d/k)}{m}},
\] 
where $c_1,c_2>0$ are absolute constants and the infimum is taken over all estimators $\widehat\theta$ generated by all possible algorithms depending on $m$ i.i.d. copies $(a_i,y_i)_{i=1}^m$ of $(a, y)$.
\end{corollary}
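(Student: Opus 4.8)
The plan is to deduce the corollary directly from Theorem~\ref{thm:lower-bound} by a simulation (reduction) argument, exactly along the lines sketched in the paragraph preceding the corollary: I will argue that any estimator built from the quantized dithered data can be reproduced by an estimator that only sees the pre-quantization linear data, by letting the latter generate the dithering noise on its own. Consequently the family of quantized-observation algorithms embeds into the family $\tilde{\mathcal A}$ of linear-observation algorithms covered by Theorem~\ref{thm:lower-bound}, so passing to the infimum can only increase the minimax risk, and the stated bound follows.

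In detail, I would first fix the depth-$3$ network $G$ furnished by Theorem~\ref{thm:lower-bound} together with the same sample regime $m\ge c_1 k\log(d/k)$. Given an arbitrary (possibly randomized) algorithm $\mathcal B$ that maps $\{(a_i,y_i)\}_{i=1}^m$ to $\widehat\theta$, where $y_i=\sign(\dotp{a_i}{\theta_0}+\xi_i+\tau_i)$ and the $\tau_i$ are i.i.d.\ dithering variables, I would define an algorithm $\mathcal A$ acting on $\{(a_i,\check y_i)\}_{i=1}^m$ that draws $\tau_1,\dots,\tau_m$ i.i.d.\ from the dithering distribution independently of everything else, forms $y_i=\sign(\check y_i+\tau_i)$, and returns $\mathcal B(\{(a_i,y_i)\}_{i=1}^m)$. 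Then $\mathcal A$ is a randomized function of $\{(a_i,\check y_i)\}_{i=1}^m$ alone, and for every $\theta_0\in G(\mathbb R^{k+1})\cap\mathbb B_2^d(1)$ the triple $(a_i,\check y_i,\tau_i)$ has exactly the law of the corollary's model, so the output of $\mathcal A$ has the same distribution as that of $\mathcal B$. This yields
\[
\sup_{\theta_0\in G(\mathbb R^{k+1})\cap\mathbb B_2^d(1)}\EE\big\|\widehat\theta_{\mathcal A}-\theta_0\big\|_2
=\sup_{\theta_0\in G(\mathbb R^{k+1})\cap\mathbb B_2^d(1)}\EE\big\|\widehat\theta_{\mathcal B}-\theta_0\big\|_2 .
\]

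Next I would take the infimum over all such $\mathcal B$. Since each associated $\mathcal A$ lies in the estimator class over which Theorem~\ref{thm:lower-bound} infimizes, I obtain
\[
\inf_{\widehat\theta}\ \sup_{\theta_0\in G(\mathbb R^{k+1})\cap\mathbb B_2^d(1)}\EE\big\|\widehat\theta-\theta_0\big\|_2
\ \ge\ \inf_{\widehat\theta\in\tilde{\mathcal A}}\ \sup_{\theta_0\in G(\mathbb R^{k+1})\cap\mathbb B_2^d(1)}\EE\big\|\widehat\theta-\theta_0\big\|_2
\ \ge\ c_2\sqrt{\frac{k\log(d/k)}{m}},
\]
where the leftmost infimum is over estimators built from $\{(a_i,y_i)\}_{i=1}^m$ and the final inequality is Theorem~\ref{thm:lower-bound}; this is the claimed lower bound.

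The only delicate point — and the step I would be most careful about — is the measure-theoretic bookkeeping: one must confirm that ``an estimator depending only on $m$ i.i.d.\ copies of $(a,\check y)$'' in Theorem~\ref{thm:lower-bound} is permitted to use auxiliary independent randomness (so that internally sampling the $\tau_i$ inside $\mathcal A$ is legitimate), and that the joint law of $(a,\check y,\tau)$ arising in the simulation coincides with the generative model underlying the corollary, so that the two suprema over $\theta_0$ really agree. Beyond this there is no analytic content to add — all of the work is already done inside Theorem~\ref{thm:lower-bound}.
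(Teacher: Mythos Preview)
Your proposal is correct and follows essentially the same approach as the paper: the corollary is obtained from Theorem~\ref{thm:lower-bound} by the simulation argument that any algorithm using the quantized dithered data $(a_i,y_i)$ can be realized as an algorithm using the linear data $(a_i,\check y_i)$ that internally generates its own dithering noise, so the class of quantized-data estimators is a subset of the class over which Theorem~\ref{thm:lower-bound} infimizes. Your writeup is in fact more careful than the paper's one-paragraph sketch, explicitly spelling out the equality of laws and the auxiliary-randomness point.
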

\begin{remark} [Lower Bound]
This corollary indicates that the sample complexity recovering $\theta_0$ within error $\varepsilon$ is at least $\Omega \big( k \log(d/k)/\varepsilon^2 \big)$. Thus, when the ReLU network is shallow (the depth $n$ is small) and $k \ll d$, the sample complexity we have obtained in Theorem \ref{thm:main-1} and Remark \ref{remark:sample_comp} is near-optimal up to logarithm factors of $ \varepsilon^{-1}$ and $ k$.
\end{remark}

The proof is inspired by an observation in \citet{liu2019information} that for a specifically chosen ReLU network (with offsets), the linear recovery problem considered here is equivalent to a group sparse recovery problem. The main differences here, though, are two-fold: first, we need to tackle the scenario where the range of the generative network is restricted to a unit ball; second, our ReLU network \eqref{eq:relu} has \emph{no offset}. The proof is postponed in Section \S \ref{sec:proof_lower_bound} with more proof details in Supplement \S \ref{sec:detailed_proof_lower_bound}.

\subsection{Global Landscape Analysis}\label{sec:computational}

In this section, we present the theoretical properties of the global landscape of the proposed empirical risk $L(x)$ in \eqref{eq:erm}. We start by introducing some notations used in the rest of this paper. For any fixed $x$, we define $W_{+, x}:=\mathrm{diag}(Wx>0) W$, where $\mathrm{diag}(Wx>0)$ is a diagonal matrix whose $i$-th diagonal entry is $1$ if the product of the $i$-th row of $W$ and $x$ is positive, and $0$ otherwise. Thus, $W_{+, x}$ retains the rows of $W$ which has a positive product with $x$, and sets other rows to be all zeros. We further define $W_{i,+,x}:= \mathrm{diag}(W_iW_{i-1,+,x} \cdots W_{1,+,x}x>0)W_i$ recursively, where only active rows of $W_i$ are kept, such that the ReLU network $G(x)$ defined in \eqref{eq:relu} can be equivalently rewritten as $G(x) = (\Pi_{i=1}^n W_{i,+,x}) x := W_{n,+,x} W_{n-1,+,x}\cdots W_{1,+,x} x $. Next, we introduce the Weight Distribution Condition, which is widely used in recent works to analyze the landscape of different empirical risks \citep{hand2018global,hand2018phase,huang2018provably}.

\begin{definition}[Weight Distribution Condition (WDC)]  \label{def:wdc}
A matrix $W$ satisfies the Weight Distribution Condition with $\varepsilon_\WDC > 0$ if for any nonzero vectors $x,z\in \R^{p}$, 
\begin{align*}
\l\| W_{+,x}^\top W_{+,z} - Q_{x,z} \r\|_2 \leq \varepsilon_\WDC,  
\end{align*}
where $Q_{x,z} := \frac{\pi-\angle (x,z)}{2 \pi} \mf I_{p} + \frac{\sin \angle (x,z) }{2\pi } M_{\hat{x} \leftrightarrow \hat{z}}$ with $M_{\hat{x} \leftrightarrow \hat{z}}$ being the matrix transforming $\hat{x}$ to $\hat{z}$, $\hat{z}$ to $\hat{x}$, and $\vartheta$ to $0$ for any $\vartheta \in \mathrm{span} (\{ x,z  \})^{\bot}$. We denote $\hat{x} = \frac{x}{\|x\|_2}$ and $\hat{z} = \frac{z}{\|z\|_2}$ as normalized $x$ and $z$.

\end{definition}

Particularly, the matrix $M_{\hat{x} \leftrightarrow \hat{z}}$ in the definition of WDC is defined as 
\begin{align*}
M_{\hat{x} \leftrightarrow \hat{z}}:=U^\top \begin{bmatrix}
\cos \angle (x,z) & \sin \angle (x,z) & 0\\ 
\sin \angle (x,z) & -\cos \angle (x,z) & 0\\ 
0 & 0 & \mf{0}_{(p-2)\times (p-2)}
\end{bmatrix} U,
\end{align*}
where the matrix $U$ denotes a rotation matrix such that $U \hat{x} = e_1$ and $U \hat{z} = \cos\angle (x,z) \cdot e_1 + \sin \angle (x,z) \cdot  e_2$ with $e_1 = [1,0,\cdots, 0]^\top$ and $e_2 = [0,1,0,\cdots, 0]^\top$. Moreover, if $\angle (x,z) = 0$ or $\angle (x,z) = \pi$,  then we have $M_{\hat{x} \leftrightarrow \hat{z}} = \hat{x} \hat{x}^\top$ or $M_{\hat{x} \leftrightarrow \hat{z}} = -\hat{x} \hat{x}^\top$ respectively.

Intuitively, the WDC characterizes the invertibility of the ReLU network in the sense that the output of each layer of the ReLU network nearly preserves the angle of any two input vectors. 
As is shown in \citet{hand2018global}, for any arbitrarily small $\varepsilon_\WDC>0$, if the network is sufficiently expansive at each layer, namely $d_i \geq c d_{i-1} \log d_{i-1}$ for all $i\in [n]$ with $d_i$ being polynomial on $\varepsilon_\WDC^{-1}$, and entries of $W_{i}$ are i.i.d. $\mathcal{N}(0, 1/ d_i)$, then $W_i \in \R^{d_i \times d_{i-1}}$ for all $i \in [n]$ satisfies WDC with constant $\varepsilon_\WDC$ with high probability. In particular, it does not require $W_i$ and $W_j$ to be independent  for $i \neq j$. The question whether WDC is necessary for analyzing the
computational aspect of the generative network remains open and warrants further studies.

Next, we present the Theorems \ref{thm:determine_converge} and \ref{thm:optimum_compare}. We denote the directional derivative along the direction of the non-zero vector $z$ as $D_{z} L(x) = \lim_{t\rightarrow 0^+}\frac{L(x+t\hat{z}) - L(x)}{t}$ with $\hat{z} = \frac{z}{\|z\|_2}$. Specifically, $D_{z} L(x)$ equals $\langle  \nabla L(x), \hat{z} ~\rangle$ if $L(x)$ is differentiable at $x$ and otherwise equals $\lim_{N\rightarrow +\infty} \langle \nabla L(x_N), \hat{z}~ \rangle$. Here $\{x_N\}_{N \geq 0}$ is a sequence such that $x_N \rightarrow x$ and $L(x)$ is differentiable at any $x_N$. The existence of such a sequence is guaranteed by the piecewise linearity of $G(x)$. Particularly, for any $x$ such that $L(x)$ is differentiable, the gradient of $L(x)$ is computed as $\nabla L(x) = 2 (\Pi_{j=1}^n W_{j, +, x})^\top (\Pi_{j=1}^n W_{j, +, x} ) x - \frac{2\lambda}{m} \sum_{i=1}^m y_i (\Pi_{j=1}^n W_{j, +, x} )^\top a_i$. 

\begin{theorem} \label{thm:determine_converge} 
Suppose that $G(\cdot)$ is a ReLU network with weights $W_i$  satisfying WDC with $\varepsilon_\WDC$ for all $i\in [n]$ where $n > 1$. Let $v_x = \lim_{x_N \rightarrow x} \nabla L(x_N)$ where $\{x_N\}$ is the sequence such that $\nabla L(x_N)$ exists for all $x_N$ (and $v_x = \nabla L(x)$ if $L(x)$ is differentiable at $x$). If $\varepsilon_\WDC$ sastisfies $c_1 n^8 \varepsilon_\WDC^{1/4} \leq 1$, by setting $\lambda\geq4C_{a,\xi,R}\cdot\log(64C_{a,\xi,R}\cdot\varepsilon_\WDC^{-1})$ and $m \geq c_2 \lambda^2   \log^2(\lambda m) (kn\log(ed) + k\log(2R) + k\log m+ u)  /\varepsilon_\WDC^2,$ then with probability $1-c_3\exp(-u)$, for any nonzero $x_0$ satisfying $\|x_0\|_2 \leq R (1/2 + \varepsilon_\WDC)^{-n/2}$, the directional derivatives satisfy
\begin{itemize} [noitemsep,leftmargin=*]
\item[\textbf{1.}] If $\|x_0\|_2 >  \check{\delta}$, then
\begin{align*}
&
D_{-v_x} L(x) < 0, \ \  \forall x \notin \mathcal{B} (x_0, \delta_1 ) \cup \mathcal{B} (-\rho_n x_0, \delta_2) \cup \{0\}, \\
&
D_{w} L(0) < 0, \quad\ \  \forall w \neq 0.
\end{align*}
\item[\textbf{2.}] If $\|x_0\|_2 \leq \check{\delta}$, then 
\begin{align*}
D_{-v_x} L(x) < 0, \ 
\forall x \notin \mathcal{B} (x_0, \delta_1) \cup \mathcal{B} (-\rho_n x_0, \delta_2) \cup \mathcal{B} (0, \check{\delta}),
\end{align*}
\end{itemize}
where we have $\check{\delta} = 2^{n/2} \varepsilon^{1/2}_\WDC$, $\delta_1 = c_4 n^3 \varepsilon^{1/4}_\WDC\|x_0\|_2$, $\delta_2=c_5 n^{14} \varepsilon^{1/4}_\WDC \|x_0\|_2$, and $0 < \rho_n \leq 1$ with $\rho_n \rightarrow 1$ as $n \rightarrow \infty$.
\end{theorem}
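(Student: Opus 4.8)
The plan is to show that, up to a uniformly controllable perturbation, the (sub)gradient of $L$ agrees with a deterministic vector field $h_{x,x_0}\in\mathrm{span}\{x,x_0\}$ whose only approximate zeros are $x_0$, $-\rho_n x_0$ and $0$, and then to analyze that field layer by layer. Wherever $L$ is differentiable, $\nabla L(x)=2(\Pi_{j=1}^n W_{j,+,x})^\top(\Pi_{j=1}^n W_{j,+,x})x-\tfrac{2\lambda}{m}\sum_{i=1}^m y_i(\Pi_{j=1}^n W_{j,+,x})^\top a_i$. The dithering choice $\tau\sim\mathrm{Unif}[-\lambda,\lambda]$ with $\lambda$ at least logarithmic in $\varepsilon_\WDC^{-1}$ gives $\mathbb E[\lambda y\,a\mid a,\xi]=(\langle a,G(x_0)\rangle+\xi)\,a$ up to a truncation bias of size $\mathcal{O}(\varepsilon_\WDC)$ (using the sub-exponential tails of $a,\xi$ exactly as in the $\lambda$-calibration of Theorem~\ref{thm:main-1}), whence $\mathbb E[\lambda y\,a]=G(x_0)$ since $a$ is isotropic, mean zero and independent of $\xi$. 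Reusing the piecewise-linear covering behind Theorem~\ref{thm:main-1} (the map $x\mapsto\Pi_j W_{j,+,x}$ ranges over the linear regions of $G$ counted in Lemma~\ref{lem:ball-supremum}) together with sub-exponential concentration, I would prove that with probability $1-c_3e^{-u}$, uniformly in $x$,
$$\Big\|\tfrac{\lambda}{m}\textstyle\sum_i y_i(\Pi_j W_{j,+,x})^\top a_i-(\Pi_j W_{j,+,x})^\top G(x_0)\Big\|_2\lesssim \varepsilon_\WDC,$$
where I exploit $\|\Pi_j W_{j,+,x}\|\le(1/2+\varepsilon_\WDC)^{n/2}$, a direct consequence of WDC. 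This identifies $\nabla L(x)$ with the population gradient $\nabla\widetilde L(x):=2(\Pi_j W_{j,+,x})^\top\big[(\Pi_j W_{j,+,x})x-G(x_0)\big]$ up to this error, uniformly, and the same bound controls every limiting (sub)gradient $v_x$.

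Next I would pass from $\nabla\widetilde L$ to a scalar recursion on angles. Writing $G(x_0)=(\Pi_j W_{j,+,x_0})x_0$ and applying WDC recursively across the $n$ layers, as in the linear generative-prior landscape analyses of \citet{hand2018global,huang2018provably}, one obtains $(\Pi_j W_{j,+,x})^\top(\Pi_j W_{j,+,x})=\tfrac{1}{2^n}\mf I_k+E_1$ and $(\Pi_j W_{j,+,x})^\top(\Pi_j W_{j,+,x_0})=\tfrac{1}{2^n}Q^{(n)}_{x,x_0}+E_2$ with $\|E_1\|,\|E_2\|\lesssim\mathrm{poly}(n)\,\varepsilon_\WDC\,2^{-n}$, where $Q^{(n)}_{x,x_0}$ is the composition of the maps $Q_{\cdot,\cdot}$ of Definition~\ref{def:wdc} driven by the angle recursion $\bar\theta_{i+1}=g(\bar\theta_i)$, $g(\theta)=\cos^{-1}\!\big(\tfrac{(\pi-\theta)\cos\theta+\sin\theta}{\pi}\big)$, $\bar\theta_0=\angle(x,x_0)$. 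Combining with Step~1 yields $\nabla L(x)=2h_{x,x_0}+(\text{error})$ with $\|\text{error}\|\lesssim\mathrm{poly}(n)\,\varepsilon_\WDC\,2^{-n/2}\|x_0\|_2$, where $h_{x,x_0}$ has an explicit radial component (proportional to $x$) with coefficient $\tfrac{1}{2^n}\big(1-\prod_{i}\tfrac{\pi-\bar\theta_i}{\pi}\,\tfrac{\|x_0\|_2}{\|x\|_2}\big)$-type and a tangential component (proportional to $\hat x_0$) built from the $\tfrac{\sin\bar\theta_i}{\pi}$ factors.

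The heart of the argument is a deterministic lower bound on $\|h_{x,x_0}\|_2$ — equivalently on $-D_{-v_x}L(x)=\|v_x\|_2$ — away from the prescribed neighborhoods. I would (a) show $g$ is a contraction toward $0$, so $\bar\theta_i\to 0$ geometrically; (b) treat the regime where $\angle(x,x_0)$ is bounded away from $\pi$, where the radial $\tfrac{1}{2^n}x$ term dominates unless $\|x\|_2\asymp\|x_0\|_2$ and $\angle(x,x_0)$ is small, forcing $x\in\mathcal B(x_0,\delta_1)$ with $\delta_1=c_4n^3\varepsilon_\WDC^{1/4}\|x_0\|_2$; and the regime $\angle(x,x_0)$ near $\pi$, where a one-dimensional analysis of the radial coefficient along $-\hat x_0$ isolates the spurious zero at $\|x\|_2=\rho_n\|x_0\|_2$ with $\rho_n=\prod_i\tfrac{\pi-\check\theta_i}{\pi}\to 1$, giving $\mathcal B(-\rho_n x_0,\delta_2)$, $\delta_2=c_5n^{14}\varepsilon_\WDC^{1/4}\|x_0\|_2$; the $\varepsilon_\WDC^{1/4}$ scaling and the large powers of $n$ appear because near these zeros $\|h_{x,x_0}\|_2$ degrades like the square root of the distance after the $n$-fold error propagation, which also forces $c_1n^8\varepsilon_\WDC^{1/4}\le 1$. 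Finally (c) for $x\to 0$ with $x\ne 0$ I use that $(\Pi_j W_{j,+,x})x\to 0$ while $(\Pi_j W_{j,+,x})^\top G(x_0)$ stays $\asymp 2^{-n/2}\|x_0\|_2$, so $\nabla L(x)\approx-2(\Pi_j W_{j,+,x})^\top G(x_0)\ne0$; taking $x_N=tw\to 0$ and evaluating $\langle(\Pi_j W_{j,+,w})\hat w,(\Pi_j W_{j,+,x_0})x_0\rangle>0$ by WDC (all angle factors are positive) gives $D_wL(0)=-2\langle(\Pi_j W_{j,+,w})\hat w,G(x_0)\rangle<0$ for every $w\ne0$. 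Putting these together: when $\|x_0\|_2>\check\delta=2^{n/2}\varepsilon_\WDC^{1/2}$ the error term is dominated outside $\mathcal B(x_0,\delta_1)\cup\mathcal B(-\rho_n x_0,\delta_2)\cup\{0\}$, giving Case~1; when $\|x_0\|_2\le\check\delta$ the two small balls are absorbed into $\mathcal B(0,\check\delta)$ and the same estimates give descent outside $\mathcal B(x_0,\delta_1)\cup\mathcal B(-\rho_n x_0,\delta_2)\cup\mathcal B(0,\check\delta)$, giving Case~2. The probability and sample-complexity accounting is inherited verbatim from Step~1.

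The main obstacle is step~(b): propagating the per-layer $\mathcal{O}(\varepsilon_\WDC)$ WDC slack through the $n$-fold composition while retaining a usable lower bound on $\|h_{x,x_0}\|_2$, in particular pinning down the spurious critical radius $\rho_n$ and showing it is the unique near-zero for $\angle(x,x_0)$ close to $\pi$; this is precisely what produces the $\varepsilon_\WDC^{1/4}$ radii and the powers $n^3$, $n^8$, $n^{14}$. A secondary subtlety is step~1: unlike the REC-based linear analyses, the effective ``measurement operator'' here is only $\tfrac{\lambda}{m}\sum_i y_i a_i$, so one must check that the dithered one-bit observations still deliver a uniform $\varepsilon_\WDC$-scale concentration after projection through the bounded-norm layer products, which is where the statistical machinery of Theorem~\ref{thm:main-1} is reused rather than any restricted-eigenvalue property.
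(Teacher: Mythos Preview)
Your overall architecture matches the paper's: establish the uniform one-bit concentration $\big|\big\langle \tfrac{\lambda}{m}\sum_i y_ia_i-G(x_0),\,H_x(z)\big\rangle\big|\le\varepsilon_\WDC\|H_x(z)\|_2$ (the paper does this via two dedicated lemmas, one for bias and one for variance, whose proofs recycle the piecewise-linear covering from Theorem~\ref{thm:main-1}), reduce $\nabla L(x)$ to the deterministic field $h_{x,x_0}$ via WDC, and invoke the Hand--Voroninski set $S_{\varepsilon,x_0}$ to localize the approximate zeros. The case split on $\|x_0\|_2\gtrless 2^{n/2}\varepsilon_\WDC^{1/2}$ and the treatment of $D_wL(0)$ through $\langle G(x_0),H_w(w)\rangle>0$ also mirror the paper.

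Two points need correction, and they are exactly where your $\varepsilon_\WDC^{1/4}$ accounting goes wrong. First, the $n$-fold WDC propagation does \emph{not} give an $\mathcal O(\varepsilon_\WDC)$ error for $(\Pi_jW_{j,+,x})^\top(\Pi_jW_{j,+,x_0})$: the sharp bound (Lemma~\ref{lem:wdc_ineq}, from \citet{hand2018global}) is $\|(\Pi_jW_{j,+,x})^\top G(x_0)-\tilde h_{x,x_0}\|_2\le 24\,n^3\sqrt{\varepsilon_\WDC}\,2^{-n}\|x_0\|_2$, so the gradient error is of order $n^3\sqrt{\varepsilon_\WDC}\,2^{-n}\max(\|x\|_2,\|x_0\|_2)$, not $\mathrm{poly}(n)\varepsilon_\WDC$. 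Second, the deterministic zero-set lemma (Lemma~\ref{lem:converge_set}) says that $\|h_{x,x_0}\|_2\le 2^{-n}\varepsilon\max(\|x\|_2,\|x_0\|_2)$ forces $x$ into balls of radius $\asymp n\sqrt{\varepsilon}\,\|x_0\|_2$ and $\asymp n^{11}\sqrt{\varepsilon}\,\|x_0\|_2$; plugging $\varepsilon\asymp n^4\sqrt{\varepsilon_\WDC}$ yields the radii $n^3\varepsilon_\WDC^{1/4}$ and $n^{14}\varepsilon_\WDC^{1/4}$. So the quarter power is two square roots stacked---one from WDC composition, one from the zero-set inversion---not from $h_{x,x_0}$ ``degrading like the square root of the distance'' (it vanishes linearly at its zeros). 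Relatedly, your formula $\rho_n=\prod_i\tfrac{\pi-\check\theta_i}{\pi}$ is incorrect: the paper's $\rho_n=\sum_{i=0}^{n-1}\tfrac{\sin\check\varrho_i}{\pi}\prod_{j=i+1}^{n-1}\tfrac{\pi-\check\varrho_j}{\pi}$ with $\check\varrho_0=\pi$ comes from the \emph{tangential} (not radial) coefficient of $h_{x,x_0}$ at $\bar\varrho_0=\pi$. None of this changes the route, but if you carry your stated scalings through you will not recover the theorem's constants.
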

\begin{remark}[Interpretation of Theorem \ref{thm:determine_converge}]
Note that in the above theorem, Case 1 indicates that the when the magnitude of the true representation $\|x_0\|^2_2$ is larger than 
$\check{\delta}^2 = \mc O(\varepsilon_\WDC)$ (signal $x_0$ is strong), the global minimum lies in small neighborhoods around $x_0$ and its scalar multiple $-\rho_n x_0$, while for any point outside the neighborhoods of $x_0$ and $-\rho_n x_0$, one can always find a direction with a negative directional derivative. Note that $x = 0$ is a local maximum due to $D_{w} L(0) < 0$ along any non-zero directions $w$. One the other hand, Case 2 implies that when $\|x_0\|^2_2$ is smaller than $\check{\delta}^2$, the global minimum lies in the neighborhood around $0$ (and thus around $x_0$). We will see in Theorem \ref{thm:optimum_compare} that one can further pin down the global minimum around the true $x_0$ for Case 1.
\end{remark}

The next theorem shows that in Case 1 of Theorem \ref{thm:determine_converge}, under certain conditions, the true global minimum lies around the true representation $x_0$ instead of its negative multiple. 

\begin{theorem}\label{thm:optimum_compare}
Suppose that $G(\cdot)$ is a ReLU network with wights $W_i$ satisfying WDC with error $\varepsilon_\WDC$ for all $i \in [n]$ where $n > 1$. Assume that $c_1 n^3 \varepsilon^{1/4}_\WDC \leq 1$ , and $x_0$ is any nonzero vector satisfying $\|x_0\|_2 \leq R (1/2 + \varepsilon_\WDC)^{-n/2}$. Then, setting $\lambda\geq4C_{a,\xi,R}\cdot\log(64C_{a,\xi,R}\cdot\varepsilon_\WDC^{-1})$ and 
$m \geq c_2  \lambda^2   \log^2(\lambda m) (kn\log(ed) + k\log(2R) + k\log m+ u)  /\varepsilon_\WDC^2$, with probability $1-2c_3 \exp(-u)$, for any $x_0$ such that $\|x_0\|_2\geq \check{\delta} = 2^{n/2} \varepsilon^{1/2}_\WDC$, 
the risk $L(\cdot)$ satisfies
\begin{align*}
L(x) < L(z), \ \  \forall x \in \mathcal{B}(\varphi x_0, \delta_3 )~ \text{ and }~ \forall z \in \mathcal{B}(-\zeta x_0,  \delta_3 ),
\end{align*}
where $\varphi, \zeta$ are any scalars in $[\rho_n, 1]$ and $\delta_3 = c_4 n^{-5} \|x_0\|_2$. Particularly, we have that the radius $\delta_3 < \rho_n \|x_0\|_2, \forall n > 1$, such that $0 \notin \mathcal{B}(\varphi x_0, \delta_3 )$ and $0 \notin \mathcal{B}(-\zeta x_0,  \delta_3 )$.
\end{theorem}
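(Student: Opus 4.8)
The plan is to sandwich $L$ between a deterministic surrogate and thereby reduce the comparison to a one-line inequality between two quadratics in $\varphi$ and $\zeta$. Write $L(x)=\|G(x)\|_2^2-2\langle \widehat g_m, G(x)\rangle$ with $\widehat g_m:=\tfrac{\lambda}{m}\sum_{i=1}^m y_i a_i$. Since $\tau\sim\mathrm{Unif}[-\lambda,\lambda]$ gives $\EE[\lambda\,\mathrm{sign}(t+\tau)]=t$ whenever $|t|\le\lambda$, the choice $\lambda\ge 4C_{a,\xi,R}\log(64C_{a,\xi,R}\varepsilon_\WDC^{-1})$ forces the dithering bias to be $\mathcal{O}(\varepsilon_\WDC)$, so $\EE[\widehat g_m]=G(x_0)$ up to $\mathcal{O}(\varepsilon_\WDC)$. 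The uniform concentration underlying Theorem~\ref{thm:main-1} --- the linear‑piece count of Lemma~\ref{lem:ball-supremum} together with a sub‑exponential Bernstein bound, valid once $m$ is as large as prescribed --- then shows, on an event $\mathcal{E}_1$ of probability $1-c_3e^{-u}$, that $L(x)-\bar L(x)=-2\langle \widehat g_m-G(x_0),G(x)\rangle$ is, uniformly over the relevant region (say $\|G(x)\|_2\lesssim R$), bounded in absolute value by $\eta_{\mathrm{stat}}(x)\lesssim\varepsilon_\WDC\|G(x)\|_2$: the error $\widehat g_m-G(x_0)$ lives effectively in the $\mathcal{O}(\mathrm{poly})$ many $k$‑dimensional pieces of $\mathrm{range}(G)$, so it contributes as a seminorm of size $\varepsilon_\WDC$ times $\|G(x)\|_2$. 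Here $\bar L(x):=\|G(x)\|_2^2-2\langle G(x_0),G(x)\rangle$ is the population risk.

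Next, on the event $\mathcal{E}_2$ where the WDC holds, the Hand--Voroninski-type estimates already used in the proof of Theorem~\ref{thm:determine_converge}, together with positive homogeneity $G(cx)=cG(x)$, give $\bigl|\|G(x)\|_2^2-\tfrac{\|x\|_2^2}{2^n}\bigr|\lesssim n\varepsilon_\WDC\tfrac{\|x\|_2^2}{2^n}$ and $\bigl|\langle G(x),G(x_0)\rangle-\tfrac{\|x\|_2\|x_0\|_2}{2^n}h_n(\angle(x,x_0))\bigr|\lesssim n^{\beta}\varepsilon_\WDC\tfrac{\|x\|_2\|x_0\|_2}{2^n}$ for an absolute exponent $\beta$, where $h_n(\theta)$ is the $n$‑fold iterate of the angle map $\theta\mapsto\cos^{-1}\!\bigl(\tfrac1\pi((\pi-\theta)\cos\theta+\sin\theta)\bigr)$ composed with $\cos$. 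The two angles we need are exactly the fixed points of this map: $h_n(0)=1$, while the orbit from $\pi$ passes through $\pi/2$ and decreases monotonically to $0$, so $h_n(\pi)=\rho_n$ with $0<\rho_n<1$ for $n>1$ and $\rho_n\uparrow1$. Hence $\bar L(x)=\ell_{x_0}(x)+\eta_{\mathrm{wdc}}(x)$ with $\ell_{x_0}(x):=\tfrac{1}{2^n}\bigl(\|x\|_2^2-2\|x\|_2\|x_0\|_2\,h_n(\angle(x,x_0))\bigr)$ and $\eta_{\mathrm{wdc}}(x)\lesssim n^{\beta}\varepsilon_\WDC\tfrac{\|x\|_2\|x_0\|_2+\|x\|_2^2}{2^n}$; in particular $\ell_{x_0}(\varphi x_0)=\tfrac{\|x_0\|_2^2}{2^n}(\varphi^2-2\varphi)$ and $\ell_{x_0}(-\zeta x_0)=\tfrac{\|x_0\|_2^2}{2^n}(\zeta^2-2\zeta\rho_n)$ for $\varphi,\zeta>0$.

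Now the comparison. On $[\rho_n,1]$ the map $\varphi\mapsto\varphi^2-2\varphi$ is decreasing, so $\sup_{\varphi}\ell_{x_0}(\varphi x_0)=\tfrac{\|x_0\|_2^2}{2^n}(\rho_n^2-2\rho_n)$, while $\zeta\mapsto\zeta^2-2\zeta\rho_n$ has vertex $\zeta=\rho_n$, so $\inf_{\zeta}\ell_{x_0}(-\zeta x_0)=-\tfrac{\rho_n^2\|x_0\|_2^2}{2^n}$; their difference is the margin $\mu_n:=\tfrac{2\rho_n(1-\rho_n)}{2^n}\|x_0\|_2^2>0$ for $n>1$, and the standard near‑fixed‑point expansion of the angle map at $0$ gives $1-\rho_n\gtrsim n^{-2}$, so $\mu_n\gtrsim\|x_0\|_2^2/(n^2 2^n)$. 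Moving from $\varphi x_0$ to any point of $\mathcal{B}(\varphi x_0,\delta_3)$ changes $\ell_{x_0}$ by at most $\sup\|\nabla\ell_{x_0}\|\cdot\delta_3\lesssim\tfrac{\|x_0\|_2}{2^n}\,\delta_3=c_4\tfrac{\|x_0\|_2^2}{n^5 2^n}=:\eta_{\mathrm{pert}}$ (the crude bound $\|\nabla\ell_{x_0}(x)\|\lesssim(\|x\|_2+\|x_0\|_2)/2^n$ holds throughout the ball, which by $\delta_3=c_4 n^{-5}\|x_0\|_2$ avoids the origin), and likewise around $-\zeta x_0$. Chaining the three displays, for every $x\in\mathcal{B}(\varphi x_0,\delta_3)$ and $z\in\mathcal{B}(-\zeta x_0,\delta_3)$ on which $\|\cdot\|_2$ is comparable to $\|x_0\|_2$,
\[
L(z)-L(x)\ \ge\ \mu_n-2\bigl(\eta_{\mathrm{stat}}+\eta_{\mathrm{wdc}}+\eta_{\mathrm{pert}}\bigr),\qquad \eta_{\mathrm{wdc}}\lesssim n^{\beta}\varepsilon_\WDC\tfrac{\|x_0\|_2^2}{2^n},\quad \eta_{\mathrm{stat}}\lesssim \varepsilon_\WDC\tfrac{\|x_0\|_2}{2^{n/2}}.
\]
From $c_1 n^3\varepsilon_\WDC^{1/4}\le1$ we get $\varepsilon_\WDC\lesssim n^{-12}$, hence $\eta_{\mathrm{wdc}}/\mu_n\lesssim n^{\beta+2}\varepsilon_\WDC\to0$ and (choosing $c_4$ a small absolute constant) $\eta_{\mathrm{pert}}/\mu_n\lesssim c_4 n^{-3}<\tfrac14$; and from $\|x_0\|_2\ge\check\delta=2^{n/2}\varepsilon_\WDC^{1/2}$, $\eta_{\mathrm{stat}}/\mu_n\lesssim\varepsilon_\WDC n^2 2^{n/2}/\|x_0\|_2\le n^2\varepsilon_\WDC^{1/2}\to0$. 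Thus $L(z)-L(x)>0$ on $\mathcal{E}_1\cap\mathcal{E}_2$, which has probability $1-2c_3e^{-u}$; finally $\delta_3=c_4 n^{-5}\|x_0\|_2<\rho_n\|x_0\|_2$ for all $n>1$ once $c_4$ is small (using $\rho_n\ge\rho_2>0$), which yields $0\notin\mathcal{B}(\varphi x_0,\delta_3)\cup\mathcal{B}(-\zeta x_0,\delta_3)$.

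The main obstacle is that the margin $\mu_n$ is itself small: because $\rho_n\to1$, it is only of order $n^{-2}$ relative to the natural scale $\|x_0\|_2^2/2^n$, so every perturbation must be controlled below this tolerance. This is exactly what forces the strong WDC requirement $\varepsilon_\WDC\lesssim n^{-12}$ and the small radius $\delta_3=\mathcal{O}(n^{-5}\|x_0\|_2)$, and --- most subtly --- the lower bound $\|x_0\|_2\ge\check\delta$: the statistical fluctuation $\eta_{\mathrm{stat}}$ is only \emph{linear} in $\|x_0\|_2$ whereas $\mu_n$ is \emph{quadratic}, so for weak signals the comparison would fail, and $\check\delta=2^{n/2}\varepsilon_\WDC^{1/2}$ is precisely the threshold at which the quadratic term wins once $\varepsilon_\WDC\lesssim n^{-4}$. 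The only other nontrivial input is pinning down the polynomial‑in‑$n$ constants in the WDC estimates of the second step at the angles $0$ and $\pi$ --- which, fortunately, are the two angles where the angle iteration is most benign (its fixed points), so those constants are tractable.
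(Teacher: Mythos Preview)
Your proposal is correct and follows the paper's approach almost exactly: pass from $L$ to the population surrogate $\bar L$ via the concentration of Lemmas~\ref{lem:comp-expect}--\ref{lem:comp-variance}, approximate $\bar L$ by the explicit angle-based quadratic using the WDC (Lemma~\ref{lem:wdc_ineq}), evaluate the margin $2\rho_n(1-\rho_n)\|x_0\|_2^2/2^n$ at the worst case $\varphi=\zeta=\rho_n$, invoke $1-\rho_n\gtrsim n^{-2}$ (Lemma~\ref{lem:rho_lower_bound}), and check that the statistical, WDC, and ball-perturbation errors are all dominated. Two scalings in your write-up are off but harmless under the hypothesis $c_1 n^3\varepsilon_\WDC^{1/4}\le1$: the WDC approximation error is $\sim n^3\sqrt{\varepsilon_\WDC}/2^n$, not $n^\beta\varepsilon_\WDC/2^n$ (see \eqref{eq:wdc_ineq_1}); and the Lipschitz constant of $h_n(\angle(x,x_0))$ near $\theta=\pi$ carries an extra factor $n^3$ (this is the $10\pi^2 n^3\psi$ in Lemma~\ref{lem:loss_error_bound}), so in fact $\eta_{\mathrm{pert}}/\mu_n\lesssim c_4$ rather than $c_4 n^{-3}$ --- which is exactly why $\delta_3$ must be $\mathcal O(n^{-5}\|x_0\|_2)$ and not the $\mathcal O(n^{-2}\|x_0\|_2)$ your crude gradient bound would allow.
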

\begin{remark}[Interpretation of Theorem \ref{thm:optimum_compare}]
The significance of Theorem \ref{thm:optimum_compare} is two-fold: first, it shows that the value of the empirical risk $L(x)$ is always smaller around $x_0$ compared to its negative multiple $-\rho_n x_0$; second, when the network is sufficiently expansive such that $\varepsilon_\WDC$ is small, i.e. $c n^{19} \varepsilon^{1/4}_\WDC \leq 1$, along with Case 1 in Theorem \ref{thm:determine_converge}, we have $\mathcal{B} (x_0, \delta_1 ) \subseteq \mathcal{B}(\varphi x_0, \delta_3 )$ and $ \mathcal{B} (-\rho_n x_0, \delta_2) \subseteq \mathcal{B}(-\zeta x_0,  \delta_3 )$ for some $\varphi$ and $\zeta$, so that one can guarantee that the global minimum of $L(x)$ stays around $x_0$. Since we do not focus on optimizing the order of $n$ in our results, further improvement of such a dependency will be one of our future works. 
\end{remark}

\begin{figure}[!t]
     \centering
     \begin{subfigure}
         \centering
         \includegraphics[width=0.48\textwidth]{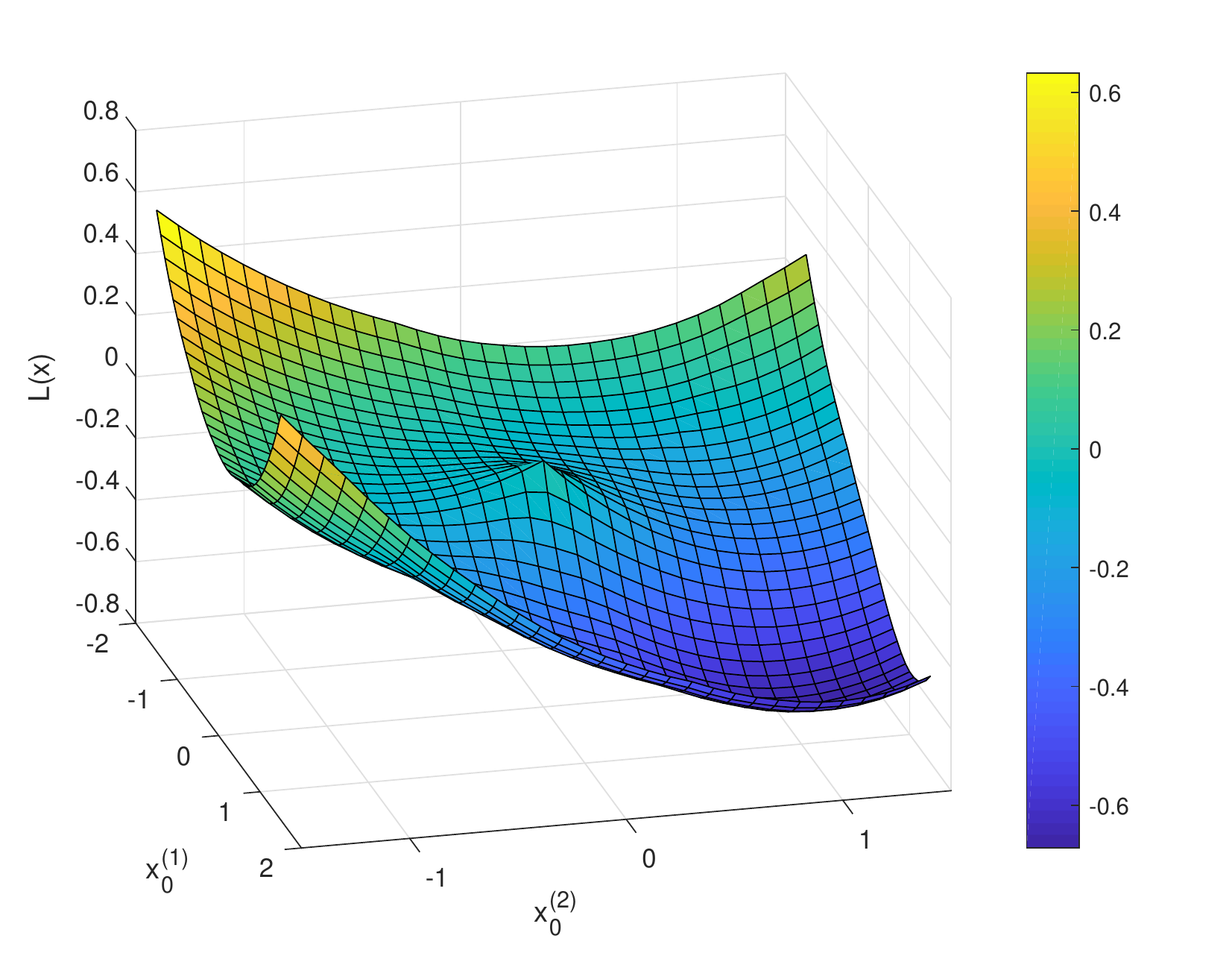}
     \end{subfigure}
     \begin{subfigure}
         \centering
         \includegraphics[width=0.46\textwidth]{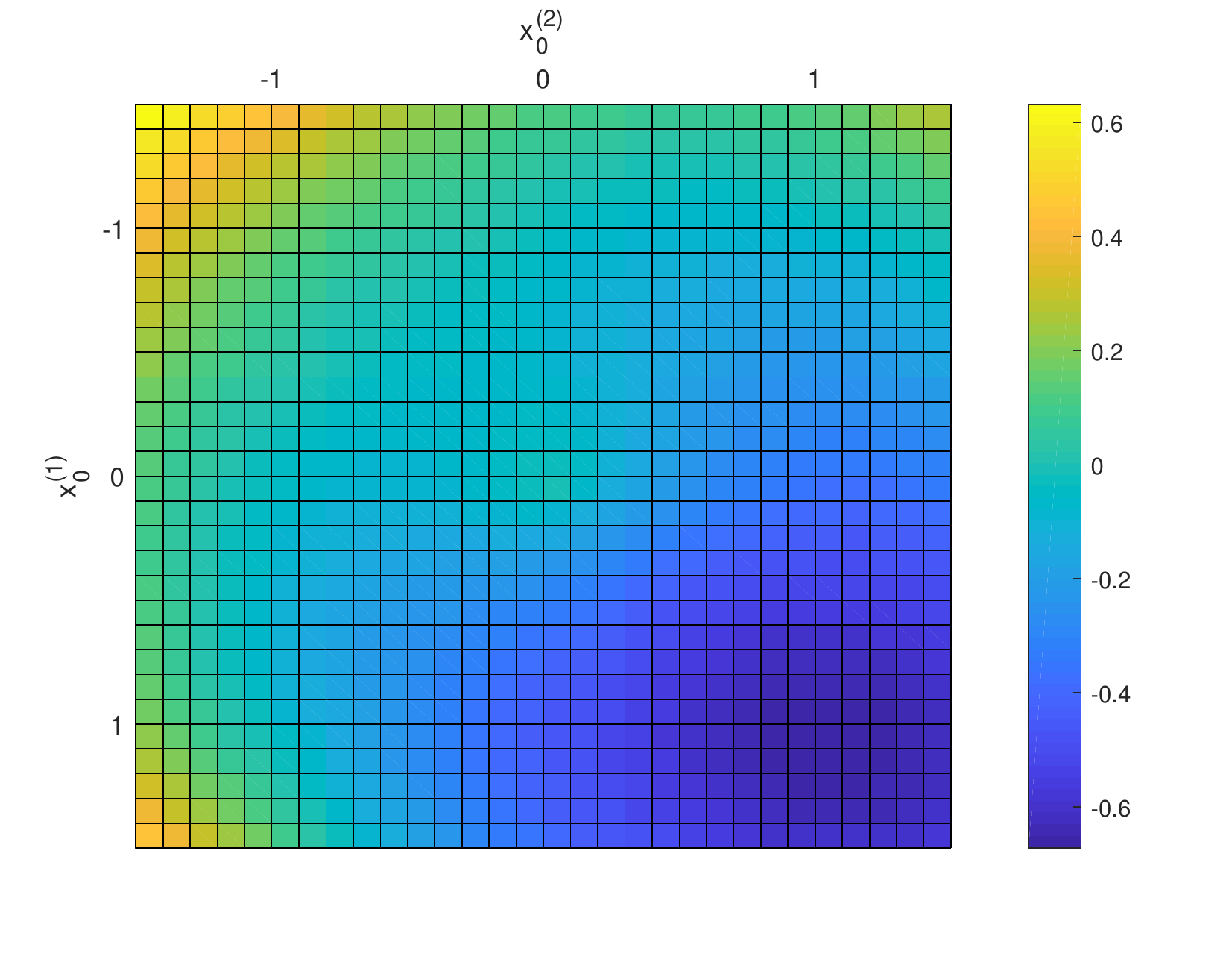}
     \end{subfigure}
     \setlength{\belowcaptionskip}{-0.2cm}
	 \caption{Illustration of landscape for $L(x)$. We build a two-layer ReLU network $G(\cdot)$ with input $x_0$ where $x_0 = [1,1]^\top$, Gaussian weights $W_1 \in \RR^{64 \times 2}$ and $W_2 \in \RR^{1024 \times 64}$ such that $k = 2$ and $d = 1024$. The samples $\{(a_i, y_i)\}_{i=1}^m$ are generated via standard Guassian vector $a_i$ and $y_i = \sign( \langle a_i, G(x_0) \rangle + \xi_i + \tau_i)$ with noise $\xi_i \sim \mc N(0,0.01)$, dithering $\tau_i \sim \text{Unif}(-10,10)$, and a large sample number $m \rightarrow +\infty$.}
	 \label{fig:landscape}
\end{figure}

For better understanding of the landscape analyzed in Theorem \ref{thm:determine_converge} and Theorem \ref{thm:optimum_compare}, we illustrate the landscape of $L(x)$ via simulation in Figure \ref{fig:landscape}. The simulation is based on a large sample number $m \rightarrow + \infty$, which intends to show the landscape of the expectation of the risk $L(x)$. We are more interested in Case 1 of Theorem \ref{thm:determine_converge}, where $x_0$ can be potentially recovered. By letting $x_0 = [1,1]^\top$ which is sufficiently far away from the origin, Figure \ref{fig:landscape} shows that there are no stationary points outside the neighbors of $x_0$ and its negative multiple and the directional derivatives along any directions at the origin are negative, which matches the Case 1 of Theorem \ref{thm:determine_converge}. In addition, the function values at the neighbor of $x_0$ is lower than that of its negative multiple, which therefore verifies the result in Theorem \ref{thm:optimum_compare}. The landscape will further inspire us to design efficient algorithms to solve the ERM in \eqref{eq:erm}.

\subsection{Connections with Invertibility of Neural Network}

As a straightforward corollary to Theorems \ref{thm:determine_converge} and \ref{thm:optimum_compare}, we obtain the approximate invertibility of ReLU network under noisy quantized measurements. Specifically, previous results \citep{hand2018global, gilbert2017towards, arora2015deep} show that under proper assumptions, one can invert the neural network (NN) and approximate $x_0$ by observing the outcome $G(x_0)$ and solving $\argmin_{x\in\mathbb{R}^d}\|G(x) - G(x_0)\|_2$. Here, we consider a generalized version of the previous setting in the sense that instead of observing the full $G(x_0)$, we only observe the randomly probed and quantized information $ \frac{\lambda}{m}\sum_{i=1}^m\sign(\dotp{a_i}{G(x_0)} + \tau_i)a_i$. Theorems \ref{thm:determine_converge} and \ref{thm:optimum_compare} essentially show that by solving following minimization problem:
$
\argmin_{x\in\mathbb{R}^k}\big\|G(x) - \frac{\lambda}{m}\sum_{i=1}^m\sign(\dotp{a_i}{G(x_0)} + \tau)a_i\big\|_2,
$
one can still invert the NN and approximate the true representation $x_0$.

On the other hand, without this random sensing vector $a_i$,  
it is not always possible to approximate $x_0$ via directly quantized measurements $\sign([G(x_0)]_i +  \tau_i), \forall i \in [d]$.
A simple example would be a $G(x_0)$ which is exactly sparse (e.g. $G(x_0) = \sigma\circ([\mf I_{k\times k}~ \mf{0}_{k\times(d-k)}]^\top x_0)$)
and $x_0$ is entrywise positive. Then, $G(x_0)$ corresponds to a vector with first $k$ entries being $x_0$ and other entries 0. In this case, the observations $\sign([G(x_0)]_i +  \tau_i), \forall i\in [d]$, are just 
$\sign(x_{0,i}+\tau_i), \forall i \in [k]$, and 0 otherwise. It is then obvious to see that any estimation procedure would incur a constant error estimating $x_0$ regardless of the choices $\tau_i$.

\section{Proofs of Main Results}

\subsection{Proof of Theorem \ref{thm:main-1}} \label{sec:proof_main}

Consider the excessive risk $L(x) - L(x_0)$ for any $x\in\mathbb{R}^k$. Our goal is to show that under the conditions that $m$ is sufficiently large and $\lambda$ is set properly, with high probability, for any $x\in\mathbb{R}^k$ and any $x_0\in\mathbb{R}^k$ satisfying $\|G(x_0)\|_2\leq R$, if $\|G(x) - G(x_0)\|_2>\varepsilon$, then $L(x) - L(x_0)>0$ holds. 
By proving this claim, we can get that for $\hat{x}_m$, i.e. the solution to \eqref{eq:erm}, satisfying $L(\hat{x}_m) \leq L(x_0)$, then $\|G(\hat{x}_m) - G(x_0)\|_2  \leq \varepsilon$ holds with high probability. 

Recall that $\{(y_i,a_i)\}_{i=1}^m$ are $m$ i.i.d. copies of $(y,a)$ defined in \eqref{eq:def-y}. For abbreviation, across this section, we let 
\begin{align}
\Delta^G_{x,x_0}:= G(x) - G(x_0).
\end{align}
Then, we have the following decomposition
\begin{align*}
&L(x) - L(x_0) \nonumber\\
&= \|G(x)\|_2^2 - \|G(x_0)\|_2^2 - \frac{2\lambda}{m}\sum_{i=1}^m y_i\dotp{a_i}{\Delta^G_{x,x_0}} \nonumber \\
&=  \underbrace{\|G(x)\|_2^2 - \|G(x_0)\|_2^2 - 2\lambda\expect{y_i\dotp{a_i}{\Delta^G_{x,x_0}}}}_{\text{(I)}}  - \underbrace{ \frac{2\lambda}{m}\sum_{i=1}^m\big(y_i\dotp{a_i}{\Delta^G_{x,x_0}} - \expect{y_i\dotp{a_i}{\Delta^G_{x,x_0}}}\big)}_{\text{(II)}}.
\end{align*}
The term (I) is the bias of the expected risk, and the term (II) is the variance resulting from the empirical risk. Thus, to see whether $L(x)-L(x_0) > 0$ when $\|\Delta^G_{x,x_0}\|_2 > \varepsilon$, we focus on showing the lower bound of term (I) and the upper bound of term (II). For term (I), we give its lower bound according to the following lemma.
\begin{lemma}\label{lem:expected-risk}
Letting $K_{a,\xi,R} = \|a\|_{\psi_1}R + \| \xi\|_{\psi_1}$, there exists an absolute constant $c_1>0$ such that
\begin{align*}
\l|\expect{y_i\dotp{a_i}{\Delta^G_{x,x_0}}} - \lambda^{-1}  \dotp{G(x_0)}{\Delta^G_{x,x_0}} \r| \leq \sqrt{c_1 K_{a,\xi,R}}(\sqrt{2(\lambda+1)}+2)e^{-\lambda/(2K_{a,\xi,R})} \|\Delta^G_{x,x_0}\|_2.
\end{align*}
Moreover, $\forall\varepsilon\in(0,1)$, if $\lambda\geq4C_{a,\xi,R}\cdot\log(64C_{a,\xi,R}\cdot\varepsilon^{-1})$ with $C_{a,\xi,R} = \max \{c_1 K_{a,\xi,R}, 1\}$, 
and $\|\Delta^G_{x,x_0}\|_2 >\varepsilon$, then
\[
\|G(x)\|_2^2 - \|G(x_0)\|_2^2 - 2\lambda\expect{y_i\l \langle a_i, \Delta^G_{x,x_0}\r\rangle}
\geq \frac{1}{2}\|\Delta^G_{x,x_0}\|_2^2.
\]
\end{lemma}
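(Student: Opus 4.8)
The plan is to evaluate $\mathbb{E}[y_i\dotp{a_i}{\Delta^G_{x,x_0}}]$ exactly via the dithering mechanism, peel off the linear ``main term'' $\lambda^{-1}\dotp{G(x_0)}{\Delta^G_{x,x_0}}$, and bound the remainder by a truncated-moment estimate for a sub-exponential variable; the ``moreover'' part then follows by completing the square and calibrating $\lambda$. \emph{Step 1 (dithering identity and main term).} Abbreviate $\theta_0:=G(x_0)$, $v:=\Delta^G_{x,x_0}=G(x)-\theta_0$, and $s:=\dotp{a}{\theta_0}+\xi$, so $y=\sign(s+\tau)$ with $\tau\sim\mathrm{Unif}[-\lambda,\lambda]$ drawn independently of $(a,\xi)$. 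Conditioning on $(a,\xi)$ and integrating out $\tau$ yields the exact identity $\mathbb{E}_\tau[\sign(s+\tau)]=\mathrm{clip}(s/\lambda)$, where $\mathrm{clip}(u):=\max\{-1,\min\{1,u\}\}$ (this conditional mean equals $s/\lambda$ on $\{|s|\le\lambda\}$ and $\pm1$ otherwise), so $\mathbb{E}[y\dotp{a}{v}]=\mathbb{E}[\mathrm{clip}(s/\lambda)\dotp{a}{v}]$. Writing $\mathrm{clip}(s/\lambda)=s/\lambda-r(s)$ with $r(s):=s/\lambda-\mathrm{clip}(s/\lambda)$, and using that $a$ is isotropic (so $\mathbb{E}[\dotp{a}{\theta_0}\dotp{a}{v}]=\dotp{\theta_0}{v}$) together with $a$ being centered and independent of $\xi$ (so $\mathbb{E}[\xi\dotp{a}{v}]=0$), the linear part is exactly $\mathbb{E}[(s/\lambda)\dotp{a}{v}]=\lambda^{-1}\dotp{\theta_0}{v}=\lambda^{-1}\dotp{G(x_0)}{\Delta^G_{x,x_0}}$, matching the claimed main term; it remains to bound $|\mathbb{E}[r(s)\dotp{a}{v}]|$.

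\emph{Step 2 (the remainder).} The pointwise identity $|r(s)|=\lambda^{-1}(|s|-\lambda)_+$, Cauchy--Schwarz, and isotropy ($\mathbb{E}[\dotp{a}{v}^2]=\|v\|_2^2$) give
\[
\big|\mathbb{E}[r(s)\dotp{a}{v}]\big|\;\le\;\mathbb{E}\big[|r(s)|\,|\dotp{a}{v}|\big]\;\le\;\big(\mathbb{E}[r(s)^2]\big)^{1/2}\|v\|_2\;\le\;\lambda^{-1}\big(\mathbb{E}[s^2\mathbf{1}\{|s|>\lambda\}]\big)^{1/2}\|v\|_2.
\]
Now $s=\dotp{a}{\theta_0}+\xi$ is sub-exponential with $\|s\|_{\psi_1}\le\|a\|_{\psi_1}\|\theta_0\|_2+\|\xi\|_{\psi_1}\le\|a\|_{\psi_1}R+\|\xi\|_{\psi_1}=K_{a,\xi,R}$, using subadditivity of $\|\cdot\|_{\psi_1}$ and $\|G(x_0)\|_2\le R$. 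Feeding the sub-exponential tail estimate $\mathbb{P}(|s|>t)\lesssim e^{-t/K_{a,\xi,R}}$ into $\mathbb{E}[s^2\mathbf{1}\{|s|>\lambda\}]=\lambda^2\mathbb{P}(|s|>\lambda)+\int_\lambda^\infty 2t\,\mathbb{P}(|s|>t)\,dt$ and integrating produces a bound of the form $\big(\text{polynomial in }\lambda,K_{a,\xi,R}\big)\cdot e^{-\lambda/K_{a,\xi,R}}$; taking square roots and using $\lambda\ge1$ and $\sqrt{x+y}\le\sqrt x+\sqrt y$ yields exactly
\[
\big|\mathbb{E}[y_i\dotp{a_i}{\Delta^G_{x,x_0}}]-\lambda^{-1}\dotp{G(x_0)}{\Delta^G_{x,x_0}}\big|\;\le\;\sqrt{c_1K_{a,\xi,R}}\,\bigl(\sqrt{2(\lambda+1)}+2\bigr)\,e^{-\lambda/(2K_{a,\xi,R})}\,\|\Delta^G_{x,x_0}\|_2
\]
for a suitable absolute constant $c_1\ge1$. (The precise packaging of the polynomial prefactor is immaterial --- any prefactor polynomial in $\lambda$ and $K_{a,\xi,R}$ suffices below.)

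\emph{Step 3 (the ``moreover'' bound).} Let $B:=\sqrt{c_1K_{a,\xi,R}}(\sqrt{2(\lambda+1)}+2)e^{-\lambda/(2K_{a,\xi,R})}$. Completing the square, $\|G(x)\|_2^2-\|G(x_0)\|_2^2-2\dotp{G(x_0)}{v}=\|v\|_2^2$, hence
\[
\|G(x)\|_2^2-\|G(x_0)\|_2^2-2\lambda\,\mathbb{E}[y_i\dotp{a_i}{v}]\;=\;\|v\|_2^2-2\lambda\big(\mathbb{E}[y_i\dotp{a_i}{v}]-\lambda^{-1}\dotp{G(x_0)}{v}\big)\;\ge\;\|v\|_2^2-2\lambda B\|v\|_2.
\]
It remains to check, by a routine computation, that the choice $\lambda\ge4C_{a,\xi,R}\log(64C_{a,\xi,R}\varepsilon^{-1})$ with $C_{a,\xi,R}=\max\{c_1K_{a,\xi,R},1\}$ forces $2\lambda B\le\varepsilon/2$: since $K_{a,\xi,R}\le C_{a,\xi,R}$ and $c_1\ge1$, this choice gives $e^{-\lambda/(2K_{a,\xi,R})}\le(\varepsilon/(64C_{a,\xi,R}))^{2}$, whose decay outweighs the polynomial factor $2\lambda\sqrt{c_1K_{a,\xi,R}}(\sqrt{2(\lambda+1)}+2)$ once $\lambda$ exceeds this logarithmic threshold. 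Then whenever $\|\Delta^G_{x,x_0}\|_2=\|v\|_2>\varepsilon$ we get $2\lambda B\|v\|_2\le(\varepsilon/2)\|v\|_2<\tfrac12\|v\|_2^2$, so the left-hand side is $\ge\tfrac12\|v\|_2^2=\tfrac12\|\Delta^G_{x,x_0}\|_2^2$, as claimed.

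\emph{Where the difficulty lies.} Conceptually the argument is light: the dithering identity of Step 1 is the standard device by which a uniform dither linearizes $\sign$ in expectation, Step 3 is merely completing the square plus calibrating $\lambda$, and the cross term $\mathbb{E}[\xi\dotp{a}{v}]$ drops out by the centering of $a$ and its independence from $\xi$. The one step requiring genuine care is the truncated second-moment estimate in Step 2 --- extracting the exponential-in-$\lambda$ decay from the sub-exponential tail of $s=\dotp{a}{\theta_0}+\xi$, with the constants packaged so that the mild logarithmic lower bound on $\lambda$ in the statement is already enough to make Step 3 go through.
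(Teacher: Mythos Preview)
Your proof is correct and follows essentially the same approach as the paper: both linearize $\sign$ via the dithering identity, use Cauchy--Schwarz together with isotropy of $a$, and control the remaining truncated moment via the sub-exponential tail of $s=\langle a,\theta_0\rangle+\xi$, then complete the square for the second claim. The only cosmetic difference is that you package the remainder as a single term $r(s)=s/\lambda-\mathrm{clip}(s/\lambda)$ with $|r(s)|=\lambda^{-1}(|s|-\lambda)_+$, whereas the paper splits it into $(V_i/\lambda)\mathbf{1}_{\{|V_i|>\lambda\}}$ and the indicator terms $\mathbf{1}_{\{V_i>\lambda\}}-\mathbf{1}_{\{V_i<-\lambda\}}$ and bounds each separately; your packaging is slightly tidier but leads to the same polynomial-times-exponential bound.
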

It shows that $\text{term (I)} \geq \frac{1}{2}\|\Delta^G_{x,x_0}\|_2^2$ when $\|\Delta^G_{x,x_0}\|_2 >\varepsilon$. This lemma is proved via the ingredient of dithering, i.e., artificially adding the noise smooths the $\sign(\cdot)$ function. To see this, for a fixed $V$, it holds  that 
$$\EE_\tau[\sign(V+\tau)] =  \frac{V}{\lambda} \mathbf{1}_{\{|V|\leq  \lambda\}} + \mathbf{1}_{\{V> \lambda\}} - \mathbf{1}_{\{V< -\lambda\}},$$ 
where the dithering noise $\tau\sim\text{Unif}[-\lambda,+\lambda]$, and $\mathbf{1}_{\{\cdot\}}$ is an indicator function. As a consequence, $\expect{y_i|a_i, \xi_i}=(\dotp{a_i}{G(x_0)} + \xi_i)/\lambda$ given that 
$|\dotp{a_i}{G(x_0)} + \xi_i|$ is not too large, 
and then Lemma \ref{lem:expected-risk} follows. Detailed proof can be found in Supplement \S\ref{sec:proof-bias}.

Next, we present the analysis for showing the upper bound of the term (II), which is the key to proving Theorem \ref{thm:main-1}. To give the upper bound of term (II), it suffices to bound the following supremum over all $x\in\mathbb{R}^k$ and all $x_0$ satisfying $x_0\in\mathbb{R}^k,~\|G(x_0)\|_2\leq R$:
\begin{equation}\label{eq:variance-bound-1}
\sup \frac{\l|\frac1m\sum_{i=1}^my_i\dotp{a_i}{\Delta^G_{x,x_0}} -\expect{y_i\dotp{a_i}{\Delta^G_{x,x_0}}}\r|}{\|\Delta^G_{x,x_0}\|_2}.
\end{equation}
Recall that $y_i = \sign(\dotp{a_i}{G(x_0)}+\xi_i+\tau_i)$. By symmetrization inequality (Lemma \ref{lemma:symmetry} in the supplement), the following lemma readily implies the similar bound for \eqref{eq:variance-bound-1}.
\begin{lemma}\label{lem:main-bound-on-sup}
Suppose Assumption \ref{as:moments} holds and the number of samples $m\geq c_2\lambda^2\log^2(\lambda m) \cdot [kn\log(ed) + k\log(2R) + k\log m+ u]/\varepsilon^2$ for some absolute constant $c_2$ large enough, then, 
with probability at least $1-c\exp(-u)$,
\[
\sup_{x_0\in\mathbb{R}^k,~\|G(x_0)\|_2\leq R, x\in\mathbb{R}^k}
\frac{\l|\frac1m\sum_{i=1}^m\varepsilon_i y_i \dotp{a_i}{\Delta^G_{x,x_0}}\r|}{\|\Delta^G_{x,x_0}\|_2}
\leq \frac{\varepsilon}{16\lambda},
\]
where $\{\varepsilon_i\}_{i=1}^m$ are i.i.d. Rademacher random variables and $c>0$ is an absolute constant.
\end{lemma}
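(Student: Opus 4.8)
The object to control is a symmetrized linear empirical process indexed by the difference set $\{\Delta^G_{x,x_0} = G(x)-G(x_0) : x, x_0 \in \mathbb{R}^k,\ \|G(x_0)\|_2 \le R\}$, and the goal is to bound it \emph{without} the generic chaining argument that would force a sub-Gaussian assumption. The plan is to exploit the piecewise linearity of $G$. On each of its linear regions, $G$ agrees with a fixed $d\times k$ matrix; writing $A$ and $A'$ for the matrices of the regions containing $x$ and $x_0$, we have $\Delta^G_{x,x_0} = Ax - A'x_0 \in E := \mathrm{col}(A)+\mathrm{col}(A')$, a subspace of dimension at most $2k$. Likewise, on each region of $G$ the label $y_i = \sign(\langle a_i, G(x_0)\rangle + \xi_i + \tau_i)$ is an affine function of $x_0$, so, conditionally on $\{(a_i,\xi_i,\tau_i)\}_{i=1}^m$, the joint label vector $(y_1,\dots,y_m)$ takes only finitely many values as $x_0$ ranges over $\mathbb{R}^k$. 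Hence, on this conditioning,
\[
\sup_{x,\,x_0} \frac{\big| \tfrac1m \sum_{i=1}^m \varepsilon_i y_i \langle a_i, \Delta^G_{x,x_0}\rangle \big|}{\|\Delta^G_{x,x_0}\|_2} \;\le\; \max_{(E,s)}\ \sup_{v \in E \cap \mathcal S^{d-1}} \Big| \tfrac1m \sum_{i=1}^m \varepsilon_i s_i \langle a_i, v\rangle \Big| ,
\]
where $(E,s)$ ranges over the realizable pairs of a $(\le 2k)$-dimensional subspace and a sign vector $s\in\{\pm1\}^m$; here I would invoke Lemma \ref{lem:ball-supremum} to bound the number of linear regions of the $n$-layer network, and hence $\log\#\{(E,s)\} \lesssim kn\log(ed)+k\log(2R)+k\log m$. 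This combinatorial reduction is the chaining-free, ``sub-Gaussian free'' core of the argument.

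For a fixed realizable pair $(E,s)$ I would take a net $\mathcal{N}$ of $E\cap\mathcal S^{d-1}$ of cardinality at most $(3/\eta)^{2k}$. Conditionally on the data, for each $v\in\mathcal{N}$ the quantity $\tfrac1m\sum_i \varepsilon_i s_i\langle a_i,v\rangle$ is a weighted Rademacher average with weights $c_i := s_i\langle a_i,v\rangle$, $|c_i|=|\langle a_i,v\rangle|$, and Hoeffding's inequality makes it concentrate on the scale $\tfrac1m(\sum_i \langle a_i,v\rangle^2)^{1/2}$. Provided the data satisfies $\sum_i \langle a_i,v\rangle^2 \le 2m$ for every relevant net point — a high-probability event by Assumption \ref{as:moments}, isotropy fixing the mean at $m$ and sub-exponentiality giving concentration, with the uniformity over the nets requiring a truncation of $\langle a_i,v\rangle$ at a level of order $\log(\lambda m)$, which is the origin of the $\log^2(\lambda m)$ factor — this gives $\big|\tfrac1m\sum_i \varepsilon_i s_i\langle a_i,v\rangle\big| \lesssim \sqrt{t/m}$ with probability at least $1-2e^{-t}$ over $\{\varepsilon_i\}$. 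Choosing $\eta$ of order $\varepsilon/(\lambda\sqrt{d})$, so that $2k\log(3/\eta)$ is still absorbed into $kn\log(ed)+k\log(2R)+k\log m$, controls the discretization error.

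Taking $t \asymp kn\log(ed) + k\log(2R) + k\log m + u$ and union-bounding over $\mathcal{N}$, over the $(E,s)$ pairs, and over the events controlling $\sum_i\langle a_i,v\rangle^2$ keeps the total failure probability below $c\exp(-u)$; on the complementary event the supremum is of order $\log(\lambda m)\sqrt{(kn\log(ed)+k\log(2R)+k\log m+u)/m}$, the factor $\log(\lambda m)$ coming from the truncation level and absorbing the dependence on $\|a\|_{\psi_1}$ via $\lambda\ge 4C_{a,\xi,R}\ge 4\|a\|_{\psi_1}$. Substituting the hypothesis $m \ge c_2\lambda^2\log^2(\lambda m)[kn\log(ed)+k\log(2R)+k\log m+u]/\varepsilon^2$ then bounds the supremum by $\varepsilon/(16\lambda)$ once $c_2$ is a large enough absolute constant, which is the claim.

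The step I expect to be the real obstacle is the first one: establishing, through Lemma \ref{lem:ball-supremum}, that the $n$-layer ReLU composition confines the difference set to only $\exp(\widetilde{\mathcal{O}}(kn\log d))$ subspaces and the label vectors to a comparably small family. The nested dependence of each layer's active-neuron set on the regions of the earlier layers is what makes this count delicate and what produces the linear dependence on the depth $n$; turning it into a bound of the clean form $kn\log(ed)+k\log(2R)+k\log m$, and in particular retaining only a logarithmic dependence on the ambient dimension $d$, is where the real work lies. Secondary technicalities are the uniform truncation for the $\sum_i\langle a_i,v\rangle^2$ bounds and a careful treatment of the $x_0$-dependence of the labels $y_i$, handled here by the label-vector count rather than any decoupling step.
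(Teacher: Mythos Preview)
Your overall strategy --- enumerate the finitely many realizable $(E,s)$ pairs via the piecewise linearity of $G$ --- is different from the paper's and is in principle workable. The piece count you flag as the hard step is actually routine: it is Claim~1 inside the paper's proof of Lemma~\ref{lem:ball-supremum}, and the label-vector count follows from the same hyperplane-arrangement bound applied to the $m$ halfspaces $\{x_0:\langle W_P^\top a_i,x_0\rangle+\xi_i+\tau_i>0\}$ in $\mathbb R^k$, giving at most $(m+1)^k$ sign patterns per linear piece and hence $\log\#\{(E,s)\}\lesssim kn\log(ed)+k\log m$ as you claim.

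The genuine gap is the step you call secondary: establishing $\sum_i\langle a_i,v\rangle^2\le 2m$ uniformly over all net points in all the relevant $2k$-subspaces. Under only the sub-exponential Assumption~\ref{as:moments}, the summands $\langle a_i,v\rangle^2$ are merely $\psi_{1/2}$, so the best single-direction tail you get is of order $\exp(-c\sqrt m)$; a union bound over $\exp(\Theta(kn\log d))$ directions then forces $m\gtrsim(kn\log d)^2$, not the linear dependence in the lemma. Your proposed truncation at level $T\asymp\log(\lambda m)$ does not rescue this: the truncation event $\{\exists\, i,v:|\langle a_i,v\rangle|>T\}$ must itself survive a union bound over $m\cdot\exp(\Theta(kn\log d))$ pairs, which forces $T\gtrsim kn\log d$, and then Bernstein on the truncated squares needs $m/T^2\gtrsim kn\log d$, i.e.\ $m\gtrsim(kn\log d)^3$. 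The paper sidesteps second moments entirely. It places a $\delta$-net on $G(\mathbb R^k)\cap\mathbb B_2^d(R)$ and, for each \emph{fixed} net point $G(v)$, uses that $\varepsilon_i\cdot\sign(\langle a_i,G(v)\rangle+\xi_i+\tau_i)$ has the same distribution as $\varepsilon_i$ (independence of $\varepsilon_i$ from the data), so Bernstein applies directly to the sub-exponential variables $\varepsilon_i\langle a_i,w\rangle$ --- no squares ever appear (Lemma~\ref{lem:ball-supremum}). The price is a separate sign-change correction bounding the fraction of indices where $\sign(\langle a_i,G(x_0)\rangle+\xi_i+\tau_i)\neq\sign(\langle a_i,G(v)\rangle+\xi_i+\tau_i)$, handled by a VC-dimension and tessellation argument (Lemmas~\ref{lem:counting-process}--\ref{lem:main-count}); this is where the $\log^2(\lambda m)$ and $k\log(2R)$ factors in the statement actually originate. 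Incidentally, your route can be repaired without the second-moment detour by using the conditional identity $P_\varepsilon\bigl(\,|\sum_i\varepsilon_is_i\langle a_i,v\rangle|>t\,\bigm|\,\text{data}\bigr)=P_\varepsilon\bigl(\,|\sum_i\varepsilon_i\langle a_i,v\rangle|>t\,\bigm|\,\text{data}\bigr)$, bounding the number of realizable $s$ deterministically, and then averaging over the data to reach an unconditional Bernstein bound --- but that is not the argument you wrote.
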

We provide a proof sketch for Lemma \ref{lem:main-bound-on-sup} as below. Details can be found in Supplement \S \ref{sec:proof-var}. The main difficulty is the simultaneous supremum over both $x_0$ and $x$, whereas in ordinary uniform concentration bounds (e.g. in non-uniform recovery), one only requires to bound a supremum over $x$. The idea is to consider a $\delta$-covering net over the set 
$G(\mathbb{R}^k)\cap \mathbb{B}_2^d(R)$, namely $\mathcal{N}(G(\mathbb{R}^k)\cap \mathbb{B}_2^d(R),~\delta)$, and bounding the supremum over each individual covering ball.
The $\delta$ value has to be carefully chosen so as to achieve the following goals:
\begin{itemize}[leftmargin=*] 
\item We replace each $\sign(\dotp{a_i}{G(x_0)} + \xi_i + \tau_i)$ by $\sign(\dotp{a_i}{G(v)} + \xi_i + \tau_i)$, where $G(v)$ is the nearest point to $G(x_0)$ in the $\delta$-net, and show that this supremum when fixing $G(v)$ is small. This is done via a ``one-step chaining'' argument making use of the piecewise linearity structure of $G$.
\item We consider the gap of such a replacement, i.e., the sign changes when replacing $G(x_0)$ by $G(v)$, and show that $d_{H}(G(x_0),G(v)):= \frac1m\sum_{i=1}^m \mathbf{1}_{\{\sign(\dotp{a_i}{G(x_0)} + \xi_i + \tau_i)  \neq  \sign(\dotp{a_i}{G(v)} + \xi_i + \tau_i)\}}$, which is the fraction of sign changes, is uniformly small for all $G(x_0)$ and $G(v)$ pairs. This can be rephrased as the \textit{uniform hyperplane tessellation problem}: Given an accuracy level 
$\varepsilon>0$, for any two points 
$\theta_1,\theta_2\in G(\mathbb{R}^k)\cap \mathbb{B}_2^d(R)$ such that $\|\theta_1-\theta_2\|_2\leq \delta$, what is the condition on $m$ and $\delta$ such that 
$d_H(\theta_1, \theta_2)\leq \|\theta_1-\theta_2\|_2 + \varepsilon$? We answer this question with a tight sample bound on $m$ in terms of $\varepsilon$ by counting the number of linear pieces in $G(\cdot)$ with a VC dimension argument.
\item We bound the error regarding a small portion of the indices $\{1,2,\cdots,m\}$ for which the signs do change in the previous replacement, and take a union bound over the $\delta$-net.
\end{itemize}


\begin{proof}[Proof of Theorem \ref{thm:main-1}]
By Lemma \ref{lem:main-bound-on-sup} and symmetrization inequality (Lemma \ref{lemma:symmetry} in the supplement), one readily gets that \eqref{eq:variance-bound-1} is bounded by $\varepsilon/8\lambda$ with probability at least $1-c_3\exp(-u)$. This further implies the following bound
\[
\frac{2\lambda}{m}\sum_{i=1}^m\big(y_i\dotp{a_i}{\Delta^G_{x,x_0}} - \expect{y_i\dotp{a_i}{\Delta^G_{x,x_0}}}\big)\leq \frac{\varepsilon}{4}
\|\Delta^G_{x,x_0}\|_2.
\]
Thus, when $\|\Delta^G_{x,x_0}\|_2 > \varepsilon$, the left-hand side of the above inequality is further bounded by $\|\Delta^G_{x,x_0}\|_2^2/4$. Combining with Lemma \ref{lem:expected-risk}, we finally obtain 
$L(x) -L(x_0) = \text{(I)} - \text{(II)} \geq \|\Delta^G_{x,x_0}\|_2^2/4>0$, if $\|\Delta^G_{x,x_0}\|_2 > \varepsilon$. Note that with high probability, this inequality holds for any $x\in\mathbb{R}^k$ and $x_0\in\mathbb{R}^k$ satisfying $\|G(x_0)\|_2\leq R$. This further implies $\|G(\hat x_m)-G(x_0)\|_2\leq \varepsilon$, which finishes the proof.
\end{proof}

\subsection{Proof of Theorem \ref{thm:lower-bound}} \label{sec:proof_lower_bound}

The key to proving Theorem \ref{thm:lower-bound} is to build a connection between our problem and the sparse recovery problem. Then we can further analyze the lower bound by employing tools from the area of sparse recovery. The detailed proofs for this subsection are presented in Supplement \S\ref{sec:detailed_proof_lower_bound}. 

\begin{definition}
A vector $ v\in\mathbb R^d$ is $k$-group sparse if, when dividing $ v$ into $k$ blocks of sub-vectors of size $d/k$,\footnote{We assume WLOG that $d/k$ is an integer.} each block has exactly one non-zero entry. 
\end{definition}
We establish the following proposition to build a connection between the ReLU network and the group sparse vector.
\begin{proposition}\label{prop:transform-main}
Any nonnegative $k$-group sparse vector in $\mathbb B_2^d(1)$ can be generated by a ReLU network of the form \eqref{eq:relu} with a $k+1$ dimensional input and depth $n=3$.
\end{proposition}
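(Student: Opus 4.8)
The plan is to exhibit, for every nonnegative $k$-group sparse $v \in \mathbb{B}_2^d(1)$, an explicit input $x \in \mathbb{R}^{k+1}$ with $G(x) = v$, where $G$ is one fixed $3$-layer network of the form \eqref{eq:relu}. Write $p := d/k$ and split $v$ into consecutive blocks $v^{(1)}, \dots, v^{(k)} \in \mathbb{R}^p$; by group sparsity each block equals $a_j e_{i_j}$ for a position $i_j \in [p]$ and a magnitude $a_j \ge 0$, and $\|v\|_2 \le 1$ means exactly $\sum_{j} a_j^2 \le 1$. The first idea is an encoding that packs the \emph{discrete} index $i_j$ and the \emph{continuous} magnitude $a_j$ into a single scalar: set $\tilde x_j := 2(i_j - 1) + a_j$, so that $\tilde x_j$ lies in the ``slot'' $I_i := [2(i-1), 2i-1]$ precisely when $i = i_j$, with consecutive slots separated by unit-length gaps (here we use $a_j \le 1$). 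We will take $x = (\tilde x_1, \dots, \tilde x_k, 1)^\top$, the final coordinate being fixed to $1$ so that it can supply the affine offsets that \eqref{eq:relu} lacks.

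Next I would introduce the ``hat'' function $g_i(t) := \sigma\big(1 - |t - (2i-1)|\big)$, the triangle supported on $[2i-2, 2i]$ that rises linearly from $0$ at $2i-2$ to $1$ at $2i-1$ and returns to $0$ at $2i$. Its single relevant property is the following: if $\tilde x_j$ lies in its slot $I_{i_j}$, then $g_{i_j}(\tilde x_j) = \tilde x_j - (2i_j - 2) = a_j$, whereas $g_i(\tilde x_j) = 0$ for every $i \ne i_j$, because slot separation forces $\tilde x_j \notin [2i-2, 2i]$. Hence $(g_1(\tilde x_j), \dots, g_p(\tilde x_j)) = a_j e_{i_j} = v^{(j)}$, so the map $x \mapsto \big(g_i(\tilde x_j)\big)_{j \in [k],\, i \in [p]}$ already outputs $v$; what remains is to realize this map by a depth-$3$ network of the form \eqref{eq:relu}.

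For the realization I would assemble the hat functions out of ReLU ``ramps''. The layers produce, for each block $j$, the ramps $v_{j,m} := \sigma\big(\tilde x_j - (2m - 1)\big)$ for $m = 0, 1, \dots, p-1$; these can be obtained either directly from $\tilde x_j$ (the offset $-(2m-1)$ coming from the row of $W_1$ acting on the constant last coordinate) or by composition, using $\sigma\big(\sigma(\tilde x_j - c) - c'\big) = \sigma\big(\tilde x_j - c - c'\big)$, which lets one defer half of them to the second layer and keep every hidden width $O(d)$; the constant $\sigma(1) = 1$ is carried along similarly. The output layer then returns, for each coordinate $(j,i)$, the value $\sigma\big(v_{j,i-1} - 2 v_{j,i} - 1\big)$, and a short case check (according to whether $\tilde x_j$ sits left of, inside, or right of the window $[2i-2, 2i]$) shows this equals $g_i(\tilde x_j)$ on the feasible range $\tilde x_j \le 2p-1$, with the harmless convention $v_{j,p} := 0$. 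Since every $g_i \ge 0$, the outermost $\sigma$ in \eqref{eq:relu} is transparent, and the $d - k$ vanishing entries of $v$ are produced automatically. This yields $G(x) = v$, proving that the nonnegative $k$-group sparse vectors in $\mathbb{B}_2^d(1)$ all lie in $G(\mathbb{R}^{k+1})$.

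The step I expect to be the real obstacle is this last one: squeezing the hat-function computation into \emph{exactly three bias-free layers of the prescribed form} while keeping the widths under control — a bookkeeping issue that is easy to get slightly wrong. The conceptual crux, which makes the rest routine ReLU algebra, is the slot-with-gap encoding: by spacing the intervals assigned to distinct positions, a single scalar $\tilde x_j$ can be decoded by a bounded-depth ReLU circuit so as to recover simultaneously the correct position $i_j$ (as the unique active hat) and the correct magnitude $a_j$ (as that hat's value), while all remaining coordinates of the block are forced to $0$.
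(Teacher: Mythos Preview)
Your proposal is correct and follows essentially the same strategy as the paper: encode each block's (position, magnitude) pair into a single scalar, decode it via nonoverlapping triangle/hat functions built from nested ReLUs, and use the extra $(k{+}1)$-th input fixed to $1$ to supply the affine offsets that the bias-free form \eqref{eq:relu} lacks. The paper's concrete realization differs only cosmetically: its first layer outputs $\sigma(x_i)$ together with the constants $\sigma(r\cdot z)$, $r\in[2d/k]$, its second layer forms the two shifted ramps $\Upsilon_r=\sigma(\sigma(x_i)-2r)$ and $\Upsilon'_r=\sigma(\sigma(x_i)-2r-1)$, and its third layer combines them as $\sigma(\Upsilon_r-2\Upsilon'_r)$; your variant shifts the ramps already in the first layer via the constant input and then combines them with an extra $-1$ supplied by the carried constant. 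Either layout yields a valid depth-$3$ network of the required form.
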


  The idea is to map each of the first $k$ input entries into one block in $\mathbb{R}^d$ of length $d/k$ respectively, and use the remaining one entry to construct proper offsets. We construct this mapping via a ReLU network \emph{with no offset} as follows:
         
        Consider a three-layer ReLU network, which has $k+1$ dimensional input of the form: $[x_1,\cdots,x_k,z]^\top \allowbreak \in\R^{k+1}$. The first hidden layer has the width of $(k+2d/k)$ whose first $k$ nodes outputs $\sigma(x_i)$, $\forall i\in[k]$, and the next $2d/k$ nodes output $\sigma(r\cdot z), \forall r\in [2d/k]$, which become the offset terms for the second layer.
        Then, with $\sigma(x_i)$ and $\sigma(r\cdot z)$ from the first layer, the second hidden layer will output the values of $\Upsilon_r(x_i,z)=\sigma(\sigma(x_i)-2\sigma(r\cdot z))$ and $\Upsilon'_r(x_i,z) = \sigma(\sigma(x_i)-2\sigma(r\cdot z) - \sigma(z))$, $\forall i\in [k]$ and $\forall r\in [d/k]$. 
        Finally, by constructing the third layer, we have the following mapping: $\forall i\in [k]$ and $\forall r\in [d/k]$, $\Gamma_r(x_i ,z) := \sigma\big(\Upsilon_r(x_i,z) - 2\Upsilon'_r(x_i,z)\big)$.
        
         Note that $\Gamma_r(x_i ,z)$ fires only when $x_i\geq0$, in which case we have $\sigma(x_i)=x_i$. Letting $z$ always equal to 1, we can observe that $\{\Gamma_r(x_i,1)\}_{r=1}^{d/k}$ is a sequence of $d/k$ non-overlapping triangle functions on the positive real line with width $2$ and height $1$. Therefore, the function $\Gamma_r(x_i,1)$ can generate the value of the $r$-th entry in the $i$-th block of a nonnegative $k$-group sparse vector in $\mathbb{B}_2^d(1)$.

The above proposition implies that the set of nonnegative $k$-group sparse vectors in $\mathbb{B}^d_2(1)$ is the subset of $G(\R^{k+1})\cap \mathbb{B}^d_2(1)$ where $G(\cdot)$ is defined by the mapping $\Gamma$.

\begin{lemma}\label{lem:linear-model-lower-bound-final}
Assume that $\theta_0\in K \subseteq \mathbb{B}^d_2(1)$ where $K$ is a set containing any $k$-group sparse vectors in $\mathbb{B}^d_2(1)$, and $K$ satisfies that $\forall v\in K$ then $ \lambda v \in K , \forall \lambda \in [0,1)$. Assume that $\check{y} = \dotp{a}{\theta_0} + \xi$ with $\xi\sim\mathcal{N}(0,\sigma^2)$ and $ a\sim\mathcal N(0, \mf I_d)$. Then, there exist absolute constants $c_1, c_2>0$ such that any estimator $\widehat\theta$ which depends only on $m$ observations of  $(a, \check{y})$ satisfies that when $m \geq c_1 k\log(d/k)$, there is 
$$\sup_{\theta_0\in K} \EE \|\widehat{\theta} - \theta_0\|_2 \geq c_2\sqrt{\frac{k\log(d/k)}{m}}.$$
\end{lemma}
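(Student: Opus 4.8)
The plan is to establish the bound by a local packing argument combined with Fano's inequality, exploiting that $K$ contains every $k$-group sparse vector of the unit ball. First I would build the packing. There are $(d/k)^k$ group-sparse support patterns (one active coordinate in each of the $k$ blocks of length $d/k$); identifying a pattern with a word in $[d/k]^k$ and counting, the number of patterns within block-Hamming distance $k/4$ of a fixed one is at most $(k/4+1)\binom{k}{k/4}(d/k)^{k/4} \leq \exp(ck + \tfrac{k}{4}\log(d/k))$, which for $k\leq d/4$ (so $d/k$ bounded away from $1$) is a vanishing fraction of all $(d/k)^k$ patterns. A maximal packing in block-Hamming distance therefore yields a set $\mathcal P$ of patterns with $\log|\mathcal P| \geq c_0 k\log(d/k)$ any two of which differ in at least $k/4$ of the blocks. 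For a parameter $\delta\in(0,1]$ chosen below, let $\theta_p$ ($p\in\mathcal P$) be the nonnegative $k$-group sparse vector whose active entries all equal $\delta/\sqrt k$. Then $\|\theta_p\|_2 = \delta\leq 1$, so $\theta_p\in K$ by hypothesis, and for $p\neq q$ the difference $\theta_p-\theta_q$ carries two entries of magnitude $\delta/\sqrt k$ in each of the $\geq k/4$ differing blocks, giving $\delta/\sqrt 2 \leq \|\theta_p-\theta_q\|_2 \leq \sqrt 2\,\delta$.

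Next I would bound the information. Let $P_p$ denote the law of the $m$ i.i.d. observations $\{(a_l,\check y_l)\}_{l=1}^m$ when $\theta_0=\theta_p$. Since the $a_l$ have a $\theta$-independent distribution and, conditionally on $a_l$, $\check y_l\sim\mathcal N(\langle a_l,\theta_p\rangle,\sigma^2)$, the chain rule for KL divergence together with the isotropy of $a$ gives $\mathrm{KL}(P_p\|P_q) = \tfrac{m}{2\sigma^2}\,\mathbb E_a\langle a,\theta_p-\theta_q\rangle^2 = \tfrac{m}{2\sigma^2}\|\theta_p-\theta_q\|_2^2 \leq m\delta^2/\sigma^2$. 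Generalized Fano then yields, for any test $\psi$ taking values in $\mathcal P$, $\max_{p\in\mathcal P}P_p(\psi\neq p) \geq 1 - \frac{\max_{p\neq q}\mathrm{KL}(P_p\|P_q)+\log 2}{\log|\mathcal P|}$. Choosing $\delta^2 = \sigma^2\log|\mathcal P|/(16m)$ --- which satisfies $\delta\leq 1$ precisely when $m\geq c_1 k\log(d/k)$ for a suitable absolute $c_1$, using $\log|\mathcal P|\leq k\log(d/k)$ and absorbing $\sigma$ --- makes the right-hand side at least $1/2$ once $d$ is large enough that $\log|\mathcal P|\gtrsim 1$.

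Finally I would pass from testing to estimation in the usual way: given an estimator $\hat\theta$, set $\psi = \arg\min_{p\in\mathcal P}\|\hat\theta-\theta_p\|_2$; the separation $\|\theta_p-\theta_q\|_2\geq\delta/\sqrt 2$ forces $\psi=p$ whenever $\|\hat\theta-\theta_p\|_2 < \delta/(2\sqrt 2)$, so $P_p(\|\hat\theta-\theta_p\|_2\geq\delta/(2\sqrt2))\geq P_p(\psi\neq p)$, and Markov's inequality gives $\sup_{\theta_0\in K}\mathbb E\|\hat\theta-\theta_0\|_2 \geq \tfrac{\delta}{2\sqrt2}\max_{p}P_p(\psi\neq p) \geq \tfrac{\delta}{4\sqrt2}$. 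Substituting the choice of $\delta$ and $\log|\mathcal P|\geq c_0 k\log(d/k)$ gives the claimed bound $c_2\sqrt{k\log(d/k)/m}$. I expect the main obstacle --- though still essentially routine --- to be the combinatorial packing step: verifying that the group-sparse patterns admit an exponentially large, well-separated subset, i.e. the Gilbert--Varshamov-type counting that gives $\log|\mathcal P|\gtrsim k\log(d/k)$ while keeping the $\geq k/4$ block separation, where the assumption $k\leq d/4$ is exactly what keeps the counting non-vacuous. The KL computation and the Fano/Markov reduction are standard, and the scalability of $K$ is not needed here since the packing points are bona fide $k$-group sparse vectors in the unit ball.
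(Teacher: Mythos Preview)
Your proof is correct, and it follows the same information-theoretic template as the paper, but the execution differs in two concrete ways. First, the paper does not run Fano directly: it invokes a ready-made minimax lower bound of \citet{plan2016high} (their Theorem~4.2, recorded here as Lemma~\ref{lem:linear-model-lower-bound}), which packages the Fano step into a statement involving local packing numbers $P_t$ of $K\cap\mathbb{B}_2^d(t)$, and then feeds in a separate packing lemma (Lemma~\ref{lem:packing}) showing $\log P_t \geq ck\log(d/k)$. That packing lemma is proved by a \emph{randomized} construction on the sphere---picking group-sparse sign vectors uniformly and bounding the collision probability---rather than your deterministic Gilbert--Varshamov volume count on block-Hamming balls; both yield the same $c_0 k\log(d/k)$ bound and your counting is equally clean. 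Second, the Plan--Vershynin black box is exactly where the star-shapedness hypothesis $\lambda v\in K$ for $\lambda\in[0,1)$ enters in the paper's argument: it guarantees $P_t=P_1$ for all $t\leq 1$, which lets the infimum over $t$ in the definition of $\delta_*$ collapse to the right scale. Your direct approach sidesteps this entirely by placing the packing at scale $\delta$ from the outset, so you are right that scalability is unused in your route---a small but genuine simplification. What the paper's modular approach buys is reusability (the same Lemma~\ref{lem:linear-model-lower-bound} handles arbitrary star-shaped $K$ once one supplies a packing), whereas your argument is more self-contained and slightly sharper in its hypotheses.
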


Then, we are ready to show the proof of Theorem \ref{thm:lower-bound}.
\begin{proof}[Proof of Theorem \ref{thm:lower-bound}] According to Proposition \ref{prop:transform-main}, let $G(\cdot)$ be defined by the mapping $\Gamma$. One can verify that $G(\lambda x) = \lambda G(x),~\forall \lambda\geq0$, by the positive homogeneity of ReLU network with no offsets. Letting $K = G(\mathbb R^{k+1})\cap \mathbb B_2^d(1)$ and then by Lemma \ref{lem:linear-model-lower-bound-final}, we can obtain Theorem \ref{thm:lower-bound}, which completes the proof.
\end{proof}

\subsection{Proof Outline of Theorem \ref{thm:determine_converge} and Theorem \ref{thm:optimum_compare} } 

The key to proving Theorems \ref{thm:determine_converge} and \ref{thm:optimum_compare} lies in understanding the concentration of $L(x)$ and $\nabla L(x)$. We prove two critical lemmas, Lemmas \ref{lem:comp-expect} and \ref{lem:comp-variance} in Supplement \S \ref{sec:proof_thm_22}, to show that, with high probability, when $\lambda$ and $m$ are sufficiently large, for any $x$, $z$ and $x_0$ such that $|G(x_0)|\leq R$, the following holds 
\begin{align}
\bigg| \bigg\langle  \frac{\lambda }{m}\sum_{i=1}^m y_i a_i - G(x_0), H_x(z) \bigg\rangle \bigg| \leq \varepsilon \|H_x(z)\|_2,   \label{eq:one-bit-rc}
\end{align}
where $H_x(z) := \prod_{j=1}^n W_{j, +, x}  z$ and $G(x) = H_x(x)$. 
In particular, this replaces the range restricted isometry condition (RRIC) adopted in previous works \citep{hand2018global}. Under the conditions of Theorems \ref{thm:determine_converge} and \ref{thm:optimum_compare}, the inequality \eqref{eq:one-bit-rc} essentially implies 
\begin{align*}
\frac{\lambda}{m} \sum_{i=1}^m y_i \dotp{a_i}{H_x(z)} \approx \dotp{G(x_0)}{H_x(z)}, \forall x, z.
\end{align*} 
Therefore, by definition of $L(x)$ in \eqref{eq:erm}, we can approximate $\nabla L(x)$ and $L(x)$ as follows:
\begin{align}
\langle\nabla L(x),z\rangle &\approx 2 \langle G(x), H_x(z) \rangle - 2 \langle G(x_0), H_x(z) \rangle, 
 \label{eq:approx_grad} \\
 L(x)  &\approx \|G(x_0)\|_2^2 - 2 \langle G(x_0), G(x)\rangle. 
 \label{eq:approx_erm}
\end{align}
We give a sketch proof of Theorem \ref{thm:determine_converge} as follows. Please see Supplement \S\ref{sec:proof_thm_22} for proof details.
\begin{itemize}[leftmargin=*]
\item We show that $\forall x, z$, $\langle G(x), H_x(z) \rangle -  \langle G(x_0), H_x(z) \rangle \approx \langle  h_{x,x_0},z \rangle$, where we define a certain approximation function $h_{x,x_0} :=2^{-n} x - 2^{-n} \big[ \big(  \prod_{i=0}^{n-1} \frac{\pi-\overline{\varrho}_i}{\pi}  \big) x_0 + \sum_{i=0}^{n-1} \frac{\sin \overline{\varrho}_i}{\pi} \big( \prod_{j=i+1}^{d-1}\frac{\pi - \overline{\varrho}_j}{\pi} \big) \frac{\|x_0\|_2}{\|x\|_2} x \big]$ with $\overline{\varrho}_i = g(\overline{\varrho}_{i-1})$, $\overline{\varrho}_0 = \angle (x,x_0)$, and $g(\varrho):=\cos^{-1} \l( \frac{(\pi-\varrho)\cos\varrho + \sin \varrho}{\pi} \r)$ as shown in Lemmas \ref{lem:converge_set} and  \ref{lem:wdc_ineq}. Combining with \eqref{eq:approx_grad}, we obtain $\langle\nabla L(x),z\rangle \approx 2 \langle h_{x,x_0}, z \rangle$. 
\item With $v_x$ being defined in Theorem \ref{thm:determine_converge}, the directional derivative along the direction $v_x$ is approximated as $D_{-v_x} L(x) \|v_x\|_2 \approx - 4 \| h_{x,x_0}\|^2_2$ following 
the previous step. Particularly, $\| h_{x,x_0}\|_2$ being small implies $x$ is close to $x_0$ or $-\rho_n x_0$ by Lemma \ref{lem:converge_set} and $\| h_{x,x_0}\|_2$ gets small as $\|x_0\|_2$ approaches $0$.
\item We consider the error of approximating $D_{-v_x} L(x) \|v_x\|_2$ by $- 4 \| h_{x,x_0}\|^2_2$. When $\|x_0\|_2$ is not small, and $x\neq0$, one can show the error is negligible compared to $- 4 \| h_{x,x_0}\|^2_2$, so that by the previous step, one finishes the proof of Case 1 when $x\neq 0$. On the other hand, for Case 2, when $\|x_0\|_2$ approaches 0, such an error is decaying slower than $-4\| h_{x,x_0}\|^2_2$ itself and eventually dominates it. As a consequence, one can only conclude that $\hat{x}_m$ is around the origin.
\item For Case 1 when $x = 0$, we can show $D_w L(0) \cdot \|w\|_2\leq   |\langle G(x_0), H_{x_N}(w) \rangle - \lambda / m \sum_{i=1}^m y_i  \langle a_i, H_{x_N}(w)\rangle | \allowbreak - \langle G(x_0), H_{x_N}(w)\rangle$ with  $x_N\rightarrow 0$. By giving the upper bound of the first term and the lower bound of the second term according to \eqref{eq:one-bit-rc} and Lemma \ref{lem:wdc_ineq}, we  obtain $D_{w} L(0) < 0, \forall w\neq 0$.  
\end{itemize}

Theorem \ref{thm:optimum_compare} is proved in Supplement \S\ref{sec:proof_thm_23}. We have the following proof sketch. We show by \eqref{eq:approx_erm} that 
$L(x) \approx 2 \langle  h_{x, x_0},x \rangle - \|G(x)\|_2^2$ for any $x$.  With such approximation, by Lemmas \ref{lem:loss_error_bound}, \ref{lem:rho_lower_bound} in the supplement, under certain conditions, we have that if $x$ and $z$ are around $x_0$ and  $-\rho_n x_0$ respectively, $L(x) < L(z)$ holds.

\section{Conclusion}

We consider the problem of one-bit compressed sensing via ReLU generative networks, in which $G:\mathbb{R}^k\rightarrow\mathbb{R}^d$ is an $n$-layer ReLU generative network with a low dimensional representation $x_0$ to $G(x_0)$.  We propose to recover the target $G(x_0)$ solving an unconstrained empirical risk minimization problem. Under a weak sub-exponential measurement assumption, we establish a joint statistical and computational analysis. We prove that the ERM estimator in this new framework achieves a statistical rate of $m=\tilde{\mathcal{O}}(kn \log d /\varepsilon^2)$ recovering any $G(x_0)$ uniformly up to an error $\varepsilon$. When the network is shallow, this rate matches the information-theoretic lower bound up to logarithm factors of $\varepsilon^{-1}$. Computationally, we prove that under proper conditions on the network weights, the proposed empirical risk has no stationary point outside of small neighborhoods around the true representation $x_0$ and its negative multiple. Under further assumptions on weights, we show that the global minimizer of the empirical risk stays within the neighborhood around $x_0$ rather than its negative multiple.

\bibliographystyle{ims}
\bibliography{bibliography}

\newpage
\appendix{}

\onecolumn
\section{Proof of Theorem \ref{thm:main-1}} \label{sec:detailed_proof_main-1}

In this section, we provide the proofs of the two key lemmas, i.e., Lemma \ref{lem:expected-risk} and Lemma \ref{lem:main-bound-on-sup} as well as other supporting lemmas. The proof of Theorem \ref{thm:lower-bound} is immediately obtained by following Lemma \ref{lem:expected-risk} and Lemma \ref{lem:main-bound-on-sup} as shown in Section \S\ref{sec:proof_main}.

\subsection{Bias of the Expected Risk} 
\label{sec:proof-bias}
We prove Lemma \ref{lem:expected-risk} in this subsection.
\begin{lemma}[Lemma \ref{lem:expected-risk}]\label{lem:expected-risk-supp}
There exists an absolute constant $c_1>0$ such that the following holds:
\begin{align*}
&\l|\expect{y_i\dotp{a_i}{G(x) - G(x_0)}} - \frac{1}{\lambda}\dotp{G(x_0)}{G(x) - G(x_0)} \r|\\
&\qquad \leq \sqrt{c_1(\|a\|_{\psi_1}R + \| \xi\|_{\psi_1})}(\sqrt{2(\lambda+1)}+2)e^{-\lambda/2(\|a\|_{\psi_1}R + \| \xi\|_{\psi_1})} 
\|G(x) - G(x_0)\|_2.
\end{align*}
Furthermore, for any $\varepsilon\in(0,1)$, if $\lambda\geq4C_{a,\xi,R}\cdot\log(64C_{a,\xi,R}\cdot\varepsilon^{-1})$ where $C_{a,\xi,R}=\max\{c_1(R\|a\|_{\psi_1}+\|\xi\|_{\psi_1}),1\}$, and $\|G(x) - G(x_0)\|_2 > \varepsilon$, then, we have 
\[
\|G(x)\|_2^2 - \|G(x_0)\|_2^2 - 2\lambda\expect{y_i\dotp{a_i}{G(x) - G(x_0)}}
\geq \frac12\|G(x) - G(x_0)\|_2^2.
\]
\end{lemma}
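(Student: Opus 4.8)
\textbf{Proof proposal for Lemma \ref{lem:expected-risk-supp}.}

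The plan is to compute $\expect{y_i\dotp{a_i}{\Delta}}$ (writing $\Delta := G(x)-G(x_0)$) by conditioning on $(a_i,\xi_i)$ and using the dithering identity, then control the error incurred because the dithering identity only holds exactly on the event that $|\dotp{a_i}{G(x_0)}+\xi_i|\leq\lambda$. First I would recall that for fixed $V$ and $\tau\sim\mathrm{Unif}[-\lambda,\lambda]$ one has $\EE_\tau[\sign(V+\tau)] = \tfrac{V}{\lambda}\mathbf 1_{\{|V|\leq\lambda\}} + \mathbf 1_{\{V>\lambda\}} - \mathbf 1_{\{V<-\lambda\}}$. Applying this with $V = \dotp{a_i}{G(x_0)}+\xi_i$ and taking expectation over $(a_i,\xi_i)$ gives
\[
\expect{y_i\dotp{a_i}{\Delta}} = \frac1\lambda\,\expect{(\dotp{a_i}{G(x_0)}+\xi_i)\dotp{a_i}{\Delta}} + \expect{\big(\mathbf 1_{\{V>\lambda\}}-\mathbf 1_{\{V<-\lambda\}} - \tfrac V\lambda\mathbf 1_{\{|V|>\lambda\}}\big)\dotp{a_i}{\Delta}}.
\]
The first term: since $a_i$ is isotropic and mean zero and $\xi_i$ is independent of $a_i$ (implicit in the model), $\expect{(\dotp{a_i}{G(x_0)}+\xi_i)\dotp{a_i}{\Delta}} = \expect{\dotp{a_i}{G(x_0)}\dotp{a_i}{\Delta}} = \dotp{G(x_0)}{\Delta}$, which is exactly the main term $\tfrac1\lambda\dotp{G(x_0)}{\Delta}$.

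The substance is bounding the remainder term. Call the bracketed scalar $E_i$; note $|E_i|\leq 2\cdot\mathbf 1_{\{|V|>\lambda\}}$ in absolute value pointwise (using $|V/\lambda|\geq1$ on that event together with the triangle inequality, carefully — actually $|E_i| \le |V|/\lambda \cdot \mathbf 1_{\{|V|>\lambda\}} + \mathbf 1_{\{|V|>\lambda\}}$, but a cleaner bound suffices). By Cauchy--Schwarz, $|\expect{E_i\dotp{a_i}{\Delta}}| \leq \|\dotp{a_i}{\Delta}\|_{L_2}\cdot\|E_i\|_{L_2}$, and $\|\dotp{a_i}{\Delta}\|_{L_2} \le \|\dotp{a_i}{\Delta}\|_{\psi_1}\cdot c \le c\|a\|_{\psi_1}\|\Delta\|_2$ up to a constant (sub-exponential norm dominates $L_2$ norm). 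For $\|E_i\|_{L_2}$ I would split: the $\mathbf 1_{\{|V|>\lambda\}}$ piece contributes $\sqrt{\PP(|V|>\lambda)}$, and the $\tfrac V\lambda\mathbf 1_{\{|V|>\lambda\}}$ piece contributes $\tfrac1\lambda\|V\mathbf 1_{\{|V|>\lambda\}}\|_{L_2}$. Since $V$ is sub-exponential with $\|V\|_{\psi_1}\leq K_{a,\xi,R} = \|a\|_{\psi_1}R+\|\xi\|_{\psi_1}$ (using $\|G(x_0)\|_2\leq R$), a standard sub-exponential tail gives $\PP(|V|>\lambda)\leq 2e^{-\lambda/K_{a,\xi,R}}$ and, via a truncation/integration-by-parts estimate, $\|V\mathbf 1_{\{|V|>\lambda\}}\|_{L_2}^2 \lesssim (\lambda^2 + K_{a,\xi,R}^2)e^{-\lambda/K_{a,\xi,R}}$, so $\tfrac1\lambda\|V\mathbf 1_{\{|V|>\lambda\}}\|_{L_2} \lesssim \sqrt{(1+K_{a,\xi,R}^2/\lambda^2)e^{-\lambda/K_{a,\xi,R}}} \lesssim \sqrt{(\lambda+1)}\,e^{-\lambda/(2K_{a,\xi,R})}$ after absorbing constants (and assuming $\lambda\gtrsim 1$). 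Collecting the pieces and renaming constants yields the claimed bound with the factor $\sqrt{c_1 K_{a,\xi,R}}(\sqrt{2(\lambda+1)}+2)e^{-\lambda/(2K_{a,\xi,R})}$. I expect the bookkeeping of the exponential moment of the truncated sub-exponential variable to be the main technical nuisance, though it is routine.

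Finally, for the second assertion: write $\|G(x)\|_2^2 - \|G(x_0)\|_2^2 = \|\Delta\|_2^2 + 2\dotp{G(x_0)}{\Delta}$ by expanding $\|G(x_0)+\Delta\|_2^2$. Then
\[
\|G(x)\|_2^2 - \|G(x_0)\|_2^2 - 2\lambda\expect{y_i\dotp{a_i}{\Delta}} = \|\Delta\|_2^2 + 2\dotp{G(x_0)}{\Delta} - 2\lambda\expect{y_i\dotp{a_i}{\Delta}},
\]
and by the first part $2\dotp{G(x_0)}{\Delta} - 2\lambda\expect{y_i\dotp{a_i}{\Delta}}$ is at least $-2\lambda\cdot\sqrt{c_1K_{a,\xi,R}}(\sqrt{2(\lambda+1)}+2)e^{-\lambda/(2K_{a,\xi,R})}\|\Delta\|_2$ in magnitude. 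It then suffices to check that the choice $\lambda\geq 4C_{a,\xi,R}\log(64C_{a,\xi,R}/\varepsilon)$ forces $2\lambda\sqrt{c_1K_{a,\xi,R}}(\sqrt{2(\lambda+1)}+2)e^{-\lambda/(2K_{a,\xi,R})}\leq\varepsilon/2$; since the exponential decay $e^{-\lambda/(2K_{a,\xi,R})}$ beats the polynomial prefactor in $\lambda$, plugging in the stated $\lambda$ and using $C_{a,\xi,R}\ge\max\{c_1 K_{a,\xi,R},1\}\ge 1$ makes this a direct (if slightly tedious) verification. Combined with $\|\Delta\|_2>\varepsilon$, we get the left side is at least $\|\Delta\|_2^2 - \tfrac\varepsilon2\|\Delta\|_2 \geq \|\Delta\|_2^2 - \tfrac12\|\Delta\|_2^2 = \tfrac12\|\Delta\|_2^2$, which is the desired inequality.
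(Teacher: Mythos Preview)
Your proposal is correct and follows essentially the same approach as the paper: both compute $\expect{\sign(V+\tau)\mid V}$ via the dithering identity, split off the truncation error, and bound it by Cauchy--Schwarz together with sub-exponential tail estimates on $V=\dotp{a_i}{G(x_0)}+\xi_i$. One small point: where you bound $\|\dotp{a_i}{\Delta}\|_{L_2}\le c\|a\|_{\psi_1}\|\Delta\|_2$ via the $\psi_1$ norm, the paper (and the stated constant) uses isotropy directly to get the exact equality $\|\dotp{a_i}{\Delta}\|_{L_2}=\|\Delta\|_2$, which avoids an extraneous $\|a\|_{\psi_1}$ factor; you already invoked isotropy for the main term, so just use it again here.
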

\begin{proof}[Proof of Lemma \ref{lem:expected-risk}]
Recall that $y_i= \sign(\dotp{a_i}{G(x_0)} + \xi_i + \tau_i)$. For simplicity of notations, we set $V_i = \dotp{a_i}{G(x_0)} + \xi_i$ and 
$Z_i =\dotp{a_i}{G(x) - G(x_0)} $. Note first that due to the independence between $V_i$ and $\tau_i$, we have
\begin{align*}
\expect{\sign(V_i+\tau_i)|V_i} =& \frac{V_i}{\lambda}\mathbf{1}_{\{|V_i|\leq \lambda\}} + \mathbf{1}_{\{V_i> \lambda\}} - \mathbf{1}_{\{V_i< -\lambda\}}\\
=&\frac{V_i}{\lambda} -  \frac{V_i}{\lambda} \mathbf{1}_{\{|V_i|> \lambda\}} + \mathbf{1}_{\{V_i> \lambda\}} - \mathbf{1}_{\{V_i< -\lambda\}}.
\end{align*}
Thus, we have
\begin{align}
\l|\expect{Z_i\sign(V_i+\tau_i)} - \frac{\expect{Z_iV_i}}{\lambda}  \r|
&= \l|-\expect{\frac{Z_iV_i}{\lambda}\mathbf{1}_{\{|V_i|> \lambda\}}}  + \expect{Z_i\mathbf{1}_{\{V_i> \lambda\}}} -  \expect{Z\mathbf{1}_{\{V_i> \lambda\}}}\r|
\nonumber\\
&  \leq \l|\expect{\frac{Z_iV_i}{\lambda}\mathbf{1}_{\{|V_i|> \lambda\}}}\r| + 2\l| \expect{Z_i\mathbf{1}_{\{|V_i|> \lambda\}}}\r|\nonumber\\
&  \leq \frac{\|Z_i\|_{L_2}\cdot\|V_i\mathbf{1}_{\{|V_i|> \lambda\}}\|_{L_2}}{\lambda} + 2\|Z_i\|_{L_2} \mathrm{Pr}(|V_i|> \lambda)^{1/2},
\label{eq:inter-exp-analysis} 
\end{align}
where the last line follows from Cauchy-Schwarz inequality. Now we bound these terms respectively. First of all, by the isotropic assumption of $a_i$, we have
\[
\|Z_i\|_{L_2} = \l\{\expect{|\dotp{a_i}{G(x) - G(x_0)}|^2}\r\}^{1/2} = \|G(x) - G(x_0)\|_2.
\]
Next, we have
\begin{align*}
\|V_i\mathbf{1}_{\{|V_i|> \lambda\}}\|_{L_2} &= \expect{V_i^2\mathbf{1}_{\{|V_i|> \lambda\}}}^{1/2}
= \l(\int_{\lambda}^{\infty}w^2dP(w)\r)^{1/2} \\
&= \l(2\int_{\lambda}^{\infty}wP(|V_i|> w)dw\r)^{1/2}
\leq \l(2c_1\int_{\lambda}^{\infty}we^{-w/\|\dotp{a_i}{G(x_0)} + \xi_i\|_{\psi_1}}dw\r)^{1/2}\\
&\leq \sqrt{2c_1(\lambda+1)\|\dotp{a_i}{G(x_0)} + \xi_i\|_{\psi_1}}e^{-\lambda/2\|\dotp{a_i}{G(x_0)} + \xi_i\|_{\psi_1}},
\end{align*}
where the second  from the last inequality follows from sub-exponential assumption of $\dotp{a_i}{G(x_0)} + \xi_i$ and $c_1>0$ is an absolute constant. 
Note that 
\[
\|\dotp{a_i}{G(x_0)} + \xi_i\|_{\psi_1}\leq \|\dotp{a_i}{G(x_0)}\|_{\psi_1} + \| \xi_i\|_{\psi_1} 
\leq\|a\|_{\psi_1}\|G(x_0)\|_2 + \| \xi\|_{\psi_1} \leq \|a\|_{\psi_1}R + \| \xi\|_{\psi_1},
\]
where we use the assumption that $\|G(x_0)\|_2\leq R$. Substituting this bound into the previous one gives 
\[
\|V_i\mathbf{1}_{\{|V_i|> \lambda\}}\|_{L_2}\leq 
\sqrt{2c_1(\lambda+1)(\|a\|_{\psi_1}R + \| \xi\|_{\psi_1})}e^{-\lambda/2(\|a\|_{\psi_1}R + \| \xi\|_{\psi_1})}.
\]
Furthermore, 
\[
\mathrm{Pr}(|V_i|> \lambda)^{1/2}\leq \sqrt{c_1(\|a\|_{\psi_1}R + \| \xi\|_{\psi_1})}e^{-\lambda/2(\|a\|_{\psi_1}R + \| \xi\|_{\psi_1})}.
\]
Overall, substituting the previous computations into \eqref{eq:inter-exp-analysis}, we obtain
\begin{align*}
&\l|\expect{Z_i\sign(V_i+\tau_i)} - \frac{\expect{Z_iV_i}}{\lambda}  \r|\\
&\qquad \leq \sqrt{c_1(\|a\|_{\psi_1}R + \| \xi\|_{\psi_1})}(\sqrt{2(\lambda+1)}/\lambda+2)e^{-\lambda/2(\|a\|_{\psi_1}R + \| \xi\|_{\psi_1})} 
\|G(x) - G(x_0)\|_2,
\end{align*}
finishing the first part of the proof.

To prove the second part, we need to compute 
$$
\Big|2\lambda\expect{y_i\dotp{a_i}{G(x) - G(x_0)}} - 2\dotp{G(x_0)}{G(x) - G(x_0)}\Big|
=2\Big|\lambda\expect{Z_i\sign(V_i+\tau_i)} - \expect{Z_iV_i}  \Big|.
$$
Note that when $\varepsilon<1$ and
$$\lambda\geq4\max\{c_1(R\|a\|_{\psi_1}+\|\xi\|_{\psi_1}),1\}\log(64\max\{c_1(R\|a\|_{\psi_1} + \|\xi\|_{\psi_1}),1\}/\varepsilon).$$
One can check that
\begin{align*}
&\l|\lambda\expect{Z_i\sign(V_i+\tau_i)} - \expect{Z_iV_i}  \r|\\
& \qquad \leq \sqrt{c_1(\|a\|_{\psi_1}R + \| \xi\|_{\psi_1})}(\sqrt{2(\lambda+1)}+2\lambda)e^{-\lambda/2(\|a\|_{\psi_1}R + \| \xi\|_{\psi_1})}
\|G(x) - G(x_0)\|_2\\
& \qquad \leq\frac14\varepsilon\|G(x) - G(x_0)\|_2.
\end{align*}
Thus, it follows
\begin{align*}
&\|G(x)\|_2^2 - \|G(x_0)\|_2^2 - 2\lambda\expect{y_i\dotp{a_i}{G(x) - G(x_0)}}\\
& \qquad \geq  \|G(x)\|_2^2 - \|G(x_0)\|_2^2 - 2\dotp{G(x_0)}{G(x) - G(x_0)}  - \frac12\varepsilon\|G(x) - G(x_0)\|_2\\
& \qquad = \|G(x) - G(x_0)\|_2^2 - \frac12\varepsilon\|G(x) - G(x_0)\|_2.
\end{align*}
Thus, when $\|G(x) - G(x_0)\|_2>\varepsilon$ the second claim holds.
\end{proof}

\subsection{Analysis of Variances: Uniform Bounds of An Empirical Process}\label{sec:proof-var}
Our goal in this subsection is to prove Lemma \ref{lem:main-bound-on-sup}.
Note that one can equivalently write the $\{G(x_0): \|G(x_0)\|_2\leq R,~x_0\in\mathbb{R}^k\}$ as $G(\mathbb{R}^k)\cap \mathbb{B}_2^d(R)$, where $\mathbb{B}_2^d(R)$ denotes the $\ell_2$-ball of radius $R$. 
The strategy of bounding this supremum is as follows: Consider a $\delta$-covering net over the set 
$G(\mathbb{R}^k)\cap \mathbb{B}_2^d(R)$, namely $\mathcal{N}(G(\mathbb{R}^k)\cap \mathbb{B}_2^d(R),~\delta)$, and bounding the supremum over each individual covering ball.
The $\delta$ value will be decided later.

\subsubsection{Bounding Supremum Under Fixed Signs: A Covering Net Argument}
First of all, since for any point $\theta\in G(\mathbb{R}^k)\cap \mathbb{B}_2^d(R)$, there exists a $v\in\mathbb{R}^k$ such that 
$\theta = G(v)$, we use $G(v)$ to denote any point in the net $\mathcal{N}(G(\mathbb{R}^k)\cap \mathbb{B}_2^d(R),~\delta)$. 
We replace each $\sign(\dotp{a_i}{G(x_0)} + \xi_i + \tau_i)$ by $\sign(\dotp{a_i}{G(v)} + \xi_i + \tau_i)$ and have the following lemma regarding the supremum for each fixed $G(v)$.

\begin{lemma}\label{lem:ball-supremum}
Let $c,c_1>0$ be some absolute constants. For any $u\geq0$ and fixed $G(v)$, the following holds with probability at least $1-2\exp(-u-c_1 kn\log ed)$,
\begin{align*}
&\sup_{x\in\mathbb{R}^k,~x_0\in\mathbb{R}^k} \frac{\l|\frac1m\sum_{i=1}^m\varepsilon_i\sign(\dotp{a_i}{G(v)} + \xi_i + \tau_i)\dotp{a_i}{G(x) - G(x_0)}\r|}{\|G(x) - G(x_0)\|_2} \\
&\qquad \leq \sqrt{\frac{8(u+ckn\log(ed))}{m}} + \frac{2\|a\|_{\psi_1}(u+ckn\log(ed))}{m}.
\end{align*}
\end{lemma}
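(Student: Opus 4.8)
\textbf{Proof proposal for Lemma \ref{lem:ball-supremum}.}

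The plan is to bound the supremum by viewing the numerator, after fixing the signs $s_i := \sign(\dotp{a_i}{G(v)} + \xi_i + \tau_i)$, as a linear functional of the difference vector $w := G(x) - G(x_0)$, normalized by $\|w\|_2$. The crucial observation is that $G(x)$ and $G(x_0)$ each lie in the range of a piecewise-linear map, so $w$ ranges over a set that is covered by finitely many low-dimensional linear pieces. First I would invoke the piecewise linearity of the ReLU network: the map $x \mapsto G(x)$ partitions $\mathbb{R}^k$ into at most $N$ convex polyhedral regions on each of which $G$ is linear with rank at most $k$; a standard VC-dimension / hyperplane-counting argument (the network has $\sum_i d_i \le nd$ neurons, each contributing one hyperplane in its input space) gives $\log N = \mathcal{O}(kn\log(ed))$. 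Consequently, as $x, x_0$ range over $\mathbb{R}^k$, the difference $w = G(x) - G(x_0)$ lies in a union of at most $N^2$ subspaces each of dimension at most $2k$.

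Next I would fix one such pair of pieces, so that $w$ is confined to a fixed subspace $V$ of dimension $\le 2k$, and control
\[
\sup_{w \in V \setminus \{0\}} \frac{\bigl|\tfrac1m\sum_{i=1}^m \varepsilon_i s_i \dotp{a_i}{w}\bigr|}{\|w\|_2}
= \Bigl\| \tfrac1m \sum_{i=1}^m \varepsilon_i s_i P_V a_i \Bigr\|_2,
\]
where $P_V$ is the orthogonal projection onto $V$. Conditionally on $\{a_i, \xi_i, \tau_i\}$, the signs $s_i$ are fixed and $\varepsilon_i s_i$ are again i.i.d. Rademacher, so $\tfrac1m\sum_i \varepsilon_i s_i P_V a_i$ is a sum of independent mean-zero vectors in a $2k$-dimensional space. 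I would bound its Euclidean norm by combining a $\tfrac14$-net over the unit sphere of $V$ (of cardinality $9^{2k}$) with a Bernstein-type tail bound for each fixed unit direction $t$: $\tfrac1m\sum_i \varepsilon_i s_i \dotp{a_i}{t}$ is an average of independent sub-exponential variables with $\psi_1$-norm $\le \|a\|_{\psi_1}$, giving a tail of the form $\exp(-cm\min\{t'^2, t'/\|a\|_{\psi_1}\})$. Choosing the deviation level $t' \asymp \sqrt{(u + ckn\log(ed))/m} + \|a\|_{\psi_1}(u + ckn\log(ed))/m$ makes the per-direction failure probability small enough to absorb the $9^{2k}$ net factor and still leave $\exp(-u - c_1 kn\log(ed))$, and then a union bound over the $N^2 = \exp(\mathcal{O}(kn\log(ed)))$ choices of linear pieces is likewise absorbed (after adjusting constants).

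The main obstacle I anticipate is the careful bookkeeping of the piecewise-linear structure: one must argue that the supremum over all $x, x_0 \in \mathbb{R}^k$ genuinely reduces to a supremum over finitely many fixed subspaces without losing the rank-$\le 2k$ property (the activation pattern of $G$ at $x$ fixes which rows of each $W_i$ are retained, hence a fixed linear map of rank $\le k$), and that the count of distinct patterns is $\exp(\mathcal{O}(kn\log(ed)))$ rather than something larger. A secondary technical point is that the $s_i$ depend on the randomness of $a_i$, but since we have already symmetrized (the $\varepsilon_i$ are external Rademachers) we may condition on everything except $\{\varepsilon_i\}$ and treat $\varepsilon_i s_i$ as fresh Rademachers, so the concentration step sees only the fixed vectors $a_i$; this is where symmetrization pays off. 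Everything else — the net argument over the sphere, the Bernstein bound, the union bounds — is routine once the geometry is set up, and the stated two-term (sub-Gaussian plus sub-exponential) bound is exactly the shape produced by Bernstein's inequality for averages of sub-exponential random variables.
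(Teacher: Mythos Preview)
Your proposal is correct and follows essentially the same route as the paper: absorb the signs $s_i$ into the Rademachers, use the piecewise-linear structure of $G$ to reduce to at most $(2d)^{kn}$ pairs of linear pieces (each giving a subspace of dimension $\le 2k$), then apply a net argument on the sphere of that subspace together with Bernstein's inequality and union bounds. One small sharpening: rather than conditioning on $\{a_i,\xi_i,\tau_i\}$ and later invoking the unconditional $\psi_1$-norm of $\varepsilon_i s_i\dotp{a_i}{t}$ (your phrasing ``the concentration step sees only the fixed vectors $a_i$'' is slightly at odds with then using an unconditional sub-exponential bound), the paper simply notes at the outset that $\varepsilon_i s_i \stackrel{d}{=} \varepsilon_i$ because $\varepsilon_i$ is independent of $s_i$, so the $s_i$ can be dropped entirely before any concentration is applied; this avoids any ambiguity about which randomness is being averaged over.
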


\begin{proof}[Proof of Lemma \ref{lem:ball-supremum}]
First of all, since $v$ is fixed and $\varepsilon_i$ is independent of $ \sign(\dotp{a_i}{G(v)} + \xi_i + \tau_i)$, it follows the distribution of 
$\varepsilon_i$ is the same as the distribution of $\varepsilon_i \sign(\dotp{a_i}{G(v)} + \xi_i + \tau_i)$. Thus, it is enough to work with the following supremum:
\[
\sup_{x\in\mathbb{R}^k,~x_0\in\mathbb{R}^k} \frac{\l|\frac1m\sum_{i=1}^m\varepsilon_i\dotp{a_i}{G(x) - G(x_0)}\r|}{\|G(x) - G(x_0)\|_2}.
\]
To this point, we will then use the piecewise linear structure of the ReLU function. Note that the ReLU network has $n$ layers with each layer having at most $d$ nodes, where each layer of the network is a linear transformation followed by at most $d$ pointwise nonlinearities. Consider any node in the first layer, which can be written as $\max\{\dotp{w}{x},0\}$ with a weight vector $w$ and an input vector $x$, splits the input space $\mathbb{R}^k$ into two disjoint pieces, namely 
$\mathcal{P}_1$ and $\mathcal{P}_2$, where for any input in $\mathcal{P}_1$, the node is a linear mapping $\dotp{w}{x}$ and for any input in $\mathcal{P}_2$ is the other linear mapping $\dotp{0}{x}$. 

Thus, each node in the first layer corresponds to a splitting hyperplane in $\mathbb R^k$. We have the following claim on the number of possible pieces split by $d$ hyperplanes:

\noindent \textit{Claim 1:} The maximum number of pieces when splitting $\mathbb R^k$ with $d$ hyperplanes, denoted as 
$\mathcal{C}(d,k)$, is
\[
\mathcal{C}(d,k) = {d\choose 0} + {d\choose 1} + \cdots + {d\choose k}.
\]
The proof of this claim, which follows from, for example \cite{winder1966partitions}, is based on an induction argument on both $d$ and $k$ and omitted here for brevity. Note that $\mathcal{C}(d,k)\leq d^k+1$.
For the second layer, we can consider each piece after the first layer, which is a subset of $\mathbb R^k$ and will then be further split into at most $d^k+1$ pieces. Thus, we will get at most $(d^k+1)^2$ pieces after the second layer. Continuing this argument through all $n$ layers and we have the input space $\mathbb R^k$ is split into at most 
$(d^{k}+1)^n\leq (2d)^{kn}$ pieces, where within each piece the function $G(\cdot)$ is simply a linear transformation from 
$\mathbb R^k$ to $\mathbb R^d$. 

Now, we consider any two pieces, namely $\mathcal{P}_1,~\mathcal{P}_2\subseteq\mathbb R^k$, from the aforementioned collection of pieces, and aim at bounding the following quantity:
\begin{align*}
\sup_{t_1\in\mathcal P_1,t_2\in\mathcal P_2}
\frac{\l|\frac1m\sum_{i=1}^m\varepsilon_i\dotp{a_i}{G(t_1)-G(t_2)}\r|}{\|G(t_1)-G(t_2)\|_2}.
\end{align*}
By the previous argument, we know that within $\mathcal P_1$ and $\mathcal P_2$, the function $G(\cdot)$ can simply be represented by some fixed linear maps $W_1$ and $W_2$, respectively. As a consequence, it suffices to bound
\begin{align*}
&\sup_{t_1\in\mathcal P_1,t_2\in\mathcal P_2}
\frac{\l|\frac1m\sum_{i=1}^m\varepsilon_i\dotp{a_i}{W_1t_1-W_2t_2}\r|}{\|W_1t_1-W_2t_2\|_2}\\
& \qquad \leq 
\sup_{t_1,~t_2\in\mathbb{R}^k}
\frac{\l|\frac1m\sum_{i=1}^m\varepsilon_i\dotp{a_i}{W_1t_1-W_2t_2}\r|}{\|W_1t_1-W_2t_2\|_2}\\
& \qquad \leq\sup_{t\in\mathbb{R}^{2k}}
\frac{\l|\frac1m\sum_{i=1}^m\varepsilon_i\dotp{a_i}{W_0t}\r|}{\|W_0t\|_2},
\end{align*}
where $W_0:=[W_1,~-W_2]$, and the last inequality follows from concatenating $t_1$ and $t_2$ to form a vector $t \in\mathbb{R}^{2k}$ and then expanding the set to take supremum over $t\in\mathbb{R}^{2k}$. Let $\mathcal{E}_{2k}$ be the subspace in $\mathbb R^d$ spanned by the $2k$ columns of $W_0$, then, the above supremum can be rewritten as
\[
E_m:=\sup_{b\in\mathcal{E}^{2k}\cap\mathcal{S}^{d-1}}\l|\frac1m\sum_{i=1}^m\varepsilon_i\dotp{a_i}{b}\r|.
\]
To bound the supremum, we consider a $1/2$-covering net of the set $\mathcal{E}^{2k}\cap\mathcal{S}^{d-1}$, namely, 
$\mathcal{N}(\mathcal{E}^{2k}\cap\mathcal{S}^{d-1},1/2)$. A simple volume argument shows that the cardinality
$|\mathcal{N}(\mathcal{E}^{2k}\cap\mathcal{S}^{d-1},1/2)|\leq 3^{2k}$.

By Bernstein's inequality (Lemma \ref{Bernstein}), we have for any fixed $b\in\mathcal{N}(\mathcal{E}^{2k}\cap\mathcal{S}^{d-1},1/2)$,
\[
\mathrm{Pr}\l( \l|\frac1m\sum_{i=1}^m\varepsilon_i\dotp{a_i}{b}\r|\geq \sqrt{\frac{2u'}{m}} + \frac{\|a\|_{\psi_1}u'}{m} \r)\leq 2e^{-u'}.
\]
Taking $u' = u+ckn\log(ed)$ for some $c>6$, we have with probability at least $1-2\exp(-u-ckn\log(ed))$, 
\[
\l|\frac1m\sum_{i=1}^m\varepsilon_i\dotp{a_i}{b}\r|\leq \sqrt{\frac{2(u+ckn\log(ed))}{m}} + \frac{\|a\|_{\psi_1} (u+ckn\log(ed))}{m}.
\]
Taking a union bound over all $b\in\mathcal{N}(\mathcal{E}^{2k}\cap\mathcal{S}^{d-1},1/2)$, we have with probability at least $1-2\exp(-u-ckn\log(ed))\cdot 3^{2k}\geq 1-2\exp(-u-c_1kn\log(ed))$ for some absolute constant $c_1>2$.
\begin{equation}\label{eq:covering-bound-1}
\sup_{b\in\mathcal{N}(\mathcal{E}^{2k}\cap\mathcal{S}^{d-1},1/2)}\l|\frac1m\sum_{i=1}^m\varepsilon_i\dotp{a_i}{b}\r|\leq \sqrt{\frac{2(u+ckn\log(ed))}{m}} + \frac{\|a\|_{\psi_1} (u+ckn\log(ed))}{m}.
\end{equation}
Let $P_{\mathcal N}(\cdot)$ be the projection of any point in $\mathcal{E}^{2k}\cap\mathcal{S}^{d-1}$ onto $\mathcal{N}(\mathcal{E}^{2k}\cap\mathcal{S}^{d-1},1/2)$. we have
\begin{align}
E_m \leq& \sup_{b\in\mathcal{N}(\mathcal{E}^{2k}\cap\mathcal{S}^{d-1},1/2)}\l|\frac1m\sum_{i=1}^m\varepsilon_i\dotp{a_i}{b}\r|  
+ \sup_{b\in\mathcal{E}^{2k}\cap\mathcal{S}^{d-1}}\l|\frac1m\sum_{i=1}^m\varepsilon_i\dotp{a_i}{b- P_{\mathcal N}(b)}\r|  \nonumber\\
\leq&\sup_{b\in\mathcal{N}(\mathcal{E}^{2k}\cap\mathcal{S}^{d-1},1/2)}\l|\frac1m\sum_{i=1}^m\varepsilon_i\dotp{a_i}{b}\r|  
+ \frac12\sup_{b\in\mathcal{E}^{2k}\cap\mathcal{S}^{d-1}}\l|\frac1m\sum_{i=1}^m\frac{\varepsilon_i\dotp{a_i}{b - P_{\mathcal N}(b)}}{\|b - P_{\mathcal N}(b)\|_2}\r|   \nonumber\\
\leq& \sup_{b\in\mathcal{N}(\mathcal{E}^{2k}\cap\mathcal{S}^{d-1},1/2)}\l|\frac1m\sum_{i=1}^m\varepsilon_i\dotp{a_i}{b}\r| + \frac12 E_m, \label{eq:1-step-chaining}
\end{align}
where the second inequality follows from the homogeneity of the set $\mathcal{E}^{2k}\cap\mathcal{S}^{d-1}$ under constant scaling. 
Combining \eqref{eq:covering-bound-1} and \eqref{eq:1-step-chaining} gives
\[
\sup_{b\in\mathcal{E}^{2k}\cap\mathcal{S}^{d-1}}\l|\frac1m\sum_{i=1}^m\varepsilon_i\dotp{a_i}{b}\r|\leq 
 2\sqrt{\frac{2(u+ckn\log(ed))}{m}} + \frac{2\|a\|_{\psi_1} (u+ckn\log(ed))}{m}.
\]
Taking a further union bound over at most $(2d)^{kn}$ different pair of subspaces $\mathcal P_1,~\mathcal P_2$ finishes the proof.
\end{proof}

\subsubsection{Counting the Sign Differences: A VC-Dimension Bound}
In this section, we consider all possible sign changes replacing each $\sign(\dotp{a_i}{G(x_0)} + \xi_i + \tau_i)$ by $\sign(\dotp{a_i}{G(v)} + \xi_i + \tau_i)$, where we recall $G(v)$ is a nearest point to $G(x_0)$ in $\mathcal{N}(G(\mathbb{R}^k)\cap \mathbb{B}_2^d(R),~\delta)$.

First of all, since $\tau_i\sim \text{Unif}[-\lambda,+\lambda]$, for any $\eta>0$, defining a new random variable $X_i:=\dotp{a_i}{G(v)} + \xi_i$ which is thus independent of $\tau_i$, for all $i=1,2,\cdots,m$, we have
\begin{align*}
&\mathrm{Pr}(| \dotp{a_i}{G(v)} + \xi_i+\tau_i| \leq \eta) =\mathrm{Pr}( -\eta\leq  X_i +\tau_i \leq \eta) \leq \frac{\eta}{\lambda},
\end{align*}
by computing the integral of the probability density functions of $X_i$ and $\tau_i$ in $-\eta\leq  X_i +\tau_i \leq \eta$. It is sufficient to calculate the above probability bound with only knowing the distribution of $\tau_i$. Using Chernoff bound (Lemma \ref{lem:chernoff}), one has with probability at least $1-\exp(-\eta m/3\lambda)$,
\begin{equation}\label{eq:count-1}
\sum_{i=1}^m\mathbf{1}_{\{| \dotp{a_i}{G(v)} + \xi_i+\tau_i|\geq\eta\}}\geq \l(1-\frac{2\eta}{\lambda}\r)m.
\end{equation}
Next, we prove the following lemma:
\begin{lemma}\label{lem:counting-process}
Let $\eta,\delta>0$ be chosen parameters. For any $u\geq0$ and fixed $G(v)$, the following holds with probability at least $1-2\exp(-u)$,
\[
\sup_{x_0\in\mathbb{R}^k, \|G(x_0)-G(v)\|_2\leq \delta}\sum_{i=1}^m\mathbf{1}_{\{|\dotp{a_i}{G(x_0) -G(v)}| \geq \eta\}}
\leq m\cdot \mathrm{Pr}(|\dotp{a_i}{z}|\geq \eta/\delta) + L\sqrt{(kn\log(ed)+u)m},
\] 
where $z$ is any fixed vector in $\mathbb{B}_2^d(1)$ and $L>1$ is an absolute constant.
\end{lemma}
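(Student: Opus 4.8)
The plan is to read the left-hand side as the supremum of a centered empirical process over a class of $\{0,1\}$-valued functions, to control that class by a Vapnik--Chervonenkis (VC) argument, and to handle the resulting mean term by a rescaling together with the sub-exponential tail from Assumption \ref{as:moments}. Writing $w_{x_0}:=G(x_0)-G(v)$ and $f_{x_0}(a):=\mathbf{1}_{\{|\dotp{a}{w_{x_0}}|\ge\eta\}}$, the quantity of interest is $\sup_{x_0}\sum_{i=1}^m f_{x_0}(a_i)$ over $\{x_0:\|w_{x_0}\|_2\le\delta\}$, and I would split it as $\sup_{\|w_{x_0}\|_2\le\delta} m\,\EE f_{x_0}$ plus $\sup_{x_0\in\mathbb{R}^k}\big|\sum_{i=1}^m(f_{x_0}(a_i)-\EE f_{x_0})\big|$, where the second supremum is now taken over all $x_0\in\mathbb{R}^k$ (enlarging the constraint set only enlarges the class). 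For the mean term, $\|w_{x_0}\|_2\le\delta$ gives $\{|\dotp{a}{w_{x_0}}|\ge\eta\}\subseteq\{|\dotp{a}{\widehat w_{x_0}}|\ge\eta/\delta\}$ after normalizing, and by Assumption \ref{as:moments} every unit-direction marginal of $a$ satisfies the same $\psi_1$ tail bound, so $\sup_{\|w_{x_0}\|_2\le\delta}\EE f_{x_0}\le\mathrm{Pr}(|\dotp{a}{z}|\ge\eta/\delta)$ for any fixed $z\in\mathbb{B}_2^d(1)$, which is exactly the first term on the right-hand side of the claim.

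The heart of the matter is to bound the complexity of $\{f_{x_0}\}$ so that it scales like $kn\log(ed)$ rather than polynomially in $d$, and I would do this as in the proof of Lemma \ref{lem:ball-supremum}: the input space $\mathbb{R}^k$ is cut into at most $(2d)^{kn}$ polyhedral pieces on each of which $G$ coincides with a fixed linear map $W_{\mathcal{P}}$. On a piece $\mathcal{P}$ one has $w_{x_0}=W_{\mathcal{P}}x_0-G(v)$, hence $\dotp{a}{w_{x_0}}=\dotp{W_{\mathcal{P}}^{\top}a}{x_0}-\dotp{a}{G(v)}=\dotp{\phi_{\mathcal{P}}(a)}{(x_0,-1)}$ where $\phi_{\mathcal{P}}(a):=\big(W_{\mathcal{P}}^{\top}a,\ \dotp{a}{G(v)}\big)\in\mathbb{R}^{k+1}$ is a fixed linear image of $a$; so on $\mathcal{P}$ each $f_{x_0}$ is the indicator of the complement of a slab in these $k+1$ coordinates, i.e.\ a union of two affine half-space indicators in $\mathbb{R}^{k+1}$, which form a class of VC dimension $O(k)$ independent of $m$ and $d$. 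I expect this to be the main obstacle: one must verify that relaxing $x_0\in\mathcal{P}$ to $x_0\in\mathbb{R}^k$ only enlarges the class, and that the piece-dependent feature map $\phi_{\mathcal{P}}$ cannot inflate the VC dimension -- it cannot, since the VC dimension of half-spaces depends only on the feature-space dimension $k+1$.

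To assemble the pieces, for a VC class of dimension $O(k)$ I would invoke, via symmetrization and the Dudley entropy bound (using the Sauer--Shelah / Haussler covering-number estimate), the log-free bound $\EE\sup_{x_0}\big|\sum_{i=1}^m(f_{x_0}(a_i)-\EE f_{x_0})\big|\le C\sqrt{km}$ on each piece; since each summand lies in $\{0,1\}$, this supremum has bounded differences with constants $1$, so McDiarmid's inequality upgrades it to $C\sqrt{km}+\sqrt{u'm/2}$ with probability at least $1-e^{-u'}$ for each fixed piece. A union bound over the at most $(2d)^{kn}$ pieces with $u'=u+\log\!\big((2d)^{kn}\big)\le u+kn\log(ed)$ then yields, with probability at least $1-e^{-u}\ge 1-2e^{-u}$, a uniform fluctuation of order $L\sqrt{(kn\log(ed)+u)m}$, absorbing the $O(k)$ VC term into $kn\log(ed)$; combining with the mean bound from the first paragraph completes the proof. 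It is the routing through a union bound over pieces -- rather than bounding the VC dimension of the full union class -- together with the log-free chaining estimate per piece, that keeps an extraneous $\log m$ out of the final rate.
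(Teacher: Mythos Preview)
Your proposal is correct and leads to the same bound, but the route differs from the paper's in one meaningful place. The paper relaxes $G(x_0)-G(v)$ to $t\in(G(\mathbb{R}^k)-G(\mathbb{R}^k))\cap\mathbb{B}_2^d(\delta)$, rescales to the unit ball, and then bounds the VC dimension of the \emph{full} union class $\mathcal{F}=\bigcup_{\mathcal{P}_1,\mathcal{P}_2}\{\mathbf{1}_{\{|\dotp{\cdot}{t}|\ge\eta/\delta\}}:t\in\text{affine}(\mathcal{P}_1-\mathcal{P}_2)\}$ directly: they show $V(\mathcal{F})\le c\,kn\log(ed)$ by counting shatterable subsets, and then apply Haussler's covering bound, Dudley's integral, and the bounded-differences inequality a single time. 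You instead keep $G(v)$ fixed, work on one piece $\mathcal{P}$ at a time with a $(k{+}1)$-dimensional feature map, get a per-piece VC dimension of $O(k)$, run Dudley and McDiarmid per piece, and then union-bound over the $(2d)^{kn}$ pieces by inflating $u$ to $u'=u+kn\log(ed)$.

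Both arguments deliver the same $L\sqrt{(kn\log(ed)+u)m}$ fluctuation term. The paper's approach is cleaner in that concentration is invoked once, but it pays for this with the explicit VC computation for a union of exponentially many subclasses. Your per-piece plus union-bound route avoids that computation entirely and is arguably more modular; it also exploits the fact that $G(v)$ is fixed (so only one piece index is needed, not a pair), which the paper's relaxation to $T-T$ gives up. One small remark on your mean bound: the inequality $\sup_{\|w\|\le\delta}\Pr(|\dotp{a}{w}|\ge\eta)\le\Pr(|\dotp{a}{z}|\ge\eta/\delta)$ for ``any fixed $z\in\mathbb{B}_2^d(1)$'' should be read, as in the paper, as the uniform sub-exponential tail bound over unit directions; that is exactly how it is used downstream in Lemma~\ref{lem:main-count}.
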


This lemma implies that the counting process $\{\mathbf{1}_{\{|\dotp{a_i}{G(x_0) -G(v)}| \geq \eta\}}\}_{i=1}^m$ enjoys a tight sub-Gaussian uniform concentration. The proof relies on a book-keeping VC dimension argument.

\begin{proof}[Proof of Lemma \ref{lem:counting-process}]
First of all, let $T = G(\mathbb{R}^k)$, and it suffices to bound the following supremum:
\[
\sup_{t\in (T-T)\cap\mathbb{B}_2^d(\delta)}\sum_{i=1}^m\mathbf{1}_{\{|\dotp{a_i}{t}| \geq \eta\}}.
\]
Let $\mathcal{T}$ be the set of all distinctive pieces split by $G(\cdot)$. By the same argument as that of Lemma \ref{lem:ball-supremum}, the cardinality of $\mathcal{T}$ is at most 
$(d^{k}+1)^n\leq (2d)^{kn}$, and we have
\begin{align*}
&\sup_{t\in (T-T)\cap\mathbb{B}_2^d(\delta)}\sum_{i=1}^m\mathbf{1}_{\{|\dotp{a_i}{t}| \geq \eta\}}\\
& \qquad \leq \sup_{\mathcal{P}_1,~\mathcal{P}_2\in\mathcal{T}, t\in (\mathcal{P}_1-\mathcal{P}_2)\cap\mathbb{B}_2^d(\delta)}\sum_{i=1}^m\mathbf{1}_{\{|\dotp{a_i}{t}| \geq \eta\}}\\
& \qquad \leq \sup_{\mathcal{P}_1,~\mathcal{P}_2\in\mathcal{T},~ t\in \text{affine}(\mathcal P_1-\mathcal P_2)\cap\mathbb{B}_2^d(\delta)}
\sum_{i=1}^m\mathbf{1}_{\{|\dotp{a_i}{t}| \geq \eta\}}\\
& \qquad = \sup_{\mathcal{P}_1,~\mathcal{P}_2\in\mathcal{T},~ t\in \text{affine}(\mathcal P_1-\mathcal P_2)\cap\mathbb{B}_2^d(1)}
\sum_{i=1}^m\mathbf{1}_{\{|\dotp{a_i}{t}| \geq \eta/\delta\}},
\end{align*}
where $\text{affine}(\mathcal P_1-\mathcal P_2)$ denotes the affine subspace spanned by $\mathcal P_1-\mathcal P_2$, which is of dimension at most $2k$. To this point, define the set
\begin{equation}\label{eq:the-set-1}
\mathcal{C} := \{t: t\in \text{affine}(\mathcal P_1-\mathcal P_2)\cap\mathbb{B}_2^d(1), \mathcal{P}_1,~\mathcal{P}_2\in\mathcal{T}\},
\end{equation}
and define an empirical process
\[
\mathcal R(\{a_i\}_{i=1}^m,t):= \frac1m\sum_{i=1}^m\l(\mathbf{1}_{\{|\dotp{a_i}{t}| \geq \eta/\delta\}} - \expect{\mathbf{1}_{\{|\dotp{a_i}{t}| \geq \eta/\delta\}}}\r).
\]
Our goal is to bound
\[
\sup_{t\in\mathcal{C}}
|\mathcal R(\{a_i\}_{i=1}^m, t)|.
\]
By symmetrization inequality (Lemma \ref{lemma:symmetry}) it suffices to bound 
\[
\sup_{t\in\mathcal{C}}
\l|\frac1m\sum_{i=1}^m\varepsilon_i\mathbf{1}_{\{|\dotp{a_i}{t}| \geq \eta/\delta\}}\r|,
\]
where $\{\varepsilon\}_{i=1}^m$ are i.i.d. Rademacher random variables.
Define the set of indicator functions:
$$\mathcal{F}:= \{\mathbf{1}_{\{|\dotp{\cdot}{t}| \geq \eta/\delta\}}:
~t\in\mathcal{C}\}.$$
By Hoeffding's inequality, the stochastic process $m^{-1/2}\sum_{i=1}^m\varepsilon_i\mathbf{1}_{\{|\dotp{a_i}{t}| \geq \eta/\delta\}}$ parametrized by $\mathcal{F}$ when fixing $\{a_i\}_{i=1}^m$ is a sub-Gaussian process with respect to the empirical $L_2$ metric:
\[
\|f-g\|_{L_2(\mu_m)}:= \sqrt{\frac1m\sum_{i=1}^m(f(a_i)-g(a_i))^2},~\forall f,g\in\mathcal{F}.
\]
By Lemma \ref{lem:entropy-integral}, one can easily derive the following bound:
\begin{equation}\label{eq:dudley-bound}
\expect{\sup_{t\in\mathcal{C}}
|\mathcal R(\{a_i\}_{i=1}^m,t)|}
\leq \frac{C_0}{\sqrt{m}}\int_0^2\sqrt{\log|\mathcal N(\varepsilon, ~\mathcal{F},~\|\cdot\|_{L_2(\mu_m)})|}d\varepsilon,
\end{equation}
where $\mathcal N(\varepsilon, ~\mathcal{F},~\|\cdot\|_{L_2(\mu_m)})$ is the $\varepsilon$-covering net of $\mathcal{F}$ under the empirical $L_2$-metric. By Haussler's inequality (Theorem 2.6.4 of \citet{wellner2013weak}), 
\[
|\mathcal N(\varepsilon, ~\mathcal{F},~\|\cdot\|_{L_2(\mu_m)})|
\leq C_1 V(\mathcal{F})(4e)^{V(\mathcal F)}\l(\frac{1}{\varepsilon}\r)^{2V(\mathcal{F})},
\]
where $V(\mathcal{F})$ is the VC dimension of the class $\mathcal{F}$ and $C_1$ is an absolute constant. To compute $V(\mathcal{F})$, note first that for any fixed $\mathcal P_1,~\mathcal P_2\in\mathcal{T}$ and any fixed constant $c$, the VC dimension of the class of half-spaces defined as
\[
\mathcal{H}' := \{\dotp{\cdot}{t}\geq c:~t\in\text{affine}(\mathcal{P}_1-\mathcal{P}_2)\}
\]  
is bounded by $2k$. Thus, for any $p$ points on $\mathbb{R}^k$ and the number of different subsets of these points picked by $\mathcal{H}'$ is bounded by $(p+1)^{2k}$.
Next, note that any element in the class 
\[
\mathcal{H} := \{|\dotp{\cdot}{t}|\geq c:~t\in\text{affine}(\mathcal{P}_1-\mathcal{P}_2)\}
\]
is the intersection of two halfspaces in $\mathcal{H}'$. Thus, the number of different subsets of $p$ points picked by $\mathcal{H}$ is bounded by 
\[
{(p+1)^{2k}\choose 2}\leq e^2(p+1)^{4k}/4\leq 2(p+1)^{4k}.
\]
Taking into account that the class $\mathcal{F}$ is the union of at most $(2d)^{2kn}$ different classes of the form
\[
 \{\mathbf{1}_{\{|\dotp{\cdot}{t}| \geq \eta/\delta\}}:~t\in\text{affine}(\mathcal{P}_1-\mathcal{P}_2)\},
\]
we arrive at the conclusion that the number of distinctive mappings in $\mathcal{F}$ from any $p$ points in $\mathbb{R}^k$ to $\{0,1\}^p$
is bounded by $2d^{2kn}(p+1)^{4k}$. To get the VC dimension of $\mathcal{F}$, we try to find the smallest $p$ such that 
\[
2d^{2kn}(p+1)^{4k}<2^p.
\]
A sufficient condition is to have $2kn\log_2(d) + 4k\log_2(p+1) + 1 < p$, which holds when $p>c_0kn\log(ed)-1$ for some absolute constant $c_0$ large enough. Thus, $V(\mathcal{F})\leq c_0kn\log(ed)$. Thus, it follows
\begin{align*}
\log|\mathcal N(\varepsilon, ~\mathcal{F},~\|\cdot\|_{L_2(\mu_m)})|
&  \leq  \log C_1 + \log V(\mathcal{F}) + V(\mathcal{F})\log(4e) +  2V(\mathcal F)\log(1/\varepsilon)\\
&   \leq  c_1kn\log(ed)(\log(1/\varepsilon)+1),
\end{align*}
for some absolute constant $c_1>0$.  Substituting this bound into \eqref{eq:dudley-bound}, and we obtain 
\[
\expect{\sup_{t\in\mathcal{C}}
|\mathcal R(\{a_i\}_{i=1}^m, t)|}\leq c_2\sqrt{\frac{kn\log(ed)}{m}},
\]
for some absolute constant $c_2$. Finally, by bounded difference inequality, we obtain with probability at least $1-2e^{-u}$, 
\begin{equation*}
\sup_{t\in\mathcal{C}}
|\mathcal R(\{a_i\}_{i=1}^m, t)| \leq \expect{\sup_{t\in\mathcal{C}}
|\mathcal R(\{a_i\}_{i=1}^m, t)|} + \sqrt{\frac um}\leq L\sqrt{\frac{kn\log(ed)+u}{m}},
\end{equation*}
finishing the proof.
\end{proof}

Combining Lemma \ref{lem:counting-process} and \eqref{eq:count-1} we have the following bound on the number of sign differences:
\begin{lemma}\label{lem:main-count}
Let $u>0$ be any constant. Suppose $
m\geq c_2\lambda^2(kn\log(ed) + k\log(2R) + u)/\varepsilon^2
$ with $\varepsilon < 1$ for some absolute constant $c_2$ large enough and $\lambda\geq1$. 
Define the following parameters
\begin{align}
&\delta := \frac{\eta}{\|a\|_{\psi_1}}\log(c_1\lambda/\eta),\label{eq:delta-1}\\
&\eta := (\lambda+ \|a\|_{\psi_1}) L\sqrt{\frac{kn\log(ed)  + u' }{m}}, \label{eq:eta-1}
\end{align} 
and $u'>0$ satisfying
\begin{equation}\label{eq:u-prime-bound}
u' = u+kn\log(ed) + k\log(2R) + Ck\log\l(\frac{m}{k\log(ed) +u'}\r).
\end{equation}
We have with probability at least $1-\exp(-c_0 u)-2\exp(-u)$, 
\begin{align*}
\sup~
\frac{1}{m}\sum_{i=1}^m\mathbf{1}_{\{ \sign(\dotp{a_i}{G(v)} + \xi_i+\tau_i) \neq \sign(\dotp{a_i}{G(x_0)} + \xi_i+\tau_i)\}}
\leq 
\frac{4\eta}{\lambda},
\end{align*}
where the supremum is taken over $x_0\in\mathbb{R}^k, \|G(x_0)-G(v)\|_2\leq \delta, G(v)\in\mathcal{N}(G(\mathbb{R}^k)\cap \mathbb{B}_2^d(R),~\delta)$ and $c_0\geq 1, c_1, c_2, C, L>0$ are absolute constants.
\end{lemma}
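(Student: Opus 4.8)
The plan is to decompose each sign disagreement into two exhaustive bad events governed by a margin threshold $\eta$, count the indices falling in each event separately, and then add the two counts. Fix an index $i$, a net point $G(v)\in\mathcal{N}(G(\mathbb{R}^k)\cap\mathbb{B}_2^d(R),\delta)$, and any $x_0$ with $\|G(x_0)-G(v)\|_2\le\delta$; write $A_i:=\dotp{a_i}{G(v)}+\xi_i+\tau_i$ and $B_i:=\dotp{a_i}{G(x_0)}+\xi_i+\tau_i$, so that $B_i-A_i=\dotp{a_i}{G(x_0)-G(v)}$. The elementary but crucial observation is that if $|A_i|\ge\eta$ and $|B_i-A_i|<\eta$ then, by the triangle inequality, $|B_i|>0$ and $\sign(A_i)=\sign(B_i)$; hence a disagreement at $i$ forces $|A_i|<\eta$ or $|\dotp{a_i}{G(x_0)-G(v)}|\ge\eta$, i.e.
\begin{align*}
\mathbf{1}_{\{\sign(A_i)\ne\sign(B_i)\}}\le \mathbf{1}_{\{|A_i|<\eta\}}+\mathbf{1}_{\{|\dotp{a_i}{G(x_0)-G(v)}|\ge\eta\}} .
\end{align*}
Summing over $i\in[m]$ and taking the supremum over $x_0$ and over net points $G(v)$, it suffices to bound the two resulting sums.

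For the first sum, since $\tau_i\sim\mathrm{Unif}[-\lambda,\lambda]$ is independent of $\dotp{a_i}{G(v)}+\xi_i$ we have $\mathrm{Pr}(|A_i|\le\eta)\le\eta/\lambda$, so the Chernoff estimate \eqref{eq:count-1} gives $\sum_i\mathbf{1}_{\{|A_i|<\eta\}}\le \tfrac{2\eta}{\lambda}m$ for a fixed $G(v)$ with probability at least $1-\exp(-\eta m/(3\lambda))$. I would then union-bound over $\mathcal{N}(G(\mathbb{R}^k)\cap\mathbb{B}_2^d(R),\delta)$; the relevant fact is that, since $G$ has at most $(2d)^{kn}$ linear pieces and $G$ restricted to each piece maps into a $k$-dimensional ball of radius at most $R$, one has $\log|\mathcal{N}(G(\mathbb{R}^k)\cap\mathbb{B}_2^d(R),\delta)|\lesssim kn\log(ed)+k\log(R/\delta)$, which is precisely what the implicitly defined quantity $u'$ in \eqref{eq:u-prime-bound} is built to dominate — so the choices of $\delta,\eta$ together with the assumed sample complexity force $\eta m/(3\lambda)\ge \log|\mathcal{N}|+c_0u$, and the union bound costs only an $\exp(-c_0u)$ in probability. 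For the second sum, Lemma~\ref{lem:counting-process} already supplies a bound uniform over all admissible pairs $(x_0,G(v))$ simultaneously (its proof takes the supremum over $(T-T)\cap\mathbb{B}_2^d(\delta)$ with $T=G(\mathbb{R}^k)$, which contains every $G(x_0)-G(v)$): with probability at least $1-2\exp(-u)$,
\begin{align*}
\sup\,\sum_{i=1}^m\mathbf{1}_{\{|\dotp{a_i}{G(x_0)-G(v)}|\ge\eta\}}\le m\,\mathrm{Pr}(|\dotp{a_i}{z}|\ge\eta/\delta)+L\sqrt{(kn\log(ed)+u)m}, \quad z\in\mathbb{B}_2^d(1).
\end{align*}

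It then remains to check that the parameter choices make the three bounds add up to $4\eta m/\lambda$. The choice of $\delta$ makes $\eta/\delta$ of order $\|a\|_{\psi_1}\log(c_1\lambda/\eta)$; then the sub-exponential tail of $\dotp{a_i}{z}$ (using $\|\dotp{a_i}{z}\|_{\psi_1}\le\|a\|_{\psi_1}$) gives $\mathrm{Pr}(|\dotp{a_i}{z}|\ge\eta/\delta)\le 2\exp(-\log(c_1\lambda/\eta))=2\eta/(c_1\lambda)\le\eta/\lambda$ once $c_1$ is a large enough absolute constant. Meanwhile, reading off $u'\ge u$ from \eqref{eq:u-prime-bound} and using the definition of $\eta$, $L\sqrt{(kn\log(ed)+u)m}\le L\sqrt{(kn\log(ed)+u')m}\le \eta m/\lambda$. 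Adding the three pieces, $\sum_i\mathbf{1}_{\{\text{disagreement}\}}\le \tfrac{2\eta}{\lambda}m+\tfrac{\eta}{\lambda}m+\tfrac{\eta}{\lambda}m=\tfrac{4\eta}{\lambda}m$; dividing by $m$ yields the claim, with total failure probability $\exp(-c_0u)+2\exp(-u)$.

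The main obstacle is not any individual inequality but the mutual calibration of $\eta$, $\delta$, and the implicit quantity $u'$: one must pick them so that, at once, $u'$ is a genuine solution of \eqref{eq:u-prime-bound} of size no larger than $kn\log(ed)+k\log(2R)+u$ up to a logarithmic-in-$m$ factor, the log-cardinality of the $\delta$-net is absorbed into the exponent of the union bound for the first sum, and $\eta/\delta$ is large enough — of order $\|a\|_{\psi_1}\log(\lambda/\eta)$ — that the sub-exponential tail $\mathrm{Pr}(|\dotp{a_i}{z}|\ge\eta/\delta)$ drops below $\eta/\lambda$ (this also needs $c_1\lambda/\eta>1$, i.e.\ $\eta$ small). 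All of these follow from the assumed $m\ge c_2\lambda^2(kn\log(ed)+k\log(2R)+u)/\varepsilon^2$ with $c_2$ large.
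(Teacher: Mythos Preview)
Your proposal is correct and follows essentially the same route as the paper: the same two-event decomposition via the margin $\eta$, the Chernoff bound \eqref{eq:count-1} plus a union bound over the $\delta$-net for the small-margin event, and Lemma~\ref{lem:counting-process} for the large-perturbation event, with the same parameter calibration. The only substantive difference is that the paper applies Lemma~\ref{lem:counting-process} as stated (for fixed $G(v)$, with the inflated exponent $u'$) and then takes a second union bound over the net, whereas you observe---correctly---that the \emph{proof} of that lemma already bounds $\sup_{t\in(T-T)\cap\mathbb{B}_2^d(\delta)}$ and hence is uniform over all pairs $(x_0,G(v))$ at once, so you invoke it directly with exponent $u$ and skip the union bound; both routes give the same final $4\eta/\lambda$ and the same probability budget.
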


\begin{proof}[Proof of Lemma \ref{lem:main-count}]
We compute $\mathrm{Pr}(|\dotp{a_i}{z}|\geq \eta/\delta)$. By the fact that $\dotp{a_i}{z}$ is a sub-exponential random variable, 
\[
\mathrm{Pr}(|\dotp{a_i}{z}|\geq \eta/\delta)\leq c_1\exp\l(-\frac{\eta}{\delta\|a\|_{\psi_1}}\r),
\]
where $c_1>0$ is an absolute constant.
We choose $\delta$ according to \eqref{eq:delta-1}, 
which implies
\[
\mathrm{Pr}(|\dotp{a_i}{z}|\geq \eta/\delta)\leq \frac{\eta}{\lambda}.
\]
From Lemma \ref{lem:counting-process}, we readily obtain with probability at least $1-2\exp(-u')$,
\begin{equation}\label{eq:sup-1}
\sup_{x_0\in\mathbb{R}^k, \|G(x_0)-G(v)\|_2\leq \delta}\sum_{i=1}^m\mathbf{1}_{\{|\dotp{a_i}{G(x_0) -G(v)}| \geq \eta\}}\leq 
\l(\frac{\eta}{\lambda} + L\sqrt{\frac{kn\log(ed)  + u'}{m}}\r)m.
\end{equation}
We will then take a further supremum over all $G(v)\in \mathcal{N}(G(\mathbb{R}^k)\cap \mathbb{B}_2^d(R),~\delta)$.
Note that by a simple volume argument, $\mathcal{N}(G(\mathbb{R}^k)\cap \mathbb{B}_2^d(R),~\delta)$ satisfies
\[
\log|\mathcal{N}(G(\mathbb{R}^k)\cap \mathbb{B}_2^d(R),~\delta)|\leq kn\log(ed) + 
k\log(2R/\delta).
\]
Choose $\eta$ according to \eqref{eq:eta-1}. Then, 
By the aforementioned choices of $\eta$ and $\delta$ in \eqref{eq:eta-1} and \eqref{eq:delta-1}, we obtain
\[
\log(1/\delta)\leq C\log\l(\frac{m}{k\log(ed)  + u'}\r),
\]
where $C$ is an absolute constant. Thus,
\begin{equation}\label{eq:net-size}
\log|\mathcal{N}(G(\mathbb{R}^k)\cap \mathbb{B}_2^d(R),~\delta)|\leq
kn\log(ed) + k\log(2R) + Ck\log\l(\frac{m}{k\log(ed) +u'}\r).
\end{equation}
Finally, for any $u>0$, take $u'$ so that it satisfies \eqref{eq:u-prime-bound}.
By \eqref{eq:sup-1}, we obtain that,   with probability at least 
$$1-2\exp\l(-u - kn\log(ed) - k\log(2R) - Ck\log\l(\frac{m}{k\log(ed) +u'}\r)\r),$$
the following holds
\begin{align*}
\sup_{x_0\in\mathbb{R}^k, \|G(x_0)-G(v)\|_2\leq \delta}\sum_{i=1}^m\mathbf{1}_{\{|\dotp{a_i}{G(x_0) -G(v)}| \geq \eta\}}
\leq 
\l(\frac{\eta}{\lambda} + L\sqrt{\frac{kn\log(ed) + u'}{m}}\r)m.
\end{align*}
Taking a union bound over all $G(v)\in \mathcal{N}(G(\mathbb{R}^k)\cap \mathbb{B}_2^d(R),~\delta)$, we get with probability at least 
$1-2\exp(-u)$,
\begin{align*}
\sup_{x_0\in\mathbb{R}^k, \|G(x_0)-G(v)\|_2\leq \delta, G(v)\in\mathcal{N}(G(\mathbb{R}^k)\cap \mathbb{B}_2^d(R),~\delta)}~~\sum_{i=1}^m\mathbf{1}_{\{|\dotp{a_i}{G(x_0) -G(v)}| \geq \eta\}} \leq 
\l(\frac{\eta}{\lambda} + L\sqrt{\frac{kn\log(ed) + u'}{m}}\r)m.
\end{align*}
Note that by definition of $\eta$ in \eqref{eq:eta-1}, $L\sqrt{(kn\log(ed) + u')/m}\leq \eta/\lambda$, and this readily implies with probability at least $1-2\exp(-u)$,
\begin{align*}
\sup_{x_0\in\mathbb{R}^k, \|G(x_0)-G(v)\|_2\leq \delta, G(v)\in\mathcal{N}(G(\mathbb{R}^k)\cap \mathbb{B}_2^d(R),~\delta)}~~\sum_{i=1}^m\mathbf{1}_{\{|\dotp{a_i}{G(x_0) -G(v)}| \geq \eta\}}
\leq 
\frac{2\eta}{\lambda}m,
\end{align*}
or equivalently
\begin{align}\label{eq:small-count}
\inf_{x_0\in\mathbb{R}^k, \|G(x_0)-G(v)\|_2\leq \delta, G(v)\in\mathcal{N}(G(\mathbb{R}^k)\cap \mathbb{B}_2^d(R),~\delta)}~~\sum_{i=1}^m\mathbf{1}_{\{|\dotp{a_i}{G(x_0) -G(v)}| < \eta\}}
\geq 
\l(1-\frac{2\eta}{\lambda}\r)m.
\end{align}
Moreover, taking a union bound over all $G(v)\in\mathcal{N}(G(\mathbb{R}^k)\cap \mathbb{B}_2^d(R),~\delta)$ in \eqref{eq:count-1}, we have with probability at least 
\begin{align*}
&1-\exp(\log|\mathcal{N}(G(\mathbb{R}^k)\cap \mathbb{B}_2^d(R),~\delta)|-\eta m/3\lambda)\\
&\qquad \geq 1-\exp\l(kn\log(ed) + k\log(2R) + Ck\log\l(\frac{m}{k\log(ed) +u'}\r) -\frac{\eta m}{3\lambda} \r),
\end{align*}
one has
\begin{equation}\label{eq:major-count}
\inf_{G(v)\in\mathcal{N}(G(\mathbb{R}^k)\cap \mathbb{B}_2^d(R),~\delta)}~~\sum_{i=1}^m\mathbf{1}_{\{| \dotp{a_i}{G(v)} + \xi_i+\tau_i|\geq\eta\}}\geq \Big(1-\frac{2\eta}{\lambda} \Big)m.
\end{equation}
Note that by assumption, we have 
$m\geq c_2\lambda^2(kn\log(ed) + k\log(2R) + u)/\varepsilon^2$ for some $\varepsilon<1$ and
some absolute constant $c_2$ large enough. Thus, it follows
\begin{align*}
\frac{\eta m}{3\lambda} \geq& \frac L3\sqrt{(kn\log(ed)+u')m}\\
\geq& \frac L3\sqrt{\l(u+kn\log(ed) + k\log(2R) + Ck\log\frac{m}{kn\log(ed) + u'}\r)m}\\
\geq& \frac{L}{3\sqrt{2}}\l( \sqrt{c_2} \big(u+kn\log(ed) + k\log(2R)\big)  + \sqrt{Ckm\log\frac{m}{kn\log(ed) + u'}}\r)\\
\geq& c_0\l( u+kn\log(ed) + k\log(2R)  + k\log\frac{m}{k\log(ed) +u'}\r),
\end{align*}
where $c_0$ is an absolute constant with $c_0 = L\sqrt{c_2} \min\{\sqrt{C}, 1\}/ (3\sqrt{2})$, and
the last inequality follows from the assumption that $m\geq \sqrt{c_2 km}\geq\sqrt{c_2 k\log m}$ for any $\varepsilon < 1$ and $m > 1$. When $c_2$ is large enough such that $c_0\geq\max\{C, 1\}$, we have
\begin{align*}
\frac{\eta m}{3\lambda} - \l( kn\log(ed) + k\log(2R) + Ck\log\l(\frac{m}{k\log(ed) +u'}\r) \r) \geq c_0 u.
\end{align*}
Then, we have \eqref{eq:major-count} holds with probability at least $1-\exp(-c_0u)$. 

Furthermore, $|\dotp{a_i}{G(x_0) -G(v)}| < \eta$ and $|\dotp{a_i}{G(v)} + \xi_i+\tau_i|\geq\eta$ with $\eta > 0$ will lead to $\sign(\dotp{a_i}{G(v)} + \xi_i+\tau_i) = \sign(\dotp{a_i}{G(x_0)} + \xi_i+\tau_i)$. The inequality \eqref{eq:small-count} implies that with high probability, there are at least $(1-2\eta/\lambda)m$ vectors $a_i$ (where $i\in \{1,\ldots,m\}$) satisfying $|\dotp{a_i}{G(x_0) -G(v)}| < \eta$. In addition, the inequality \eqref{eq:major-count} implies that with high probability, there are at least $(1-2\eta/\lambda)m$ vectors $a_i$ satisfying $|\dotp{a_i}{G(v)} + \xi_i+\tau_i|\geq\eta$. Then, we know that there must be at least $(1-4\eta/\lambda)m$ vectors $a_i$ satisfying both $|\dotp{a_i}{G(x_0) -G(v)}| < \eta$ and $|\dotp{a_i}{G(v)} + \xi_i+\tau_i|\geq\eta$ with high probability. 

Thus, combining \eqref{eq:small-count} and \eqref{eq:major-count}, under the conditions of this lemma, we have with probability at least $1-\exp(-c_0u)-2\exp(-u)$,
\begin{align*}
\inf ~
\sum_{i=1}^m \mathbf{1}_{\{ \sign(\dotp{a_i}{G(v)} + \xi_i+\tau_i) = \sign(\dotp{a_i}{G(x_0)} + \xi_i+\tau_i)\}}
\geq 
\l(1-\frac{4\eta}{\lambda}\r)m,
\end{align*}
which is equivalent to
\begin{align*}
\sup ~ \frac{1}{m}
\sum_{i=1}^m \mathbf{1}_{\{ \sign(\dotp{a_i}{G(v)} + \xi_i+\tau_i) \neq \sign(\dotp{a_i}{G(x_0)} + \xi_i+\tau_i)\}}
\leq
\frac{4\eta}{\lambda} ,
\end{align*}
where the infimum and supremum are taken over $x_0\in\mathbb{R}^k, \|G(x_0)-G(v)\|_2\leq \delta, G(v)\in\mathcal{N}(G(\mathbb{R}^k)\cap \mathbb{B}_2^d(R),~\delta)$. This completes the proof.
\end{proof}

\subsubsection{Putting Bounds Together: Proof of Lemma \ref{lem:main-bound-on-sup}}\label{sec:proof-main-lemma}

\begin{lemma}[Lemma \ref{lem:main-bound-on-sup}]\label{lem:main-bound}
Suppose Assumption \ref{as:moments} holds and 
\begin{equation}\label{eq:m-bound-supp}
m\geq c_2\lambda^2\log^2(\lambda m)(kn\log(ed) + k\log(2R) + k\log m+ u)/\varepsilon^2,
\end{equation}
for some absolute constant $c_2$ large enough, then, 
with probability at least $1-c_1\exp(-u)$,
\[
\sup_{x_0\in\mathbb{R}^k,~\|G(x_0)\|_2\leq R,~x\in\mathbb{R}^k}
\frac{\l|\frac1m\sum_{i=1}^m\varepsilon_i\sign(\dotp{a_i}{G(x_0)} + \xi_i+\tau_i)\dotp{a_i}{G(x) - G(x_0)}\r|}{\|G(x) - G(x_0)\|_2}
\leq \frac{\varepsilon}{16\lambda},
\]
where $\{\varepsilon_i\}_{i=1}^m$ are i.i.d. Rademacher random variables and $c>0$ is an absolute constant.
\end{lemma}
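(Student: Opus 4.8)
The plan is to reduce the supremum to the three ingredients already established in the excerpt — the fixed-sign bound (Lemma~\ref{lem:ball-supremum}), the uniform count of sign disagreements (Lemma~\ref{lem:main-count}), and one additional estimate on sums restricted to a sublinear index set — glued together by a single $\delta$-net over $G(\mathbb R^k)\cap\mathbb B_2^d(R)$. Concretely, I would fix $\eta,\delta$ exactly as in Lemma~\ref{lem:main-count}, put $\mathcal N:=\mathcal N(G(\mathbb R^k)\cap\mathbb B_2^d(R),\delta)$, and for every $x_0$ with $\|G(x_0)\|_2\le R$ let $G(v)=G(v(x_0))\in\mathcal N$ be a nearest net point, so $\|G(x_0)-G(v)\|_2\le\delta$. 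Writing $s_i^{x_0}:=\sign(\dotp{a_i}{G(x_0)}+\xi_i+\tau_i)$ and $s_i^{v}:=\sign(\dotp{a_i}{G(v)}+\xi_i+\tau_i)$, the triangle inequality splits the numerator as
\[
\Bigl|\tfrac1m\sum_{i=1}^m\varepsilon_i s_i^{x_0}\dotp{a_i}{\Delta^G_{x,x_0}}\Bigr|\le\underbrace{\Bigl|\tfrac1m\sum_{i=1}^m\varepsilon_i s_i^{v}\dotp{a_i}{\Delta^G_{x,x_0}}\Bigr|}_{(\mathrm A)}+\underbrace{\frac2m\sum_{i\in S_{x_0}}\bigl|\dotp{a_i}{\Delta^G_{x,x_0}}\bigr|}_{(\mathrm B)},
\]
with $S_{x_0}:=\{i:s_i^{x_0}\ne s_i^{v}\}$ (using $|s_i^{x_0}-s_i^{v}|\in\{0,2\}$), and it suffices to bound $(\mathrm A)/\|\Delta^G_{x,x_0}\|_2$ and $(\mathrm B)/\|\Delta^G_{x,x_0}\|_2$ each by $\varepsilon/(32\lambda)$, uniformly over $x\in\mathbb R^k$ and over $x_0$ with $\|G(x_0)\|_2\le R$.

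The term $(\mathrm A)$ I would dispatch directly: for each fixed $G(v)\in\mathcal N$, Lemma~\ref{lem:ball-supremum} controls $(\mathrm A)/\|\Delta^G_{x,x_0}\|_2$ uniformly over $x,x_0$, and a union bound over $\mathcal N$ — whose log-cardinality is $\le kn\log(ed)+k\log(2R/\delta)$ with $\log(1/\delta)\lesssim\log(\lambda m)$, as shown inside the proof of Lemma~\ref{lem:main-count} — can be absorbed by enlarging the deviation parameter, leaving a bound of order $\sqrt{(kn\log(ed)+k\log(2R)+k\log m+u)/m}$, which is at most $\varepsilon/(32\lambda)$ under \eqref{eq:m-bound-supp}.

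The real work is $(\mathrm B)$. Lemma~\ref{lem:main-count} already gives, on a $1-\exp(-c_0u)-2\exp(-u)$ event, that $|S_{x_0}|\le 4\eta m/\lambda=:\gamma m$ simultaneously for all admissible $x_0$; hence $(\mathrm B)\le\frac2m\|\Delta^G_{x,x_0}\|_2\cdot\max_{|S|\le\gamma m}\sup_{b}\sum_{i\in S}|\dotp{a_i}{b}|$, where — by the piecewise linearity of $G$, exactly as in the proof of Lemma~\ref{lem:ball-supremum} — $\Delta^G_{x,x_0}/\|\Delta^G_{x,x_0}\|_2$ ranges over a union of at most $(2d)^{2kn}$ sets $\mathcal E\cap\mathcal S^{d-1}$ with $\dim\mathcal E\le 2k$. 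So the crux is the estimate
\[
\frac1m\max_{|S|\le\gamma m}\ \sup_{b\in\mathcal E\cap\mathcal S^{d-1}}\ \sum_{i\in S}\bigl|\dotp{a_i}{b}\bigr|\ \lesssim\ \|a\|_{\psi_1}\,\gamma\log(e/\gamma)\ +\ \sqrt{\tfrac{\gamma u'}{m}}+\tfrac{u'}{m},
\]
uniformly over the $(2d)^{2kn}$ subspaces, with $u'$ as in \eqref{eq:u-prime-bound}. For fixed $b$ the left side is the sum of the $\lceil\gamma m\rceil$ largest of the i.i.d.\ sub-exponential variables $|\dotp{a_i}{b}|$; its expectation is $O(\gamma m\,\|a\|_{\psi_1}\log(e/\gamma))$ by a layer-cake computation, and concentration over all $S$ follows from a Bernstein bound per fixed $S$ together with a union bound over the $\binom{m}{\gamma m}\le e^{\gamma m\log(e/\gamma)}$ index sets. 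One then passes from a $\tfrac12$-net of $\mathcal E\cap\mathcal S^{d-1}$ (cardinality $\le 3^{2k}$) to all of it by the rescaling trick used for \eqref{eq:1-step-chaining}, and unions over the $(2d)^{2kn}$ subspaces, all of which only inflate the deviation parameter by $O(kn\log(ed))$. Substituting $\gamma=4\eta/\lambda$ with $\eta$ from \eqref{eq:eta-1} (so that $\gamma\asymp\sqrt{(kn\log(ed)+u')/m}$) makes the bound of order $\|a\|_{\psi_1}\log(\lambda m)\sqrt{u'/m}$, hence $\le\varepsilon/(64\lambda)$ under \eqref{eq:m-bound-supp}, so $(\mathrm B)/\|\Delta^G_{x,x_0}\|_2\le\varepsilon/(32\lambda)$. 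Intersecting the finitely many high-probability events (Lemma~\ref{lem:ball-supremum} over $\mathcal N$, Lemma~\ref{lem:main-count}, and the order-statistic estimate) then yields the claim with probability $1-c_1\exp(-u)$ after adjusting constants.

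The main obstacle is exactly this last estimate on $(\mathrm B)$: a crude Cauchy--Schwarz bound $\sum_{i\in S}|\dotp{a_i}{b}|\le\sqrt{|S|}\,(\sum_i\dotp{a_i}{b}^2)^{1/2}$ would only give an $O(\sqrt\gamma)$ factor and hence the suboptimal rate $m=\widetilde{\mathcal O}(kn\log d/\varepsilon^4)$; obtaining the advertised $\varepsilon^{-2}$ rate forces one to use the sub-exponential tail to show that the worst $\gamma m$ of the $m$ inner products sum to only $O(\gamma m\log(1/\gamma))$, and to make this hold uniformly both over the exponentially many index sets $S$ and over the $(2d)^{2kn}$ low-dimensional subspaces carrying $\Delta^G_{x,x_0}$. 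The calibration of $\eta$ and $\delta$ — and thereby the $\log^2(\lambda m)$ and $k\log m$ factors appearing in \eqref{eq:m-bound-supp} — is dictated precisely by balancing the contribution of $(\mathrm B)$ against that of $(\mathrm A)$.
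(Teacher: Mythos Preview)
Your proposal is correct and follows essentially the same route as the paper: the same $\delta$-net split into the fixed-sign term $(\mathrm A)$ handled by Lemma~\ref{lem:ball-supremum} with a union bound over $\mathcal N$, and the sign-discrepancy term $(\mathrm B)$ controlled via Lemma~\ref{lem:main-count} for $|S_{x_0}|\le 4\eta m/\lambda$ followed by a Bernstein bound per fixed index set, a union bound over the $\binom{m}{\gamma m}$ choices of $S$, and the $\tfrac12$-net/subspace argument from Lemma~\ref{lem:ball-supremum}. The only cosmetic difference is that the paper centers $|\dotp{a_i}{t}|$ by its expectation (using $\mathbb E|\dotp{a_i}{t}|\le 1$ from isotropy) before applying Bernstein, rather than phrasing the mean term via order statistics, but the two computations coincide and yield the same $\log(\lambda m)\sqrt{u'/m}$ bound.
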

\begin{proof}[Proof of Lemma \ref{lem:main-bound-on-sup}]
Let $\mathcal I$ be the set of indices such that 
$\sign(\dotp{a_i}{G(v)} + \xi_i+\tau_i) \neq \sign(\dotp{a_i}{G(x_0)} + \xi_i+\tau_i)$. By Lemma \ref{lem:main-count}, we know that 
$|\mathcal I|\leq 4m\eta/\lambda$. Then, we have with probability at least $1-\exp(-c_0u)-2\exp(-u)$,
\begin{align*}
&\sup_{x_0\in\mathbb{R}^k,~\|G(x_0)\|_2\leq R,~x\in\mathbb{R}^k}
\frac{\l|\frac1m\sum_{i=1}^m\varepsilon_i\sign(\dotp{a_i}{G(x_0)} + \xi_i+\tau_i)\dotp{a_i}{G(x) - G(x_0)}\r|}{\|G(x) - G(x_0)\|_2}\\
& \qquad \leq 
\sup_{x\in\mathbb{R}^k,~x_0\in\mathbb{R}^k,~\|G(x_0)-G(v)\|_2\leq \delta, ~G(v)\in\mathcal{N}(G(\mathbb{R}^k)\cap \mathbb{B}_2^d(R),~\delta)} \frac{\l|\frac1m\sum_{i=1}^m\varepsilon_i y_i^v \dotp{a_i}{G(x) - G(x_0)}\r|}{\|G(x) - G(x_0)\|_2}
\\
&\qquad \quad + 
\sup_{x\in\mathbb{R}^k, ~x_0\in\mathbb{R}^k, ~\|G(x_0)-G(v)\|_2\leq \delta, ~G(v)\in\mathcal{N}(G(\mathbb{R}^k)\cap \mathbb{B}_2^d(R),~\delta)}
\frac{\l|\frac1m\sum_{i=1}^m\varepsilon_i (y_i - y_i^v) \dotp{a_i}{G(x) - G(x_0)}\r|}{\|G(x) - G(x_0)\|_2}\\
& \qquad \leq \underbrace{
\sup_{x\in\mathbb{R}^k,~x_0\in\mathbb{R}^k,~G(v)\in\mathcal{N}(G(\mathbb{R}^k)\cap \mathbb{B}_2^d(R),~\delta)} \frac{\l|\frac1m\sum_{i=1}^m\varepsilon_i\sign(\dotp{a_i}{G(v)} + \xi_i + \tau_i)\dotp{a_i}{G(x) - G(x_0)}\r|}{\|G(x) - G(x_0)\|_2}
}_{\text{(I)}}\\
&\qquad \quad +\underbrace{
\sup_{x\in\mathbb{R}^k, ~x_0\in\mathbb{R}^k}~
\max_{|\mathcal I|\leq 4m\eta/\lambda}
\frac{2}{m}\sum_{i\in\mathcal I}\frac{|\dotp{a_i}{G(x) - G(x_0}|}{\|G(x) - G(x_0)\|_2}
}_{\text{(II)}},
\end{align*}
where, for simplicity, we let $y_i^v := \sign(\dotp{a_i}{G(v)} + \xi_i+\tau_i)$ be the sign function associated with $G(v)$ in the net in the first inequality, and the second inequality is by Lemma \ref{lem:main-count} for term (II) and dropping the constraint $\|G(x_0)-G(v)\|_2\leq \delta$ for term (I).

For the rest of the proof, we will bound (I) and (II) respectively.  
To bound (I), take $u$ in  Lemma \ref{lem:ball-supremum} to be
$
kn\log(ed) + k\log(2R) + Ck\log m + u
$, we have with probability at 
 $1- 2\exp(-c' kn\log(ed) - k\log(2R) - Ck\log m-u)$, for a fixed $G(v)$, any $x\in\mathbb{R}^k,~x_0\in\mathbb{R}^k$,
\begin{align*}
&\frac{\l|\frac1m\sum_{i=1}^m\varepsilon_i\sign(\dotp{a_i}{G(v)} + \xi_i + \tau_i)\dotp{a_i}{G(x) - G(x_0)}\r|}{\|G(x) - G(x_0)\|_2}\\
& \leq \sqrt{\frac{8(ckn\log(ed) + k\log(2R) + Ck\log m + u)}{m}} + \frac{2\|a\|_{\psi_1}(ckn\log(ed) + k\log(2R) + Ck\log m + u)}{m},
\end{align*}
where $c,c',C>0$ are absolute constants. Take a further union bound over all $G(v)\in\mathcal{N}(G(\mathbb{R}^k)\cap \mathbb{B}_2^d(R),~\delta)$ with the net size satisfying \eqref{eq:net-size}, we have with probability at least $1-2\exp(-u)$,
\begin{align}
\begin{aligned}\label{eq:bound-i}
\text{(I)}&\leq \sqrt{\frac{8(ckn\log(ed) + k\log(2R) + Ck\log m + u)}{m}} \\
&\qquad + \frac{2\|a\|_{\psi_1}(ckn\log(ed) + k\log(2R) + Ck\log m + u)}{m}.
\end{aligned}
\end{align}
Next, we will bound  the term (II). Let $t = (G(x)-G(x_0))/\|G(x)-G(x_0)\|_2$ and it is enough to bound
\begin{align}
\sup_{x_0\in\mathbb{R}^k, x_0\in\mathbb{R}^k}~~
\max_{|\mathcal I|\leq 4m\eta/\lambda}\frac1m\sum_{i\in\mathcal I}|\dotp{a_i}{t}| - \expect{|\dotp{a_i}{t}|} + \expect{|\dotp{a_i}{t}|}. \label{eq:sep-bound-overall} 
\end{align}
It is obvious that $|\dotp{a_i}{t}| - \expect{|\dotp{a_i}{t}|}$ is also a sub-exponential random variable with sub-exponential norm bounded by $2\|a\|_{\psi_1}$, and $\expect{|\dotp{a_i}{t}|}\leq 1$. Thus, by Bernstein's inequality,
\[
\frac{1}{|\mathcal I|}\sum_{i\in\mathcal I}|\dotp{a_i}{t}| - \expect{|\dotp{a_i}{t}|}\leq \frac{2\sqrt{u_2}}{\sqrt{|\mathcal I|}} 
+ \frac{2\|a\|_{\psi_1}u_2}{|\mathcal I|},
\]
with probability at least $1-2\exp(-u_2)$. 
Thus,
\[
\frac1m\sum_{i\in\mathcal I}|\dotp{a_i}{t}| - \expect{|\dotp{a_i}{t}|} \leq 
\frac1m (2\sqrt{u_2|\mathcal I|}  + 2\|a\|_{\psi_1}u_2).
\]
Here we take 
$$
u_2 = C_1\log
\l(\frac{\lambda m}{kn\log(ed)+u'}\r)\Big(u+2kn\log(ed) + k\log(2R) + k\log\frac{m}{kn\log(ed)+u'}\Big)^{1/2}\sqrt{m},
$$
where $C_1$ is an absolute constant large enough and $u'$ satisfies \eqref{eq:u-prime-bound}.
Using the fact that 
\[
|\mathcal I|\leq \frac{4m\eta}{\lambda}\leq 2L
\Big(u+2kn\log(ed) + k\log(2R) + k\log\frac{m}{kn\log(ed)+u'}\Big)^{1/2}\sqrt{m},
\]
we have with probability at least 
\[
1-2\exp\l(-C_1\log
\l(\frac{\lambda m}{kn\log(ed)+u'}\r)\Big(u+2kn\log(ed) + k\log(2R) + k\log\frac{m}{kn\log(ed)+u'}\Big)^{1/2}\sqrt{m} \r),
\]
the following holds
\begin{align}
\begin{aligned}\label{eq:inter-bound}
&\l|\frac1m\sum_{i\in\mathcal I}|\dotp{a_i}{t}| - \expect{|\dotp{a_i}{t}|}\r|  \\
&\qquad \leq C_1\|a\|_{\psi_1}\log\Big(\frac{\lambda m}{kn\log(ed)+u'}\Big)\sqrt{\frac{u+2kn\log(ed) + k\log(2R) + k\log\frac{m}{kn\log(ed)+u'}}{m}}.
\end{aligned}
\end{align}
To bound the maximum over $|\mathcal I|\leq 4m\eta/\lambda$, we take a union bound over all ${m\choose 4\eta m/\lambda}$ possibilities, where
\[
{m\choose 4\eta m/\lambda}\leq \l(\frac{em}{4\eta m/\lambda}\r)^{4\eta m/\lambda}
 = \l(\frac\lambda \eta\r)^{4\eta m/\lambda}.
\]
Thus, it follows from the definition of $\eta$ in terms of $\lambda$ in Lemma \ref{lem:main-count},
\begin{align*}
\log{m\choose 4\eta m/\lambda} &\leq \frac{4\eta m}{\lambda}\log\frac{\lambda}{\eta} \\
&\leq L\Big(u+2kn\log(ed) + k\log(2R) + \log\frac{m}{kn\log(ed)+u'}\Big)^{1/2}\cdot\sqrt{m}\log\Big(\frac{\lambda m}{kn\log(ed)+u'}\Big),
\end{align*}
and when $C_1>L$, the union bound gives, with probability at least 
\[
1-2\exp\l(-C_2\log
\l(\frac{\lambda m}{kn\log(ed)+u'}\r)\Big(u+2kn\log(ed) + k\log(2R) + \log\frac{m}{kn\log(ed)+u'}\Big)^{1/2}\sqrt{m} \r),
\]
the quantity 
\[
\max_{|\mathcal I|\leq 4\eta/\lambda}\l|\frac1m\sum_{i\in\mathcal I}|\dotp{a_i}{t}| - \expect{|\dotp{a_i}{t}|}\r|
\]
is also bounded by the right hand side of \eqref{eq:inter-bound} with a possibly different constant $C_1$, where $t = (G(x)-G(x_0))/\|G(x)-G(x_0)\|_2$. Now, 
using the same trick as that of Lemma \ref{lem:ball-supremum}, we obtain 
\[
\sup_{x\in\mathbb{R}^k, x_0\in\mathbb{R}^k}
\max_{|\mathcal I|\leq 4\eta/\lambda}\l|\frac1m\sum_{i\in\mathcal I}|\dotp{a_i}{t}| - \expect{|\dotp{a_i}{t}|}\r|
\]
is bounded by the right hand side of \eqref{eq:inter-bound} with a possibly different constant $C_1$ and 
with probability
\begin{align*}
1-2\cdot 3^{2k}(2d)^{kn}
\exp\l(-C_2\log
\l(\frac{\lambda m}{kn\log(ed)+u'}\r)\Big(u+2kn\log(ed) + k\log(2R) + \log\frac{m}{kn\log(ed)+u'}\Big)^{1/2}\sqrt{m} \r),
\end{align*}
where $C_2$ is another absolute constant.
Note that by assumption in Theorem \ref{thm:main-1}, 
\begin{align*}
m\geq c_2\lambda^2\log^2(\lambda m)(kn\log(ed) + k\log(2R) + k\log m+ u)/\varepsilon^2,
\end{align*}
for some absolute constant $c_2$ large enough. 

On the other hand, for the extra expectation term in \eqref{eq:sep-bound-overall}, due to $\expect{|\dotp{a_i}{t}|}\leq 1$ with $t = (G(x)-G(x_0))/\|G(x)-G(x_0)\|_2$, we have
\begin{align*}
&\sup_{x_0\in\mathbb{R}^k, x_0\in\mathbb{R}^k}
\max_{|\mathcal I| \leq 4m\eta/\lambda}\frac1m\sum_{i\in\mathcal I}   \expect{|\dotp{a_i}{t}|} \leq \max_{|\mathcal I|\leq 4m\eta/\lambda} \frac{|\mathcal I|}{m}\\
&\qquad\qquad \leq 2L
\sqrt{\frac{u+2kn\log(ed) + k\log(2R) + k\log\frac{m}{kn\log(ed)+u'}}{m}}. 
\end{align*}
Combining the above results, we have
\[
\text{(II)}
\leq
C_3\log\Big(\frac{\lambda m}{kn\log(ed)+u'}\Big)\sqrt{\frac{u+2kn\log(ed) + k\log(2R) + k\log\frac{m}{kn\log(ed)+u'}}{m}},
\]
with probability at least $1- c_3\exp(-u)$, where $c_3\geq1$ is an absolute constant and $C_3$ is a constant depending on $\|a\|_{\psi_1}$, $C_1$, and $L$. Combining this bound with \eqref{eq:bound-i} and using \eqref{eq:m-bound-supp}, and letting $c_2$ be sufficiently large such that it satisfies $c_2\geq 256[(2\|a\|_{\psi_1}+3) (c+C)+C_3]^2$,  we obtain with probability $1-c_3\exp(-u)-\exp(-c_0 u)-2\exp(-u) \geq 1-c_1 \exp(-u)$ for an absolute constant $c_1 > 0$ (recalling that $c_0 \geq 1$ as shown in Lemma \ref{lem:main-count}),
\[
\sup_{x_0\in\mathbb{R}^k,~\|G(x_0)\|_2\leq R,~x\in\mathbb{R}^k}
\frac{\l|\frac1m\sum_{i=1}^m\varepsilon_i\sign(\dotp{a_i}{G(x_0)} + \xi_i+\tau_i)\dotp{a_i}{G(x) - G(x_0)}\r|}{\|G(x) - G(x_0)\|_2}
\leq \frac{\varepsilon}{16\lambda}.
\]
This finishes the proof.
\end{proof}

\subsection{Useful Probability Bounds for Proving Theorem \ref{thm:main-1}}
We recall the following well-known concentration inequality.
\begin{lemma}[Bernstein's inequality]
\label{Bernstein}
Let $X_1,\cdots,X_m$ be a sequence of independent centered random variables. 
Assume that there exist positive constants $f $ and $D$ such that for all integers $p\geq 2$
\[
\frac1m\sum_{i=1}^m\expect{|X_i|^p}\leq\frac{p!}{2}f ^2D^{p-2},
\]
then
\[
 \mathrm{Pr}\left(\left|\frac1m\sum_{i=1}^m X_i\right|\geq\frac{f }{\sqrt{m}}\sqrt{2u}+\frac{D}{m}u\right)
\leq2\exp(-u).
\]
In particular, if $X_1,\cdots,X_m$ are all sub-exponential random variables, then $f $ and $D$ can be chosen as 
$f =\frac{1}{m}\sum_{i=1}^m\|X_i\|_{\psi_1}$ and $D=\max\limits_{i=1\ldots m}\|X_i\|_{\psi_1}$. 
\end{lemma}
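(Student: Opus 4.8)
\medskip
\noindent The plan is to prove this classical estimate by the exponential-moment (Chernoff) method. Since the hypothesis is unchanged under $X_i\mapsto -X_i$, it suffices to prove the one-sided bound $\mathrm{Pr}\big(\tfrac1m\sum_{i=1}^m X_i \ge \tfrac{f}{\sqrt m}\sqrt{2u}+\tfrac{D}{m}u\big)\le \exp(-u)$ and then union-bound the two signs; writing $S_m:=\sum_{i=1}^m X_i$, everything reduces to controlling the moment generating function $\mathbb{E}[e^{\lambda S_m}]$ for $\lambda\in(0,1/D)$, the range dictated by the moment-growth constant $D$.

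First I would bound the MGF of one centered summand. The hypothesis makes $\mathbb{E}[e^{\lambda X_i}]$ finite for $|\lambda|<1/D$; expanding it in a power series, dropping the linear term since $\mathbb{E}[X_i]=0$, and using $|\mathbb{E}[X_i^p]|\le\mathbb{E}[|X_i|^p]$ gives $\mathbb{E}[e^{\lambda X_i}]\le 1+\sum_{p\ge 2}\frac{\lambda^p}{p!}\mathbb{E}[|X_i|^p]$. Passing to logarithms via $\log(1+x)\le x$, summing over $i$ by independence, exchanging the two summations, and substituting $\frac1m\sum_i\mathbb{E}[|X_i|^p]\le\frac{p!}{2}f^2D^{p-2}$ collapses the inner sum to a geometric series:
\[
\log\mathbb{E}[e^{\lambda S_m}] \;\le\; \sum_{p\ge 2}\frac{\lambda^p}{p!}\sum_{i=1}^m\mathbb{E}[|X_i|^p] \;\le\; \frac{m f^2}{2}\sum_{p\ge 2}\lambda^p D^{p-2} \;=\; \frac{m f^2 \lambda^2}{2(1-\lambda D)},
\]
which is exactly the log-MGF bound of a sub-gamma variable with variance factor $v:=mf^2$ and scale $D$.

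The final step is Markov's inequality followed by a sharp optimization in $\lambda$: for all $t>0$, $\mathrm{Pr}(S_m\ge t)\le \exp\!\big(-\lambda t+\tfrac{v\lambda^2}{2(1-\lambda D)}\big)$, hence $\mathrm{Pr}(S_m\ge t)\le \exp(-\psi^*(t))$, where $\psi^*$ is the Legendre transform of $\lambda\mapsto\tfrac{v\lambda^2}{2(1-\lambda D)}$. \textbf{This optimization is the one delicate point}: the convenient choice $\lambda=t/(v+tD)$ yields only the weaker $\exp(-t^2/(2(v+tD)))$, which upon inversion loses a factor $2$ in the linear term of the deviation. To recover the stated constants I would instead use the exact formula $\psi^*(t)=\tfrac{v}{D^2}\,h(Dt/v)$ with $h(x)=1+x-\sqrt{1+2x}$; since $h$ is increasing on $[0,\infty)$ with inverse $h^{-1}(y)=y+\sqrt{2y}$, solving $\psi^*(t)=u$ gives exactly $t=\sqrt{2vu}+Du$. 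Therefore $\mathrm{Pr}(S_m\ge \sqrt{2vu}+Du)\le \exp(-u)$; substituting $v=mf^2$ and dividing the event through by $m$ turns the threshold into $\tfrac{f}{\sqrt m}\sqrt{2u}+\tfrac{D}{m}u$, and the factor $2$ in the final bound comes from the union bound over signs.

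For the sub-exponential special case it remains only to note that $\psi_1$-control forces the moment hypothesis: up to an absolute constant absorbed into the norm, $\mathbb{E}[e^{|X_i|/\|X_i\|_{\psi_1}}]\le 2$ implies $\mathbb{E}[|X_i|^p]\le p!\,\|X_i\|_{\psi_1}^p$ for every $p\ge 2$, so after averaging (and bounding $\|X_i\|_{\psi_1}^{p-2}\le(\max_j\|X_j\|_{\psi_1})^{p-2}$) the left-hand side acquires the shape $\tfrac{p!}{2}f^2D^{p-2}$ with $D=\max_i\|X_i\|_{\psi_1}$ and $f$ of the same order whenever the $\psi_1$-norms are comparable; in particular $f=D=\|X_1\|_{\psi_1}$, up to an absolute constant, in the i.i.d.\ regime in which the lemma is invoked, so the general bound applies.
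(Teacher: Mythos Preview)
The paper does not prove this lemma at all; it is merely quoted as ``the following well-known concentration inequality'' and used as a black box. Your proposal is the standard Cram\'er--Chernoff derivation (bounding the log-MGF by $\tfrac{v\lambda^2}{2(1-\lambda D)}$ via the moment hypothesis, then inverting the sub-gamma Legendre transform to recover the deviation threshold $\sqrt{2vu}+Du$), and it is correct for the main statement.

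One small caveat on the ``in particular'' clause: with the paper's definition $\|X\|_{\psi_1}=\sup_{p\ge 1}p^{-1}\|X\|_{L_p}$ one gets $\mathbb{E}|X_i|^p\le p^p\|X_i\|_{\psi_1}^p$, not $p!\,\|X_i\|_{\psi_1}^p$, so an absolute constant (from Stirling) is unavoidable; moreover the passage from $\tfrac1m\sum_i\|X_i\|_{\psi_1}^2$ to $\big(\tfrac1m\sum_i\|X_i\|_{\psi_1}\big)^2$ goes the wrong way by Jensen, so the specific choice $f=\tfrac1m\sum_i\|X_i\|_{\psi_1}$ is only literally valid in the i.i.d.\ or comparable-norm regime you flag at the end. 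This does not affect any application in the paper, where the $X_i$ are always i.i.d.\ and constants are absorbed.
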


The following version of symmetrization inequality can be found, for example, in \citet{wellner2013weak}.
\begin{lemma}[Symmetrization inequality]\label{lemma:symmetry}
Let $\l\{Z_t(i)\r\}_{i=1}^m$ be i.i.d. copies of a mean 0 stochastic process $\l\{Z_t:~t\in T\r\}$. For every $1\leq i \leq m$, let $g_t(i):~T\rightarrow\mathbb R$ be an arbitrary function. Let $\{\varepsilon_i\}_{i=1}^m$ be a sequence of independent Rademacher random variables. Then, for every $x>0$, 
\begin{equation*}
\l(1-\frac{4m}{x^2}\sup_{t\in T} var(Z_t)\r)\cdot \mathrm{Pr}\l(\sup_{t\in T}\l| \sum_{i=1}^mZ_t(i)\r| > x\r)
\leq 2 \mathrm{Pr}\l( \sup_{t\in T}\l| \sum_{i=1}^m\varepsilon_i (Z_t(i)-g_t(i))\r| > \frac{x}{4} \r),
\end{equation*}
where $var(Z_t) = \expect{(Z_t - \expect{Z_t})^2}$.
\end{lemma}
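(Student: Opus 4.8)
The plan is to prove this by the classical ghost-sample argument, deferring the introduction of the functions $g_t(i)$ to the very last step. First I would bring in a ``ghost sample'' $\{Z_t'(i)\}_{i=1}^m$, an independent family of i.i.d. copies of $\{Z_t:t\in T\}$, also independent of $\{\varepsilon_i\}_{i=1}^m$. On the event $A:=\{\sup_{t}|\sum_i Z_t(i)|>x\}$, choose (under the usual separability/measurability conventions, or after first restricting to finite $T$) a point $t^\ast=t^\ast(\{Z_t(i)\}_i)$ with $|\sum_i Z_{t^\ast}(i)|>x$. Conditioning on the original sample, $t^\ast$ is then deterministic, so $\sum_i Z_{t^\ast}'(i)$ has mean $0$ and variance $m\,\mathrm{var}(Z_{t^\ast})\le m\sup_t\mathrm{var}(Z_t)$; Chebyshev's inequality gives $\mathrm{Pr}'\big(|\sum_i Z_{t^\ast}'(i)|\ge x/2\big)\le 4m\sup_t\mathrm{var}(Z_t)/x^2$. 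On the complementary conditional event one has $|\sum_i(Z_{t^\ast}(i)-Z_{t^\ast}'(i))|>x/2$, hence $\sup_t|\sum_i(Z_t(i)-Z_t'(i))|>x/2$. Passing to conditional probabilities given $\{Z_t(i)\}_i$ and then taking expectations yields
\[
\Big(1-\tfrac{4m}{x^2}\sup_{t}\mathrm{var}(Z_t)\Big)\,\mathrm{Pr}(A)\ \le\ \mathrm{Pr}\Big(\sup_{t}\Big|\sum_{i=1}^m\big(Z_t(i)-Z_t'(i)\big)\Big|>\tfrac{x}{2}\Big).
\]

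Next I would symmetrize: for each fixed $i$ the process $(Z_t(i)-Z_t'(i))_{t\in T}$ is symmetric (since $(Z_t(i))_t$ and $(Z_t'(i))_t$ are i.i.d.), and these processes are independent across $i$, so the law of $\big(\sum_i(Z_t(i)-Z_t'(i))\big)_t$ is unchanged if we insert independent Rademacher signs $\varepsilon_i$. Only now would I bring in $g_t(i)$, writing $Z_t(i)-Z_t'(i)=(Z_t(i)-g_t(i))-(Z_t'(i)-g_t(i))$: the $g_t$-terms cancel, but re-enter through the triangle inequality, giving $\sup_t|\sum_i\varepsilon_i(Z_t(i)-Z_t'(i))|\le \sup_t|\sum_i\varepsilon_i(Z_t(i)-g_t(i))|+\sup_t|\sum_i\varepsilon_i(Z_t'(i)-g_t(i))|$. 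If the left side exceeds $x/2$, one of the two summands exceeds $x/4$; a union bound, together with the fact that these two summands have the same distribution (as $\{Z_t(i)\}$ and $\{Z_t'(i)\}$ are i.i.d. copies), gives
\[
\mathrm{Pr}\Big(\sup_{t}\Big|\sum_{i=1}^m\varepsilon_i\big(Z_t(i)-Z_t'(i)\big)\Big|>\tfrac{x}{2}\Big)\ \le\ 2\,\mathrm{Pr}\Big(\sup_{t}\Big|\sum_{i=1}^m\varepsilon_i\big(Z_t(i)-g_t(i)\big)\Big|>\tfrac{x}{4}\Big),
\]
and chaining this with the previous display completes the argument.

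I expect the only genuinely delicate points to be (i) the measurable selection of $t^\ast$, which I would handle by the standard separability hypotheses implicit in statements of this kind (or by proving the bound for finite $T$ and taking a monotone limit), and (ii) the bookkeeping of \emph{when} $g_t(i)$ is introduced: it must appear only after the Rademacher symmetrization of the differences $Z_t(i)-Z_t'(i)$, because the sign-flip symmetry being exploited is that of $Z_t(i)-Z_t'(i)$ and not that of $Z_t(i)-g_t(i)$. Everything else reduces to Chebyshev's inequality, exchangeability, and a union bound, so no heavy computation is needed.
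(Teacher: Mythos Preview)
Your proposal is correct and is precisely the classical ghost-sample proof. Note, however, that the paper does not actually supply its own proof of this lemma: it simply records the statement and cites \citet{wellner2013weak} (Lemma~2.3.7 there). Your argument is exactly the standard one found in that reference---Chebyshev on the ghost sample to peel off the $(1-4m\sup_t\mathrm{var}(Z_t)/x^2)$ factor, then exchangeability of $(Z_t(i),Z_t'(i))$ to insert the Rademacher signs, then the triangle inequality with the arbitrary centerings $g_t(i)$ and a union bound---so there is nothing to compare.
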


The following classical bound can be found, for example in Proposition 2.4 of \citet{angluin1979fast}. 
\begin{lemma}[Chernoff bound]
\label{lem:chernoff}
Let $X_1,\ldots,X_n$ be a sequence of i.i.d. copies of $X$ such that $\mathrm{Pr}(X=1)=1-\mathrm{Pr}(X=0)=p\in (0,1)$, and define 
$S_n:=\sum_{i=1}^n X_i$.  
Then 
\[
\mathrm{Pr}\Big(\frac{S_n}{n} \geq (1+\tau)p \Big)\leq \inf_{\theta>0}\Big[ e^{-\theta np(1+\tau)}\mathbb{E} e^{\theta S_n} \Big]\leq
\begin{cases}
e^{-\frac{\tau^2 np}{2+\tau}}, & \tau>1, \\
e^{-\frac{\tau^2 np}{3}}, & 0<\tau\leq 1.
\end{cases}
\]
\end{lemma}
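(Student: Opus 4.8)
The plan is to run the classical exponential-moment (Chernoff) argument and then do a short piece of calculus to package the exponent into the two stated cases. First I would observe that since $t \mapsto e^{\theta t}$ is increasing for every $\theta > 0$, Markov's inequality yields
\[
\mathrm{Pr}\!\Big(\frac{S_n}{n} \geq (1+\tau)p\Big) = \mathrm{Pr}\big(e^{\theta S_n} \geq e^{\theta(1+\tau)np}\big) \leq e^{-\theta(1+\tau)np}\,\mathbb{E}e^{\theta S_n},
\]
and taking the infimum over $\theta > 0$ of the right-hand side gives exactly the first inequality claimed in the lemma. It remains to evaluate this infimum and bound it.

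Next I would compute the moment generating function explicitly. By independence and the Bernoulli assumption, $\mathbb{E}e^{\theta S_n} = \big(\mathbb{E}e^{\theta X}\big)^n = \big(1 - p + p e^{\theta}\big)^n = \big(1 + p(e^{\theta}-1)\big)^n$, and the elementary inequality $1 + x \leq e^x$ upgrades this to $\mathbb{E}e^{\theta S_n} \leq e^{np(e^{\theta}-1)}$. Substituting back, for every $\theta > 0$,
\[
\mathrm{Pr}\!\Big(\frac{S_n}{n} \geq (1+\tau)p\Big) \leq \exp\!\big(np\big[(e^{\theta}-1) - \theta(1+\tau)\big]\big).
\]
Minimizing the bracket in $\theta$, the first-order condition $e^{\theta} = 1+\tau$ gives the admissible minimizer $\theta^\ast = \ln(1+\tau) > 0$, at which the bracket equals $\tau - (1+\tau)\ln(1+\tau)$. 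Writing $\psi(\tau) := (1+\tau)\ln(1+\tau) - \tau$, this produces the clean intermediate bound $\mathrm{Pr}(S_n/n \geq (1+\tau)p) \leq \exp(-np\,\psi(\tau))$.

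The only remaining point — and the one requiring a little care — is the elementary analytic lower bound $\psi(\tau) \geq \frac{\tau^2}{2+\tau}$ for all $\tau \geq 0$. I would prove it by setting $h(\tau) := \psi(\tau) - \frac{\tau^2}{2+\tau}$, noting $h(0) = h'(0) = 0$ (using $\psi'(\tau) = \ln(1+\tau)$), and then checking $h''(\tau) = \frac{1}{1+\tau} - \frac{8}{(2+\tau)^3} \geq 0$, which is equivalent to the polynomial inequality $(2+\tau)^3 \geq 8(1+\tau)$; the latter holds with equality at $\tau = 0$ and has derivative $3(2+\tau)^2 - 8 \geq 4 > 0$ on $[0,\infty)$, so it holds throughout. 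Thus $h'' \geq 0 \Rightarrow h' \geq 0 \Rightarrow h \geq 0$. Given this, for $0 < \tau \leq 1$ I bound $\frac{\tau^2}{2+\tau} \geq \frac{\tau^2}{3}$ (since $2+\tau \leq 3$), giving $\mathrm{Pr} \leq e^{-\tau^2 np/3}$, and for $\tau > 1$ I simply retain $\frac{\tau^2}{2+\tau}$, giving $\mathrm{Pr} \leq e^{-\tau^2 np/(2+\tau)}$. The main (and really only) obstacle is this last paragraph of calculus: organizing the nested derivative-sign checks so that both regimes reduce cleanly to $(2+\tau)^3 \geq 8(1+\tau)$; everything preceding it is the textbook Chernoff computation.
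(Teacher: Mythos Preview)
Your argument is correct: the Markov/MGF step, the optimization at $\theta^\ast=\ln(1+\tau)$, and the calculus establishing $\psi(\tau)\geq \tau^2/(2+\tau)$ (via $h''\geq 0 \Leftrightarrow (2+\tau)^3\geq 8(1+\tau)$) all check out, and the two regimes follow immediately. The paper does not actually prove this lemma---it simply cites Proposition~2.4 of Angluin--Valiant---so there is nothing to compare against; your write-up is a clean, self-contained proof of the cited fact.
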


The following bound is the well-known Dudley's entropy estimate which can be found, for example, in Corollary 2.2.8 of \citet{wellner2013weak}.
\begin{lemma}[Dudley's entropy bound]\label{lem:entropy-integral}
Let $(T,d)$ be an arbitrary semi-metric space, and let $\{X_t,~t\in T\}$ be a separable sub-Gaussian stochastic process with 
\footnote{For a sub-Gaussian random variable $X$, the $\psi_2$-norm is defined as $\sup_{p\geq1}p^{-1/2}\|X\|_{L_p}$.}
\[
\|X_s-X_t\|_{\psi_2}\leq Cd(s,t),~\forall s,t\in T,
\]
for some constant $C>0$. Then, for every $r>0$,
\[
\expect{\sup_{d(s,t)\leq r}|X_s-X_t|}\leq C_0\int_0^r\sqrt{\log\mathcal{N}(\varepsilon,d)}d\varepsilon, 
\]
where $\mathcal{N}(\varepsilon,d)$ is the $\varepsilon$ covering number of the set $T$ and $C_0$ is an absolute constant.
\end{lemma}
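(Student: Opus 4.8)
This is the classical Dudley entropy bound; one could simply cite Corollary~2.2.8 of \citet{wellner2013weak}, but here is the plan I would follow, via the standard chaining argument. First I would reduce to the case that $T$ is finite. By separability there is a countable $T'\subseteq T$ with $\sup_{d(s,t)\leq r}|X_s-X_t|=\sup_{s,t\in T',\,d(s,t)\leq r}|X_s-X_t|$ almost surely (using $L^2$-continuity of $X$ in $d$, which the hypothesis $\|X_s-X_t\|_{\psi_2}\leq Cd(s,t)$ supplies), and enumerating $T'$ the supremum over its first $N$ points increases to the full supremum, so by monotone convergence it suffices to prove the stated bound for finite $T$ with an absolute constant not depending on $|T|$. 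For finite $T$ fix dyadic scales $\delta_j:=r\,2^{-j}$ for $j\geq0$, a minimal $\delta_j$-net $T_j\subseteq T$ of cardinality $\mathcal{N}(\delta_j,d)$, and a nearest-point map $\pi_j:T\to T_j$ with $d(t,\pi_j(t))\leq\delta_j$; note that once $\delta_j$ drops below $\min_{s\neq t}d(s,t)$ we have $T_j=T$ and $\pi_j=\mathrm{id}$, so the chains below are finite sums.

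The heart of the argument is the telescoping identity $X_t-X_{\pi_0(t)}=\sum_{j\geq1}\bigl(X_{\pi_j(t)}-X_{\pi_{j-1}(t)}\bigr)$ for every $t\in T$. For each level $j\geq1$ the increment $X_{\pi_j(t)}-X_{\pi_{j-1}(t)}$ is sub-Gaussian with $\psi_2$-norm at most $C\,d(\pi_j(t),\pi_{j-1}(t))\leq C(\delta_j+\delta_{j-1})\leq 3C\delta_j$, and as $t$ ranges over $T$ this increment takes at most $|T_j|\,|T_{j-1}|\leq\mathcal{N}(\delta_j,d)^2$ distinct values. Invoking the elementary maximal inequality $\EE\max_{1\leq i\leq N}|Y_i|\leq C'\sigma\sqrt{\log(eN)}$ for $\sigma$-sub-Gaussian $Y_i$ — itself a union bound on $\mathrm{Pr}(|Y_i|\geq u)\leq 2e^{-cu^2/\sigma^2}$ followed by integration of the tail — gives $\EE\sup_{t\in T}|X_{\pi_j(t)}-X_{\pi_{j-1}(t)}|\leq C''\delta_j\sqrt{\log\mathcal{N}(\delta_j,d)}$. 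Summing over $j\geq1$, using the triangle inequality together with the telescoping identity, and comparing the dyadic sum to the integral via monotonicity of $\mathcal{N}(\cdot,d)$ (so that $\delta_j\sqrt{\log\mathcal{N}(\delta_j,d)}\leq 2\int_{\delta_{j+1}}^{\delta_j}\sqrt{\log\mathcal{N}(\varepsilon,d)}\,d\varepsilon$) yields $\EE\sup_{t\in T}|X_t-X_{\pi_0(t)}|\leq 2C''\int_0^r\sqrt{\log\mathcal{N}(\varepsilon,d)}\,d\varepsilon$.

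It remains to pass from this one-sided estimate to the two-sided supremum over pairs with $d(s,t)\leq r$ and to absorb the base layer. Writing $|X_s-X_t|\leq|X_s-X_{\pi_0(s)}|+|X_t-X_{\pi_0(t)}|+|X_{\pi_0(s)}-X_{\pi_0(t)}|$, the first two terms are controlled by the previous paragraph; for the third, if $d(s,t)\leq r$ then $d(\pi_0(s),\pi_0(t))\leq r+2\delta_0=3r$ while the number of such pairs in $T_0$ is at most $\mathcal{N}(r,d)^2$, so the same maximal inequality bounds its expected supremum by $C'''\,r\sqrt{\log\mathcal{N}(r,d)}$, and since $\mathcal{N}(\cdot,d)$ is non-increasing this is at most $C'''\int_0^r\sqrt{\log\mathcal{N}(\varepsilon,d)}\,d\varepsilon$. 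Adding the three contributions gives the claim with an absolute constant $C_0$. I expect the only genuinely delicate points to be the measurability/separability reduction to finite $T$ in the first step, and the bookkeeping that keeps the base-layer term inside the Dudley integral; the remainder is textbook dyadic chaining combined with the sub-Gaussian maximal inequality.
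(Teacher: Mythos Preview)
Your proposal is correct and follows the standard dyadic chaining argument; the paper itself does not prove this lemma at all but simply cites Corollary~2.2.8 of \citet{wellner2013weak}, which your argument reproduces. So there is nothing to compare against beyond noting that you have supplied the textbook proof where the paper only gives a reference.
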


\section{Proof of Theorem \ref{thm:lower-bound}} \label{sec:detailed_proof_lower_bound}

We provide detailed proofs of Proposition \ref{prop:transform-main} and Lemma \ref{lem:linear-model-lower-bound-final} in this section. As shown in Section \S\ref{sec:proof_lower_bound}, Theorem \ref{thm:lower-bound} can be proved immediately following Proposition \ref{prop:transform-main} and Lemma \ref{lem:linear-model-lower-bound-final}.

\begin{definition}
A vector $ v\in\mathbb R^d$ is $k$-group sparse if, when dividing $ v$ into $k$ blocks of sub-vectors of size $d/k$,\footnote{We assume WLOG that $d/k$ is an integer.} each block has exactly one non-zero entry. 
\end{definition}

\begin{proposition}[Proposition \ref{prop:transform-main}]\label{prop:transform}
Any nonnegative $k$-group sparse vector in $\mathbb B_2^d(1)$ can be generated by a ReLU network of the form \eqref{eq:relu} with a $k+1$ dimensional input and and depth $n=3$.
\end{proposition}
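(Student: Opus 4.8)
The plan is to make the three-layer, offset-free construction sketched above fully explicit and to reduce the whole statement to a family of one-dimensional ``tent'' functions. First I would fix the architecture with $d_0=k+1$ input units, $d_1=k+2d/k$ units in the first hidden layer, $d_2=2d$ in the second, and $d_3=d$ outputs, and write down three bias-free matrices. The matrix $W_1\in\mathbb{R}^{d_1\times(k+1)}$ has first $k$ rows $e_1\tran,\dots,e_k\tran$ (so those nodes output $\sigma(x_i)$) and remaining rows $r\cdot e_{k+1}\tran$ for $r\in[2d/k]$ (so those nodes output $\sigma(rz)$); since no offsets are allowed, these scaled copies of the auxiliary input $z$ are precisely what plays the role of offsets downstream. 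The matrix $W_2\in\mathbb{R}^{2d\times d_1}$ forms, for each $i\in[k]$ and $r\in[d/k]$, the two combinations $\sigma(x_i)-2\sigma(rz)$ and $\sigma(x_i)-2\sigma(rz)-\sigma(z)$ of the first-layer outputs (note $\sigma(z)=\sigma(1\cdot z)$ is one of the available units), yielding after ReLU the quantities $\Upsilon_r(x_i,z)$ and $\Upsilon'_r(x_i,z)$. The matrix $W_3\in\mathbb{R}^{d\times 2d}$ forms $\Upsilon_r(x_i,z)-2\Upsilon'_r(x_i,z)$ and routes it to output coordinate $(i-1)(d/k)+r$, so that after the final ReLU the output of $G$ is the $d$-vector whose $i$-th block of length $d/k$ equals $\bigl(\Gamma_1(x_i,z),\dots,\Gamma_{d/k}(x_i,z)\bigr)$ with $\Gamma_r(x_i,z)=\sigma\!\bigl(\Upsilon_r(x_i,z)-2\Upsilon'_r(x_i,z)\bigr)$. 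One checks that this composition has exactly the form \eqref{eq:relu} with $n=3$ and no offsets.

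The second step is the key identity. Fixing $z=1$, I would show that for all $i$ and all $r\in[d/k]$ one has $\Gamma_r(t,1)=\max\{0,\min(t-2r,\,2r+2-t)\}$ as a function of $t=x_i\in\mathbb{R}$; that is, $\Gamma_r(\cdot,1)$ is the tent supported on $[2r,2r+2]$ with slopes $\pm1$ and peak value $1$ at $t=2r+1$, vanishing for all $t\le 2r$ (hence for all $t<0$). This follows by unwinding the two nested ReLUs and splitting into the cases $t<2r$, $2r\le t\le 2r+1$, $2r+1\le t\le 2r+2$, $t\ge 2r+2$, together with $t<0$. Since the intervals $[2r,2r+2]$, $r\in[d/k]$, are pairwise disjoint except at shared endpoints where the tents are zero, for every $t$ at most one of $\Gamma_1(t,1),\dots,\Gamma_{d/k}(t,1)$ is nonzero, and on its rising edge each tent attains every value in $[0,1]$.

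The third step is the selection argument. Let $v\in\mathbb{B}_2^d(1)$ be nonnegative and $k$-group sparse, and write $v=(v^{(1)},\dots,v^{(k)})$ with each $v^{(i)}\in\mathbb{R}^{d/k}$ supported on a single index $r_i\in[d/k]$ with value $b_i:=v^{(i)}_{r_i}\ge 0$; from $\|v\|_2\le 1$ we get $b_i\in[0,1]$. Setting $x_i:=2r_i+b_i$ and $z:=1$, the tent identity gives $\Gamma_{r_i}(x_i,1)=b_i$ and $\Gamma_r(x_i,1)=0$ for every $r\ne r_i$ (because $x_i\in[2r_i,2r_i+1]$ lies outside the support of every other tent), so the $i$-th block of $G(x_1,\dots,x_k,1)$ is exactly $v^{(i)}$. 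Hence $G(x_1,\dots,x_k,1)=v$, proving that every nonnegative $k$-group sparse vector in $\mathbb{B}_2^d(1)$ lies in the range of this depth-$3$ ReLU network.

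I expect the only real obstacle to be bookkeeping rather than a conceptual difficulty: one must verify the tent identity through the composition of ReLUs without sign errors, and keep the node indexing consistent across the three layers so that $W_2$ and $W_3$ are genuine linear read-offs of the previous layer's activations and the output coordinates line up with the intended block structure. Everything else — positivity of the offset nodes when $z=1$, disjointness of the tent supports, and the existence of $x_i$ realizing a prescribed value $b_i$ in block $i$ — is elementary, so the full write-up is essentially a careful expansion of the sketch above.
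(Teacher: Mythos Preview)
Your proposal is correct and follows essentially the same construction as the paper's proof: the same first-layer split into $\sigma(x_i)$ nodes and offset nodes $\sigma(rz)$, the same second-layer quantities $\Upsilon_r,\Upsilon'_r$, and the same third-layer tent $\Gamma_r$, with the final selection step via $z=1$ and $x_i=2r_i+b_i$. The only difference is that you make the layer widths, the weight matrices, and the tent identity fully explicit and verify them by a case split, whereas the paper leaves these as a sketch; this is a cosmetic, not a mathematical, difference.
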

\begin{proof}[Proof of Proposition \ref{prop:transform-main}]
Consider an $k+1$ dimensional input of a network. The idea is to map each of the first $k$ entries of the input into a block in $\mathbb{R}^d$ of length $d/k$, respectively, and use one another input entry to construct proper offsets.

We first construct a single-hidden-layer ReLU network (i.e. $n=2$) \emph{with offsets} and $k$ dimensional input $[x_1,~\cdots, x_k]^T$ that can generate all positive $k$-group sparse signals. 
For each entry $x_i$ of $x\in\mathbb R^k$,  we consider a sequence of functions of the form:
\begin{equation}\label{eq:construct-1}
\tilde{\Gamma}_r(x_i):= \sigma(\sigma(x_i-2r) - 2\sigma(x_i-2r-1)),~~r\in\left\{1,~2,~\cdots,~\frac{d}{k}\right\}.
\end{equation}
Graphically, it is a sequence of $d/k$ non-overlapping triangle functions on the positive real line with width 2 and height 1. We use outputs of $\tilde{\Gamma}_r(x_i)$ over all $r$ as the output of the $i$-th block in $\mathbb{R}^d$. It then follows that for any $x_i\in\mathbb{R}$, there is only one of $\tilde{\Gamma}_r(x_i)$ that can be nonzero. Furthermore, the nonzero entry can take any value in $[0,1]$. Thus, lining up all $k$ blocks constructed in such a way, we have any positive $k$-group sparse vector in $\mathbb B_\infty^d(1)$ can be generated by this network, and so does any vector in $\mathbb B_2^d(1)$.

To represent such a network above using a ReLU network \emph{with no offset}, we add another hidden layer of width $(k+2d/k)$ before passing to $\tilde{\Gamma}_r(\cdot)$ and make use of the additional $k+1$ entries. The proposed network with a $k+1$ dimensional input of the form: $[x_1,\cdots,x_k,z]^T$ can be constructed as follows. The first $k$ nodes are:
\[
\sigma(x_i),~~i\in\{1,2,\cdots,k\}.
\]
The next $2d/k$ nodes are used to construct the offsets:
\[
\sigma(r\cdot z),~~r\in\left\{1,2,\cdots, \frac{2d}{k}\right\}.
\]
The second and the third hidden layers are almost the same as \eqref{eq:construct-1} mapping each $\sigma(x_i)$ into a block in $\mathbb{R}^d$ of length $d/k$, except that we replace the offsets $2r$ and $2r+1$ by the output computed in the first hidden layer, i.e., $\sigma(r\cdot z)$. Then, we construct the second layer that can output the following results for all $i\in \{ 1,2,...,k \}$ and $r\in \{1,2,...,d/k \}$: 
\begin{align*}
 \Upsilon_r(x_i,z)=\sigma(\sigma(x_i)-2\sigma(r\cdot z)) \quad \text{ and } \quad  \Upsilon'_r(x_i,z) = \sigma(\sigma(x_i)-2\sigma(r\cdot z) - \sigma(z)). 
\end{align*}        
Finally, by constructing the third layer, we have for all $i\in \{ 1,2,...,k \}$ and $r\in \{1,2,...,d/k \}$
\begin{equation}\label{eq:construct-2}
\Gamma_r(x_i ,z) := \sigma\big(\Upsilon_r(x_i,z) - 2\Upsilon'_r(x_i,z)\big).
\end{equation}
Note that \eqref{eq:construct-1} fires only when $x_i\geq0$, on which case we have $\sigma(x_i)=x_i$. Finally, we take $z$ always equal to 1 and obtain $\tilde{\Gamma}_r(x_i) = \Gamma_r( x_i, 1)$. Thus, the proposed network \eqref{eq:construct-2} can generate all nonnegative $k$-group sparse signals in $\mathbb{B}_2^d(1)$.
\end{proof}

Furthermore, based on the next two lemmas, we give the proof of Lemma \ref{lem:linear-model-lower-bound-final}.
\begin{lemma}[Theorem 4.2 of \citet{plan2016high}]\label{lem:linear-model-lower-bound}
Assume that $\theta_0\in K$ where $K \subseteq \mathbb R^d$ satisfies $\lambda v\in K$ for any $v\in K$ and $\lambda\in[0,1)$. Assume that $\check{y} = \dotp{a}{\theta_0} + \xi$ with $\xi\sim\mathcal{N}(0,\sigma^2)$ and $ a\sim\mathcal N(0, \mf I_d)$. Let 
\begin{align*}
\delta_* := \inf_{t>0}\l\{ t+\frac{\sigma}{\sqrt m}\l(1+\sqrt{\log P_t}\r) \r\}, 
\end{align*}
where $P_t$ with $t>0$ is the packing number of $K\cap \mathbb{B}_2^d(t)$ with balls of radius $t/10$. Then, there exists an absolute constant $c>0$ such that any estimator $\widehat\theta$ which depends only on $m$ observations of  $(a, \check{y})$ satisfies
\[
\sup_{\theta_0\in K} \expect{\|\widehat{\theta} - \theta_0\|_2}\geq c\min\{\delta_*,\text{diam}(K)\}.
\]
\end{lemma}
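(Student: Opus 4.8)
The plan is to prove this by the classical Fano (packing) minimax lower bound, exploiting that for the Gaussian linear model the Kullback--Leibler divergence between two data laws is explicit and proportional to the squared parameter distance. Fix $t>0$ with $P_t\ge 2$ and let $\{\theta^{(j)}\}_{j=1}^{P_t}\subseteq K\cap\mathbb B_2^d(t)$ be a maximal $(t/10)$-packing, so the $\theta^{(j)}$ are pairwise at distance $\ge t/10$ and have norm $\le t$. Under parameter $\theta$ one sample $(a,\check y)$ satisfies $a\sim\mathcal N(0,\mf I_d)$ and $\check y\mid a\sim\mathcal N(\dotp{a}{\theta},\sigma^2)$, whence $D_{\mathrm{KL}}\big(P_\theta^{\otimes m}\,\big\|\,P_{\theta'}^{\otimes m}\big)=\tfrac{m\|\theta-\theta'\|_2^2}{2\sigma^2}\le \tfrac{2mt^2}{\sigma^2}$ for any two packing points. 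Fano's inequality then bounds the Bayes error of the induced $P_t$-ary testing problem below by $1-\tfrac{2mt^2/\sigma^2+\log 2}{\log P_t}$, and the standard reduction (``test $=$ nearest packing point'', then Markov's inequality) converts this into
\[
\sup_{\theta_0\in K}\EE\|\widehat\theta-\theta_0\|_2\;\ge\;\frac{t}{20}\Bigl(1-\frac{2mt^2/\sigma^2+\log 2}{\log P_t}\Bigr)\qquad\text{for every estimator }\widehat\theta .
\]
Writing $\mathcal M$ for the minimax risk (the infimum of the left side over $\widehat\theta$), this yields $\mathcal M\ge t/80$ for every \emph{admissible} $t$, meaning $P_t\ge 4$ and $2mt^2/\sigma^2\le\tfrac14\log P_t$; hence $\mathcal M\ge\bar t/80$ with $\bar t:=\sup\{t>0:\ t\text{ admissible}\}$ (the case $\bar t=\infty$ makes the lemma trivial, and $\bar t>0$ as soon as $K\neq\{0\}$, since star-shapedness forces $P_t\ge 4$ for all small $t$).

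Separately I would prove the ``noise floor'' $\mathcal M\ge c_0\min\{\mathrm{diam}(K),\sigma/\sqrt m\}$: since $0\in K$ and $K$ is star-shaped about $0$, it contains a segment $[0,v]$ with $\|v\|_2\ge\mathrm{diam}(K)/2$, and applying Le Cam's two-point bound to $\{0,\,sw\}$, where $w=v/\|v\|_2$ and $s:=\min\{\|v\|_2,\sigma/\sqrt m\}$, for which $\mathrm{TV}\big(P_0^{\otimes m},P_{sw}^{\otimes m}\big)\le\sqrt{ms^2/(4\sigma^2)}\le\tfrac12$, gives $\mathcal M\ge s/4$, i.e. $\mathcal M\gtrsim\min\{\mathrm{diam}(K),\sigma/\sqrt m\}$. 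To conclude I would combine this with the Fano bound and the definition $\delta_*=\inf_{t>0}\{t+\tfrac{\sigma}{\sqrt m}(1+\sqrt{\log P_t})\}$: pick $t'$ slightly above $\bar t$; non-admissibility of $t'$ forces either $P_{t'}<4$ (so $\sqrt{\log P_{t'}}$ is an absolute constant) or $2m(t')^2/\sigma^2>\tfrac14\log P_{t'}$ (so $\tfrac{\sigma}{\sqrt m}\sqrt{\log P_{t'}}\le 2\sqrt2\,t'$), and in either case the defining objective at $t'$ is at most a constant times $\bar t+\sigma/\sqrt m$, so $\delta_*\le C(\mathcal M+\sigma/\sqrt m)$. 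If $\mathrm{diam}(K)\ge\sigma/\sqrt m$ the floor gives $\sigma/\sqrt m\le C'\mathcal M$, hence $\delta_*\le C''\mathcal M$; if $\mathrm{diam}(K)<\sigma/\sqrt m$ then $\delta_*\ge\sigma/\sqrt m>\mathrm{diam}(K)$, so $\min\{\delta_*,\mathrm{diam}(K)\}=\mathrm{diam}(K)\le c_0^{-1}\mathcal M$ directly. Either way $\mathcal M\ge c\min\{\delta_*,\mathrm{diam}(K)\}$ for an absolute $c>0$; the degenerate case $K=\{0\}$ is immediate.

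The routine ingredients are the Fano step and the KL identity, which are textbook for the Gaussian linear model, as is the two-point floor. The main obstacle is the bookkeeping in the final combination: because $\delta_*$ is an \emph{infimum} over the scale $t$, one must verify that the best admissible scale for the Fano bound, together with the irreducible $\sigma/\sqrt m$ term, actually reconstructs $\min\{\delta_*,\mathrm{diam}(K)\}$ up to an absolute constant -- equivalently, that at the threshold $\bar t$ the two ``active'' summands $t$ and $\tfrac{\sigma}{\sqrt m}\sqrt{\log P_t}$ of the defining objective are comparable, which is exactly what the admissibility boundary $2mt^2/\sigma^2\asymp\log P_t$ encodes. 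The ``$1$'' in $1+\sqrt{\log P_t}$ is precisely the contribution accounted for by the two-point floor, and the ``$\mathrm{diam}(K)$'' in the minimum absorbs the regime where $K$ is too small to support even that floor; tracking these matchings and the attendant edge cases is where the care is needed.
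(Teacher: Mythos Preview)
The paper does not prove this lemma at all: it is stated as Lemma~\ref{lem:linear-model-lower-bound} with the attribution ``Theorem 4.2 of \citet{plan2016high}'' and is used as a black box in the proof of Lemma~\ref{lem:linear-model-lower-bound-final}. So there is no ``paper's own proof'' to compare to; what you have written is a self-contained argument for a result the authors simply import.

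That said, your plan is the standard and correct route to such minimax lower bounds. The KL identity $D_{\mathrm{KL}}(P_\theta^{\otimes m}\|P_{\theta'}^{\otimes m})=\tfrac{m}{2\sigma^2}\|\theta-\theta'\|_2^2$ for the Gaussian linear model, the Fano reduction over a $(t/10)$-packing of $K\cap\mathbb B_2^d(t)$, the Le Cam two-point floor on a segment inside the star-shaped $K$, and the final balancing at the admissibility threshold $mt^2/\sigma^2\asymp\log P_t$ are exactly how the result in \citet{plan2016high} is established. A couple of small points worth tightening: (i) your claim $0\in K$ is indeed justified because the hypothesis is $\lambda\in[0,1)$, which includes $\lambda=0$; (ii) the sentence ``$\bar t>0$ as soon as $K\neq\{0\}$, since star-shapedness forces $P_t\ge 4$ for all small $t$'' is right but deserves one line of justification---any nonzero $v\in K$ gives the full segment $[0,v]\subset K$, and for $t<\|v\|_2$ the intersection $K\cap\mathbb B_2^d(t)$ contains a diameter of length $t$, whose $(t/10)$-packing number is at least $5$; (iii) in the combination step, when you pick $t'>\bar t$ you should also note that $\bar t<\infty$ whenever $\mathrm{diam}(K)<\infty$ (otherwise $P_t$ is eventually bounded while $mt^2/\sigma^2\to\infty$), so the case split is exhaustive. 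None of these affect the validity of the approach.
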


\begin{lemma}
\label{lem:packing}
When $k\leq d/4$, for any $t\leq 1$, we have $P_t\geq \exp\l(ck\log d/k \r)$, where $P_t$ is defined as in Lemma \ref{lem:linear-model-lower-bound} with letting $K \subseteq \mathbb{B}^d_2(1)$ being a set containing all $k$ group sparse vectors  in $\mathbb B_2^d(1)$.  Here $c>0$ is an absolute constant. 
\end{lemma}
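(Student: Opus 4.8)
The plan is to build an explicit, large, well-separated family inside $K\cap\mathbb B_2^d(t)$ and to estimate its size by a Gilbert--Varshamov bound on $q$-ary codes.

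\textbf{Step 1: the test vectors.} For each support pattern $s=(s_1,\dots,s_k)\in[d/k]^k$ --- specifying, in each of the $k$ blocks of size $d/k$, the location of its unique nonzero coordinate --- let $v_s\in\mathbb R^d$ have value $t/\sqrt k$ in coordinate $s_j$ of block $j$ and $0$ elsewhere. Every such $v_s$ is nonnegative and $k$-group sparse, with $\|v_s\|_2=t\le1$, so $v_s\in K\cap\mathbb B_2^d(t)$. If two patterns $s,s'$ differ in exactly $H$ blocks, then $\|v_s-v_{s'}\|_2^2=2H(t/\sqrt k)^2=2Ht^2/k$, so any two patterns with Hamming distance at least $\lceil k/40\rceil$ yield vectors at $\ell_2$ distance at least $t/\sqrt{20}>t/5=2\cdot(t/10)$, an admissible configuration for a packing by balls of radius $t/10$.

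\textbf{Step 2: counting well-separated patterns.} By the greedy Gilbert--Varshamov argument (repeatedly select a pattern and discard the Hamming ball of radius $\lceil k/40\rceil$ around it), there exists a set $\mathcal C\subseteq[d/k]^k$ with pairwise Hamming distance $\ge\lceil k/40\rceil$ and $|\mathcal C|\ge (d/k)^k/V$, where $V=\sum_{i\le\lceil k/40\rceil}\binom{k}{i}(d/k-1)^i$ is the size of such a Hamming ball. Since $\binom{k}{i}(d/k-1)^i$ is nondecreasing in $i$ for $i\le\lceil k/40\rceil\le k/2$, we get $V\le(\lceil k/40\rceil+1)\binom{k}{\lceil k/40\rceil}(d/k)^{\lceil k/40\rceil}$, and then $\binom{k}{\lceil k/40\rceil}\le 2^{H_2(1/40)k}$ (with $H_2$ the binary entropy) gives $\log|\mathcal C|\ge k\log(d/k)-O(k)-O(\log k)$ with absolute implied constants. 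The hypothesis $k\le d/4$ forces $d/k\ge4$, hence $\log(d/k)\ge\log 4$, so the $O(k)$ and $O(\log k)$ corrections are each at most a fixed fraction of $k\log(d/k)$, uniformly in $1\le k\le d/4$; consequently $\log|\mathcal C|\ge c\,k\log(d/k)$ for an absolute constant $c\in(0,1)$. Finally $P_t\ge|\{v_s:s\in\mathcal C\}|=|\mathcal C|\ge\exp\!\big(ck\log(d/k)\big)$, which is the claim.

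The only delicate points are cosmetic. First, one must match the separation $t/\sqrt{20}$ that the construction delivers to whatever precise convention (strict vs.\ non-strict, radius vs.\ diameter) is intended by ``packing by balls of radius $t/10$'' in Lemma~\ref{lem:linear-model-lower-bound}; the construction carries comfortable slack, since any constant Hamming fraction $\alpha$ gives separation $t\sqrt{2\alpha}$, so $\alpha$ can be enlarged if needed. Second, one must carry out the constant bookkeeping in Step~2 so that the linear and logarithmic corrections are genuinely absorbed into $k\log(d/k)$ for \emph{all} admissible $k$ --- this is exactly where $k\le d/4$ (equivalently $d/k\ge4$) is used. Both are mechanical, and the Gilbert--Varshamov existence statement is classical; I expect the uniform constant bookkeeping over the full range $1\le k\le d/4$, rather than any conceptual issue, to be the step most prone to slips.
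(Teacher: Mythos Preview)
Your proof is correct and follows a genuinely different route from the paper's. The paper first observes that $P_t=P_1$ by scaling (both the set $K\cap\mathbb B_2^d(t)$ and the packing radius $t/10$ are homogeneous of degree one in $t$), then uses a \emph{probabilistic} argument on the same family of constant-magnitude $k$-group-sparse vectors: it draws two vectors uniformly at random, bounds the probability that they are $1/10$-close via Stirling, and deduces that a random subset of size $\exp(ck\log(d/k))$ is a valid packing with positive probability. You instead work directly at scale $t$ and invoke the deterministic Gilbert--Varshamov bound on $q$-ary codes with $q=d/k$. The reduction of $\ell_2$ separation to Hamming distance is identical in both arguments; only the counting differs, and both are classical. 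Your version is arguably cleaner since it avoids the slightly informal random-subset step in the paper.

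One small correction on the point you already flagged: to guarantee minimum Hamming distance $\ge\lceil k/40\rceil$ in the greedy argument you must discard balls of radius $\lceil k/40\rceil-1$, not $\lceil k/40\rceil$; with your stated $V$ the bound collapses to $|\mathcal C|\ge1$ at $k=1$. Either fix the off-by-one, or simply note that for $k<40$ any two distinct patterns already have $\ell_2$ distance $t\sqrt{2/k}\ge t/\sqrt{20}>t/5$, so all $(d/k)^k$ patterns form a packing and the bound holds with $c=1$ in that range.
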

\begin{proof}[Proof of Lemma \ref{lem:packing}]
The proof of this lemma follows from the idea of randomized packing construction in Section 4.3 of \citet{plan2016high}.  
For any $t$, since $P_t$ is defined as the packing number with balls of radius scaling as $t$, which is the radius of the set $K \cap \mathbb{B}_2^d(t)$,  then we have $P_t  = P_1$. Thus, we only need to consider the lower bound of $P_1$.  Furthermore, since $\mc S^{d-1} \subseteq \mathbb{B}_2^d(1)$, where $\mc S^{d-1}$ is the unit sphere in $\mathbb R^d$ of radius $1$, the packing number of $K \cap \mathbb{B}_2^d(1)$ is larger $K \cap \mc S^{d-1}$. Thus, we consider $1/10$ packing of the set 
$K\cap \mc S^{d-1}$ to obtain the lower bound of $P_1$. Consider a subset $K\cap \mc S^{d-1}$ such that it contains all nonnegative $k$ group sparse signals in $\mathbb{R}^d$ where each non-zero entry equals $1/\sqrt{k}$. This is possible due to Proposition \ref{prop:transform}. Then, we have $|\mathcal C| = (d/k)^k$. 
We will show that there exists a large enough subset $\mathcal X\subseteq\mathcal C$ such that $\forall x, y \in\mathcal X$, $\|x-y\|_2>1/10$. 
Consider picking vectors $x,y\in\mathcal C$ uniformly at random and computing the probability of the event
$
\|x-y\|_2^2\leq 1/100
$. When the event happens, it requires $x$ and $y$ to have at least $0.99k$ matching non-zero coordinates. Assume without loss of generality that $0.01k$ is an integer, this event happens with probability
\[
\l.{k\choose 0.99k}{d-0.99k\choose 0.01k}\r/(d/k)^k.
\]
Using Stirling's approximation and $k\leq d/4$, we have $\mathrm{Pr}(\|x-y\|_2^2\leq 1/100)\leq \exp(-c'k\log (d/k))$, where $c'>0$ is an absolute constant. This implies the claim that when choosing $\mathcal X$ to have $\exp(ck\log (d/k))$ uniformly chosen vectors from $\mathcal C$, which satisfies $\forall x, y \in\mathcal X$, $\|x-y\|_2>1/10$ with a constant probability.
\end{proof}

\begin{lemma}[Lemma \ref{lem:linear-model-lower-bound-final}]
Assume that $\theta_0\in K \subseteq \mathbb{B}^d_2(1)$ where $K$ is a set containing any $k$-group sparse vectors in $\mathbb{B}^d_2(1)$, and $K$ satisfies that $\forall v\in K$ then $ \lambda v \in K , \forall \lambda \in [0,1)$. Assume that $\check{y} = \dotp{a}{\theta_0} + \xi$ with $\xi\sim\mathcal{N}(0,\sigma^2)$ and $ a\sim\mathcal N(0, \mf I_d)$. Then, there exist absolute constants $c_1, c_2>0$ such that any estimator $\widehat\theta$ which depends only on $m$ observations of  $(a, \check{y})$ satisfies that when $m \geq c_1 k\log(d/k)$, there is 
\[
\sup_{\theta_0\in K} \EE \|\widehat{\theta} - \theta_0\|_2\geq c_2\sqrt{\frac{k\log(d/k)}{m}}.
\]
\end{lemma}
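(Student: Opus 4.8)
The plan is to assemble the generic minimax lower bound of Lemma~\ref{lem:linear-model-lower-bound} with the packing estimate of Lemma~\ref{lem:packing}. Since $K$ contains all $k$-group sparse vectors in $\mathbb{B}_2^d(1)$ and obeys $\lambda v\in K$ for every $v\in K$ and $\lambda\in[0,1)$, it is star-shaped about the origin, so the hypothesis of Lemma~\ref{lem:linear-model-lower-bound} is met and it yields
\[
\sup_{\theta_0\in K}\mathbb{E}\|\widehat\theta-\theta_0\|_2 \;\geq\; c\min\{\delta_*,\ \mathrm{diam}(K)\},
\qquad
\delta_* = \inf_{t>0}\Big\{\, t + \tfrac{\sigma}{\sqrt m}\big(1 + \sqrt{\log P_t}\,\big)\Big\},
\]
with $P_t$ the $t/10$-packing number of $K\cap\mathbb{B}_2^d(t)$. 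It then remains to (i) lower bound $\delta_*$ via the packing estimate and (ii) check that the $\mathrm{diam}(K)$ term is harmless.

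For (i), I would fix the scale $t_0 := \sigma\sqrt{c\,k\log(d/k)/m}$, where $c>0$ is the constant of Lemma~\ref{lem:packing}; the assumption $m\geq c_1 k\log(d/k)$ with $c_1$ large enough forces $t_0\leq 1$. Then I bound the infimand below for every $t>0$ by a case split: if $t\geq t_0$ the infimand is at least $t\geq t_0$; if $0<t<t_0\leq 1$, then Lemma~\ref{lem:packing} gives $\log P_t\geq c\,k\log(d/k)$, so the infimand is at least $\tfrac{\sigma}{\sqrt m}\sqrt{\log P_t}\geq t_0$. Taking the infimum over both regimes yields $\delta_*\geq t_0\geq c'\sqrt{k\log(d/k)/m}$ for an absolute constant $c'$ (treating $\sigma$ as a fixed constant; in the application $\sigma=1$).

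For (ii), since $0\in K$ and $K$ contains a unit-norm $k$-group sparse vector, $\mathrm{diam}(K)\geq 1$; moreover $\sqrt{k\log(d/k)/m}\leq c_1^{-1/2}\leq 1$ by the sample-size assumption, so $\min\{\delta_*,\mathrm{diam}(K)\}\geq\min\{\delta_*,1\}\geq c'\sqrt{k\log(d/k)/m}$ after possibly shrinking $c'$. Combining with the displayed bound from Lemma~\ref{lem:linear-model-lower-bound} gives the claim with $c_2=c\,c'$.

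I do not anticipate a genuine obstacle: the substantive work — constructing a good packing of the $k$-group sparse set (Lemma~\ref{lem:packing}) and proving the abstract minimax inequality (Lemma~\ref{lem:linear-model-lower-bound}) — has already been done, so this proof is essentially an assembly step. The only point requiring care is the case analysis inside $\delta_*$, which is precisely where the hypothesis $m\geq c_1 k\log(d/k)$ is consumed, namely to keep the critical scale $t_0$ below $1$ so that Lemma~\ref{lem:packing} is applicable.
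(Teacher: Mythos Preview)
Your proposal is correct and follows essentially the same route as the paper: both assemble Lemma~\ref{lem:linear-model-lower-bound} with the packing estimate of Lemma~\ref{lem:packing}, then verify that the $\mathrm{diam}(K)$ term does not interfere under the sample-size hypothesis. The only cosmetic difference is that the paper splits the infimum at $t=1$ (using $P_t=P_1$ for all $t\le 1$ to evaluate it exactly), whereas you split at the target scale $t_0$ and use only the lower bound $\log P_t\ge c\,k\log(d/k)$; your handling of $\mathrm{diam}(K)\ge 1$ is in fact slightly cleaner than the paper's.
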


\begin{proof}[Proof of Lemma \ref{lem:linear-model-lower-bound-final}] 
Since $K$ satisfies $\lambda v \in K$ for any $ v \in K$ and $\lambda \in [0,1)$. Thus, by  Lemma \ref{lem:linear-model-lower-bound},  we have
\begin{align*}
\delta_* = \inf_{t>0}\l\{ t+\frac{\sigma}{\sqrt m}\l(1+\sqrt{\log P_t}\r) \r\}.
\end{align*}
Consider that for any $t > 1$, we can observe that 
\begin{align*}
t+\frac{\sigma}{\sqrt m}\l(1+\sqrt{\log P_t}\r) > 1.
\end{align*} 
On the other hand, for any $t \leq 1$, then we have
\begin{align*}
&\inf_{0 <  t \leq 1}\l\{ t+\frac{\sigma}{\sqrt m}\l(1+\sqrt{\log P_t}\r) \r\}\\
&\quad = \inf_{0 <  t \leq 1}\l\{ t+\frac{\sigma}{\sqrt m}\l(1+\sqrt{\log P_1}\r) \r\}\\
&\quad  = \frac{\sigma}{\sqrt m}\l(1+\sqrt{\log P_1}\r),
\end{align*}
where the first equality is due to Lemma \ref{lem:packing}, the second equality is by taking $\inf$ over $t$. If $m \geq \sigma^2 (1+\sqrt{\log P_1})^2$, we have 
\begin{align*}
\inf_{0 <  t \leq 1} \{ t+\tfrac{\sigma}{\sqrt m} (1+\sqrt{\log P_t} )  \} \leq 1.
\end{align*} 
Comparing the cases $t > 1$ and $t\leq 1$,  we get that, if $m \geq \sigma^2 (1+\sqrt{\log P_1})^2$, then 
\begin{align*}
\delta_0 = \inf_{t>0}\l\{ t+\frac{\sigma}{\sqrt m}\l(1+\sqrt{\log P_t}\r) \r\}  = \inf_{0 <  t \leq 1} \l\{ t+\frac{\sigma}{\sqrt m} \l(1+\sqrt{\log P_t} \r)  \r\} = \frac{\sigma}{\sqrt m}\l(1+\sqrt{\log P_1}\r).
\end{align*}
Moreover, since the $\text{diam}(K) \leq 1$, then by  Lemma \ref{lem:linear-model-lower-bound}, we have
\begin{align*}
\sup_{\theta_0\in K} \EE  \|\widehat{\theta} - \theta_0\|_2 \geq c\min\{\delta_*,\text{diam}(K)\} = \frac{c\sigma}{\sqrt{m}} \l( 1+\sqrt{\log P_1} \r) \geq \frac{c\sigma}{\sqrt{m}} \sqrt{\log P_1},
\end{align*}
by letting $m\geq \sigma^2 (1+\sqrt{\log P_1})^2$. Furthermore, according to Lemma \ref{lem:packing}, we know $\log P_1 \geq c' k\log(d/k)$ with $c'$ being an absolute constant. Then, there exists a sufficient large absolute constant $c_1$ such that when $m\geq c_1 k \log (d/k)$, we have
\begin{align*}
\sup_{\theta_0\in K} \EE [\|\widehat{\theta} - \theta_0\|_2]\geq c_2\sqrt{\frac{k\log(d/k)}{m}}.
\end{align*}
\end{proof}
\section{Proof of Theorem \ref{thm:determine_converge}} \label{sec:proof_thm_22}

Before presenting the proof of Theorem \ref{thm:determine_converge}, we first introduce some notations and definitions used hereafter. These notations and definitions will also be used in the proof of Theorem \ref{thm:optimum_compare}
in Section \S\ref{sec:proof_thm_23}.
According to the definition of $W_{i,+,x}$ in the paper, we can know that $G(x)$ can be represented as 
\[
G(x) =  \l( \prod_{i=1}^n W_{i, +, x}\r)  x  = (W_{n, +, x} W_{n-1, +, x} \cdots W_{1,+,x}) x.
\]
We therefore further define a more general form $H_x(z)$ as follows, 
\begin{align*}
H_x(z) := \l( \prod_{i=1}^n W_{i, +, x} \r)  z = (W_{n, +, x} W_{n-1, +, x}\cdots W_{1, +, x})  z,
\end{align*}
by which we can see that $H_x(x) = G(x)$.
 
Recall that as shown in the main body of the paper, for any $x$ such that $L(x)$ is differentiable, we can write the gradient of $L(x)$ w.r.t. $x$ as follows 
\begin{align*}
\nabla L(x) = 2 \l(\prod_{j=1}^n W_{j, +, x}\r)^\top \l(\prod_{j=1}^n W_{j, +, x}\r) x - \frac{2\lambda}{m} \sum_{i=1}^m y_i \l( \prod_{j=1}^n W_{j, +, x} \r)^\top a_i, 
\end{align*}
by which we further have
\begin{align*}
\dotp{\nabla L(x)}{z}  = 2 \dotp{G(x)}{H_x(z)} - \frac{2\lambda}{m} \sum_{i=1}^m y_i \dotp{a_i}{H_x(z)},
\end{align*}
for any $x$ and $z$.

We then let
\begin{align}
&h_{x,x_0} := \frac{1}{2^n} x - \frac{1}{2^n} \l[ \l(  \prod_{i=0}^{n-1} \frac{\pi-\overline{\varrho}_i}{\pi}  \r) x_0 + \sum_{i=0}^{n-1} \frac{\sin \overline{\varrho}_i}{\pi} \l( \prod_{j=i+1}^{d-1}\frac{\pi - \overline{\theta}_j}{\pi} \r) \frac{\|x_0\|_2}{\|x\|_2} x \r], \label{eq:h} \\
&S_{\varepsilon,x_0} := \{ x \neq 0 : \|h_{x, x_0}\|_2 \leq \frac{1}{2^n} \varepsilon \max (\|x\|_2, \|x_0\|_2) \}. \label{eq:S}
\end{align}
where $\overline{\varrho}_0 = \angle (x,x_0) $ and $\overline{\varrho}_i = g(\overline{\varrho}_{i-1})$, and $g(\varrho):=\cos^{-1} \l( \frac{(\pi-\varrho)\cos\varrho + \sin\varrho}{\pi} \r)$ as defined in Lemma~\ref{lem:converge_set}. In the following subsections, we provides key lemmas for the proof of Theorem \ref{thm:determine_converge}, and then a proof sketch of this theorem, followed by a detailed proof.

\subsection{Lemmas for Theorem \ref{thm:determine_converge}}

\begin{lemma}\label{lem:comp-expect}
Define $H_x(z) = \prod_{j=1}^n W_{j, +, x}  z $. Suppose that $G(x_0)$ satisfies $|G(x_0)| \leq R$. There exists an absolute constant $c_1>0$ such that for any $z$ and any $x$, when $$\lambda\geq4\max\{c_1(R\|a\|_{\psi_1}+\|\xi\|_{\psi_1}),1\}\log(64\max\{c_1(R\|a\|_{\psi_1} + \|\xi\|_{\psi_1}),1\}/\varepsilon),$$ the following holds:
\begin{align*}
\l|\lambda \expect{y_i\dotp{a_i}{H_x(z)}} - \dotp{G(x_0)}{H_x(z)} \r|\leq \frac{1}{4} \varepsilon  \|H_x(z)\|_2.
\end{align*}
\end{lemma}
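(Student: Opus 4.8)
The plan is to rerun the proof of Lemma~\ref{lem:expected-risk-supp} essentially verbatim, with the fixed vector $\Delta^G_{x,x_0}=G(x)-G(x_0)$ there replaced everywhere by the fixed vector $H_x(z)=\prod_{j=1}^n W_{j,+,x}\,z$. For any fixed $x$ and $z$ the matrix $\prod_{j=1}^n W_{j,+,x}$ is deterministic, so $H_x(z)$ is a deterministic element of $\mathbb{R}^d$, and $y_i=\sign(\dotp{a_i}{G(x_0)}+\xi_i+\tau_i)$ does not depend on $x$ or $z$; hence the randomness in $\expect{y_i\dotp{a_i}{H_x(z)}}$ comes only from $(a_i,\xi_i,\tau_i)$, exactly as in Lemma~\ref{lem:expected-risk-supp}. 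The only properties of $\Delta^G_{x,x_0}$ used in that proof are that it is deterministic, that $\|\dotp{a_i}{\Delta^G_{x,x_0}}\|_{L_2}=\|\Delta^G_{x,x_0}\|_2$ by isotropy, and that $\expect{\dotp{a_i}{\Delta^G_{x,x_0}}(\dotp{a_i}{G(x_0)}+\xi_i)}=\dotp{G(x_0)}{\Delta^G_{x,x_0}}$ by isotropy together with $\expect{a_i}=0$; both identities hold with $H_x(z)$ in place of $\Delta^G_{x,x_0}$, so the estimates are automatically uniform over $x,z$.

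Concretely, fix $x,z$ and put $V_i:=\dotp{a_i}{G(x_0)}+\xi_i$ and $Z_i:=\dotp{a_i}{H_x(z)}$. Since $\tau_i\sim\text{Unif}[-\lambda,\lambda]$ is independent of $(a_i,\xi_i)$, we have $\expect{\sign(V_i+\tau_i)\mid a_i,\xi_i}=\tfrac{V_i}{\lambda}\mathbf{1}_{\{|V_i|\le\lambda\}}+\mathbf{1}_{\{V_i>\lambda\}}-\mathbf{1}_{\{V_i<-\lambda\}}$, and taking expectations over $(a_i,\xi_i)$ followed by Cauchy--Schwarz gives
\[
\l|\expect{Z_i\sign(V_i+\tau_i)}-\tfrac1\lambda\expect{Z_iV_i}\r|\le\tfrac1\lambda\|Z_i\|_{L_2}\|V_i\mathbf{1}_{\{|V_i|>\lambda\}}\|_{L_2}+2\|Z_i\|_{L_2}\,\mathrm{Pr}(|V_i|>\lambda)^{1/2}.
\]
By isotropy $\|Z_i\|_{L_2}=\|H_x(z)\|_2$, and $\expect{Z_iV_i}=\dotp{H_x(z)}{G(x_0)}$, the $\xi_i$ cross-term vanishing because $\expect{a_i}=0$. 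Since $\dotp{a_i}{G(x_0)}+\xi_i$ is sub-exponential with $\psi_1$-norm at most $\|a\|_{\psi_1}\|G(x_0)\|_2+\|\xi\|_{\psi_1}\le\|a\|_{\psi_1}R+\|\xi\|_{\psi_1}=:K_{a,\xi,R}$ (using $\|G(x_0)\|_2\le R$), the tail estimates for $\|V_i\mathbf{1}_{\{|V_i|>\lambda\}}\|_{L_2}$ and $\mathrm{Pr}(|V_i|>\lambda)^{1/2}$ are identical to those in the proof of Lemma~\ref{lem:expected-risk-supp}; multiplying through by $\lambda$ then yields
\[
\l|\lambda\expect{y_i\dotp{a_i}{H_x(z)}}-\dotp{G(x_0)}{H_x(z)}\r|\le\sqrt{c_1K_{a,\xi,R}}\l(\sqrt{2(\lambda+1)}+2\lambda\r)e^{-\lambda/(2K_{a,\xi,R})}\|H_x(z)\|_2.
\]
Finally, for $\lambda\ge 4C_{a,\xi,R}\log(64C_{a,\xi,R}/\varepsilon)$ with $C_{a,\xi,R}=\max\{c_1K_{a,\xi,R},1\}$, the scalar prefactor is at most $\varepsilon/4$ by the same elementary estimate as in the last step of the proof of Lemma~\ref{lem:expected-risk-supp}, giving the claimed inequality.

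I expect no genuine obstacle here: the statement is a pointwise-in-$(x,z)$ claim and the proof is a transcription of the bias bound for the expected risk. The only things worth double-checking are that $V_i$ and $y_i$ are genuinely independent of $(x,z)$, so the required lower bound on $\lambda$ and the sub-exponential tail bounds are uniform over $x,z$; and that the $\xi_i$ cross-term in $\expect{Z_iV_i}$ drops out, which uses only that $a_i$ is centered and independent of $\xi_i$, as in Assumption~\ref{as:moments}.
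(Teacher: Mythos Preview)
Your proposal is correct and follows essentially the same approach as the paper: both set $V_i=\dotp{a_i}{G(x_0)}+\xi_i$ and $Z_i=\dotp{a_i}{H_x(z)}$, use the dithering identity for $\expect{\sign(V_i+\tau_i)\mid V_i}$, apply Cauchy--Schwarz, and invoke the sub-exponential tail bounds on $V_i$ exactly as in Lemma~\ref{lem:expected-risk-supp}. Your write-up is in fact slightly more explicit than the paper's in noting that $\expect{Z_iV_i}=\dotp{G(x_0)}{H_x(z)}$ relies on the $\xi_i$ cross-term vanishing via $\expect{a_i}=0$.
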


\begin{proof}[Proof of Lemma \ref{lem:comp-expect}]
Recall that $y_i= \sign(\dotp{a_i}{G(x_0)} + \xi_i + \tau_i)$. We let $V_i = \dotp{a_i}{G(x_0)} + \xi_i$ and 
$Z_i =\dotp{a_i}{H_z(x)} $. Still, we assume $V_i$ and $\tau_i$ are independent. Thus, there is
\begin{align*}
\expect{\sign(V_i+\tau_i)|V_i} = \frac{V_i}{\lambda} -  \frac{V_i}{\lambda} \mathbf{1}_{\{|V_i|> \lambda\}} + \mathbf{1}_{\{V_i> \lambda\}} - \mathbf{1}_{\{V_i< -\lambda\}}.
\end{align*}
Therefore, we have
\begin{align*}
&\l|\expect{Z_i\sign(V_i+\tau_i)} - \frac{\expect{Z_i V_i}}{\lambda}  \r| \\
& \qquad =  \l|-\expect{\frac{Z_i V_i}{\lambda}\mathbf{1}_{\{|V_i|> \lambda\}}}  + \expect{Z_i\mathbf{1}_{\{V_i> \lambda\}}} -  \expect{Z\mathbf{1}_{\{V_i> \lambda\}}}\r|\\
& \qquad \leq \frac{\|Z_i\|_{L_2}\cdot\|V_i\mathbf{1}_{\{|V_i|> \lambda\}}\|_{L_2}}{\lambda} + 2\|Z_i\|_{L_2} \mathrm{Pr}(|V_i|> \lambda)^{1/2}, 
\end{align*}
where the last line follows from Cauchy-Schwarz inequality. 

First, by the isotropic assumption of $a_i$, we have
\[
\|Z_i\|_{L_2} = \l \{ \expect{|\dotp{a_i}{H_x(z)}|^2} \r\}^{1/2}  =  \|H_x(z)\|_2.
\]
Next, same to Lemma \ref{lem:expected-risk}, we have
\begin{align*}
\|V_i \mathbf{1}_{\{|V_i|> \lambda\}}\|_{L_2} \leq &\sqrt{2c_1(\lambda+1)\|\dotp{a_i}{G(x_0)} + \xi_i\|_{\psi_1}}e^{-\lambda/2\|\dotp{a_i}{G(x_0)} + \xi_i\|_{\psi_1}}\\
\leq & \sqrt{2c_1(\lambda+1)(\|a\|_{\psi_1}R + \| \xi\|_{\psi_1})}e^{-\lambda/2(\|a\|_{\psi_1}R + \| \xi\|_{\psi_1})}.
\end{align*}
due to our assumption that $\|G(x_0)\|_2\leq R$ and $V_i$ is sub-gaussian. Moreover, we also have 
\[
\mathrm{Pr}(|V_i|> \lambda)^{1/2}\leq \sqrt{c_1(\|a\|_{\psi_1}R + \| \xi\|_{\psi_1})}e^{-\lambda/2(\|a\|_{\psi_1}R + \| \xi\|_{\psi_1})}.
\]
Overall, we can obtain
\begin{align*}
\l|\lambda \expect{Z_i\sign(V_i+\tau_i)} - \expect{Z_i V_i}  \r|\leq \sqrt{c_1(\|a\|_{\psi_1}R + \| \xi\|_{\psi_1})}(\sqrt{2(\lambda+1)}+2\lambda)e^{-\lambda/2(\|a\|_{\psi_1}R + \| \xi\|_{\psi_1})}
\|H_x(z)\|_2.
\end{align*}
When 
$$\lambda\geq4\max\{c_1(R\|a\|_{\psi_1}+\|\xi\|_{\psi_1}),1\}\log(64\max\{c_1(R\|a\|_{\psi_1} + \|\xi\|_{\psi_1}),1\}/\varepsilon),$$
it is immediate that 
\begin{equation*}
\sqrt{c_1(\|a\|_{\psi_1}R + \| \xi\|_{\psi_1})}(\sqrt{2(\lambda+1)}+2\lambda)e^{-\lambda/2(\|a\|_{\psi_1}R + \| \xi\|_{\psi_1})}
\leq \frac14\varepsilon.
\end{equation*}
As a consequence, we have
\[
\l|\lambda \expect{y_i\dotp{a_i}{H_x(z)}} - \dotp{G(x_0)}{H_x(z)} \r| \leq \frac14\varepsilon\|H_x(z)\|_2,
\]
which finishes the proof.
\end{proof}

\begin{lemma}  \label{lem:comp-variance}
Define $H_x(z) := \prod_{j=1}^n W_{j, +, x}  z $. Suppose that $G(x_0)$ satisfies $|G(x_0)| \leq R$.  Then, with probability at least $1-c_4\exp(-u)$ where $c_4 > 0$ is an absolute constant,
\begin{align*}
\sup_{x\in \R^k, z\in \R^k, ~x_0 \in \R^k, ~|G(x_0)|\leq R } \frac{\l|\frac{\lambda }{m}\sum_{i=1}^m y_i \dotp{a_i}{H_x(z)} - \lambda \expect{y_i\dotp{a_i}{H_x(z)}}\r|}{\|H_x(z)\|_2} \leq \frac{\varepsilon}{8},
\end{align*}
where the sample complexity is
\begin{align*}
m\geq c_2\lambda^2\log^2(\lambda m)(kn\log(ed) + k\log(2R) + k\log m+ u)/\varepsilon^2,
\end{align*}
for some absolute constant $c_2$ large enough.
\end{lemma}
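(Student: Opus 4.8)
The plan is to mimic the proof of Lemma~\ref{lem:main-bound-on-sup} almost verbatim. The crucial structural observation is that, once the activation pattern of $G$ at $x$ is fixed, the map $z \mapsto H_x(z) = \bigl(\prod_{j=1}^n W_{j,+,x}\bigr) z$ is \emph{linear}; by the piecewise-linearity counting used in the proof of Lemma~\ref{lem:ball-supremum}, there are at most $(2d)^{kn}$ such patterns, so the normalized image set $\{H_x(z)/\|H_x(z)\|_2 : x,z \in \mathbb{R}^k,\ H_x(z)\neq 0\}$ is contained in the intersection of $\mathcal{S}^{d-1}$ with a union of at most $(2d)^{kn}$ subspaces of dimension at most $k$. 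This index set is strictly smaller than the one in Lemma~\ref{lem:ball-supremum}, where $G(x)-G(x_0)$ ranged over a union of $2k$-dimensional subspaces and $x_0$ carried an extra ball constraint, so no new difficulty arises and the same sample complexity will suffice.

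Concretely, I would first apply the symmetrization inequality (Lemma~\ref{lemma:symmetry}) to reduce the claim to the bound $\sup \bigl|\tfrac1m\sum_{i=1}^m \varepsilon_i y_i \dotp{a_i}{H_x(z)}\bigr|/\|H_x(z)\|_2 \le \varepsilon/(16\lambda)$, where $\{\varepsilon_i\}$ are i.i.d.\ Rademacher variables and $y_i = \sign(\dotp{a_i}{G(x_0)}+\xi_i+\tau_i)$. Next, introduce a $\delta$-net $\mathcal{N}(G(\mathbb{R}^k)\cap\mathbb{B}_2^d(R),\delta)$ and, for each $x_0$, write $y_i = y_i^v + (y_i - y_i^v)$ with $y_i^v = \sign(\dotp{a_i}{G(v)}+\xi_i+\tau_i)$ and $G(v)$ the nearest net point to $G(x_0)$. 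For the fixed-sign term, since $\varepsilon_i y_i^v$ is again Rademacher, I would place a $1/2$-net (of cardinality at most $3^k$) on the sphere of each $k$-dimensional subspace, apply Bernstein's inequality (Lemma~\ref{Bernstein}) at each net point, use the one-step chaining of \eqref{eq:1-step-chaining} to pass to the full sphere, and union bound over the $(2d)^{kn}$ subspaces and over $\mathcal{N}(G(\mathbb{R}^k)\cap\mathbb{B}_2^d(R),\delta)$; this is the exact analogue of Lemma~\ref{lem:ball-supremum} with $2k$ replaced by $k$. For the sign-gap term, Lemma~\ref{lem:main-count}---whose conclusion depends only on $G(x_0)$ and $G(v)$, not on $H_x(z)$, and so transfers verbatim---bounds the number of indices with $y_i\neq y_i^v$ by $4m\eta/\lambda$ with $\eta$ of order $\lambda\sqrt{(kn\log(ed)+u')/m}$; the contribution $\tfrac1m\sum_{i\in\mathcal I}|\dotp{a_i}{t}|$ with $t = H_x(z)/\|H_x(z)\|_2$ is then controlled by Bernstein on a fixed index set, a union over the $\binom{m}{|\mathcal I|}$ choices of $\mathcal I$, a union over the $(2d)^{kn}\cdot 3^k$ subspace/net pairs, and the trivial bound $\expect{|\dotp{a_i}{t}|}\le 1$, exactly as in the proof of Lemma~\ref{lem:main-bound}.

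Finally, choosing $\delta$ and $\eta$ as in Lemma~\ref{lem:main-count} and imposing $m \ge c_2 \lambda^2 \log^2(\lambda m)(kn\log(ed)+k\log(2R)+k\log m+u)/\varepsilon^2$ with $c_2$ large enough makes each of the two terms at most $\varepsilon/(32\lambda)$, so the Rademacher supremum is at most $\varepsilon/(16\lambda)$ and the claimed bound $\varepsilon/8$ follows by undoing the symmetrization, on an event of probability at least $1-c_4\exp(-u)$. The one place that genuinely requires care, just as in Theorem~\ref{thm:main-1}, is the \emph{simultaneous} supremum over $x_0$ (entering through the sign pattern $y_i$) and over $x,z$ (entering through $H_x(z)$); this is precisely what the decoupling above handles, via the net over the range of $G$ together with the uniform hyperplane-tessellation bound of Lemma~\ref{lem:main-count}. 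Since $\{H_x(z)\}$ involves $k$-dimensional rather than $2k$-dimensional pieces and $z$ is entirely free (no ball constraint), this final step is if anything slightly easier than its counterpart for $L(x)-L(x_0)$.
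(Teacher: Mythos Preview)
Your proposal is correct and follows essentially the same approach as the paper's proof: symmetrization, a $\delta$-net over $G(\mathbb{R}^k)\cap\mathbb{B}_2^d(R)$ to decouple $x_0$ from $(x,z)$, then the analogue of Lemma~\ref{lem:ball-supremum} for the fixed-sign term (exploiting that $H_x(\cdot)$ ranges over at most $(2d)^{kn}$ linear maps into $k$-dimensional subspaces) combined with Lemma~\ref{lem:main-count} for the sign-change term. You also correctly note that the $k$-dimensional pieces here are a strict simplification of the $2k$-dimensional pieces in Lemma~\ref{lem:ball-supremum}, so the same sample complexity goes through.
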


\begin{proof} [Proof of Lemma \ref{lem:comp-variance}]
The proof of Lemma \ref{lem:comp-variance} is very similar to the proofs shown in the previous subsection. Therefore, we only outline the main proof steps here but ignore detailed calculation for some inequalities. We aim to bound the following term
\begin{align*}
\sup_{x\in \R^k, z\in \R^k, x_0 \in \R^k, |G(x_0)|\leq R } \frac{\l|\frac{1}{m}\sum_{i=1}^m y_i \dotp{a_i}{H_x(z)} - \expect{y_i\dotp{a_i}{H_x(z)}}\r|}{\|H_x(z)\|_2}.
\end{align*}
By symmetrization inequality in Lemma \ref{lemma:symmetry}, it suffices to bound 
\begin{align*}
\sup_{x\in \R^k, z\in \R^k, x_0 \in \R^k, |G(x_0)|\leq R } \frac{\l|\frac{1}{m}\sum_{i=1}^m \varepsilon_i y_i \dotp{a_i}{H_x(z)}\r|}{\|H_x(z)\|_2}
\end{align*}
where $\{\varepsilon_i\}$ are i.i.d. Rademacher random variables that are independent of other random variables.

We rewrite the set $\{G(x_0): \|G(x_0)\|_2\leq R,~x_0\in\mathbb{R}^k\}$ as $G(\mathbb{R}^k)\cap \mathbb{B}_2^d(R)$. To bound the supremum above is based on building a $\delta$-covering net over the set $G(\mathbb{R}^k)\cap \mathbb{B}_2^d(R)$, namely $\mathcal{N}(G(\mathbb{R}^k)\cap \mathbb{B}_2^d(R),~\delta)$. The $\delta$ value should be carefully chosen. For a simply notation, we let $y_i^v := \sign (\dotp{a_i}{G(v)} + \xi_i + \tau_i)$ be the sign function associated with $G(v)$ in the net. We begin our proof by bounding the supremum term as follows, with probability at least $1-\exp(-c_0u)-2\exp(-u)$,

\begin{align*}
&\sup_{x,z,x_0\in\mathbb{R}^k,~\|G(x_0)\|_2\leq R}
\frac{\l|\frac1m\sum_{i=1}^m\varepsilon_i y_i \dotp{a_i}{H_x(z)}\r|}{\|H_x(z)\|_2}\\
& \qquad \leq
\sup_{x, x_0, z\in\mathbb{R}^k,~\|G(x_0)-G(v)\|_2\leq \delta,~G(v)\in\mathcal{N}(G(\mathbb{R}^k)\cap \mathbb{B}_2^d(R),~\delta)} \frac{\l|\frac1m\sum_{i=1}^m\varepsilon_i y_i^v \dotp{a_i}{H_x(z)}\r|}{\|H_x(z)\|_2}\\
& \qquad \quad + \sup_{x,z, x_0\in\mathbb{R}^k, ~\|G(x_0)-G(v)\|_2\leq \delta, ~G(v)\in\mathcal{N}(G(\mathbb{R}^k)\cap \mathbb{B}_2^d(R),~\delta)} \frac{\l|\frac1m\sum_{i=1}^m\varepsilon_i (y_i- y_i^v) \dotp{a_i}{H_x(z)}\r|}{\|H_x(z)\|_2} \\
& \qquad \leq\underbrace{
\sup_{x,z\in\mathbb{R}^k,G(v)\in\mathcal{N}(G(\mathbb{R}^k)\cap \mathbb{B}_2^d(R),~\delta)} \frac{\l|\frac1m\sum_{i=1}^m\varepsilon_i\sign(\dotp{a_i}{G(v)} + \xi_i + \tau_i)\dotp{a_i}{H_x(z)}\r|}{\|H_x(z)\|_2}
}_{\text{(I)}}\\
&\qquad \quad +\underbrace{
\sup_{x,z\in \R^k}
\max_{|\mathcal I|\leq 4\eta/\lambda}
\frac{2}{m}\sum_{i\in\mathcal I}\frac{|\dotp{a_i}{H_x(z)}|}{\|H_x(z)\|_2}
}_{\text{(II)}},
\end{align*}
where the first inequality is due to decomposition of the supremum term, and the second inequality is by Lemma \ref{lem:main-count} for term (II) and dropping the constraint $\|G(x_0)-G(v)\|_2\leq \delta$ for term (I).

\noindent \textbf{Bounding Term (I):} We first show the bound based on fixed $G(v)$. Then we give a uniform bound for any $G(v)$ in the $\delta$-net.
For a fixed $G(v)$, we have
\begin{align*}
&\sup_{x, z\in\mathbb{R}^k} \frac{\l|\frac1m\sum_{i=1}^m\varepsilon_i \sign(\dotp{a_i}{G(v)} + \xi_i + \tau_i)  \dotp{a_i}{H_x(z)}\r|}{\|H_x(z)\|_2} = \sup_{x, z\in\mathbb{R}^k} \frac{\l|\frac1m\sum_{i=1}^m\varepsilon_i  \dotp{a_i}{H_x(z)}\r|}{\|H_x(z)\|_2}.
\end{align*} 
For the function $H_x(z)$, we can see that as $x$ varies, $H_x(z)$ can be different linear functions, which constructs at most $[\mathcal{C}(d, k)]^n = [ {d\choose 0} + {d\choose 1} + \cdots + {d\choose k}]^n \leq  (d^k + 1)^n \leq (2d)^{kn}$ hyperplanes that split the whole $\R^{k}$ space.

Now, we consider any one piece $H_{\tilde{x}}$ where $\tilde{x} \in \mathcal{P}\subseteq\mathbb R^k$ and bound the following quantity:
\begin{align*}
\sup_{z\in \R^k}
\frac{\l|\frac1m\sum_{i=1}^m\varepsilon_i\dotp{a_i}{H_{\tilde{x}}(z)}\r|}{\|H_{\tilde{x}}(z)\|_2} \leq & \sup_{z\in\mathbb{R}^{k}}
\frac{\l|\frac1m\sum_{i=1}^m\varepsilon_i\dotp{a_i}{W_{\tilde{x}} z}\r|}{\|W_{\tilde{x}} z\|_2} \\
\leq&\sup_{b\in\mathcal{E}^{k}\cap\mathcal{S}^{d-1}}\l|\frac1m\sum_{i=1}^m\varepsilon_i\dotp{a_i}{b}\r|,
\end{align*}
where we let $W_{\tilde{x}} = \prod_{j=1}^n W_{j, +, \tilde{x}}  $ be the linear function at $\tilde{x}$ such that $H_{\tilde{x}}(z) = \l(\prod_{j=1}^n W_{j, +, \tilde{x}} \r) z $, and let $\mathcal{E}_{k}$ be the subspace in $\mathbb R^d$ spanned by the $k$ columns of $W_{\tilde{x}}$. We also define $b = W_{\tilde{x}} z/\|W_{\tilde{x}} z\| $ in the above formulation.

It suffices to bound the last term in the above formulation. We consider a $1/2$-covering net of the set $\mathcal{E}^{k}\cap\mathcal{S}^{d-1}$, namely, 
$\mathcal{N}(\mathcal{E}^{k}\cap\mathcal{S}^{d-1},1/2)$. A simple volume argument shows that the cardinality
$|\mathcal{N}(\mathcal{E}^{k}\cap\mathcal{S}^{d-1},1/2)|\leq 3^{k}$.

By Bernstein's inequality in Lemma \ref{Bernstein}, we have for any fixed $v\in\mathcal{N}(\mathcal{E}^{k}\cap\mathcal{S}^{d-1},1/2)$,
\[
\mathrm{Pr}\l( \l|\frac1m\sum_{i=1}^m\varepsilon_i\dotp{a_i}{b}\r|\geq \sqrt{\frac{2u'}{m}} + \frac{\|a\|_{\psi_1}u'}{m} \r)\leq 2e^{-u'}.
\]
Taking $u' = u+ckn\log(ed)$ for some $c>6$, we have with probability at least $1-2\exp(-u-ckn\log(ed))$, 
\[
\l|\frac1m\sum_{i=1}^m\varepsilon_i\dotp{a_i}{v}\r|\leq \sqrt{\frac{2(u+ckn\log(ed))}{m}} + \frac{\|a\|_{\psi_1} (u+ckn\log(ed))}{m}.
\]
Taking a union bound over all $v\in\mathcal{N}(\mathcal{E}^{k}\cap\mathcal{S}^{d-1},1/2)$, we have with probability at least $1-2\exp(-u-ckn\log(ed))\cdot 3^{k}\geq 1-2\exp(-u-c_1kn\log(ed))$ for some absolute constant $c_1>2$.
\begin{equation}
\sup_{v\in\mathcal{N}(\mathcal{E}^{k}\cap\mathcal{S}^{d-1},1/2)}\l|\frac1m\sum_{i=1}^m\varepsilon_i\dotp{a_i}{b}\r|\leq \sqrt{\frac{2(u+ckn\log(ed))}{m}} + \frac{\|a\|_{\psi_1} (u+ckn\log(ed))}{m}.
\end{equation}
Therefore, we will have 
\begin{align*}
&\sup_{b\in\mathcal{E}^{k}\cap\mathcal{S}^{d-1}}\l|\frac1m\sum_{i=1}^m\varepsilon_i\dotp{a_i}{b}\r| \\
& \qquad \leq  \sup_{v\in\mathcal{N}(\mathcal{E}^{k}\cap\mathcal{S}^{d-1},1/2)}\l|\frac1m\sum_{i=1}^m\varepsilon_i\dotp{a_i}{v}\r|  
+ \sup_{b\in\mathcal{E}^{k}\cap\mathcal{S}^{d-1}, v\in\mathcal{N}(\mathcal{E}^{k}\cap\mathcal{S}^{d-1},1/2), \|b-v\|_2 \leq 1/2}\l|\frac1m\sum_{i=1}^m\varepsilon_i\dotp{a_i}{b- v}\r|  \\
& \qquad \leq \sup_{v\in\mathcal{N}(\mathcal{E}^{k}\cap\mathcal{S}^{d-1},1/2)}\l|\frac1m\sum_{i=1}^m\varepsilon_i\dotp{a_i}{v}\r|  
+ \frac12\sup_{b\in\mathcal{E}^{k}\cap\mathcal{S}^{d-1}, v\in\mathcal{N}(\mathcal{E}^{k}\cap\mathcal{S}^{d-1},1/2), \|b-v\|_2 \leq 1/2}\l|\frac1m\sum_{i=1}^m\frac{\varepsilon_i\dotp{a_i}{b - v}}{\|b -v\|_2}\r|   \\
& \qquad \leq  \sup_{v\in\mathcal{N}(\mathcal{E}^{k}\cap\mathcal{S}^{d-1},1/2)}\l|\frac1m\sum_{i=1}^m\varepsilon_i\dotp{a_i}{v}\r| + \frac{1}{2}\sup_{b\in\mathcal{E}^{k}\cap\mathcal{S}^{d-1}}\l|\frac1m\sum_{i=1}^m\varepsilon_i\dotp{a_i}{b}\r|.
\end{align*}
Now we can obtain 
\begin{align*}
\sup_{z\in \R^k}\frac{\l|\frac1m\sum_{i=1}^m\varepsilon_i\dotp{a_i}{H_{\tilde{x}}(z)}\r|}{\|H_{\tilde{x}}(z)\|_2} \leq & \sup_{b\in\mathcal{E}^{k}\cap\mathcal{S}^{d-1}}\l|\frac1m\sum_{i=1}^m\varepsilon_i\dotp{a_i}{b}\r| \\
\leq & 2\sqrt{\frac{2(u+ckn\log(ed))}{m}} + \frac{2\|a\|_{\psi_1} (u+ckn\log(ed))}{m}.
\end{align*}
Taking a further union bound over at most $(2d)^{kn}$ linear functions, we have
\begin{align*}
\text{(I)}  \leq \sqrt{\frac{8(u+ckn\log(ed))}{m}} + \frac{2\|a\|_{\psi_1} (u+ckn\log(ed))}{m}
\end{align*}
with probability at least $1-2\exp(-u-c_1kn\log(ed))\cdot (2d)^{kn}\geq 1-2\exp(-u-c'  kn\log(ed))$ where $c' > 1$.

\noindent \textbf{Bounding Term (II):}
Now we bound the term 
\begin{align*}
\text{(II)} = \sup_{x,z\in \R^k}
\max_{|\mathcal I|\leq 4m\eta/\lambda}
\frac{2}{m}\sum_{i\in\mathcal I}\frac{|\dotp{a_i}{H_x(z)}|}{\|H_x(z)\|_2}
\end{align*}
Let $t = H_x(z) / \| H_x(z) \|_2$ and it is enough to bound
\begin{align} \label{eq:split-decomp-term2}
\sup_{t \in \mathcal{E}^k \cap \mathcal{S}^{d-1}}
\max_{|\mathcal I|\leq 4m\eta/\lambda}\frac1m\sum_{i\in\mathcal I}   \l( |\dotp{a_i}{t}| - \expect{|\dotp{a_i}{t}|} + \expect{|\dotp{a_i}{t}|} \r).
\end{align}
Note that $|\dotp{a_i}{t}| - \expect{|\dotp{a_i}{t}|}$ is also a sub-exponential random variable with sub-exponential norm bounded by $2\|a\|_{\psi_1}$, and $\expect{|\dotp{a_i}{t}|}\leq 1$. Given $x$, $H_x(z)$ is a linear function and there are at most $(2d)^{kn}$ different linear function for different $x$. 

First, we bound the term $\sup_{t \in \mathcal{E}^k \cap \mathcal{S}^{d-1}}
\max_{|\mathcal I|\leq 4m\eta/\lambda}\frac1m\sum_{i\in\mathcal I}   \l( |\dotp{a_i}{t}| - \expect{|\dotp{a_i}{t}|}\r)$. 
We have
\begin{align*}
&\sup_{t \in \mathcal{E}^k \cap \mathcal{S}^{d-1}}
\max_{|\mathcal I|\leq 4m\eta/\lambda}\frac1m\sum_{i\in\mathcal I}   \l( |\dotp{a_i}{t}| - \expect{|\dotp{a_i}{t}|}\r) \\
& \qquad = \sup_{t \in \mathcal{E}^k \cap \mathcal{S}^{d-1}}
\max_{|\mathcal I|\leq 4m\eta/\lambda}   \frac1m\sum_{i=1}^m  \mathbf{1}_{\{i\in \mathcal I\}} \l( |\dotp{a_i}{t}| - \expect{|\dotp{a_i}{t}|}\r) \\
& \qquad \leq \sup_{t \in \mathcal{E}^k \cap \mathcal{S}^{d-1}}
\max_{|\mathcal I|\leq 4m\eta/\lambda}  \l| \frac1m\sum_{i=1}^m  \mathbf{1}_{\{i\in \mathcal I\}} \l( |\dotp{a_i}{t}| - \expect{|\dotp{a_i}{t}|}\r) \r|
\end{align*}
By symmetrization inequality, it suffices to bound 
\begin{align*}
 \sup_{t \in \mathcal{E}^k \cap \mathcal{S}^{d-1}}
\max_{|\mathcal I|\leq 4m\eta/\lambda}  \l| \frac1m\sum_{i=1}^m  \varepsilon_i   \mathbf{1}_{\{i\in \mathcal I\}}  |\dotp{a_i}{t}| \r|  =  \sup_{t \in \mathcal{E}^k \cap \mathcal{S}^{d-1}, |\mathcal I|\leq 4\eta/\lambda}
\l| \frac1m\sum_{i\in \mathcal I}  \varepsilon_i |\dotp{a_i}{t}| \r| 
\end{align*}
We consider a $1/2$-covering net of the set $\mathcal{E}^{k}\cap\mathcal{S}^{d-1}$, namely, 
$\mathcal{N}(\mathcal{E}^{k}\cap\mathcal{S}^{d-1},1/2)$. A simple volume argument shows that the cardinality
$|\mathcal{N}(\mathcal{E}^{k}\cap\mathcal{S}^{d-1},1/2)|\leq 3^{k}$.
Therefore, we will have 
\begin{align*}
&\sup_{t \in \mathcal{E}^k \cap \mathcal{S}^{d-1}, |\mathcal I|\leq 4\eta/\lambda}
\l| \frac1m\sum_{i\in \mathcal I}  \varepsilon_i |\dotp{a_i}{t}| \r|  \\
& \qquad \leq  \sup_{v\in\mathcal{N}(\mathcal{E}^{k}\cap\mathcal{S}^{d-1},1/2),  |\mathcal I|\leq 4\eta/\lambda} \l| \frac1m\sum_{i\in \mathcal I}  \varepsilon_i |\dotp{a_i}{v}| \r|  
+ \sup_{\substack{t\in\mathcal{E}^{k}\cap\mathcal{S}^{d-1}, v\in\mathcal{N}(\mathcal{E}^{k}\cap\mathcal{S}^{d-1},1/2), \\ \|t-v\|_2 \leq 1/2,  |\mathcal I|\leq 4\eta/\lambda}} \l| \frac1m\sum_{i\in \mathcal I}  \varepsilon_i |\dotp{a_i}{t-v}| \r|  \\
& \qquad \leq \sup_{v\in\mathcal{N}(\mathcal{E}^{k}\cap\mathcal{S}^{d-1},1/2), |\mathcal I|\leq 4\eta/\lambda }\l|\frac1m\sum_{i\in \mathcal I}\varepsilon_i |\dotp{a_i}{v}| \r|  
+ \frac12\sup_{\substack{t\in\mathcal{E}^{k}\cap\mathcal{S}^{d-1}, v\in\mathcal{N}(\mathcal{E}^{k}\cap\mathcal{S}^{d-1},1/2), \\ \|t-v\|_2 \leq 1/2, |\mathcal I|\leq 4\eta/\lambda}}\l|\frac1m\sum_{i\in \mathcal I}\frac{\varepsilon_i |\dotp{a_i}{t - v}|}{\|t -v\|_2}\r|   \\
& \qquad \leq  \sup_{v\in\mathcal{N}(\mathcal{E}^{k}\cap\mathcal{S}^{d-1},1/2), |\mathcal I|\leq 4\eta/\lambda}\l|\frac1m\sum_{i\in \mathcal I}\varepsilon_i | \dotp{a_i}{v}|\r| + \frac{1}{2}\sup_{t\in\mathcal{E}^{k}\cap\mathcal{S}^{d-1}, |\mathcal I|\leq 4\eta/\lambda}\l|\frac1m\sum_{i\in \mathcal I}\varepsilon_i|\dotp{a_i}{t}| \r|,
\end{align*}
which implies
\begin{align*}
\sup_{t \in \mathcal{E}^k \cap \mathcal{S}^{d-1}, |\mathcal I|\leq 4\eta/\lambda}
\l| \frac1m\sum_{i\in \mathcal I}  \varepsilon_i |\dotp{a_i}{t}| \r| \leq 2  \sup_{v\in\mathcal{N}(\mathcal{E}^{k}\cap\mathcal{S}^{d-1},1/2), |\mathcal I|\leq 4\eta/\lambda}\l|\frac1m\sum_{i\in \mathcal I}\varepsilon_i | \dotp{a_i}{v}|\r|.
\end{align*}
For any fixed $v$ in the $1/2$-net and a fixed $\mathcal I$, by Bernstein's inequality, we have
\begin{align*}
\l|\frac1m\sum_{i\in \mathcal I} \varepsilon_i |\dotp{a_i}{v} | \r| = \frac{|\mathcal I|}{m} \l|\frac{1}{|\mathcal I |}\sum_{i\in \mathcal I} \varepsilon_i\dotp{a_i}{v}\r|\leq \frac1m (2\sqrt{u_2|\mathcal I|}  + 2\|a\|_{\psi_1}u_2).
\end{align*}
We take 
$$
u_2 = C_1\log
\l(\frac{\lambda m}{kn\log(ed)+u'}\r)\Big(u+2kn\log(ed) + k\log(2R) + k\log\frac{m}{kn\log(ed)+u'}\Big)^{1/2}\sqrt{m},
$$
where $C_1$ is an absolute constant large enough and $u'$ satisfies \eqref{eq:u-prime-bound}.
Using the fact that 
\[
|\mathcal I|\leq \frac{4\eta}{\lambda} m \leq 2L
\Big(u+2kn\log(ed) + k\log(2R) + k\log\frac{m}{kn\log(ed)+u'}\Big)^{1/2}\sqrt{m},
\]
we have with probability at least 
\[
1-2\exp\l(-C_1\log
\l(\frac{\lambda m}{kn\log(ed)+u'}\r)\Big(u+2kn\log(ed) + k\log(2R) + k\log\frac{m}{kn\log(ed)+u'}\Big)^{1/2}\sqrt{m} \r),
\]
the following holds
\begin{align*}
\l|\frac1m\sum_{i\in \mathcal I} \varepsilon_i | \dotp{a_i}{v}| \r|  \leq C_2\log\Big(\frac{\lambda m}{kn\log(ed)+u'}\Big)\sqrt{\frac{u+2kn\log(ed) + k\log(2R) + k\log\frac{m}{kn\log(ed)+u'}}{m}},
\end{align*}
where $C_2$ is a constant depending on $\|a\|_{\psi_1}$, $C_1$, and $L$. To bound the maximum over $|\mathcal I|\leq 4\eta/\lambda$, we take a union bound over all ${m\choose 4\eta m/\lambda}$ possibilities,
\[
{m\choose 4\eta m/\lambda}\leq \l(\frac{em}{4\eta m/\lambda}\r)^{4\eta m/\lambda}
 = \l(\frac\lambda \eta\r)^{4\eta m/\lambda}.
\]
Thus, it follows
\begin{align*}
\log{m\choose 4\eta m/\lambda}&\leq \frac{4\eta m}{\lambda}\log\frac{\lambda}{\eta}\\
&\leq L\Big(u+2kn\log(ed) + k\log(2R) + \log\frac{m}{kn\log(ed)+u'}\Big)^{1/2}\sqrt{m}\log\Big(\frac{\lambda m}{kn\log(ed)+u'}\Big),
\end{align*}
and when $C_1>L$, taking the union bound gives, with probability at least 
\[
1-2\exp\l(-C_3\log
\l(\frac{\lambda m}{kn\log(ed)+u'}\r)\Big(u+2kn\log(ed) + k\log(2R) + \log\frac{m}{kn\log(ed)+u'}\Big)^{1/2}\sqrt{m} \r),
\]
we have 
\begin{align*}
& \max_{|\mathcal I|\leq 4m\eta/\lambda}\l|\frac1m\sum_{i\in\mathcal I} \varepsilon_i |\dotp{a_i}{v}|\r|  \\
&\qquad  \leq C_2 \log\Big(\frac{\lambda m}{kn\log(ed)+u'}\Big)\sqrt{\frac{u+2kn\log(ed) + k\log(2R) + k\log\frac{m}{kn\log(ed)+u'}}{m}}.
\end{align*}
Furthermore, taking the union bound on all the $1/2$-net, we obtain 
\begin{align*}
&\sup_{t \in \mathcal{E}^k \cap \mathcal{S}^{d-1}}
\max_{|\mathcal I|\leq 4m\eta/\lambda}\l|\frac1m\sum_{i\in\mathcal I} \varepsilon_i|\dotp{a_i}{t}|\r|\\
& \qquad  \leq 2 \sup_{v \in \mathcal N(\mathcal{E}^k \cap \mathcal{S}^{d-1},1/2)}
\max_{|\mathcal I|\leq 4m\eta/\lambda}\l|\frac1m\sum_{i\in\mathcal I}|\dotp{a_i}{v}|\r| \\
&\qquad \leq 2 C_2 \log\Big(\frac{\lambda m}{kn\log(ed)+u'}\Big)\sqrt{\frac{u+2kn\log(ed) + k\log(2R) + k\log\frac{m}{kn\log(ed)+u'}}{m}}.
\end{align*}
with probability
\begin{align*}
1-2\cdot 3^{k}(2d)^{kn} \exp\l(-C_3\log
\Big(\frac{\lambda m}{kn\log(ed)+u'}\Big)\Big(u+2kn\log(ed) + k\log(2R) + \log\frac{m}{kn\log(ed)+u'}\Big)^{1/2}\sqrt{m} \r).
\end{align*}
Thus, we have with the probability above, the following holds
\begin{align*}
&\sup_{t \in \mathcal{E}^k \cap \mathcal{S}^{d-1}}
\max_{|\mathcal I|\leq 4m\eta/\lambda}\frac1m\sum_{i\in\mathcal I}   \l( |\dotp{a_i}{t}| - \expect{|\dotp{a_i}{t}|}\r)\\
&\qquad \leq C_2 \log\Big(\frac{\lambda m}{kn\log(ed)+u'}\Big)\sqrt{\frac{u+2kn\log(ed) + k\log(2R) + k\log\frac{m}{kn\log(ed)+u'}}{m}}.
\end{align*}
On the other hand, for the extra expectation term in \eqref{eq:split-decomp-term2}, we have
\begin{align*}
&\sup_{t \in \mathcal{E}^k \cap \mathcal{S}^{d-1}}
\max_{|\mathcal I|\leq 4m\eta/\lambda}\frac1m\sum_{i\in\mathcal I}   \expect{|\dotp{a_i}{t}|} \leq \max_{|\mathcal I|\leq 4m\eta/\lambda} \frac{|\mathcal I|}{m} \\
&\qquad \leq  2L
\sqrt{\frac{u+2kn\log(ed) + k\log(2R) + k\log\frac{m}{kn\log(ed)+u'}}{m}},
\end{align*}
where the first inequality is due to $\expect{|\dotp{a_i}{t}|}  \leq 1$.
 
Therefore, combining the above results, if we set
\begin{align*}
m\geq c_2\lambda^2\log^2(\lambda m)(kn\log(ed) + k\log(2R) + k\log m+ u)/\varepsilon^2,
\end{align*}
for some absolute constant $c_2$ large enough, we have
\[
\text{(II)}
\leq
C_4\log\Big(\frac{\lambda m}{kn\log(ed)+u'}\Big)\sqrt{\frac{u+2kn\log(ed) + k\log(2R) + k\log\frac{m}{kn\log(ed)+u'}}{m}},
\]
with probability at least $1- c_3\exp(-u)$, where $c_3\geq1$ is an absolute constant and $C_4$ is a constant depending on $C_3$ and $L$.

\noindent \textbf{Combining (I) and (II):} 
Combining all the results above, letting $c_2\geq 256[(2\|a\|_{\psi_1}+3)(c+c')+C_4]^2$, we obtain with probability $1-c_3\exp(-u)-\exp(-c_0u)-4\exp(-u)$ (recalling that $c_0 \geq 1$ as shown in Lemma \ref{lem:main-count}),
\begin{align*}
\sup_{x\in \R^k, z\in \R^k, x_0 \in \R^k, |G(x_0)|\leq R } \frac{\l|\frac{1}{m}\sum_{i=1}^m \varepsilon_i y_i \dotp{a_i}{H_x(z)} \r|}{\|H_x(z)\|_2} \leq \frac{\varepsilon}{16\lambda}.
\end{align*}
which thus means, for any $x,z,x_0$, by symmetrization, we have
 \begin{align*}
\l|\frac{\lambda }{m}\sum_{i=1}^m y_i \dotp{a_i}{H_x(z)} - \lambda \expect{y_i\dotp{a_i}{H_x(z)}}\r| \leq \frac{\varepsilon}{8}\|H_x(z)\|_2 .
 \end{align*}
with probability at least $1- c_4 \exp(-u)$. This finishes the proof.
\end{proof}


The following lemmas are some useful lemmas from previous papers. We rewrite them here for integrity.

\begin{lemma}[\citet{hand2018global}] \label{lem:converge_set}
Suppose $8\pi n^6 \sqrt{\varepsilon} \leq 1$. Let 
\begin{align*}
S_{\varepsilon, x_0} := \{x \neq 0 \in \R^k | \|h_{x,x_0} \|_2 \leq \frac{1}{2^n	} \varepsilon \max(\|x\|_2, \|x_0\|_2)  \}, 
\end{align*}
where $n$ is an integer greater than 1 and let $h_{x,x_0}$ be defined by
\begin{align*}
h_{x,x_0} :=\frac{1}{2^n} x - \frac{1}{2^n} \l[ \l(  \prod_{i=0}^{n-1} \frac{\pi-\overline{\varrho}_i}{\pi}  \r) x_0 + \sum_{i=0}^{n-1} \frac{\sin \overline{\varrho}_i}{\pi} \l( \prod_{j=i+1}^{d-1}\frac{\pi - \overline{\varrho}_j}{\pi} \r) \frac{\|x_0\|_2}{\|x\|_2} x \r]
\end{align*}
where $\overline{\varrho}_0 = \angle (x,x_0) $ and $\overline{\varrho}_i = g(\overline{\varrho}_{i-1})$. Particularly, we define
\begin{align*}
g(\varrho):=\cos^{-1} \l( \frac{(\pi-\varrho)\cos\varrho + \sin \varrho}{\pi} \r).
\end{align*}
If $x\in S_{\varepsilon, x_0}$, then we have
\begin{align*}
S_{\varepsilon, x_0} \subset \mathcal{B} (x_0, 56n\sqrt{\varepsilon}\|x_0\|_2) \cup \mathcal{B} (-\rho_n x_0, 500n^{11} \sqrt{\varepsilon} \|x_0\|_2),
\end{align*}
where $\rho_n$ is defined as
\begin{align*}
\rho_n := \sum_{i=0}^{n-1} \frac{\sin \check{\varrho}_i}{\pi} \l( \prod_{j=i+1}^{n-1} \frac{\pi - \check{\varrho}_j}{\pi}\r) \leq 1,
\end{align*}
and $\check{\varrho}_0=\pi$ and $\check{\varrho}_i = g(\check{\varrho}_{i-1})$.
\end{lemma}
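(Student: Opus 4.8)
The plan is the scalar reduction used by Hand--Voroninski. Since $\angle(\alpha x,\alpha x_0)=\angle(x,x_0)$ for $\alpha>0$ and every summand in $h_{x,x_0}$ is positively $1$-homogeneous under simultaneous scaling of $x$ and $x_0$, one has $h_{\alpha x,\alpha x_0}=\alpha\,h_{x,x_0}$, and this scaling leaves the inequality defining $S_{\varepsilon,x_0}$ invariant; hence it suffices to treat $\|x_0\|_2=1$. Fix $x\in S_{\varepsilon,x_0}$, put $r:=\|x\|_2>0$ and $\theta:=\angle(x,x_0)\in[0,\pi]$, and write $x/\|x\|_2=\cos\theta\,x_0+\sin\theta\,u$ for a unit vector $u\perp x_0$ in $\mathrm{span}\{x,x_0\}$. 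Abbreviating $P_n(\theta):=\prod_{i=0}^{n-1}\frac{\pi-\overline{\varrho}_i}{\pi}$ and $Q_n(\theta):=\sum_{i=0}^{n-1}\frac{\sin\overline{\varrho}_i}{\pi}\prod_{j=i+1}^{n-1}\frac{\pi-\overline{\varrho}_j}{\pi}$ with $\overline{\varrho}_i=g^{\circ i}(\theta)$, a direct expansion gives
\[
2^{n}h_{x,x_0}=\big((r-Q_n(\theta))\cos\theta-P_n(\theta)\big)\,x_0+(r-Q_n(\theta))\sin\theta\cdot u ,
\]
with endpoint values $P_n(0)=1$, $Q_n(0)=0$, $P_n(\pi)=0$, $Q_n(\pi)=\rho_n$.

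Next I would collect the deterministic estimates for the recursion and the two profile functions: (i) $g$ is increasing on $[0,\pi]$ with $0<g(\theta)\le\theta$ on $(0,\pi)$ and $g(\pi)=\pi/2$, so the iterates $\overline{\varrho}_i$ decrease monotonically to $0$; (ii) $0\le Q_n\le1$, $P_n$ is decreasing and $Q_n$ increasing in $\theta$, and --- using $\overline{\varrho}_i\le\pi/2$ for $i\ge1$ and the decay $\overline{\varrho}_i=O(1/i)$ of the iterates --- a \emph{polynomial}-in-$n$ lower bound $P_n(\theta)\ge c\,n^{-c'}(\pi-\theta)$ holds with $c,c'$ absolute; (iii) near $\theta=\pi$ the derivative of $g$ is $O((\pi-\theta)^2)$, hence $P_n(\pi-\phi)$ is comparable to $\phi$ up to a polynomial-in-$n$ factor and $Q_n(\pi-\phi)=\rho_n+O(\phi)$ for $\phi$ small; (iv) $1-\rho_n$ decays polynomially in $n$, so $\rho_n\le1$ and $\rho_n\to1$. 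These are precisely the angle-contraction estimates of Hand--Voroninski.

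Now invoke $x\in S_{\varepsilon,x_0}$: each coordinate of $2^{n}h_{x,x_0}$ above is at most $\varepsilon\max(r,1)$ in modulus. Taking the $(\cos\theta,\sin\theta)$- and $(\sin\theta,-\cos\theta)$-combinations of the two coordinates yields
\[
\big|r-Q_n(\theta)-P_n(\theta)\cos\theta\big|\le 2\varepsilon\max(r,1),\qquad P_n(\theta)\,|\sin\theta|\le 2\varepsilon\max(r,1),
\]
together with $|r-Q_n(\theta)|\,|\sin\theta|\le\varepsilon\max(r,1)$. I then split on $\theta$ relative to a threshold $\theta_1$ (a negative power of $n$). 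If $\theta\in[\theta_1,\pi-\theta_1]$: the last inequality and $Q_n\le1$ bound $r$ by an absolute constant, and then $P_n(\theta)|\sin\theta|\lesssim\varepsilon$ together with $P_n(\theta)\gtrsim n^{-c'}(\pi-\theta)$ forces $\pi-\theta\lesssim n^{c'}\varepsilon$, which contradicts $\theta\le\pi-\theta_1$ once $8\pi n^6\sqrt\varepsilon\le1$; so $\theta<\theta_1$ or $\theta>\pi-\theta_1$. For $\theta<\theta_1$ one has $\cos\theta\approx1$, $P_n(\theta)\approx1$, $Q_n(\theta)\approx0$, so the first displayed bound gives $|r-1|$ small and the perpendicular bound gives $\sqrt r\,\sin(\theta/2)$ small; substituting into $\|x-x_0\|_2^2=(r-1)^2+4r\sin^2(\theta/2)$ places $x$ in $\mathcal B(x_0,56n\sqrt\varepsilon\|x_0\|_2)$. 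For $\theta>\pi-\theta_1$, write $\phi=\pi-\theta$: $P_n(\theta)|\sin\theta|$ is of order $\phi^2$ up to polynomial-in-$n$ factors, so $\phi$ is small, while $\cos\theta\approx-1$, $P_n(\theta)\approx0$ and $Q_n(\theta)=\rho_n+O(\phi)$ give $|r-\rho_n|\lesssim\varepsilon+\phi$; substituting into $\|x+\rho_n x_0\|_2^2=(r\cos\theta+\rho_n)^2+r^2\sin^2\theta$ places $x$ in $\mathcal B(-\rho_n x_0,500n^{11}\sqrt\varepsilon\|x_0\|_2)$.

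The crux --- and the reason the two radii are so asymmetric --- is the regime $\theta\to\pi$, where $\rho_n$ lives and where $P_n$ and $Q_n$ are most sensitive: one must prove the polynomial (not exponential) lower bound on $P_n$, the stability $Q_n(\pi-\phi)=\rho_n+O(\phi)$, and the polynomial decay of $1-\rho_n$, and then propagate the $\varepsilon$-slack through all $n$ recursion layers without exponential amplification; these are the estimates that ultimately pin the radii at $56n\sqrt\varepsilon\|x_0\|_2$ and $500n^{11}\sqrt\varepsilon\|x_0\|_2$. Once $8\pi n^6\sqrt\varepsilon\le1$ guarantees the linearizations above and the nontrivial separation of the two balls, the remaining work is elementary trigonometry and constant bookkeeping.
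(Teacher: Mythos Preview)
The paper does not prove this lemma; it is quoted verbatim from \citet{hand2018global} and used as a black box in the proofs of Theorems~\ref{thm:determine_converge} and~\ref{thm:optimum_compare}. There is therefore no ``paper's own proof'' to compare against.

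That said, your proposal is a faithful sketch of the Hand--Voroninski argument itself: the reduction to $\|x_0\|_2=1$ by positive homogeneity, the two-dimensional decomposition of $2^n h_{x,x_0}$ in the $(x_0,u)$-frame, the identification of the profile functions $P_n(\theta)$ and $Q_n(\theta)$, and the case split on $\theta$ near $0$ versus near $\pi$ are exactly how the original proof proceeds. The part you correctly flag as the crux---the polynomial-in-$n$ lower bound on $P_n(\theta)$ via the $O(1/i)$ decay of the iterates $\overline{\varrho}_i$, and the stability $Q_n(\pi-\phi)=\rho_n+O(\phi)$---is indeed where all the work lies and where the constants $56n$ and $500n^{11}$ come from. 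Your outline is sound and matches the source; the only thing missing is the explicit bookkeeping of those constants, which is tedious but routine once the recursion estimates are in hand.
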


\begin{lemma}[\citet{hand2018global}] \label{lem:wdc_ineq}
Fix $0 < 16\pi n^2 \sqrt{\varepsilon_\WDC} < 1$ and $n \geq  2$. Suppose that $W_i$ satisfies the WDC with constant $\varepsilon_\WDC$ for $i = 1, \ldots,n$. Define
\begin{align*}
\tilde{h}_{x,z} = \frac{1}{2^n} \l[ \l( \prod_{i=0}^{n-1}\frac{\pi-\overline{\varrho}_i}{\pi} \r) z +  \sum_{i=0}^{n-1} \frac{\sin \overline{\varrho}_i }{\pi} \l( \prod_{j=i+1}^{n-1} \frac{\pi-\bar{\varrho}_j}{\pi} \r) \frac{\|z\|_2}{\|x\|_2} x \r],
\end{align*}
where $\overline{\varrho}_i  = g(\overline{\varrho}_{i-1})$ for $g$ in Lemma \ref{lem:converge_set} and $\overline{\varrho}_0 = \angle (x,z)$. For all $x\neq 0$ and $y \neq 0$, 
\begin{align}
&\l\|\l(\prod_{i=1}^n W_{i,+,x}\r)^\top G(z) - \tilde{h}_{x,z} \r \|_2 \leq 24 \frac{n^3\sqrt{\varepsilon_\WDC}}{2^n} \|z\|_2, \label{eq:wdc_ineq_1}\\
&\dotp{ G(x)}{G(z)} \geq \frac{1}{4\pi}\frac{1}{2^n} \|x\|_2 \|z\|_2, \label{eq:wdc_ineq_2} \\
&\|W_{i,+,x}\|_2 \leq \l( \frac{1}{2} + \varepsilon_\WDC\r)^{1/2}. \label{eq:wdc_ineq_3}
\end{align}

\end{lemma}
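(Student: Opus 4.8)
The plan is to establish the three bounds by unrolling the ReLU network one layer at a time and applying the WDC at each layer. Introduce the layerwise iterates $x^{(0)} = x$, $x^{(i)} = W_{i,+,x}x^{(i-1)}$ (so that $G(x) = x^{(n)}$), and likewise $z^{(i)}$, and write $\theta_i := \angle(x^{(i)}, z^{(i)})$. The operator-norm bound \eqref{eq:wdc_ineq_3} is immediate: applying the WDC to $W_i$ with the degenerate pair $(x^{(i-1)}, x^{(i-1)})$ gives $Q_{x^{(i-1)},x^{(i-1)}} = \tfrac12\mf I$, so $\|W_{i,+,x}^\top W_{i,+,x} - \tfrac12\mf I\|_2 \le \varepsilon_\WDC$ and hence $\|W_{i,+,x}\|_2 \le (\tfrac12 + \varepsilon_\WDC)^{1/2}$. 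The same estimate gives the norm recursion $(\tfrac12 - \varepsilon_\WDC)\|x^{(i-1)}\|_2^2 \le \|x^{(i)}\|_2^2 \le (\tfrac12 + \varepsilon_\WDC)\|x^{(i-1)}\|_2^2$ (and similarly for $z^{(i)}$), from which $\|x^{(i)}\|_2 = 2^{-i/2}\|x\|_2(1 + O(i\varepsilon_\WDC))$ and $\|z^{(i)}\|_2/\|x^{(i)}\|_2 = (\|z\|_2/\|x\|_2)(1 + O(i\varepsilon_\WDC))$.

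Next I would track the angle. Applying the WDC to $W_i$ with $(x^{(i-1)}, z^{(i-1)})$ and using $M_{\hat x^{(i-1)}\leftrightarrow\hat z^{(i-1)}}z^{(i-1)} = (\|z^{(i-1)}\|_2/\|x^{(i-1)}\|_2)x^{(i-1)}$, one obtains $\langle x^{(i)}, z^{(i)}\rangle = \tfrac{(\pi-\theta_{i-1})\cos\theta_{i-1}+\sin\theta_{i-1}}{2\pi}\|x^{(i-1)}\|_2\|z^{(i-1)}\|_2 + O(\varepsilon_\WDC)\|x^{(i-1)}\|_2\|z^{(i-1)}\|_2$; dividing by $\|x^{(i)}\|_2\|z^{(i)}\|_2$ and invoking the previous step yields $\cos\theta_i = \cos g(\theta_{i-1}) + O(\varepsilon_\WDC)$ for the map $g$ of Lemma~\ref{lem:converge_set}. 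Converting this into a bound on $\theta_i$ itself and iterating produces the key estimate $|\theta_i - \overline\varrho_i| \le Cn\sqrt{\varepsilon_\WDC}$ for all $i\le n$, where $\overline\varrho_i = g^{\circ i}(\angle(x,z))$. With this in hand, \eqref{eq:wdc_ineq_2} follows quickly: write $\langle G(x),G(z)\rangle = \cos\theta_n\|x^{(n)}\|_2\|z^{(n)}\|_2$; since $g$ is increasing on $[0,\pi]$ with $g(\pi) = \tfrac\pi2$, for $n\ge 2$ we have $\overline\varrho_{n-1}\le \tfrac\pi2$, hence $\cos\overline\varrho_n = \tfrac{(\pi-\overline\varrho_{n-1})\cos\overline\varrho_{n-1}+\sin\overline\varrho_{n-1}}{\pi}\ge \tfrac1\pi$; combining with the norm and angle estimates and the hypothesis $16\pi n^2\sqrt{\varepsilon_\WDC}<1$ gives $\langle G(x),G(z)\rangle \ge \tfrac{1}{4\pi}\tfrac{1}{2^n}\|x\|_2\|z\|_2$.

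For \eqref{eq:wdc_ineq_1} I would run a parallel recursion on the backward vectors $w_n := G(z)$, $w_{i-1} := W_{i,+,x}^\top w_i$, whose endpoint is $w_0 = (\prod_{i=1}^n W_{i,+,x})^\top G(z)$, under the ansatz $w_i \approx \alpha_i x^{(i)} + \beta_i z^{(i)}$ with $(\alpha_n,\beta_n) = (0,1)$. Expanding $W_{i,+,x}^\top W_{i,+,x}x^{(i-1)} \approx \tfrac12 x^{(i-1)}$ and $W_{i,+,x}^\top W_{i,+,z}z^{(i-1)} \approx Q_{x^{(i-1)},z^{(i-1)}}z^{(i-1)}$ via the WDC, and again using $M_{\hat x\leftrightarrow\hat z}$, one reads off the scalar recursions $\beta_{i-1} = \tfrac{\pi-\overline\varrho_{i-1}}{2\pi}\beta_i$ and $\alpha_{i-1} = \tfrac12\alpha_i + \tfrac{\sin\overline\varrho_{i-1}}{2\pi}\tfrac{\|z\|_2}{\|x\|_2}\beta_i$, each correct up to a per-layer error of order $n^2\sqrt{\varepsilon_\WDC}\,2^{-i}\|z\|_2$ (the powers of $n$ absorbing the angle deviation of the second step and the norm-ratio drift of the first step). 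Solving these recursions produces exactly $\alpha_0 x + \beta_0 z = \tilde h_{x,z}$, and summing the $n$ per-layer errors yields $\|w_0 - \tilde h_{x,z}\|_2 \le 24\,n^3\sqrt{\varepsilon_\WDC}\,2^{-n}\|z\|_2$.

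The main obstacle is the passage from $\cos\theta_i = \cos g(\theta_{i-1}) + O(\varepsilon_\WDC)$ to a usable bound on $\theta_i$: near $\theta\in\{0,\pi\}$ the inverse cosine has derivative of order $(\sin\theta)^{-1}$, so an $O(\varepsilon_\WDC)$ additive error in the cosine blows up to an $O(\sqrt{\varepsilon_\WDC})$ error in the angle, and one must verify that this deviation neither amplifies under iteration of $g$ (which is only weakly contracting near $0$) nor destroys the lower bound $\cos\overline\varrho_n \ge 1/\pi$ needed in \eqref{eq:wdc_ineq_2}. This angle-tracking step is the technical core; everything downstream of it is routine telescoping of geometrically decaying errors. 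As this lemma is quoted verbatim from \citet{hand2018global}, the argument above merely reconstructs the structure of their proof in the present notation.
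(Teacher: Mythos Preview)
The paper does not give its own proof of this lemma: it is stated with the attribution ``[\citet{hand2018global}]'' and is used as a black box in the proofs of Theorems~\ref{thm:determine_converge} and~\ref{thm:optimum_compare}. So there is nothing to compare against in the present paper. Your proposal is a faithful reconstruction of the argument in \citet{hand2018global}: the immediate derivation of \eqref{eq:wdc_ineq_3} from the WDC at the degenerate pair, the layerwise norm and angle recursions, the use of $g(\pi)=\pi/2$ and monotonicity of $g$ to get the $1/\pi$ lower bound on $\cos\overline\varrho_n$ for \eqref{eq:wdc_ineq_2}, and the backward two-coefficient ansatz $w_i\approx\alpha_i x^{(i)}+\beta_i z^{(i)}$ leading to the closed-form $\tilde h_{x,z}$ for \eqref{eq:wdc_ineq_1}. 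You also correctly flag the only genuinely delicate point, namely the $\sqrt{\varepsilon_\WDC}$ loss when inverting $\cos$ near $0$ and $\pi$ and the need to check that this error does not amplify under iteration of $g$; this is exactly where the $n^2\sqrt{\varepsilon_\WDC}$ factors in the constants originate in the original proof.
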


\subsection{Proof Sketch of Theorem \ref{thm:determine_converge}}

Under the conditions of Theorem \ref{thm:determine_converge}, our proof is sketched as follows:
\begin{itemize}
\item The key to proving Theorem \ref{thm:determine_converge} lies in understanding the concentration of $L(x)$ and $\nabla L(x)$. Here we prove two critical lemmas, Lemmas \ref{lem:comp-expect} and \ref{lem:comp-variance} in this section, combining which we can show that for any $x$, $z$ and $|G(x_0)|\leq R$, when $\lambda$ and $m$ are sufficiently large, the following holds with high probability
\begin{align*}
\Big|\frac{\lambda }{m}\sum_{i=1}^m y_i \dotp{a_i}{H_x(z)} - \dotp{G(x_0)}{H_x(z)} \Big| \leq \varepsilon \|H_x(z)\|_2.
\end{align*}
which further implies 
\begin{align*}
\frac{\lambda}{m} \sum_{i=1}^m y_i \dotp{a_i}{H_x(z)} \approx \dotp{G(x_0)}{H_x(z)},
\end{align*}
for any $x, z$.

Therefore, we have $\forall z$ and $\forall x$ such that $L(x)$ is differentiable, we can approximate $\nabla L(x)$ as follows:
\begin{align*}
 \langle\nabla L(x),z\rangle &\approx 2 \langle G(x), H_x(z) \rangle - 2 \langle G(x_0), H_x(z) \rangle.
\end{align*}

\item On the other hand, we can show that $\forall x, z$, 
\begin{align*}
\langle G(x), H_x(z) \rangle -  \langle G(x_0), H_x(z) \rangle \approx \langle  h_{x,x_0},z \rangle,
\end{align*}
which therefore leads to
\begin{align*}
\langle\nabla L(x),z\rangle \approx 2 \langle h_{x,x_0}, z \rangle.
\end{align*}

\item Following 
the previous step, with $v_x$ being defined in Theorem \ref{thm:determine_converge}, the directional derivative is approximated as 
\begin{align*}
D_{-v_x} L(x) \cdot \|v_x\|_2 \approx - 4 \| h_{x,x_0}\|^2_2.
\end{align*}

\item We consider the error of approximating $D_{-v_x} L(x) \cdot \|v_x\|_2$ by $- 4 \| h_{x,x_0}\|^2_2$ in the following two cases:

\textbf{Case 1:} When $\|x_0\|_2$ is not small and $x\neq 0$, one can show the error is negligible compared to $- 4 \| h_{x,x_0}\|^2_2$, so that $D_{-v_x} L(x) < 0$ as $- 4  \| h_{x,x_0}\|^2_2$. 

\textbf{Case 2:} When $\|x_0\|_2$ approaches 0, such an error is decaying slower than $- 4  \| h_{x,x_0}\|^2_2$ itself and eventually dominates it. As a consequence, one can only conclude that $\hat{x}_m$ is around the origin.
\item To characterize the directional derivative at $0$ in Case 1, one can show 
\begin{align*}
D_w L(0) \cdot \|w\|_2\leq   \l|\langle G(x_0), H_{x_N}(w) \rangle - \frac{\lambda }{ m } \sum_{i=1}^m y_i  \langle a_i, H_{x_N}(w)\rangle \r| - \langle G(x_0), H_{x_N}(w)\rangle 
\end{align*}
 with $x_N\rightarrow 0$. By showing that the second term dominates according to \eqref{eq:one-bit-rc} and Lemma \ref{lem:wdc_ineq}, we obtain
\begin{align*}
D_{w} L(0) < 0, \forall w\neq 0.
\end{align*}   

\end{itemize}

\subsection{Detailed Proof of Theorem \ref{thm:determine_converge}}

\begin{proof}[Proof of Theorem \ref{thm:determine_converge}]
According to Theorem \ref{thm:determine_converge}, we define a non-zero direction as follows:  
\begin{align*}
v_x := \begin{cases}
\nabla L(x) , &\text{ if $L(x)$ is differentiable at } x ,\\
\lim_{N\rightarrow +\infty} \nabla L(x_N), &\text{ otherwise, } 
\end{cases}
\end{align*}
where $\{x_N\}$ is a sequence such that $\nabla L(x)$ is differentiable at all point $x_N$ in the sequence because of the piecewise linearity of $G(x)$. 

On the other hand, by our definition of directional derivative, we have
\begin{align*}
D_{-v_x} L(x) = \begin{cases}
\big \langle  \nabla L(x), - \frac{v_x}{\|v_x\|_2}\big \rangle , &\text{ if $L(x)$ is differentiable at } x, \\
\lim_{N\rightarrow +\infty} \big \langle \nabla L(\tilde{x}_N),  -v_x / \|v_x\|_2 \big \rangle, &\text{ otherwise, } 
\end{cases}
\end{align*}
where $\{\tilde{x}_N\}$ is also a sequence with $\nabla L(\tilde{x}_N)$ existing for all $\tilde{x}_N$. Here we use $\tilde{x}_N$ only in order to distinguish from the sequence of $x_N$ in the definition of $v_x$ above.
We give the proof as follows:

\noindent \textbf{Approximation of $\langle\nabla L(x),z\rangle$:}
The proof is mainly based on the two critical lemmas, i.e., Lemma \ref{lem:comp-expect} and Lemma \ref{lem:comp-variance}.

First by \eqref{eq:wdc_ineq_3} in Lemma \ref{lem:wdc_ineq}, we have 
\begin{align}
\|G(x)\|_2 = (\prod_{i=1}^n W_{i,+,x}) x \leq (1/2 + \varepsilon_\WDC)^{n/2} \|x\|_2, \label{eq:proof22_0}
\end{align}
for any $x$. Thus, due to the assumption $\|x_0\|_2 \leq R (1/2 + \varepsilon_\WDC)^{-n/2} $ in Theorem \ref{thm:determine_converge} and $\|G(x_0)\|_2 \leq (1/2 + \varepsilon_\WDC)^{n/2} \|x_0\|_2$,  we further have 
\begin{align*}
\|G(x_0)\|_2 \leq R.
\end{align*}
By Lemma \ref{lem:comp-expect} and $\|G(x_0)\|_2 \leq R$, setting 
$$\lambda\geq4\max\{c_1(R\|a\|_{\psi_1}+\|\xi\|_{\psi_1}),1\}\log(64\max\{c_1(R\|a\|_{\psi_1} + \|\xi\|_{\psi_1}),1\}/\varepsilon),$$ the following holds for any $x$:
\begin{align}
\l|\lambda\expect{y_i\dotp{a_i}{G(x)}} - \dotp{G(x_0)}{G(x)} \r|\leq \frac{1}{4} \varepsilon  \|G(x)\|_2,  \label{eq:proof22_1}
\end{align}
if we let  $z = x$ in Lemma \ref{lem:comp-expect} such that $H_x(x) = G(x)$.

On the other hand, according to Lemma \ref{lem:comp-variance} and $|G(x_0)| \leq R$, we have that with probability at least $1-c_4 \exp(-u)$, for any $x$, the following holds:
\begin{align}
\l|\frac{\lambda }{m}\sum_{i=1}^m y_i \dotp{a_i}{G(x)} - \lambda \expect{y_i\dotp{a_i}{G(x)}}\r| \leq \frac{\varepsilon}{8}\|G(x)\|_2, \label{eq:proof22_2}
\end{align}
with sample complexity being
\begin{align*}
m\geq c_2\lambda^2\log^2(\lambda m)(kn\log(ed) + k\log(2R) + k\log m+ u)/\varepsilon^2,
\end{align*}
where we set $z = x$ in Lemma \ref{lem:comp-variance} with $H_x(x) = G(x)$.

Combining \eqref{eq:proof22_1} and \eqref{eq:proof22_2}, we will have that with probability at least $1-c_4 \exp(-u)$, for any $x$, setting 
\begin{align*}
\lambda\geq& 4\max\{c_1(R\|a\|_{\psi_1}+\|\xi\|_{\psi_1}),1\}\log(64\max\{c_1(R\|a\|_{\psi_1} + \|\xi\|_{\psi_1}),1\}/\varepsilon), \text{ and }\\
m \geq& c_2\lambda^2\log^2(\lambda m)(kn\log(ed) + k\log(2R) + k\log m+ u)/\varepsilon^2,
\end{align*}
the following holds
\begin{align}
\l|\frac{\lambda }{m}\sum_{i=1}^m y_i \dotp{a_i}{G(x)} - \dotp{G(x_0)}{G(x)} \r| \leq \varepsilon\|G(x)\|_2, \label{eq:proof22_3}
\end{align}
which leads to
\begin{align}
\l|\frac{1}{2}\langle\nabla L(x),z\rangle - ( \langle G(x), H_x(z) \rangle -  \langle G(x_0), H_x(z) \rangle) \r| \leq \varepsilon\|G(x)\|_2. \label{eq:proof22_4}
\end{align}

\noindent \textbf{Approximating $D_{-v_x} L(x) \cdot \|v_x\|_2$ and Bounding Errors:}
Without loss of generality, we directly prove the case where $L(x)$ is not differentiable at $x$. Then there exists a sequence $\tilde{x}_N \rightarrow x$ such that $\nabla L(\tilde{x}_N)$ exists for all $\tilde{x}_N$. The proof is the same when $x$ is the point such that $L(x)$ is differentiable. Therefore, we consider
\begin{align}
D_{-v_x}L(x) \|v_x\|_2 = \lim_{\tilde{x}_N\rightarrow x} \big \langle \nabla L(\tilde{x}_N),  -v_x  \big \rangle. \label{eq:proof22_5} 
\end{align}
When $L(x)$ is not differentiable, $v_x$ is defined as $\lim_{x_M \rightarrow x} \nabla L(x_M)$, where $\{x_M\} $ could be another sequence such that $\nabla L(x_M)$ exists for all $x_M$.
We decompose $D_{-v_x}L(x) \|v_x\|_2$ as follows
\begin{align*}
D_{-v_x}L(x) \|v_x\|_2 = & \lim_{\tilde{x}_N\rightarrow x}  \big \langle \nabla L(\tilde{x}_N),  -v_x  \big \rangle =  \lim_{\tilde{x}_N\rightarrow x} \lim_{x_M\rightarrow x}  -  \big \langle \nabla L(\tilde{x}_N),  L(x_M)  \big \rangle \\
= &  \lim_{\tilde{x}_N\rightarrow x} \lim_{x_M\rightarrow x} - \big [ 4 \dotp{h_{\tilde{x}_N,x_0}}{h_{x_M,x_0}} 
+ \dotp{\nabla L(\tilde{x}_N)-2h_{\tilde{x}_N,x_0}}{2h_{x_M,x_0}}\\
 & + \dotp{2h_{\tilde{x}_N,x_0}}{\nabla L(\tilde{x}_N)  - 2 h_{x_M,x_0}} + \dotp{\nabla L(\tilde{x}_N)-2h_{\tilde{x}_N,x_0}}{\nabla L(\tilde{x}_N)  - 2 h_{x_M,x_0}} \big ] \\
 = & -4 \|h_{x,x_0}\|_2^2 - \lim_{\tilde{x}_N\rightarrow x} \lim_{x_M\rightarrow x} \big [ \dotp{\nabla L(\tilde{x}_N)-2h_{\tilde{x}_N,x_0}}{2h_{x_M,x_0}}\\
 & + \dotp{2h_{\tilde{x}_N,x_0}}{\nabla L(\tilde{x}_N)  - 2 h_{x_M,x_0}} + \dotp{\nabla L(\tilde{x}_N)-2h_{\tilde{x}_N,x_0}}{\nabla L(\tilde{x}_N)  - 2 h_{x_M,x_0}} \big ],
\end{align*}
where we regard the last term inside the limitation as approximation error term.
It is equivalent to analyze
\begin{align}
\frac{1}{4} D_{-v_x}L(x)  \|v_x\|_2 =  & -\|h_{x,x_0}\|_2^2 - \lim_{\tilde{x}_N\rightarrow x} \lim_{x_M\rightarrow x} \bigg [ \dotp{\frac12 \nabla  L(\tilde{x}_N)-h_{\tilde{x}_N,x_0}}{h_{x_M,x_0}} \nonumber \\
 & + \dotp{h_{\tilde{x}_N,x_0}}{\frac12 \nabla L(x_M)  -  h_{x_M,x_0}} \nonumber \\
 & + \dotp{\frac12 \nabla L(\tilde{x}_N)-h_{\tilde{x}_N,x_0}}{\frac12 \nabla L(x_M)  - h_{x_M,x_0}} \bigg ]. \label{eq:proof22_6}
\end{align}
For simply notation, we let 
\begin{align*}
\overline{v}_{x,x_0} = \l(  \prod_{i=1}^n W_{i,+,x} \r)^\top \l(  \prod_{i=1}^n W_{i,+,x} \r) x -  \l(  \prod_{i=1}^n W_{i,+,x} \r)^\top \l(  \prod_{i=1}^n W_{i,+,x_0} \r) x_0.
\end{align*}
Thus we have
\begin{align*}
\dotp{\overline{v}_{x,x_0}}{z} =   \langle G(x), H_x(z) \rangle -  \langle G(x_0), H_x(z) \rangle.
\end{align*}
For the term $\dotp{\frac12 \nabla  L(\tilde{x}_N)-h_{\tilde{x}_N,x_0}}{h_{x_M,x_0}}$ in \eqref{eq:proof22_6}, we have that , setting $\lambda$ and $m$ sufficiently large as shown above, with probability at least $1-c_4 \exp(-u)$, 
\begin{align*}
&\l \langle \frac12 \nabla  L(\tilde{x}_N)-h_{\tilde{x}_N,x_0}, h_{x_M,x_0} \r\rangle  \\
&\qquad = \l \langle\frac12 \nabla  L(\tilde{x}_N)-\overline{v}_{\tilde{x}_N,x_0}, h_{x_M,x_0}\r\rangle + \dotp{\overline{v}_{\tilde{x}_N,x_0}-h_{\tilde{x}_N,x_0}}{h_{x_M,x_0}} \\
&\qquad\geq  -\varepsilon \|H_{\tilde{x}_N}(h_{x_M,x_0})\|_2 - \|\overline{v}_{\tilde{x}_N,x_0}-h_{\tilde{x}_N,x_0}\|_2  \|h_{x_M,x_0}\|_2 \\
&\qquad \geq -\varepsilon\|H_{\tilde{x}_N}(h_{x_M,x_0})\|_2  - 48 \frac{n^3 \sqrt{\varepsilon_\WDC}}{2^n} \max (\|\tilde{x}_N\|_2,  \|x_0\|_2) \|h_{x_M,x_0}\|_2 \\
&\qquad \geq -\varepsilon \l(\frac{1}{2} + \varepsilon_\WDC\r)^{n/2} \|h_{x_M,x_0}\|_2  - 48 \frac{n^3 \sqrt{\varepsilon_\WDC}}{2^n} \max (\|\tilde{x}_N\|_2,  \|x_0\|_2) \|h_{x_M,x_0}\|_2, 
\end{align*}
where the first inequality is by \eqref{eq:proof22_4} and Cauchy-Schwarz inequality,and the third inequality is by \eqref{eq:wdc_ineq_1} in Lemma \ref{lem:wdc_ineq}. The second inequality above is due to
\begin{align*}
&\|\overline{v}_{\tilde{x}_N,x_0}-h_{\tilde{x}_N,x_0}\|_2 \\
&\leq  \l\|\l(  \prod_{i=1}^n W_{i,+,\tilde{x}_N} \r)^\top \l(  \prod_{i=1}^n W_{i,+,\tilde{x}_N} \r) \tilde{x}_N  -  \frac{1}{2^n} \tilde{x}_N    \r\|_2  \\
&\quad + \l\| \l(  \prod_{i=1}^n W_{i,+,\tilde{x}_N} \r)^\top \l(  \prod_{i=1}^n W_{i,+,x_0} \r) x_0 -   \frac{1}{2^n} \l[ \l(  \prod_{i=0}^{n-1} \frac{\pi-\overline{\varrho}_i}{\pi}  \r) x_0 + \sum_{i=0}^{n-1} \frac{\sin \overline{\varrho}_i}{\pi} \l( \prod_{j=i+1}^{d-1}\frac{\pi - \overline{\varrho}_j}{\pi} \r) \frac{\|x_0\|_2}{\|\tilde{x}_N\|_2} \tilde{x}_N \r]    \r\|_2 \\
&\leq  24 \frac{n^3 \sqrt{\varepsilon_\WDC}}{2^n} \|\tilde{x}_N\|_2 + 24 \frac{n^3 \sqrt{\varepsilon_\WDC}}{2^n} \|x_0\|_2 \\
&\leq  48 \frac{n^3 \sqrt{\varepsilon_\WDC}}{2^n} \max (\|\tilde{x}_N\|_2,  \|x_0\|_2),
\end{align*}
where the second inequality is by \eqref{eq:wdc_ineq_1} in Lemma \ref{lem:wdc_ineq}.

Similarly, for the terms $\dotp{h_{\tilde{x}_N,x_0}}{\frac12 \nabla L(x_M)  -  h_{x_M,x_0}} $ in \eqref{eq:proof22_6}, we have that, setting $m$ and $\lambda$ sufficiently large as above, with probability at least $1-c_4 \exp(-u)$, the following holds:
\begin{align*}
\l\langle h_{\tilde{x}_N,x_0}, \frac12 \nabla L(x_M)  -  h_{x_M,x_0} \r\rangle \geq & -\varepsilon\l(\frac{1}{2} + \varepsilon_\WDC\r)^{n/2} \|h_{\tilde{x}_N, x_0}\|_2   - 48 \frac{n^3 \sqrt{\varepsilon_\WDC}}{2^n} \max (\|x_M\|_2,  \|x_0\|_2) \|h_{\tilde{x}_N, x_0}\|_2.
\end{align*} 
For the terms $\dotp{\frac12 \nabla L(\tilde{x}_N)-h_{\tilde{x}_N,x_0}}{\frac12 \nabla L(x_M)  - h_{x_M,x_0}}$ in \eqref{eq:proof22_6}, we have that, setting $m$ and $\lambda$ sufficiently large as above, with probability at least $1-2c_4 \exp(-u)$, the following holds:
\begin{align*}
&\l \langle \frac12 \nabla L(\tilde{x}_N)-h_{\tilde{x}_N,x_0}, \frac12 \nabla L(x_M)  - h_{x_M,x_0}\r \rangle  \\
&\qquad \geq   -\l[ \varepsilon\l(\frac{1}{2} + \varepsilon_\WDC\r)^{n/2} + 48 \frac{n^3 \sqrt{\varepsilon_\WDC}}{2^n} \max (\|x_M\|_2,  \|x_0\|_2) \r]   \\
&\qquad \quad \cdot \l[ \varepsilon\l(\frac{1}{2} + \varepsilon_\WDC\r)^{n/2} + 48 \frac{n^3 \sqrt{\varepsilon_\WDC}}{2^n} \max (\|\tilde{x}_N\|_2,  \|x_0\|_2) \r].
\end{align*}
Combining the above together, plugging in \eqref{eq:proof22_6} and taking limit on both sides, we have
\begin{align*}
-\frac{1}{4} D_{-v_x}L(x)  \|v_x\|_2 &\geq  \frac{1}{2}\| h_{x, x_0}\|_2 \l[ \| h_{x, x_0}\|_2  -2 \l(  \varepsilon\l(\frac{1}{2} + \varepsilon_\WDC\r)^{n/2}  + 48 \frac{n^3 \sqrt{\varepsilon_\WDC}}{2^n} \max (\|x\|_2,  \|x_0\|_2) \r) \r]  \\
&\quad + \frac{1}{2} \l[ \| h_{x, x_0}\|_2^2 - 2 \l( \varepsilon\l(\frac{1}{2} + \varepsilon_\WDC\r)^{n/2} + 48 \frac{n^3 \sqrt{\varepsilon_\WDC}}{2^n} \max (\|x\|_2,  \|x_0\|_2) \r)^2 \r],
\end{align*} 
with probability at least $1-4c_4 \exp(-u)$ by setting $m$ and $\lambda$ sufficiently large as above. 

\noindent \textbf{Discussion of Two Cases:}
We take our discussion from two aspects: $\|x_0\|_2 > 2^{n/2}\varepsilon^{1/2}_\WDC$ and $\|x_0\|_2 \leq 2^{n/2}\varepsilon^{1/2}_\WDC$.

\noindent \textbf{Case 1:} $\|x_0\|_2 > 2^{n/2}\varepsilon^{1/2}_\WDC$, or equivalently $\varepsilon_\WDC< 2^{-n}\|x_0\|^2_2$. 
This means $\|x\|_0$ is not close to $0$. 
If we let $\varepsilon = \varepsilon_\WDC$, $4\pi n \varepsilon_\WDC \leq 1$, then we have
\begin{align*}
&  \varepsilon \l(\frac{1}{2} + \varepsilon_\WDC\r)^{n/2}  + 48 \frac{n^3 \sqrt{\varepsilon_\WDC}}{2^n} \max (\|x\|_2,  \|x_0\|_2)  \\
&\qquad \leq \frac{\|x_0\| \sqrt{\varepsilon_\WDC}}{2^n} (1+2\varepsilon_\WDC)^{n/2} + 48 \frac{n^3 \sqrt{\varepsilon_\WDC}}{2^n} \max (\|x\|_2,  \|x_0\|_2)  \\
&\qquad\leq \frac{\|x_0\|  \sqrt{\varepsilon_\WDC} }{2^n} (1+2 n \varepsilon_\WDC ) + 48 \frac{n^3 \sqrt{\varepsilon_\WDC}}{2^n} \max (\|x\|_2,  \|x_0\|_2)  \\
&\qquad\leq \frac{3 n^3 \sqrt{\varepsilon_\WDC}}{2^n}  \max (\|x\|_2,  \|x_0\|_2)  + 48 \frac{n^3 \sqrt{\varepsilon_\WDC}}{2^n} \max (\|x\|_2,  \|x_0\|_2)  \\
&\qquad\leq 51 \frac{n^3 \sqrt{\varepsilon_\WDC}}{2^n} \max (\|x\|_2,  \|x_0\|_2), 
\end{align*}
where the second inequality is due to $(1+2\varepsilon_\WDC)^{n/2} \leq e^{n\varepsilon_\WDC} \leq 1 + 2 n\varepsilon_\WDC$ when $\varepsilon_\WDC$ is sufficiently small satisfying the conditions of Theorem \ref{thm:determine_converge}.

Recall the definition of $S_{121n^4 \sqrt{\varepsilon_\WDC}, x_0}$ in \eqref{eq:S}. 
We assume $x\neq 0$ and $x \notin S_{121n^4 \sqrt{\varepsilon_\WDC} ,x_0}$, namely $\|h_{x,x_0}\|_2 > 121n^4/2^n \sqrt{\varepsilon_\WDC} \max (\|x\|_2, \|x_0\|_2)$. By Lemma \ref{lem:converge_set}, if $x \in  \mathcal{B}^c (x_0, 616 n^3 \varepsilon^{-1/4}_\WDC\|x_0\|_2) \cap \mathcal{B}^c (-\rho_n x_0, 5500n^{14} \varepsilon^{-1/4}_\WDC \|x_0\|_2) $, it is guaranteed that $x \notin S_{121n^3 \sqrt{\varepsilon_\WDC} ,x_0}$ under the condition  that $88\pi n^6 \varepsilon_\WDC^{1/4} < 1$. Then we obtain
\begin{align*}
-\frac{1}{4} D_{-v_x}L(x)  \|v_x\|_2 \geq \frac{9}{2}\| h_{x, x_0}\|_2 \frac{n^3 \sqrt{\varepsilon_\WDC}}{2^n} \max (\|x\|_2,  \|x_0\|_2)    + \frac{9439}{2} \frac{n^6 \varepsilon_\WDC}{2^{2n}} [\max (\|x\|_2,  \|x_0\|_2)]^2>0,
\end{align*}
or equivalently,
\begin{align*}
D_{-v_x}L(x)  \|v_x\|_2 < 0,
\end{align*}
with probability at least $1-4c_4 \exp(-u)$ when we set 
\begin{align}
\lambda\geq & 4\max\{c_1(R\|a\|_{\psi_1}+\|\xi\|_{\psi_1}),1\}\log(64\max\{c_1(R\|a\|_{\psi_1} + \|\xi\|_{\psi_1}),1\}/\varepsilon_\WDC), \label{eq:proof22_lambda}\\
m\geq& c_2\lambda^2\log^2(\lambda m)(kn\log(ed) + k\log(2R) + k\log m+ u)/\varepsilon_\WDC^2.\label{eq:proof22_m}
\end{align} 
Next, we need to prove that $\forall w\neq 0$, $D_w L(0) < 0$.  We compute the directional derivative as 
\begin{align*}
\frac{1}{2} D_w L(0)\cdot \|w\|_2 =&  - \lim_{x_N\rightarrow 0}\frac{\lambda }{m} \sum_{i=1}^m y_i  \dotp{a_i}{H_{x_N}(w)} \\
= & \lim_{x_N\rightarrow 0} \dotp{G(x_0)}{H_{x_N}(w)} - \frac{\lambda }{m} \sum_{i=1}^m y_i  \dotp{a_i}{H_{x_N}(w)} - \dotp{G(x_0)}{H_{x_N}(w)} \\
\leq & \lim_{x_N\rightarrow 0} \l|\dotp{G(x_0)}{H_{x_N}(w)} - \frac{\lambda }{m} \sum_{i=1}^m y_i  \dotp{a_i}{H_{x_N}(w)} \r| - \dotp{G(x_0)}{H_{x_N}(w)} \\
\leq & \varepsilon\l(\frac{1}{2} + \varepsilon_\WDC\r)^{n/2} \|w\|_2 - \frac{1}{4\pi}\frac{1}{2^n} \|w\|_2 \|x_0\|_2 \\
\leq & \frac{1}{2^{n/2}} \varepsilon (1+2n\varepsilon_\WDC) \|w\|_2 - \frac{1}{4\pi}\frac{1}{2^n} \|w\|_2 \|x_0\|_2, 
\end{align*}
where the first inequality is due to \eqref{eq:proof22_4}, and the second inequality is due to \eqref{eq:wdc_ineq_2} in Lemma \ref{lem:wdc_ineq}.
Now we still let $\varepsilon = \varepsilon_\WDC$, then $576\pi^2 n^6 \varepsilon_\WDC \leq 1$ (which is guaranteed by the condition $88\pi n^6 \varepsilon_\WDC^{1/4} < 1$).  If $w\neq0$, setting $\lambda$ and $m$ satisfying \eqref{eq:proof22_lambda} and \eqref{eq:proof22_m}, the following holds with probability at least $1-c_4 \exp(-u)$,
\begin{align*}
\frac{1}{2} D_w L(0)\cdot \|w\|_2 \leq & \frac{1}{2^{n/2}} \varepsilon_\WDC (1+2n\varepsilon_\WDC) \|w\|_2 - \frac{1}{4\pi}\frac{1}{2^n} \|w\|_2 \|x_0\|_2 \\
\leq & \frac{1}{2^n} \sqrt{\varepsilon_\WDC} (1+2n\varepsilon_\WDC) \|w\|_2 \|x_0\|_2 - \frac{1}{4\pi}\frac{1}{2^n} \|w\|_2 \|x_0\|_2 \\
\leq & \frac{1}{2^n} 3n^3\sqrt{\varepsilon_\WDC}  \|w\|_2 \|x_0\|_2 - \frac{1}{4\pi}\frac{1}{2^n} \|w\|_2 \|x_0\|_2 \\
\leq & - \frac{1}{8\pi}\frac{1}{2^n} \|w\|_2 \|x_0\|_2  <  0,
\end{align*}
where the first inequality is due to the condition that $\varepsilon_\WDC < 2^{-n} \|x_0\|_2^2$. This implies that 
\begin{align*}
D_{w} L(0) < 0, \forall w \neq 0. 
\end{align*}
Summarizing the results in Case 1, we have that, if we let $\lambda$ and $m$ satisfying \eqref{eq:proof22_lambda} and \eqref{eq:proof22_m}, the following holds with probability at least $1- 5 c_4 \exp(-u)$,
\begin{align*}
&D_{-v_x} L(x) < 0, \forall x \notin \mathcal{B} (x_0, 616 n^3 \varepsilon^{1/4}_\WDC\|x_0\|_2) \cup \mathcal{B} (-\rho_n x_0, 5500n^{14} \varepsilon^{1/4}_\WDC \|x_0\|_2) \cup \{0\}, \\
&D_{w} L(0) < 0, \forall w \neq 0. 
\end{align*}

\noindent \textbf{Case 2:} $\|x_0\|_2 \leq 2^{n/2}\varepsilon^{1/2}_\WDC$, or equivalently $\varepsilon_\WDC \geq 2^{-n}\|x_0\|^2_2$.
This condition means $\|x_0\|$ is very small and close to $0$. Then, for any $z$, we would similarly have
\begin{align*}
-\frac14 D_{-v_x} L(x) \|v_x\|_2^2\geq & \frac{1}{2}\| h_{x, x_0}\|_2 \l[ \| h_{x, x_0}\|_2  -2 \l(  \varepsilon
\l(\frac{1}{2} + \varepsilon_\WDC\r)^{n/2}  + 48 \frac{n^3 \sqrt{\varepsilon_\WDC}}{2^n} \max (\|x\|_2,  \|x_0\|_2) \r) \r]  \\
&+ \frac{1}{2} \l[ \| h_{x, x_0}\|_2^2 - 2 \l( \varepsilon\l(\frac{1}{2} + \varepsilon_\WDC\r)^{n/2} + 48 \frac{n^3 \sqrt{\varepsilon_\WDC}}{2^n} \max (\|x\|_2,  \|x_0\|_2) \r)^2 \r].
\end{align*} 
For any non-zero $x$ satisfying $x \notin S_{121n^4 \sqrt{\varepsilon_\WDC}, x_0}$, which implies that $\|h_{x,x_0}\|_2 > 121 n^4 2^{-n} \varepsilon_\WDC \max (\|x\|_2, \|x_0\|_2)$, we have
\begin{align*}
-\frac14 D_{-v_x} L(x) \|v_x\|_2^2 \geq & \frac{1}{2}\| h_{x, x_0}\|_2 \l[ 25 \frac{n^3 \sqrt{\varepsilon_\WDC}}{2^n} \max (\|x\|_2,  \|x_0\|_2)  -2  \varepsilon\l(\frac{1}{2} + \varepsilon_\WDC\r)^{n/2}  \r]  \\
&+ \frac{1}{2} \l[ 53 \frac{n^3 \sqrt{\varepsilon_\WDC}}{2^n} \max (\|x\|_2,  \|x_0\|_2) - \sqrt{2}  \varepsilon\l(\frac{1}{2} + \varepsilon_\WDC\r)^{n/2} \r] \\
& \cdot \l[ \| h_{x, x_0}\|_2 + \sqrt{2} \l( \varepsilon\l(\frac{1}{2} + \varepsilon_\WDC\r)^{n/2} + 48 \frac{n^3 \sqrt{\varepsilon_\WDC}}{2^n} \max (\|x\|_2,  \|x_0\|_2) \r) \r].
\end{align*}
Furthermore, for any $x$ satisfying $\|x\|_2 \geq 2^{n/2} \sqrt{\varepsilon_\WDC}$, we have
\begin{align*}
\|x\|_2 \geq 2^{n/2} \sqrt{\varepsilon_\WDC} \geq \|x_0\|, \text{ namely } x \notin \mathcal{B} (0, 2^{n/2} \varepsilon^{1/2}_\WDC),
\end{align*}
which leads to
\begin{align*}
-\frac14 D_{-v_x} L(x) \|v_x\|_2^2 \geq & \frac{1}{2}\| h_{x, x_0}\|_2 \l[ 25 \frac{n^3 \sqrt{\varepsilon_\WDC}}{2^n} 2^{n/2} \sqrt{\varepsilon_\WDC}  -2  \varepsilon\l(\frac{1}{2} + \varepsilon_\WDC\r)^{n/2}  \r]  \\
& + \frac{1}{2} \l[ 53 \frac{n^3 \sqrt{\varepsilon_\WDC}}{2^n} 2^{n/2} \sqrt{\varepsilon_\WDC} - \sqrt{2}  \varepsilon\l(\frac{1}{2} + \varepsilon_\WDC\r)^{n/2} \r] \\
&   \cdot \l[ \| h_{x, x_0}\|_2 + \sqrt{2} \l( \varepsilon\l(\frac{1}{2} + \varepsilon_\WDC\r)^{n/2} + 48 \frac{n^3 \sqrt{\varepsilon_\WDC}}{2^n} \max (\|x\|_2,  \|x_0\|_2) \r) \r]\\
=&\frac{1}{2}\| h_{x, x_0}\|_2 \l[ 25 \frac{n^3 \varepsilon_\WDC}{2^{n/2}} -2  \varepsilon \l(\frac{1}{2} + \varepsilon_\WDC\r)^{n/2}  \r]  \\
&  + \frac{1}{2} \l[ 53 \frac{n^3 \varepsilon_\WDC}{2^{n/2}}   - \sqrt{2}  \varepsilon \l(\frac{1}{2} + \varepsilon_\WDC\r)^{n/2} \r] \\
&  \cdot \l[ \| h_{x, x_0}\|_2 + \sqrt{2} \l( \varepsilon\l(\frac{1}{2} + \varepsilon_\WDC\r)^{n/2} + 48 \frac{n^3 \sqrt{\varepsilon_\WDC}}{2^n} \max (\|x\|_2,  \|x_0\|_2) \r) \r].
\end{align*}
We let $\varepsilon = \varepsilon_\WDC$. Then we have $\varepsilon (1/2 + \varepsilon_\WDC)^{n/2} \leq 3n\varepsilon_\WDC 2^{-n/2} $, which consequently results in
\begin{align*}
-\frac14 D_{-v_x} L(x) \|v_x\|_2^2 > 0,
\end{align*}
or equivalently,
\begin{align*}
D_{-v_x} L(x) \|v_x\|_2^2 < 0.
\end{align*}
Note that in the above results, we also apply \eqref{eq:proof22_4} in deriving the inequalities. Therefore, summarizing the above results in Case 2, we have that, if we let $\lambda$ and $m$ satisfying \eqref{eq:proof22_lambda} and \eqref{eq:proof22_m}, the following holds with probability at least $1- 4 c_4 \exp(-u)$,
\begin{align*}
&D_{-v_x} L(x) < 0, ~~ \forall x \notin \mathcal{B} (x_0, 616 n^3 \varepsilon^{1/4}_\WDC\|x_0\|_2) \cup \mathcal{B} (-\rho_n x_0, 5500n^{14} \varepsilon^{1/4}_\WDC \|x_0\|_2) \cup \mathcal{B} (0, 2^{n/2} \varepsilon^{1/2}_\WDC),
\end{align*}
which completes the proof.
\end{proof}

\section{Proof of Theorem \ref{thm:optimum_compare}} \label{sec:proof_thm_23}

The proof of Theorem \ref{thm:optimum_compare} is mainly based on Lemmas \ref{lem:comp-expect} and \ref{lem:comp-variance} proved in the last section and two additional lemmas in the previous literature \citep{huang2018provably} given as below.

\subsection{Lemmas for Theorem \ref{thm:optimum_compare}}

\begin{lemma}[\citet{huang2018provably}] \label{lem:loss_error_bound}
Fix $0 < \psi \leq \frac{1}{4\pi}$. For any $\varphi, \zeta \in [\rho_n , 1]$, it holds that 
\begin{align*}
& \dotp{x}{h_{x, x_0}} - \frac{1}{2^{n+1}} \|x\|^2_2 \leq \frac{1}{2^{n+1}}\l(\varphi^2-2 \varphi + \frac{10 \pi^2 n }{K_0^3} \psi \r)\|x_0\|_2^2, \forall x \in \mathcal{B}( \varphi x_0,  \psi \|x_0\|_2) \\
&\dotp{z}{h_{z, x_0}} - \frac{1}{2^{n+1}} \|z\|^2_2 \geq \frac{1}{2^{n+1}}(\zeta^2-2\zeta \rho_n-10 \pi^2 n^3 \psi  )\|x_0\|_2^2, \forall z \in \mathcal{B}(-\zeta x_0,  \psi \|x_0\|_2)
\end{align*}
where $K_0 =  \min_{n\geq 2} \rho_n$, and $\rho_n$ is defined in Lemma~\ref{lem:converge_set}.
\end{lemma}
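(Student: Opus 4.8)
The plan is to reduce the statement to a one‑variable estimate in the angle $\theta:=\angle(x,x_0)$, read off the value at the two distinguished configurations $x=\varphi x_0$ and $z=-\zeta x_0$, and then control the deviation as the argument ranges over a ball of radius $\psi\|x_0\|_2$. First I would expand $\dotp{x}{h_{x,x_0}}$ directly from the definition of $h_{x,x_0}$ (using the form of the products consistent with Lemma \ref{lem:converge_set}). Writing $\theta=\bar\varrho_0=\angle(x,x_0)$, letting $\{\bar\varrho_i\}_{i\ge0}$ be the angle recursion generated by $g$, and abbreviating $a(\theta):=\prod_{i=0}^{n-1}\frac{\pi-\bar\varrho_i}{\pi}$, $b(\theta):=\sum_{i=0}^{n-1}\frac{\sin\bar\varrho_i}{\pi}\prod_{j=i+1}^{n-1}\frac{\pi-\bar\varrho_j}{\pi}$, and $\Phi(\theta):=a(\theta)\cos\theta+b(\theta)$, the identity $\dotp{x}{x_0}=\|x\|_2\|x_0\|_2\cos\theta$ yields the workhorse identity
\[
\dotp{x}{h_{x,x_0}}-\tfrac{1}{2^{n+1}}\|x\|_2^2=\tfrac{1}{2^{n+1}}\|x\|_2^2-\tfrac{1}{2^{n}}\|x\|_2\|x_0\|_2\,\Phi(\theta).
\]
Note that $a$, $b$, hence $\Phi$, depend only on $\theta$ and not on $\|x\|_2$.

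Next I would evaluate $\Phi$ at the two endpoints. Since $g(0)=0$, the recursion collapses to $\bar\varrho_i\equiv0$, so $a(0)=1$, $b(0)=0$, and $\Phi(0)=1$; since $\bar\varrho_0=\pi$ forces the $i=0$ factor $\frac{\pi-\pi}{\pi}=0$ in $a$, we get $a(\pi)=0$, while $b(\pi)$ is exactly $\rho_n$ by the definition of $\rho_n$ in Lemma \ref{lem:converge_set}, so $\Phi(\pi)=\rho_n$. Substituting $x=\varphi x_0$ (so $\theta=0$, $\|x\|_2=\varphi\|x_0\|_2$) and $z=-\zeta x_0$ (so $\theta=\pi$, $\|z\|_2=\zeta\|x_0\|_2$) into the workhorse identity gives the exact values $\tfrac{1}{2^{n+1}}(\varphi^2-2\varphi)\|x_0\|_2^2$ and $\tfrac{1}{2^{n+1}}(\zeta^2-2\zeta\rho_n)\|x_0\|_2^2$, which are precisely the leading terms of the two claimed bounds; everything else is the perturbation caused by allowing $\|x-\varphi x_0\|_2\le\psi\|x_0\|_2$ (resp. for $z$).

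For the perturbation step, take $x\in\mathcal{B}(\varphi x_0,\psi\|x_0\|_2)$ with $\varphi\ge\rho_n\ge K_0$ and $\psi\le\frac{1}{4\pi}$. Then $\big|\,\|x\|_2-\varphi\|x_0\|_2\big|\le\psi\|x_0\|_2$ and $\theta=\angle(x,x_0)\le\arcsin\frac{\psi}{\varphi-\psi}=O(\psi/K_0)$. Using the elementary fact $0\le g(\varrho)\le\varrho$ on $[0,\pi]$ (so $\bar\varrho_i\le\theta$ for all $i$), one gets $|1-a(\theta)|\le \frac{n\theta}{\pi}$, $|1-\cos\theta|\le\frac{\theta^2}{2}$, and $0\le b(\theta)\le\frac{n\theta}{\pi}$, hence $|\Phi(\theta)-1|=O(n\theta)=O(n\psi/K_0)$; plugging the bounds on $\|x\|_2$ and $\Phi(\theta)$ into the workhorse identity and expanding $(\varphi\pm\psi)^2$ collects all error contributions into a term at most $\tfrac{1}{2^{n+1}}\cdot\tfrac{10\pi^2 n}{K_0^3}\psi\,\|x_0\|_2^2$ once numerical constants are bounded (the factor $K_0^{-3}$ absorbs the $\arcsin$ estimate and the clearing of the $\|x\|_2\ge K_0\|x_0\|_2$ denominator). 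The $z$‑side is symmetric but with $\theta$ near $\pi$: write $\theta=\pi-\omega$ with $\omega\le\arcsin\frac{\psi}{\zeta-\psi}=O(\psi)$, and use the polynomial‑in‑$n$ Lipschitz estimates for the angle recursion near $\pi$ from the proof of Lemma \ref{lem:converge_set} (so $a(\pi-\omega)=O(\omega)$ and $|b(\pi-\omega)-\rho_n|=O(n^3\omega)$), giving $|\Phi(\pi-\omega)-\rho_n|=O(n^3\psi)$; keeping the inequality in the lower‑bound direction when expanding produces the claimed error $-\tfrac{1}{2^{n+1}}\cdot10\pi^2 n^3\psi\,\|x_0\|_2^2$.

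The main obstacle is the last step's sensitivity analysis of $a(\theta),b(\theta)$ with the stated polynomial‑in‑$n$ constants, especially near $\theta=\pi$, where $g$ is degenerate ($g(\pi)=\pi/2$, $g'(\pi)=0$) so one must propagate perturbations carefully through the iterates $\bar\varrho_1,\dots,\bar\varrho_{n-1}$. This is exactly the technical core already carried out in \citet{hand2018global}, so I would quote those Lipschitz estimates rather than re‑derive them; the remainder of the argument is the elementary algebra of the workhorse identity together with $g(0)=0$, $b(\pi)=\rho_n$, and $0\le g\le\mathrm{id}$.
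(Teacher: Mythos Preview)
The paper does not supply its own proof of this lemma: it is quoted verbatim from \citet{huang2018provably} and used as a black box in the proof of Theorem~\ref{thm:optimum_compare}. Your outline is a correct route to establishing it, and in fact matches the way such bounds are obtained in that line of work: reduce $\dotp{x}{h_{x,x_0}}-\tfrac{1}{2^{n+1}}\|x\|_2^2$ to a function of $\|x\|_2$ and $\theta=\angle(x,x_0)$ via the identity you derive, evaluate exactly at $\theta=0$ and $\theta=\pi$ (where $\Phi(0)=1$ and $\Phi(\pi)=\rho_n$, yielding the leading terms $\varphi^2-2\varphi$ and $\zeta^2-2\zeta\rho_n$), and then bound the drift over the $\psi$-ball using the Lipschitz control of the angle recursion from \citet{hand2018global}.

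The only place that requires care is exactly the one you flag: tracking constants in the perturbation of $\Phi$ near $\theta=\pi$, since the first iterate jumps from $\pi$ to $\pi/2$ and the sensitivity of the subsequent $\bar\varrho_i$'s is what produces the $n^3$ factor in the lower bound. Your plan to import those estimates rather than re-derive them is appropriate here; just be aware that the specific constants $10\pi^2$ and $K_0^{-3}$ come from bookkeeping in \citet{huang2018provably}, not from \citet{hand2018global} directly, so if you want a self-contained write-up you would need to redo that accounting.
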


\begin{lemma}[\citet{huang2018provably}] \label{lem:rho_lower_bound}
For all $n \geq 2$, there exists a constant $K_1$ such that 
\begin{align*}
\frac{1}{K_1(n + 2)^2} \leq 1-\rho_n.
\end{align*}

\end{lemma}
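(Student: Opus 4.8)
\textbf{Proof proposal for Lemma \ref{lem:rho_lower_bound}.}
The plan is to turn the defining sum for $\rho_n$ into a scalar recursion for the deficit $e_n := 1-\rho_n$, and then bound $e_n$ from below by induction, using the known rate at which the auxiliary angle sequence $\check{\varrho}_n$ tends to zero. The first step is purely algebraic: because the angles $\check{\varrho}_j$ (defined in Lemma \ref{lem:converge_set}) do not depend on $n$, I would peel off the $i=n-1$ summand of $\rho_n = \sum_{i=0}^{n-1}\frac{\sin\check{\varrho}_i}{\pi}\prod_{j=i+1}^{n-1}\frac{\pi-\check{\varrho}_j}{\pi}$ and factor $\frac{\pi-\check{\varrho}_{n-1}}{\pi}$ out of the remaining terms, which yields $\rho_n = \frac{\sin\check{\varrho}_{n-1}}{\pi} + \frac{\pi-\check{\varrho}_{n-1}}{\pi}\,\rho_{n-1}$ with $\rho_1=0$. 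Substituting $\rho_n = 1-e_n$ and simplifying $1-\tfrac{\sin\check{\varrho}_{n-1}}{\pi}-\tfrac{\pi-\check{\varrho}_{n-1}}{\pi}$ gives the clean recursion
\[
e_n \;=\; \frac{\check{\varrho}_{n-1}-\sin\check{\varrho}_{n-1}}{\pi}\;+\;\Bigl(1-\frac{\check{\varrho}_{n-1}}{\pi}\Bigr)e_{n-1},\qquad e_1=1.
\]
Since $t\ge\sin t$ on $[0,\pi]$ and $0<\check{\varrho}_{n-1}<\pi$ for $n\ge 2$, both summands are positive, so $e_n>0$ for every $n$; this already records $\rho_n<1$.

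The quantitative input I would use is that $\check{\varrho}_n\to 0$ at rate $\Theta(1/n)$. That the sequence decreases to $0$ is immediate from the fact that the map $g$ in Lemma \ref{lem:converge_set} is continuous and increasing with $g(t)<t$ on $(0,\pi)$ and $g(0)=0$. For the rate, the Taylor expansion $g(t)=t-\tfrac{t^2}{3\pi}+O(t^3)$ near $0$ shows that $1/\check{\varrho}_n$ increases by $\tfrac{1}{3\pi}+O(\check{\varrho}_{n-1})$ per step once $\check{\varrho}_{n-1}$ is small, whence $\check{\varrho}_n\ge C_1/n$ for all $n$ beyond some absolute threshold $N_0$ and an absolute constant $C_1>0$ (this estimate is also available from \citet{hand2018global}). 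Enlarging $N_0$ if necessary, I may also assume $\check{\varrho}_{n-1}\le 1$ for $n\ge N_0$.

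With this in hand I would prove, by induction on $n\ge N_0$, the bound $e_n\ge \dfrac{c}{(n+2)^2}$ for a small absolute constant $c$. Using $\frac{c}{(n+1)^2}\ge\frac{c}{(n+2)^2}$, the target inequality $\frac{\check{\varrho}_{n-1}-\sin\check{\varrho}_{n-1}}{\pi}+\bigl(1-\frac{\check{\varrho}_{n-1}}{\pi}\bigr)\frac{c}{(n+1)^2}\ge\frac{c}{(n+2)^2}$ reduces to $\frac{\check{\varrho}_{n-1}-\sin\check{\varrho}_{n-1}}{\pi}\ge\frac{\check{\varrho}_{n-1}}{\pi}\cdot\frac{c}{(n+1)^2}$, i.e. $1-\frac{\sin\check{\varrho}_{n-1}}{\check{\varrho}_{n-1}}\ge\frac{c}{(n+1)^2}$. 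Since $1-\frac{\sin t}{t}\ge\frac{t^2}{8}$ for $t\in(0,1]$ and $\check{\varrho}_{n-1}\ge C_1/n\ge C_1/(n+1)$, the left side is at least $\frac{C_1^2}{8(n+1)^2}$, so any $c\le C_1^2/8$ closes the step. For the base case $n=N_0$ and the finitely many indices $2\le n<N_0$ I would simply invoke the positivity $e_n>0$ from the first paragraph: taking $K_1:=\max\bigl\{\,8/C_1^2,\ \max_{2\le n\le N_0}\tfrac{1}{e_n(n+2)^2}\,\bigr\}$ (finite) gives $1-\rho_n=e_n\ge\frac{1}{K_1(n+2)^2}$ for all $n\ge 2$, which is the assertion.

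The main obstacle is entirely the rate estimate $\check{\varrho}_n\gtrsim 1/n$: it is precisely this lower bound that makes the ``source term'' $\frac{\check{\varrho}_{n-1}-\sin\check{\varrho}_{n-1}}{\pi}=\Theta(\check{\varrho}_{n-1}^3)=\Theta(1/n^3)$ large enough to counteract the contraction factor $1-\frac{\check{\varrho}_{n-1}}{\pi}=1-\Theta(1/n)$ and sustain an $e_n=\Theta(1/n^2)$ lower bound; a naive argument that discards the source term only gives $e_n\ge\prod_{j=1}^{n-1}(1-\check{\varrho}_j/\pi)=\Theta(1/n^3)$, which is too weak. The recursion and the induction themselves are elementary, so the delicacy is confined to pinning down how fast $\check{\varrho}_n$ decays, which I would either re-derive via the $1/\check{\varrho}_n$ argument sketched above or import from \citet{hand2018global}.
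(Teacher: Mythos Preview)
The paper does not supply its own proof of this lemma; it is quoted directly from \citet{huang2018provably} and used as a black box in the proof of Theorem \ref{thm:optimum_compare}. There is therefore nothing in the present paper to compare your argument against.

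That said, your proposal is correct and self-contained. The recursion $e_n=\frac{\check{\varrho}_{n-1}-\sin\check{\varrho}_{n-1}}{\pi}+(1-\frac{\check{\varrho}_{n-1}}{\pi})e_{n-1}$ with $e_1=1$ is derived cleanly from the definition of $\rho_n$; the $\Theta(1/n)$ decay of $\check{\varrho}_n$ does follow from the $1/\check{\varrho}_n$-difference argument you sketch (and is indeed recorded in \citet{hand2018global}); and the induction closes because the source term $\frac{\check{\varrho}_{n-1}-\sin\check{\varrho}_{n-1}}{\pi}=\Theta(\check{\varrho}_{n-1}^{\,3})=\Theta(1/n^3)$ is exactly large enough to offset the $1-\check{\varrho}_{n-1}/\pi=1-\Theta(1/n)$ contraction at the $1/n^2$ scale. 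Your closing remark that discarding the source term would only give $e_n\ge\prod_{j\ge 1}(1-\check{\varrho}_j/\pi)=\Theta(1/n^3)$, which is too weak, is also accurate and pinpoints why the two-term recursion is essential.
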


\subsection{Proof Sketches of Theorem \ref{thm:optimum_compare}}

Our proof of Theorem \ref{thm:optimum_compare} is sketched as follows:
\begin{itemize}
\item We first show that for any $x$, the empirical risk $L(x)$ can be approximated as $2 \langle  h_{x, x_0},x \rangle - \|G(x)\|_2^2$ by the two critical lemmas, Lemma \ref{lem:comp-expect} and Lemma \ref{lem:comp-variance}.

\item Then we bound the approximation error  $|L(x) - 2 \langle  (h_{x, x_0},x \rangle - \|G(x)\|_2^2)|$, where $h_{x, x_0}$ is defined in \eqref{eq:h}.

\item By Lemmas \ref{lem:loss_error_bound}, \ref{lem:rho_lower_bound}, we have that if $x$ and $z$ are around $x_0$ and  $-\rho_n x_0$ respectively, by considering the approximation errors, the upper bound of $L(x)$ is smaller than the lower bound of $L(z)$, which further leads to $L(x) < L(z)$ with high probability.
\end{itemize}

\subsection{Detailed Proof of Theorem \ref{thm:optimum_compare}}

\begin{proof}[Proof of Theorem \ref{thm:optimum_compare}]
By \eqref{eq:wdc_ineq_3} in Lemma \ref{lem:wdc_ineq}, we have have 
\begin{align}
\|G(x)\|_2 \leq (1/2 + \varepsilon_\WDC)^{n/2} \|x\|_2, \label{eq:proof23_0}
\end{align}
combining which and the assumption $\|x_0\|_2 \leq R (1/2 + \varepsilon_\WDC)^{-n/2} $ in Theorem \ref{thm:optimum_compare},  we further have 
\begin{align*}
\|G(x_0)\|_2 \leq R.
\end{align*}
 By Lemma \ref{lem:comp-expect} and $\|G(x_0)\|_2 \leq R$, we set
$$\lambda\geq4\max\{c_1(R\|a\|_{\psi_1}+\|\xi\|_{\psi_1}),1\}\log(64\max\{c_1(R\|a\|_{\psi_1} + \|\xi\|_{\psi_1}),1\}/\varepsilon),$$
and  $z = x$ in Lemma \ref{lem:comp-expect} such that $H_x(x) = G(x)$, and the following holds for any $x$,
\begin{align}
\l|\lambda\expect{y_i\dotp{a_i}{G(x)}} - \dotp{G(x_0)}{G(x)} \r|\leq \frac{1}{4} \varepsilon  \|G(x)\|_2. \label{eq:proof23_1}
\end{align}
According to Lemma \ref{lem:comp-variance} and $|G(x_0)| \leq R$, we have that with probability at least $1-c_4 \exp(-u)$, for any $x$, the following holds:
\begin{align}
\l|\frac{\lambda }{m}\sum_{i=1}^m y_i \dotp{a_i}{G(x)} - \lambda \expect{y_i\dotp{a_i}{G(x)}}\r| \leq \frac{\varepsilon}{8}\|G(x)\|_2, \label{eq:proof23_2}
\end{align}
with sample complexity being
\begin{align*}
m\geq c_2\lambda^2\log^2(\lambda m)(kn\log(ed) + k\log(2R) + k\log m+ u)/\varepsilon^2,
\end{align*}
where we also set $z = x$ in Lemma \ref{lem:comp-variance} such that $H_x(x) = G(x)$.

Combining \eqref{eq:proof23_1} and \eqref{eq:proof23_2}, we will have that with probability at least $1-c_4 \exp(-u)$, for any $x$, setting$$\lambda\geq4\max\{c_1(R\|a\|_{\psi_1}+\|\xi\|_{\psi_1}),1\}\log(64\max\{c_1(R\|a\|_{\psi_1} + \|\xi\|_{\psi_1}),1\}/\varepsilon),$$ and 
\begin{align*}
m\geq c_2\lambda^2\log^2(\lambda m)(kn\log(ed) + k\log(2R) + k\log m+ u)/\varepsilon^2,
\end{align*}
the following holds:
\begin{align}
\l|\frac{\lambda }{m}\sum_{i=1}^m y_i \dotp{a_i}{G(x)} - \dotp{G(x_0)}{G(x)} \r| \leq \varepsilon\|G(x)\|_2. \label{eq:proof23_3}
\end{align}

\noindent \textbf{Bounding the error term:}
We next  bound the term $|L(x) + \|G(x)\|_2^2 - 2\dotp{h_{x,x_0}}{x} |$ as follows.
With $\lambda$, $m$ satisfying the same conditions above, then with probability at least $1-c_4 \exp (-u)$ , the following holds: 
\begin{align*}
&\l|L(x) + \|G(x)\|_2^2 - 2\dotp{h_{x,x_0}}{x}  \r| \\
& \qquad =  \l| 2 \|G(x)\|_2^2  - \frac{2 \lambda }{m}\sum_{i=1}^m y_i \dotp{a_i}{G(x)} - 2 \dotp{h_{x, x_0}}{x}  \r| \\
& \qquad =  \l| 2\dotp{G(x_0)}{G(x)} - \frac{2 \lambda }{m}\sum_{i=1}^m y_i \dotp{a_i}{G(x)}    + 2 \|G(x)\|_2^2 - 2\dotp{G(x_0)}{G(x)}  - 2 \dotp{h_{x, x_0}}{x} \r|.
\end{align*}
Furthermore, we bound the above terms as follows
\begin{align*}
& \l| 2\dotp{G(x_0)}{G(x)} - \frac{2 \lambda }{m}\sum_{i=1}^m y_i \dotp{a_i}{G(x)} \r|   +  \l| 2 \|G(x)\|_2^2 - 2\dotp{G(x_0)}{G(x)}  - 2 \dotp{h_{x, x_0}}{x} \r| \\
& \qquad \leq 2 \varepsilon\|G(x)\|_2 +  48 \frac{n^3 \sqrt{\varepsilon_\WDC}}{2^n} \|x\|^2_2 + 48 \frac{n^3 \sqrt{\varepsilon_\WDC}}{2^n}  \|x_0\|_2 \|x\|_2\\
& \qquad \leq  2 \varepsilon \l(\frac{1}{2} + \varepsilon_\WDC \r)^{n/2} \|x\|_2  + 48 \frac{n^3 \sqrt{\varepsilon_\WDC}}{2^n} \|x\|^2_2 + 48 \frac{n^3 \sqrt{\varepsilon_\WDC}}{2^n}  \|x_0\|_2 \|x\|_2\\
& \qquad \leq 2 \varepsilon \frac{1+2n\varepsilon_\WDC}{2^{n/2}}\|x\|_2  + 48 \frac{n^3 \sqrt{\varepsilon_\WDC}}{2^n} \|x\|^2_2 + 48 \frac{n^3 \sqrt{\varepsilon_\WDC}}{2^n}  \|x_0\|_2 \|x\|_2,
\end{align*}
where the second inequality is due to \eqref{eq:proof23_3} and \eqref{eq:wdc_ineq_1} in Lemma \ref{lem:wdc_ineq}, the third inequality is due to \eqref{eq:proof23_0}, and the last inequality is due to $(1+2\varepsilon_\WDC)^{n/2 } \leq e^{n\varepsilon_\WDC} \leq 1+ 2n\varepsilon_\WDC$ if $\varepsilon_\WDC$ is sufficiently small satisfying the condition of Theorem \ref{thm:optimum_compare}. This result implies  
\begin{align*}
\l|L(x) + \|G(x)\|_2^2 - 2\dotp{h_{x,x_0}}{x}  \r| \leq 
 2 \varepsilon \frac{1+2n\varepsilon_\WDC}{2^{n/2}}\|x\|_2  + 48 \frac{n^3 \sqrt{\varepsilon_\WDC}}{2^n} \|x\|^2_2 + 48 \frac{n^3 \sqrt{\varepsilon_\WDC}}{2^n}  \|x_0\|_2 \|x\|_2.
\end{align*}
Since we only consider the case that $\varepsilon_\WDC \leq 2^{-n} \|x_0\|_2^2$. Letting $\varepsilon = \varepsilon_\WDC$, we have
\begin{align}
&\l|L(x) + \|G(x)\|_2^2 - 2\dotp{h_{x,x_0}}{x}  \r| \nonumber  \\
& \qquad \leq  2 \varepsilon_\WDC \frac{1+2n\varepsilon_\WDC}{2^{n/2}} \|x\|_2  + 48 \frac{n^3 \sqrt{\varepsilon_\WDC}}{2^n} \|x\|^2_2 + 48 \frac{n^3 \sqrt{\varepsilon_\WDC}}{2^n}  \|x_0\|_2 \|x\|_2 \nonumber \\
& \qquad \leq 2 \sqrt{\varepsilon_\WDC} \frac{1+2n\varepsilon_\WDC}{2^n}\|x_0\|_2 \|x\|_2  + 48 \frac{n^3 \sqrt{\varepsilon_\WDC}}{2^n} \|x\|^2_2 + 48 \frac{n^3 \sqrt{\varepsilon_\WDC}}{2^n}  \|x_0\|_2 \|x\|_2 \label{eq:proof23_4}
\end{align}
with probability $1-c_4 \exp(-u)$ if we set
\begin{align}
&\lambda\geq4\max\{c_1(R\|a\|_{\psi_1}+\|\xi\|_{\psi_1}),1\}\log(64\max\{c_1(R\|a\|_{\psi_1} + \|\xi\|_{\psi_1}),1\}/\varepsilon_\WDC),  \label{eq:proof23_lambda}\\
&m \geq c_2  \lambda^2   \log^2(\lambda m) (kn\log(ed) + k\log(2R) + k\log m+ u)  /\varepsilon_\WDC^2.\label{eq:proof23_m}
\end{align}

\noindent \textbf{Upper bound of $L(x)$:}
For any $x \in \mathcal{B}( \varphi x_0,  \psi \|x_0\|_2)$ with $0< \psi \leq 1/(4\pi)$ and any $\varphi  \in [\rho_n, 1]$, we have
\begin{align*}
L(x) =& 2 \dotp{x}{h_{x,x_0}} -\|G(x)\|_2^2 + \l( L(x) - 2 \dotp{x}{h_{x,x_0}} + \|G(x)\|_2^2 \r)\\
= & 2 \dotp{x}{h_{x,x_0}}  - \frac{1}{2^{n}} \|x\|_2^2 - \l(\|G(x)\|_2^2- \frac{1}{2^{n}} \|x\|_2^2 \r) + \l(L(x) - 2 \dotp{x}{h_{x,x_0}} + \|G(x)\|_2^2 \r)\\
\leq & 2 \dotp{x}{h_{x,x_0}}  - \frac{1}{2^n} \|x\|_2^2 + \l| \|G(x)\|_2^2- \frac{1}{2^{n}} \|x\|_2^2 \r| + \l| L(x) - 2 \dotp{x}{h_{x,x_0}} + \|G(x)\|_2^2 \r|\\
\leq & \frac{1}{2^n}(\varphi^2-2 \varphi +  \frac{10 \pi^2 n }{K_0^3} \psi )\|x_0\|_2^2 + \l| \|G(x)\|_2^2- \frac{1}{2^{n}} \|x\|_2^2 \r| +  \l| L(x) - 2 \dotp{x}{h_{x,x_0}} + \|G(x)\|_2^2 \r|,
\end{align*} 
where the last inequality is due to Lemma \ref{lem:loss_error_bound} and \eqref{eq:proof23_0}.
In addition, we can also obtain
\begin{align*}
 &\l| L(x) - 2 \dotp{x}{h_{x,x_0}} + \|G(x)\|_2^2 \r|  \\
& \qquad \leq \sqrt{\varepsilon_\WDC} \frac{1+2n\varepsilon_\WDC}{2^n}\|x_0\|_2 \|x\|_2  + 24 \frac{n^3 \sqrt{\varepsilon_\WDC}}{2^n} \|x\|^2_2 + 24 \frac{n^3 \sqrt{\varepsilon_\WDC}}{2^n}  \|x_0\|_2 \|x\|_2 \\
& \qquad \leq 2\sqrt{\varepsilon_\WDC} \frac{1+2n\varepsilon_\WDC}{2^n}(\varphi + \psi)\|x_0\|^2_2 + 48 \frac{n^3 \sqrt{\varepsilon_\WDC}}{2^n} (\varphi + \psi)^2\|x_0\|^2_2 + 48 \frac{n^3 \sqrt{\varepsilon_\WDC}}{2^n}  (\varphi + \psi)\|x_0\|^2_2 \\
& \qquad \leq 122 \frac{n^3 \sqrt{\varepsilon_\WDC}}{2^n}\|x_0\|^2_2 ,
\end{align*}
and 
\begin{align*}
\l| \|G(x)\|_2^2- \frac{1}{2^{n}} \|x\|_2^2 \r| \leq 24 \frac{n^3 \sqrt{\varepsilon_\WDC}}{2^n} \|x\|_2^2  \leq 30 \frac{n^3 \sqrt{\varepsilon_\WDC}}{2^n} \|x_0\|_2,
\end{align*}
due to $\|x\|_2 \leq (\varphi + \psi)\|x_0\|_2$ when $x \in \mathcal{B}( \varphi x_0,  \psi \|x_0\|_2)$ and $\varphi + \psi \leq 1 + 1/(4\pi) < 1.1$ , and \eqref{eq:wdc_ineq_1} in Lemma \ref{lem:wdc_ineq}.

Combining the above results and letting $\lambda$ and $m$ satisfy \eqref{eq:proof23_lambda} and \eqref{eq:proof23_m}, the following holds with probability at least $1-c_4 \exp(-u)$,
\begin{align*}
L(x) \leq \frac{1}{2^n}(\varphi^2-2 \varphi + \frac{10 \pi^2 n }{K_0^3} \psi + 152 n^3 \sqrt{\varepsilon_\WDC})\|x_0\|_2^2, 
\end{align*}
for any $x \in \mathcal{B}( \varphi x_0,  \psi \|x_0\|_2)$.

\noindent \textbf{Lower bound of $L(z)$:} Next, we should the lower bound of $L(z)$ when $z$ is around $-\rho_n x_0$.
Consider the situation for any $z \in \mathcal{B}( -\zeta x_0,  \psi \|x_0\|_2)$ with $0< \psi \leq 1/(4\pi)$ and any $\zeta  \in [\rho_n, 1]$. We can obtain
\begin{align*}
L(z) =& 2 \dotp{z}{h_{z,x_0}} -\|G(z)\|_2^2 + \l( L(z) - 2 \dotp{z}{h_{z,x_0}} + \|G(z)\|_2^2 \r)\\
\geq & 2 \dotp{z}{h_{z,x_0}} -\|G(z)\|_2^2 - \l| L(z) - 2 \dotp{z}{h_{z,x_0}} + \|G(z)\|_2^2 \r|\\
= & 2 \dotp{z}{h_{z,x_0}}  - \frac{1}{2^{n}} \|z\|_2^2 - \l(\|G(z)\|_2^2- \frac{1}{2^{n}} \|z\|_2^2 \r) - \l| L(z) - 2 \dotp{z}{h_{z,x_0}} + \|G(z)\|_2^2 \r|\\
\geq & 2 \dotp{z}{h_{z,x_0}}  - \frac{1}{2^n} \|z\|_2^2 - \l| \|G(z)\|_2^2- \frac{1}{2^{n}} \|z\|_2^2 \r| - \l| L(z) - 2 \dotp{z}{h_{z,x_0}} + \|G(z)\|_2^2 \r|\\
\geq & \frac{1}{2^n}(\zeta^2-2\zeta \rho_n-10 \pi^2 n^3 \psi  )\|x_0\|_2^2 - \l| \|G(x)\|_2^2- \frac{1}{2^{n}} \|x\|_2^2 \r| -  \l| L(x) - 2 \dotp{x}{h_{x,x_0}} + \|G(x)\|_2^2 \r|,
\end{align*} 
where the last inequality is due to Lemma \ref{lem:loss_error_bound}.
Furthermore, similar to the previous steps in the upper bound of $L(x)$, we have
\begin{align*}
 \l| L(z) - 2 \dotp{z}{h_{z,x_0}} + \|G(z)\|_2^2 \r|  \leq  122 \frac{n^3 \sqrt{\varepsilon_\WDC}}{2^n}\|x_0\|^2_2 ,
\end{align*}
and
\begin{align*}
\l| \|G(z)\|_2^2- \frac{1}{2^{n}} \|z\|_2^2 \r| \leq 30 \frac{n^3 \sqrt{\varepsilon_\WDC}}{2^n} \|x_0\|_2,
\end{align*}
due to $\|z\|_2 \leq (\zeta + \psi)\|x_0\|_2$ when $z \in \mathcal{B}( -\zeta x_0,  \psi \|x_0\|_2)$ and $\zeta + \psi \leq 1 + 1/(4\pi) < 1.1$.

Combining the above results, letting $\lambda$ and $m$ satisfy \eqref{eq:proof23_lambda} and \eqref{eq:proof23_m}, the following holds with probability at least $1-c_4 \exp(-u)$,
\begin{align*}
L(z) \geq \frac{1}{2^n}(\zeta^2-2\zeta \rho_n-10 \pi^2 n^3 \psi - 152 n^3 \sqrt{\varepsilon_\WDC})\|x_0\|_2^2, 
\end{align*}
for any $z \in \mathcal{B}( -\zeta x_0,  \psi \|x_0\|_2)$.

\noindent \textbf{Proving  $L(x) < L(z)$:}
In order to have $L(x) < L(z)$, it is enough to ensure that
\begin{align*}
& \min_{\zeta \in [\rho_n, 1]} \frac{1}{2^n}(\zeta {}^2-2\zeta \rho_n-10 \pi^2 n^3 \psi - 152 n^3 \sqrt{\varepsilon_\WDC} )\|x_0\|_2^2 \\
& \qquad > \max_{\varphi \in [\rho_n, 1]} \frac{1}{2^n}(\varphi^2-2 \varphi + \frac{10 \pi^2 n }{K_0^3} \psi + 152 n^3 \sqrt{\varepsilon_\WDC})\|x_0\|_2^2. 
\end{align*}
The minimizer for the left side of the above inequality is $\varphi = \rho_n$ while the maximizer for the right side is also $\zeta = \rho_n$. Then, to achieve the above inequality, we plug in the minimizer and maximizer for both sides and obtain
\begin{align*}
\rho_n^2-2\rho^2_n-10 \pi^2 n^3 \psi -152 n^3 \sqrt{\varepsilon_\WDC} >  \rho_n^2-2 \rho_n + \frac{10 \pi^2 n }{K_0^3} \psi + 152 n^3 \sqrt{\varepsilon_\WDC}.
\end{align*}
Rearranging the terms, we would obtain
\begin{align*}
2\rho_n - 2\rho_n^2 > \l( 10 \pi^2 n^3 +\frac{10 \pi^2 n }{K_0^3} \r) \psi + 304 n^3 \sqrt{\varepsilon_\WDC}. 
\end{align*}
To make the above inequality hold for all $\rho_n$, by computing the minimal value of the left-hand side according to Lemma \ref{lem:rho_lower_bound}, we require $\varepsilon_\WDC$ to satisfy
\begin{align*}
\frac{2K_0}{K_1 (n+2)^2} > \l( 10 \pi^2 n^3 +\frac{10 \pi^2 n }{K_0^3} \r) \psi + 304 n^3 \sqrt{\varepsilon_\WDC}.
\end{align*}
Due to $n+2 \leq 2n$ and $n \leq  n^3$ (since we assume $n > 1$), it suffices to ensure
\begin{align*}
\frac{K_0}{4 K_1 n^2} > \l( 10 \pi^2 n^3 +\frac{10	 \pi^2 n^3 }{K_0^3} \r) \psi + 304 n^3 \sqrt{\varepsilon_\WDC}, 
\end{align*}
which can be, therefore, guaranteed by the condition
\begin{align*}
35\sqrt{K_1/K_0} n^3 \varepsilon^{1/4}_\WDC \leq 1 \text{ and } \psi \leq \frac{K_0}{50\pi^2 K_1 (1+1/K_0^3) }n^{-5}.
\end{align*}
Thus, under the condition of Theorem \ref{thm:optimum_compare}, for any $x \in \mathcal{B}( \varphi x_0,  \psi \|x_0\|_2)$ and $z \in \mathcal{B}( -\zeta x_0,  \psi \|x_0\|_2)$, letting $\lambda$ and $m$ satisfy \eqref{eq:proof23_lambda} and \eqref{eq:proof23_m},  with probability at least $1-2c_4 \exp(-u)$, we have$$L(x) < L(z).$$
Note that the radius $\psi$ satisfies $\psi < K_0 := \rho_n$, which means there are no overlap between $\mathcal{B}( \varphi x_0,  \psi \|x_0\|_2)$ and $\mathcal{B}( -\zeta x_0,  \psi \|x_0\|_2)$. This is because by Lemma \ref{lem:rho_lower_bound}, we know that  $1/K_1 \leq (n+2)^2 \leq 4n^2$. Therefore, $\psi \leq K_0 n^{-5} / (50\pi^2 K_1 (1+1/K_0^3)) \leq K_0 n^{-3} < K_0$ when $n \geq 2$.
This completes the proof.
\end{proof}

\end{document}